\title{Symmetric (Optimistic)  Natural Policy Gradient for \\Multi-agent Learning with Parameter Convergence} 
\begin{document} 
\author{Sarath Pattathil \and  Kaiqing Zhang \and Asuman Ozdaglar\thanks{Authors are with the Department of Electrical Engineering and Computer Science, Massachusetts Institute of Technology, Cambridge, MA, USA. \{sarathp, kaiqing, asuman\}@mit.edu.
S.P. acknowledges support from MathWorks Engineering Fellowship. A.O and K.Z. were supported by MIT-DSTA grant 031017-00016. K.Z. also acknowledges  support  from Simons-Berkeley Research Fellowship.  
} 
} 
\date{}
\maketitle 

\begin{abstract} 
Multi-agent interactions are increasingly important in the context of  reinforcement learning,   and the theoretical foundations of policy gradient methods have attracted surging research interest. We investigate  the global convergence of natural policy gradient (NPG) algorithms in multi-agent learning. 
We first show that vanilla NPG may not have {\it parameter convergence}, i.e., the convergence of the vector  that parameterizes the policy, even when the costs are regularized (which {enabled} strong  convergence guarantees in the {\it policy space} in the literature). This non-convergence of parameters leads to stability issues in learning, which becomes especially relevant in the function approximation setting, where we can only operate on low-dimensional parameters, instead of the high-dimensional policy.  
We then propose {variants} of the NPG algorithm, for several  standard multi-agent learning scenarios: two-player zero-sum matrix and Markov games, and multi-player monotone games,  
with global last-iterate  parameter convergence guarantees. We also generalize the results to certain function approximation settings.  
Note that in our algorithms, the agents take {\it symmetric} roles. 
Our results might also be of independent interest for  solving nonconvex-nonconcave minimax optimization problems with certain structures.  Simulations are  also  provided to corroborate our theoretical findings. 
\end{abstract}


\section{Introduction}\label{sec:Introduction} 

Policy gradient (PG) methods have served as the workhorse of modern reinforcement learning (RL) \citep{schulman2015trust,schulman2017proximal,haarnoja2018soft}, and enjoy the desired properties of being scalable to large state-action spaces, stability with function approximation, as well as sample efficiency. In fact, policy gradient methods have achieved impressive empirical performance in multi-agent RL \citep{lowe2017multi,yu2021surprising}, the regime where many   RL's recent successes are  pertinent to \citep{silver2017mastering,OpenAI_dota,shalev2016safe}.

Despite the tremendous empirical successes, theoretical foundations  of PG methods, even for the single-agent setting, have  not been uncovered until recently \citep{fazel2018global,agarwal2019optimality,zhang2019global,wang2019neural,mei2020global,cen2021fastMDP}. The theoretical  understanding of PG methods for multi-agent RL remains  largely elusive, except for several recent attempts \citep{daskalakis2020independent,zhao2021provably,wei2021last,cen2021fast}. The key challenge is that in the policy parameter space, even for the basic two-player zero-sum matrix game, the problem becomes {\it nonconvex-nonconcave} and is computationally intractable in general \citep{daskalakis2021complexity}.

In this paper, we aim to fill in the gap by studying the global convergence of natural PG (NPG) \citep{kakade2002natural}, which forms the basis for many popular PG algorithms (e.g., Proximal Policy Optimization (PPO)/Trust Region Policy Optimization (TRPO)), in the parameter space and for multi-agent learning. We are interested in the setting where the  agents take {\it symmetric} roles and operate {\it independently}, as it does not require a central coordinator and it scales favorably with the number of agents. Analysis of this setting is challenging precisely because the concurrent updates of the agents makes the learning environment {\it non-stationary}  from one agent's perspective. {With {\it asymmetric}  update rules among agents, the non-stationarity issue can be mitigated, and the global convergence of PG methods has been established lately in \citep{zhang2019policyb,daskalakis2020independent,zhao2021provably}. However, though being valid as an optimization scheme, asymmetric update-rules might be hard to justify in game-theoretic multi-agent learning with symmetric players. It is thus desirable to develop provably convergent PG methods  with symmetric update rules.} 

We focus on the {\it last-iterate}   convergence of the {\it policy parameters}, which is critical to establish in order to avoid stability issues during learning. For example, if the norm of the parameters blow up, we might end up with precision issues in  computing the gradient and updating the parameters. This is particularly relevant to the setting with function approximation, where we can only operate on {\it low-dimensional} parameters of the policy, instead of the high-dimensional policy per se. Indeed, we aim to explore the convergence property in the function approximation setting to handle  large state-action spaces. Finally, our results  are also motivated from the study of  {\it nonconvex-nonconcave minimax} optimization problems, especially  those with certain structures that yield global convergence of gradient-based methods. We aim to explore such structures in multi-agent learning with {\it parameterized}  policies. We summarize our contributions as follows.  




\paragraph{Contributions.} Our contributions are three-fold. First, we identify the non-convergence issue in the policy parameter space of natural PG methods for RL. {We show that this issue persists even with entropy regularized rewards}. Second, we {develop  symmetric variants of the natural PG method, i.e., both without and with the optimistic updates (\cite{rakhlin2013optimization})} 
and  establish the  last-iterate global convergence  to the Nash equilibrium in the policy parameter space. Third, we generalize the scope of symmetric PG methods in game-theoretic multi-agent learning, including two-player zero-sum matrix and Markov games (MGs), multi-player monotone games, and the corresponding  linear function approximation settings under certain assumptions, in order  to handle enormously large state-action spaces, all with last-iterate parameter convergence rate guarantees. We have  also provided numerical experiments to validate the effectiveness of our algorithms. 


 

\subsection{Related work}\label{sec:related_work}



\paragraph{Policy gradient RL methods for games.}
Gradient-descent-ascent (GDA) with projection on simplexes  can be viewed as symmetric policy gradient methods for solving matrix games with {\it direct policy parameterization} \citep{agarwal2019optimality}, which enjoys an average-iterate  convergence \citep{cesa2006prediction}. Such a guarantee is shared with multiplicative weight update (MWU), which is especially  suitable for repeated matrix games, and   equivalent to natural PG method with tabular softmax policy parameterization \citep{agarwal2019optimality}. To the best of our knowledge, however, neither  parameter convergence nor function approximation has been studied in this context. For Markov games, \cite{zhang2019policyb,bu2019global,hambly2021policy} have studied global convergence of PG methods for those with a linear quadratic structure; for zero-sum Markov games,  \cite{daskalakis2020independent} established global convergence of independent PG with two-timescale stepsizes for the tabular setting;  \cite{zhao2021provably} studied  a double-loop natural PG algorithm with function approximation; {more recently, \cite{alacaoglu2022natural} proposed a framework of natural actor-critic algorithms}.  No last-iterate convergence to the Nash equilibrium was established in these works, and these update-rules  were all {\it asymmetric}. \cite{qiu2021provably} developed  symmetric policy optimization methods for certain zero-sum Markov games with structured transitions. 
Concurrently,  
\cite{zhang2022policy} proposed a policy optimization framework with fast average-iterate convergence guarantees for finite-horizon Markov games. Finally, \cite{leonardos2021global,zhang2021gradient,ding2022independent} have studied global convergence of symmetric PG methods in Markov potential games recently, not focused on  
last-iterate or parameter convergence. 

\paragraph{Last-iterate convergence in constrained multi-agent learning.}

Several papers including \cite{tseng1995linear, azizian2020tight, ibrahim2020linear, fallah2020optimal} and references therein studied the {\it last-iterate}  behavior of strongly monotone games. Furthermore, \cite{golowich2020tight, golowich2020last} extended this analysis to the monotone game setting. However, these papers did not consider {\it parametrized policies}. More specifically, in the matrix game  setting with a simplex constraint, papers including \cite{daskalakis2019last, wei2020linear,cen2021fast}   showed the last-iterate policy convergence of optimistic methods. However, these papers did not consider policy parameterization or the function approximation settings, and some papers required  the assumption that the NE is unique \citep{daskalakis2019last,wei2020linear}.  
For Markov games, \cite{zhao2021provably}  established last-iterate convergence, but not to NE due to asymmetric update; \cite{wei2021last,cen2021fast} were, to the best of our knowledge, the only last-iterate policy convergence results in Markov games with symmetric updates. {However, these works did not study the function approximation setting, or monotone games beyond the two-player zero-sum case. Also, though having greatly inspired our work, }the regularization idea in \cite{cen2021fast} alone cannot  prevent the non-convergence issue of the policy parameters from happening (see \S\ref{sec:prelim}). Our goal,  in contrast, is to study the (last-iterate) convergence of the actual policy parameters, and for more general multi-agent learning settings beyond the tabular zero-sum one. 

\paragraph{Nonconvex-nonconcave minimax optimization.} 
It is shown that for general nonconvex-nonconcave minimax optimization, even {\it local}  solution concepts \citep{jin2020local} may not exist,  and finding them can be intractable \citep{daskalakis2021complexity}. Thus, specific structural properties have to be  exploited to design efficient algorithms with {\it global} convergence. \cite{lin2019gradient, thekumparampil2019efficient, nouiehed2019solving,yang2020global} have studied the nonconvex-(strongly)-concave or the nonconvex-Polyak-Lojasiewicz (PL)  or PL-PL settings, with global convergence rate guarantees.  
 The algorithms in these papers are all {\it asymmetric} in that they run the inner loop (which solves the maximization problem) multiple times (or on a faster timescale with larger stepsizes) to reach an approximate solution of the inner optimization problem, and then run one step of descent on the outer problem (or on a slower timescale with smaller stepsizes). Closely related to one motivation of our work,  \cite{vlatakis2019poincare,flokas2021solving,mladenovic2021generalized} studied nonconvex-nonconcave minimax problem with {\it hidden convexity}  structures, and show that GDA can fail to converge globally even so. {Interestingly, the benefit of regularization (more generally, strict convexity), and natural gradient flow under  Fisher information geometry, were also examined in \cite{flokas2021solving,mladenovic2021generalized} to establish some positive  convergence results.}  Different from our work, the dynamics there are in {\it continuous-time}, and the parameterization in \cite{flokas2021solving} is decoupled by dimension, and the convergence rate in the perturbed game is not global. These conditions prevent the application of their results and proof techniques to our setting directly. {Also, last-iterate  finite-time  rates of the iterates were not established in  \cite{mladenovic2021generalized}.} 

\paragraph{Notation.}
For vector $v\in\RR^d$, we use $[v]_a$ with $a\in\{1,2,\cdots,d\}$ to denote the $a$-th element of $v$. {We use $\|v\|$ to denote the $\ell_2$-Euclidean norm of a vector $v$ and $\| Q\|$ to denote the $\ell_2$-induced norm of matrix $Q$. We also use $\| Q\|_\infty$ to denote the infinity norm and $\|Q\|_F$ to denote the Frobenius norm of matrix $Q$.}  
For a finite-set $\cS$, we use $\Delta(\cS)$ to denote the simplex over $\cS$. 
We use $\mathbbm{1}$ to denote  the matrix  of all ones of appropriate dimension. For any positive integer $n$, we use $[n]$ to denote the set $\{1,\cdots,n\}$. We use the  subscript $-i$ to denote the quantities  of all players other than player $i$. $\KL(p\| q)$ denotes the KL divergence between two probability distributions $p$ and $q$. 
For a matrix $C$, we use  $C=[A \ |\ B]$  to denote the concatenation of the component matrices $A$ and $B$. {For two vectors $x,y\in\RR^d$, $x\cdot y$ denotes their inner-product, i.e., $x^\top y$.}  We use $I_d$ to denote an identity matrix of dimension $d$.


%

\section{Motivation \& Background} \label{sec:prelim}


In this section, we introduce the background  of the  natural PG methods we study, with two-player\footnote{Hereafter, we use {\it player} and {\it agent} interchangeably.} zero-sum matrix games  being a motivating  example.  



\paragraph{Zero-sum matrix games.} Two-player zero-sum matrix games are characterized by a tuple $(\cA,\cB,Q)$, where
 $Q\in\RR^{n\times n}$ denotes the cost\footnote{Note that we can also model it as a payoff, with a negative sign.} matrix,   $\cA$ and $\cB$ denote the action spaces of players $1$ and $2$, respectively. For notational simplicity, we assume both action spaces have cardinality $n$, i.e., $|\cA|=|\cB|=n$. Note that our results can be readily generalized to the setting with different action-space cardinalities. For convenience, we use {\it indices} of the actions to denote the actions, i.e., $\cA=\cB=\{1,2,\cdots,n\}$, without loss of generality. Note that the {\it actual actions} of both players for the same index need not to be the same{, and the cost  matrix $Q$ needs not to be symmetric}.  
 The problem can thus be formulated as a minimax (i.e., saddle-point optimization) problem
 \#\label{equ:zs_matrix_original}
 \min_{g\in\Delta(\cA)}\max_{h\in\Delta(\cB)}~~f(g,h):=g^\top Q h,
 \#
 where $g$ and $h$ are referred to as the policies/strategies of the players. 
 By Minimax Theorem \citep{neumann1928theorie}, the $\min$ and $\max$ operators in \eqref{equ:zs_matrix_original} can be interchanged, and the solution concept of {\it Nash equilibrium} (NE), which is defined as a pair of policies $(g^\star,h^\star)$ such that 
 \$
 f(g,h^\star)\geq f(g^\star,h^\star)\geq f(g^\star,h), \text{~~for any~}(g,h)\in\Delta(\cA)\times\Delta(\cB)
 \$
 always holds. In particular, at the Nash equilibrium, the players execute the best-response policies of each other, and have no  incentive to deviate from it.

 
 \paragraph{Policy parameterization.} To develop policy gradient methods for multi-player learning, the policies $(g,h)\in\Delta(\cA)\times\Delta(\cB)$ are parameterized by some parameters $\theta$ and $\nu$. Specifically, consider the following softmax parameterization that is common in practice: for any $a\in\cA$ and $b\in\cB$ 
 \begin{align}\label{equ:policy_para_matrix}
g_{\theta}(a) = \frac{e^{p_{\theta}(a)}}{ \sum_{a' \in\cA} e^{p_{\theta}(a')} }, \qquad h_{\nu}(b) = \frac{e^{q_{\nu}(b)}}{ \sum_{b' \in\cB} e^{q_{\nu}(b')}},
\end{align}
where $\theta,\nu\in\RR^d$ for some integer $d>0$, $p_{\theta},q_{\nu}:\RR^d\to\RR^n$ are two differentiable functions. Note that $g_{\theta}(a),h_{\nu}(b)>0$ for any bounded  $p_{\theta},q_{\nu}$, and $\sum_{a\in\cA}g_{\theta}(a)=\sum_{b\in\cB}h_{\nu}(b)=1$. This parameterization gives the following minimax problem for the zero-sum matrix game:
\#\label{eq:hidden_bilinear_problem_og}
 \min_{\theta\in\RR^n}\max_{\nu\in\RR^n}~~~~f(\theta,\nu):=g^\top_\theta Q h_\nu, 
 \#
 where by a slight abuse of notation, we use $f(\theta,\nu)$ to denote $f(g_\theta,h_\nu)$. 
 In this section and \S\ref{sec:ONPG_Static_tabular}, we consider the tabular softmax parameterization where $p_{\theta}=\theta\in\RR^n$ and $q_\nu=\nu\in\RR^n$. In \S\ref{sec:ONPG_Matrix_FA}, we consider the setting with function approximation where $d<n$.

The benefits of softmax parameterization are that: 1) it transforms a {\it constrained} problem over simplexes  to an {\it unconstrained} one, making it easier to implement; 2) it readily incorporates function approximation to deal with large spaces (see \S\ref{sec:ONPG_Matrix_FA}). On the other hand, this policy parameterization makes the optimization problem \eqref{eq:hidden_bilinear_problem_og} more challenging to solve. Indeed, the minimax problem \eqref{eq:hidden_bilinear_problem_og} becomes  a nonconvex-nonconcave problem in $\theta$ and $\nu$, even with the tabular parameterization as we will show later in Lemma \ref{lemma:ncnc}.

\begin{remark}[Hidden bilinear problem]
	Note that Problem \eqref{eq:hidden_bilinear_problem_og}, which resembles a bilinear zero-sum  game, in fact falls into the class of {\it hidden bilinear} minimax problems discussed in \cite{vlatakis2019poincare} (or more generally the {\it hidden convex-concave} games  studied in \cite{flokas2021solving,mladenovic2021generalized}). It was shown in \cite{flokas2021solving}  that for general smooth functions of $g_\theta$ and $h_\nu$,  vanilla gradient descent-ascent exhibits a variety of behaviors antithetical to convergence to the  solutions. We here instead, show that for the specific  {\it softmax parameterization}, and for certain variants of the vanilla gradient-descent-ascent  method, the last-iterate convergence rate of the parameters  $\theta$ and $\nu$ can be established. 
\end{remark}
 
 
 
\paragraph{Natural PG \& Non-convergence pitfall.} Before proceeding further, we first introduce the {\it regularized} game: 
\begin{align}
\label{prob:entr_reg_matrix}
\min_{\theta\in\RR^n}\max_{\nu\in\RR^n}~~~\ f_\tau(\theta,\nu):= g_{\theta} ^{\top} Q h_{\nu} - \tau \mathcal{H}(g_{\theta}) + \tau \mathcal{H}(h_{\nu}), 
\end{align}
where the cost of both players is  regularized by the Shannon entropy of the policies, with $\tau>0$ being the regularization parameter, and $\mathcal{H}(\pi)=-\sum_{a\in\cA}\pi(a)\log(\pi(a))$ for $\pi$ on a simplex. 
The entropy regularization, which is commonly used in single-player RL, enjoys the benefits of both  encouraging exploration and accelerating convergence \citep{neu2017unified,mei2020global}. Our hope is also to exploit the benefits of entropy regularization in the multi-player setting. Indeed, the regularized cost traces its source in the game theory literature \citep{mckelvey1995quantal}, to model the imperfect knowledge of the cost matrix $Q$. 
{In the next lemma, we show that the problem in Equation \ref{prob:entr_reg_matrix} can be of the nonconvex-nonconcave type: 
\begin{lemma}
\label{lemma:ncnc}
The minimax problem \eqref{prob:entr_reg_matrix} is nonconvex in $\theta$ and nonconcave in $\nu$,  even if $p_{\theta}=\theta$ and $q_\nu=\nu$. 
\end{lemma}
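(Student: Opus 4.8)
The plan is to prove nonconvexity in $\theta$ and, by the symmetric min--max structure, to deduce nonconcavity in $\nu$ analogously; it suffices to exhibit a single instance of $(\cA,\cB,Q,\tau)$ on which convexity fails, since the statement concerns the class of such regularized games. First I would reduce the dimension using the translation invariance of the softmax map, $g_{\theta+c\mathbf 1}=g_\theta$ for every $c\in\RR$: this forces $f_\tau(\cdot,\nu)$ to be constant along the all-ones direction, so all of its curvature lives on the quotient. Taking the smallest nontrivial case $n=2$ makes this quotient one-dimensional, with effective coordinate $s:=\theta_1-\theta_2$, and I would fix an arbitrary opponent policy $h:=h_\nu$.

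Next I would write the restricted objective explicitly. Letting $\sigma(s)=(1+e^{-s})^{-1}$ be the logistic map, so that $g_\theta=(\sigma(s),1-\sigma(s))$, the bilinear payoff collapses to an affine function of $\sigma(s)$,
\[
g_\theta^\top Q h = c_0 + c_1\,\sigma(s), \qquad c_1:=(Qh)_1-(Qh)_2,
\]
while the entropy pullback is $\mathcal H(g_\theta)=\phi(s)$, where $\phi$ is the binary entropy composed with $\sigma$. Hence, up to an additive constant, $s\mapsto f_\tau(\theta,\nu)$ equals $c_0+c_1\sigma(s)-\tau\phi(s)$, and I would compute its second derivative from $\sigma''(s)=\sigma'(s)(1-2\sigma(s))$ and $\phi''(s)=-\sigma'(s)-s\,\sigma''(s)$, obtaining $c_1\sigma''(s)-\tau\phi''(s)$. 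The decisive structural fact is that the softmax composed with a linear functional is an S-shaped (sigmoidal) curve: $\sigma''$ is strictly positive for $s<0$, vanishes only at $s=0$, and is strictly negative for $s>0$, so $c_1\sigma(s)$ is genuinely neither convex nor concave whenever $c_1\neq0$. This nonlinearity is intrinsic to the parameterization and is already present in the tabular case $p_\theta=\theta$.

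To finish I would exhibit a point of negative curvature. Fixing any $s_0>0$ (so $\sigma''(s_0)<0$) and any $\tau>0$, I would choose $Q$ and $h$ so that $c_1=(Qh)_1-(Qh)_2$ is positive and large enough that $c_1\sigma''(s_0)-\tau\phi''(s_0)<0$; this is possible because $\phi''$ is bounded on a neighborhood of $s_0$ while $c_1\sigma''(s_0)\to-\infty$ as $c_1\to\infty$. At such a point $\nabla^2_\theta f_\tau(\cdot,\nu)$ admits a direction of strictly negative curvature, so $f_\tau(\cdot,\nu)$ is not convex. The symmetric argument applies to $t:=\nu_1-\nu_2$: the $\nu$-player's restricted objective is $d_0+d_1\sigma(t)+\tau\phi(t)$ with $d_1:=(Q^\top g)_1-(Q^\top g)_2$, and evaluating in the region $t<0$ (where $\sigma''>0$) with $d_1$ large yields strictly positive curvature, hence nonconcavity in $\nu$.

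The main obstacle I anticipate is ruling out that the strictly convex regularizer $-\mathcal H(g)$ (convex in the policy $g$) convexifies the problem after the pullback. The resolution is exactly the curvature comparison above: the entropy term contributes a bounded second derivative $-\tau\phi''(s_0)$, whereas the payoff's contribution $c_1\sigma''(s_0)$ grows without bound in $\|Q\|$, so a large enough payoff magnitude guarantees net negative curvature. I would base the proof on the payoff term precisely to make clear that the failure stems from the softmax geometry rather than from the regularizer; as an aside one can note that the entropy pullback is itself nonconvex in $\theta$, since $-\tau\phi''(s)$ flips sign from $\tau/4>0$ at $s=0$ to negative as $s\to\infty$, but this is not needed for the argument.
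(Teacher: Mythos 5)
Your proposal is correct, and it proves the lemma by a genuinely different route than the paper. The paper's proof is a bare-hands Jensen violation: it fixes $Q=I_2$, evaluates the objective at the explicit points $\theta_1=(0,0)$, $\theta_2=(\log 4,\log 9)$ (so $g_{\theta_1}=(1/2,1/2)^\top$, $g_{\theta_2}=(4/13,9/13)^\top$, midpoint policy $(1/3,2/3)^\top$) against a suitable fixed $h_\nu$, checks the midpoint inequality fails, repeats symmetrically for $\nu$, and separately verifies (with $Q=0$) that the entropy pullback $-\tau\mathcal H(g_\theta)$ is itself nonconvex, so regularization cannot rescue convexity. You instead quotient by the softmax shift invariance, reduce to the one-dimensional coordinate $s=\theta_1-\theta_2$, and run a curvature comparison: the restricted objective is $c_0+c_1\sigma(s)-\tau\phi(s)$ with $c_1=(Qh)_1-(Qh)_2$, and since $\sigma''(s_0)<0$ for $s_0>0$ while $\phi''$ is bounded near $s_0$, taking $c_1$ large forces $c_1\sigma''(s_0)-\tau\phi''(s_0)<0$, giving a direction of strictly negative curvature (your derivative formulas $\sigma''=\sigma'(1-2\sigma)$ and $\phi''=-\sigma'-s\sigma''$ check out, and restricting a convex function to a line preserves convexity, so this suffices); the $\nu$ side is symmetric. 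Both arguments are valid existence proofs, which is all the lemma requires. The paper's version buys minimal machinery — one can verify it by arithmetic — while yours buys insight: it isolates the sigmoidal geometry of softmax composed with a linear functional as the source of the failure, shows explicitly how the bounded entropy curvature is dominated by the payoff curvature (so nonconvexity persists for every $\tau>0$ once $\|Q\|$ is scaled), and your aside that $-\tau\phi''$ changes sign recovers the paper's separate $Q=0$ observation as a corollary of the same computation. One cosmetic point: if you want a single instance witnessing both failures simultaneously (as the paper's $Q=I_2$ does), note that your construction accommodates this, e.g.\ $Q=cI_2$ with interior $g,h$ chosen so that both $c_1>0$ and $d_1>0$, evaluating at $s_0>0$ and $t_0<0$ respectively.
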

Note that the nonconvexity in the parameters remains even when we regularize with the entropy of the policy, i.e., $\tau>0$.} 

Motivated by the successes of {\it natural policy gradient} \citep{kakade2002natural} and its variants, as PPO/TRPO \citep{schulman2015trust,schulman2017proximal}, in RL practice, we consider the natural PG  descent-ascent update for \eqref{prob:entr_reg_matrix}, which is given by 
\#
{\theta}_{t+1} &={\theta}_{t}-\eta \cdot F^{\dagger}_\theta(\theta_t)\cdot\frac{\partial f_\tau(\theta_t,\nu_t)}{\partial\theta} = (1 - \eta \tau) \theta_t - \eta Q h_{{\nu}_t}+\eta\tau\bigg(\log \sum_{a' \in\cA} e^{{\theta_t}(a')}-1\bigg),\label{equ:vanilla_NPG_1}\\ 
{\nu}_{t+1} &={\nu}_{t}+\eta \cdot F^{\dagger}_\nu(\nu_t)\cdot\frac{\partial f_\tau(\theta_t,\nu_t)}{\partial\nu}= (1 - \eta \tau) \nu_t + \eta Q^{\top} g_{{\theta}_t}-\eta\tau\bigg(\log \sum_{b' \in\cB} e^{{\nu_t}(b')}-1\bigg), \label{equ:vanilla_NPG_2}
\#
where $F_\theta(\theta) = \EE_{a\sim g_\theta}[(\nabla_\theta\log g_\theta(a))(\nabla_\theta\log g_\theta(a))^\top]$ and $F_\nu(\nu)=\EE_{b\sim h_\nu}[(\nabla_\nu\log h_\nu(b))(\nabla_\nu\log h_\nu(b))^\top]$ are the Fisher information matrices, $M^\dagger$ denotes the pseudo-inverse of the matrix $M$, and $\eta>0$ is the stepsize. The  derivations for natural policy gradient can be found in \S\ref{sec:vanilla_NPG_deriv} for completeness. 

Unfortunately, the vanilla NPG update  \eqref{equ:vanilla_NPG_1}-\eqref{equ:vanilla_NPG_2} may fail to converge in the parameter space for \emph{any} stepsize $\eta>0$. The key reason for the failure is that the mappings represented in \eqref{equ:vanilla_NPG_1}-\eqref{equ:vanilla_NPG_2} may not have a fixed point for a general $Q$ and $\tau$ (which could be the only limit point for this dynamics). {In fact, this issue persists even when the regularization parameter $\tau = 0$}. We formalize this  pitfall in the following lemma, with its proof deferred to appendix.  

\begin{lemma}[Pitfall of vanilla NPG]\label{lemma:pitfall_NPG}
	There exists a game \eqref{prob:entr_reg_matrix} with $\tau \geq 0$ and $|\cB|=1$, such that the updates \eqref{equ:vanilla_NPG_1}-\eqref{equ:vanilla_NPG_2} 
	do not converge for any $\eta>0$. 
\end{lemma}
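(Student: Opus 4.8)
The plan is to exploit the degeneracy forced by $|\cB|=1$: when player~$2$ has a single action, the softmax policy $h_\nu$ is the point mass on that action for \emph{every} $\nu$, so $h_\nu\equiv 1$, $\mathcal{H}(h_\nu)\equiv 0$, and $Qh_\nu\equiv Q$ is a fixed column vector. Hence the $\theta$-update \eqref{equ:vanilla_NPG_1} decouples entirely from $\nu$ and becomes the autonomous map
\[
T(\theta)=(1-\eta\tau)\,\theta-\eta Q+\eta\tau\Big(\log\textstyle\sum_{a'\in\cA}e^{\theta(a')}-1\Big)\mathbbm{1}.
\]
Since $T$ is continuous, convergence of $(\theta_t,\nu_t)$ would in particular give $\theta_t\to\theta^\star$ with $T(\theta^\star)=\theta^\star$; so it suffices to build a game in which $T$ has \emph{no} fixed point, and to do so in an $\eta$-independent way so that the conclusion holds for all stepsizes at once.

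First I would solve $T(\theta)=\theta$. For $\tau>0$, dividing through by $\eta$ and rearranging gives $\theta=-Q/\tau+c\,\mathbbm{1}$ with $c:=\log\sum_{a'}e^{\theta(a')}-1$. Substituting this form back and using the shift-equivariance of log-sum-exp cancels the unknown constant $c$ and leaves the single consistency condition $\sum_{a'\in\cA}e^{-Q(a')/\tau}=e$, which involves neither $\eta$ nor $c$. Thus any cost $Q$ with $\sum_{a'}e^{-Q(a')/\tau}\neq e$ makes $T$ fixed-point-free; concretely one may take $Q$ a large constant vector, so the left side is near $0$, or even $|\cA|=1$, where $T(\theta)=\theta-\eta(Q+\tau)$ has a fixed point only if $Q=-\tau$ and hence drifts to infinity for, say, $Q=1$. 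For the unregularized case $\tau=0$ the map is the pure translation $T(\theta)=\theta-\eta Q$, fixed-point-free whenever $Q\neq 0$, with $\theta_t=\theta_0-t\eta Q\to\infty$; this settles the $\tau=0$ part of the claim.

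Finally I would assemble the argument: fix such a $Q$ and the prescribed $\tau\ge 0$, observe that the obstruction above is independent of $\eta$, and invoke continuity of $T$ to conclude that $\theta_t$ --- and therefore the full iterate $(\theta_t,\nu_t)$ --- cannot converge for any $\eta>0$. The companion update \eqref{equ:vanilla_NPG_2} never has to be analyzed, since non-convergence of the $\theta$-block alone proves the lemma; in the $|\cA|=|\cB|=1$ instance it is especially transparent that the \emph{policy} is trivially constant while the \emph{parameter} diverges, which is exactly the pitfall the lemma isolates. The one step I expect to be the real crux, as opposed to routine algebra, is verifying that the fixed-point condition is genuinely $\eta$-free: it is this feature --- the scalar contraction factor $(1-\eta\tau)$ balanced against the constant drift $-\eta Q$ --- that upgrades ``no convergence for a given $\eta$'' to ``no convergence for every $\eta>0$,'' which is precisely what the statement demands.
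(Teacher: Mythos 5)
Your proposal is correct and follows essentially the same route as the paper: both exploit $|\cB|=1$ to make the $\theta$-update autonomous, argue that convergence would force a fixed point by continuity, and show the fixed-point equation is inconsistent for suitable $Q$ --- your normalization condition $\sum_{a'}e^{-Q(a')/\tau}=e$ is exactly the paper's observation that the would-be limit must satisfy $g_{\theta^\star}(a)=\exp(-[Q]_a/\tau-1)$, which fails to be a probability distribution (e.g., $Q=[-2\ |\ -2]^\top$, $\tau=1$), with the $\tau=0$ case handled identically as a fixed-point-free translation. The only cosmetic difference is that you phrase the obstruction as a normalization identity and note explicitly that it is $\eta$-free, whereas the paper phrases it as an invalid probability measure; these are the same condition.
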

Remarkably, we emphasize that our Lemma \ref{lemma:pitfall_NPG}, by construction, also even applies to the single-agent setting, with a regularized cost  and the NPG update, as studied recently in \cite{cen2021fastMDP,lan2021policy,zhan2021policy}. These works only focus on the convergence in the policy space, which does not imply the desired convergence in the policy parameter space. The later becomes especially relevant in the function approximation setting, as we will study later. Finally, we remark that, the non-convergence here also should not be confused with the {\it last-iterate non-convergence} of no-regret learning algorithms for solving bilinear zero-sum games \citep{daskalakis2018limit,bailey2018multiplicative}, as our example is essentially a single-agent case. We summarize the importance and motivation of establishing parameter convergence as follows.

\paragraph{Importance of Parameter Convergence:} \hfill \\
\vspace{-7pt}

\noindent\textbf{\textit{Numerical instability:}} 
Prior works were only able to  show the convergence of {\it values} and/or {\it policies}, and the convergence behavior of policy parameters was {\it unclear} (or overlooked). Arguably, having (last-iterate) parameter convergence is the {\it strongest}  type of convergence among the three. In practice, having parameters blow-up to infinity can cause numerical issues. For example, once the size of the parameter crosses a threshold (say $2^{64}$ for an integer in a 64-bit operating system), there would be overflow issues, and the stored parameter would be void, and NaN (not a number) would be returned by the program. This blow-up would then cause trouble in recovering the policy, or approximating the policy with arbitrary accuracy. In order to circumvent this issue, a common practice in Neural Network training is to do {\it Clipping/Projection}. In fact, ensuring the {\it stability}  of the model is very important in deep learning. Specifically, 
consider  $\max_{\theta \in \mathbb{R}^n} q^{\top}g_{\theta} + \tau \mathcal{H}(g_\theta)$, where we know that under the NPG updates, $g_\theta \rightarrow g^\star$ while $\theta$ could blow up to infinity. 
One could clip the parameter $\theta$ to some large constant $\theta_{\max}$, i.e., solving $\max_{ \| \theta \|_{\infty} \leq \theta_{\max}} q^{\top}g_{\theta} + \tau \mathcal{H}(g_\theta)$ instead. For concreteness, let $n = 2$, $\tau = 1$, $\theta_{\max} = 80$ and $q = [-2, -3]$. The optimal solution is then given by $g^{\star}_i \propto \exp(q_i)$ for $i = 1,2$. On running the vanilla NPG algorithm, since we do weight clipping, the algorithm converges  to $\theta = [\theta_{\max}, \theta_{\max}]$  corresponding to the distribution $[1/2, 1/2] \neq g^{\star}$. Meanwhile, the modified NPG we propose converges to $\theta = [-2, -3]$ (see Theorem \ref{thm:param_con}) which exactly corresponds to the optimal solution $g^{\star}$. Hence, in practice where the norm of the (neural network) parameters is bounded, one might not obtain policy convergence using vanilla NPG as desired, while  our proposed algorithm works. \\
\vspace{-8pt}

\noindent\textbf{\textit{Nonconvex-nonconcave minimax optimization:}} The second reason   comes from a minimax optimization perspective of solving \eqref{eq:hidden_bilinear_problem_og}. We view optimization over  the parameter space as an interesting {\it nonconvex-nonconcave} minimax optimization problem with a {\it hidden structure} (See Lemma \ref{lemma:ncnc}). To  the best of our knowledge, this is the first paper to provide a {\it symmetric} discrete-time algorithm to solve certain nonconvex-nonconcave  problem (and more generally non-monotone variational inequalities) with last-iterate convergence rates, even including the specialized settings (like the ones with    Polyak-\L ojasiewicz condition \citep{nouiehed2019solving,yang2020global}).  \\ 
\vspace{-8pt}

\noindent\textbf{\textit{Function approximation (FA): }} Parameter convergence becomes crucial in FA settings (used in practice).  Here, the policy lies in a high-dimensional space (or even an infinite-dimensional space if the actions are {\it continuous}), which we simply do not have access to and/or cannot operate on. The way practitioners run PG methods is to just operate on the  low-dimensional  policy parameter space. Thus, parameter convergence is necessary  to design meaningful stopping criteria for optimization algorithms. If parameters explode to infinity, we cannot decide on how close we are to convergence, and the numerical issue mentioned before would cause trouble in recovering the policy.







\section{{Warm-up: (Optimistic) NPG for Matrix Games}}\label{sec:ONPG_Static_tabular}





{

{To address the pitfall above about parameter convergence, we introduce two variants of the vanilla NPG \eqref{equ:vanilla_NPG_1}-\eqref{equ:vanilla_NPG_2}, and show their convergence for solving matrix games.} 

\subsection{NPG for Matrix Games}
We first introduce the {following variant of the vanilla NPG update}: 
\#
{\theta}_{t+1} &=(1 - \eta \tau) \theta_t - \eta Q h_{{\nu}_t},\label{equ:new_NPG_1}\\ 
{\nu}_{t+1} &= (1 - \eta \tau) \nu_t + \eta Q^{\top} g_{{\theta}_t},\label{equ:new_NPG_2}
\#
{where we removed the last term in \eqref{equ:vanilla_NPG_1}-\eqref{equ:vanilla_NPG_2}, respectively.} 
Note that these updates correspond to the popular Multiplicative Weights Update (MWU) for the regularized game in policy space   (we succinctly represent $g_{\theta_t}$ and $h_{\nu_t}$ as $g_t$ and  $h_t$, respectively), i.e., 
\begin{align}
\label{eq:policy_evol_MWU}
g_{t+1}(a)  \propto {g}_t(a)^{(1 - \eta \tau)} e^{-\eta [Q h_t]_a }, \nonumber \\
h_{t+1}(b)  \propto {h}_t(b)^{(1 - \eta \tau)} e^{\eta [Q^{\top} g_t]_b }. 
\end{align}

First, we provide a convergence result for the updates in Equations \eqref{equ:new_NPG_1}-\eqref{equ:new_NPG_2}, the non-optimistic version, both in terms of policy as well as parameters. In order to do so, we need to first show that the iterates of the regularized MWU in Equations \eqref{equ:new_NPG_1}-\eqref{equ:new_NPG_2} ensure  that the policies stay bounded away from the boundary of the simplex. We show this in the following lemma: 
\begin{lemma}
\label{lemma:MWU_c_main}
The polic{ies} corresponding to the iterates of regularized MWU in Equations \eqref{equ:new_NPG_1}-\eqref{equ:new_NPG_2} with stepsize $\eta < 1/\tau$ stay within a set $\Delta' \subset \Delta$ which is bounded away from the boundary of the simplex, i.e., {$\forall x \in \Delta'$ with $x=(x_1,\cdots,x_n)^\top$, and for all $i\in[n]$,} $x_i \geq \delta > 0$ for some $\delta$. 
\end{lemma}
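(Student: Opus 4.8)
The plan is to exploit the fact that the softmax map $\theta\mapsto g_\theta$ in \eqref{equ:policy_para_matrix} is invariant under adding a constant to every coordinate of $\theta$, so that the policy is controlled entirely by the \emph{spread} of the parameter vector rather than its magnitude. Concretely, writing $R_t:=\max_{a}\theta_t(a)-\min_{a}\theta_t(a)$, one reads off directly from the softmax formula that
\begin{align}
g_t(a)=\frac{e^{\theta_t(a)}}{\sum_{a'}e^{\theta_t(a')}}\ \geq\ \frac{e^{\min_{a'}\theta_t(a')}}{n\,e^{\max_{a'}\theta_t(a')}}\ =\ \frac{1}{n}\,e^{-R_t},
\end{align}
using $e^{\theta_t(a)}\geq e^{\min_{a'}\theta_t(a')}$ in the numerator and $\sum_{a'}e^{\theta_t(a')}\leq n\,e^{\max_{a'}\theta_t(a')}$ in the denominator, and symmetrically for $h_t$ in terms of the spread of $\nu_t$. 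Hence it suffices to produce a uniform-in-$t$ bound $R_t\leq\bar R$, after which one takes $\delta:=\tfrac1n e^{-\bar R}>0$.

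To bound $R_t$, I would track its one-step evolution under \eqref{equ:new_NPG_1}. For any two indices $a,a'$,
\begin{align}
\theta_{t+1}(a)-\theta_{t+1}(a')=(1-\eta\tau)\big(\theta_t(a)-\theta_t(a')\big)-\eta\big([Qh_t]_a-[Qh_t]_{a'}\big).
\end{align}
Since $h_t$ lies on the simplex, each entry $[Qh_t]_a=\sum_b Q_{ab}h_t(b)$ is bounded in magnitude by $Q_{\max}:=\max_{a,b}|Q_{ab}|$, so the last difference is at most $2\eta Q_{\max}$ in absolute value; and because $\eta<1/\tau$ we have $1-\eta\tau\in(0,1)$, so the first term is at most $(1-\eta\tau)R_t$. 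Maximizing over $a,a'$ gives the scalar recursion
\begin{align}
R_{t+1}\leq(1-\eta\tau)\,R_t+2\eta Q_{\max},
\end{align}
which is a contraction toward the fixed point $2Q_{\max}/\tau$. A one-line induction then yields $R_t\leq\max\{R_0,\,2Q_{\max}/\tau\}=:\bar R$ for every $t$, closing the argument. The bound on the spread of $\nu_t$, and hence on $h_t$, follows identically from \eqref{equ:new_NPG_2}.

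I do not expect a serious obstacle here, as the argument reduces to a clean contraction; the one place that genuinely uses a hypothesis is the sign of $1-\eta\tau$, since the step $(1-\eta\tau)(\theta_t(a)-\theta_t(a'))\leq(1-\eta\tau)R_t$ requires $1-\eta\tau\geq0$, which is exactly what the stepsize condition $\eta<1/\tau$ guarantees. (An essentially equivalent route is to bound $\|\theta_t\|_\infty$ directly, since the same computation gives $\|\theta_{t+1}\|_\infty\leq(1-\eta\tau)\|\theta_t\|_\infty+\eta Q_{\max}$; but tracking the spread $R_t$ is more natural, as it is the translation-invariant quantity that actually governs the distance of $g_t$ from the boundary of $\Delta$.)
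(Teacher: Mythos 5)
Your proof is correct, but it takes a genuinely different route from the paper's. The paper works at the policy level through the mirror-descent interpretation of the update: it derives a one-step inequality of the form $\KL(g^\star\|g_{t+1}) \leq (1-\eta\tau)\KL(g^\star\|g_t) + 2\eta\tau\log n + 2\eta\|Q\|_\infty$, unrolls it to get a uniform-in-$t$ bound on $\KL(g^\star\|g_t)$, and then converts that bound into an entrywise lower bound $g_{t,i} \geq \exp\bigl(\tfrac{-1}{g^\star_{\min}}(\,\cdots)\bigr)$, which crucially uses the fact that the regularized NE $g^\star$ lies in the interior of the simplex (so $g^\star_{\min}>0$). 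You instead work directly in parameter space, exploiting the translation invariance of softmax: you show the spread $R_t=\max_a\theta_t(a)-\min_a\theta_t(a)$ obeys the contraction $R_{t+1}\leq(1-\eta\tau)R_t+2\eta Q_{\max}$, hence $R_t\leq\bar R:=\max\{R_0,2Q_{\max}/\tau\}$ by induction, giving $g_t(a)\geq\tfrac1n e^{-\bar R}=:\delta$. Both arguments are sound; the trade-offs are real. Your argument is more elementary and self-contained — it needs no existence or interiority properties of the NE, and it produces a fully explicit $\delta$ in terms of $R_0$, $Q_{\max}$, $\tau$, $n$ — but it leans on the specific affine structure of this particular NPG update in $\theta$, so it would not transfer to a generic mirror-descent step. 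The paper's KL-based argument is less explicit (the constant involves $g^\star_{\min}$, $\cH(g^\star)$, and $\KL(g^\star\|g_0)$) and needs the interior-NE fact, but it stays within the same KL machinery used for the rest of the convergence analysis (Lemmas \ref{lemma:MWU_a}--\ref{lemma:MWU_b} and Theorem \ref{thm:con_MWU}), and the uniform KL bound it establishes is itself of independent use there. One small point to keep in mind if you formalize your version: the induction step uses $2\eta Q_{\max}=\eta\tau\cdot(2Q_{\max}/\tau)\leq\eta\tau\bar R$, which is exactly why $\bar R$ must be taken as the maximum of $R_0$ and $2Q_{\max}/\tau$; as you note, the sign condition $1-\eta\tau\geq 0$ from $\eta<1/\tau$ is the only place the stepsize hypothesis enters.
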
 

Since the iterates {of the policies} lie within $\Delta'$, {a closed and bounded set, and the regularized cost   is continuously differentiable with respect to the policies,} we let $L$ denote the {smoothness constant} of the {regularized cost}  in the policy space, i.e.,
\begin{align}
\label{eq:MWU_L_m}
\| (Mz_1 + \tau \nabla \cH(z_1)) - (Mz_2 + \tau \nabla \cH(z_2)) \| \leq L \|z_1 - z_2 \| , \qquad \forall z_1, z_2 \in \cZ',
\end{align}
where $z = [g; h]$, $\cZ' \in \Delta' \times \Delta'$ and with a slight abuse of notation, we define $\nabla \cH(z) = [\nabla_g \cH(g); \nabla_h \cH(h)]$, and also, we  define the matrix $M = \begin{pmatrix}
0 & Q \\ -Q^{\top} & 0
\end{pmatrix}$.

Finally, the entropy regularized optimization problem in the policy space can be formulated as:
\begin{align}
\label{prob:entr_reg_matrix_policy}
\min_{g \in \Delta(\cA)}\max_{h \in \Delta(\cB)} = g ^{\top} Q h - \tau \mathcal{H}(g) + \tau \mathcal{H}(h). 
\end{align}

Now, we use this to derive the policy and parameter convergence of the MWU updates in Equations \eqref{equ:new_NPG_1}-\eqref{equ:new_NPG_2}
 in the following theorem.  
 \begin{theorem}
 \label{thm:con_MWU}
 The solution ($g^\star, h^\star$) to the problem \eqref{prob:entr_reg_matrix_policy} is unique. Furthermore, let $\theta^\star = \frac{-Qh^\star}{\tau}$ and $\nu^\star = \frac{Q^{\top}g^\star}{\tau}$. Then on running Equations \eqref{equ:new_NPG_1}-\eqref{equ:new_NPG_2} with the stepsize satisfying $0 < \eta \leq \tau/L^2$, 
we have:
\begin{align}
\KL (z^* \| z_{t+1}) \leq \left( 1 - \frac{\eta \tau}{2} \right) \KL (z^* \| z_{t}), 
\end{align}
and
\begin{align}
\| \theta_{t+1} - \theta^\star \|^2 + \| \nu_{t+1} - \nu^\star \|^2  \leq (1 - \eta \tau/4)^t \Bigg(\| \theta_{0} - \theta^\star \|^2 + \| \nu_{0} - \nu^\star \|^2 +  \frac{4C}{\eta\tau}\Bigg), 
\end{align}
where $C =  \left(1 + \frac{1}{\eta\tau} (1 - \eta \tau)^2 \right)4 \eta^2 \|Q \|_\infty^2\KL(z^\star \| z_0)$. Here $z_0 = (g_{\theta_0}, h_{\nu_0})$ and $z^\star = (g^\star, h^\star)$.
 \end{theorem}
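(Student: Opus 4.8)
The plan is to prove the three claims in turn, with the KL contraction as the analytical core. For the \textbf{uniqueness} claim, I would note that the objective in \eqref{prob:entr_reg_matrix_policy} is strictly convex in $g$ and strictly concave in $h$, since the entropy terms $-\tau\mathcal{H}(g)$ and $+\tau\mathcal{H}(h)$ are $\tau$-strongly convex/concave on the simplex; over the compact convex domain $\Delta(\cA)\times\Delta(\cB)$ this forces the saddle point $(g^\star,h^\star)$ to be unique. Writing the first-order stationarity conditions on the interior gives $g^\star(a)\propto e^{-[Qh^\star]_a/\tau}$ and $h^\star(b)\propto e^{[Q^\top g^\star]_b/\tau}$, so the stated $\theta^\star=-Qh^\star/\tau$ and $\nu^\star=Q^\top g^\star/\tau$ produce exactly $g_{\theta^\star}=g^\star$ and $h_{\nu^\star}=h^\star$, and one checks directly that $(\theta^\star,\nu^\star)$ is a fixed point of \eqref{equ:new_NPG_1}-\eqref{equ:new_NPG_2}.

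The key observation for the \textbf{KL contraction} is that, with the negative-entropy mirror map $\phi(z)=\sum_i z_i\log z_i$ (whose Bregman divergence is $\KL$), the update \eqref{equ:new_NPG_1}-\eqref{equ:new_NPG_2} is exactly one mirror-descent step for the operator $\mathcal{F}(z):=Mz+\tau\nabla\mathcal{H}(z)$ appearing in \eqref{eq:MWU_L_m}; the $\tau\log$ term built into $\mathcal{F}$ is precisely what produces the $(1-\eta\tau)$ factor in the logit update. I would then invoke the standard three-point Bregman inequality,
$$\KL(z^\star\|z_{t+1})\le\KL(z^\star\|z_t)-\KL(z_{t+1}\|z_t)-\eta\langle\mathcal{F}(z_t),z_{t+1}-z^\star\rangle,$$
and split the inner product as $\langle\mathcal{F}(z_t),z_t-z^\star\rangle+\langle\mathcal{F}(z_t),z_{t+1}-z_t\rangle$. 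For the first piece, using $\mathcal{F}(z^\star)=0$, the antisymmetry of $M$ (so $\langle M(z_t-z^\star),z_t-z^\star\rangle=0$, which cancels the cross-player coupling) together with the strong monotonicity of the entropy term relative to $\KL$ gives $\langle\mathcal{F}(z_t),z_t-z^\star\rangle\ge\tau\KL(z^\star\|z_t)$. For the second piece I would combine it with $-\KL(z_{t+1}\|z_t)$ using the smoothness bound $\|\mathcal{F}(z_t)-\mathcal{F}(z^\star)\|\le L\|z_t-z^\star\|$, Pinsker's inequality $\KL(z_{t+1}\|z_t)\ge\frac{1}{2}\|z_{t+1}-z_t\|^2$, and $\|z_t-z^\star\|^2\le2\KL(z^\star\|z_t)$, bounding the residual by $\eta^2 L^2\KL(z^\star\|z_t)$. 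Under $\eta\le\tau/L^2$ this is dominated by the strong-monotonicity gain, and after absorbing the leftover $\KL(z_t\|z^\star)$ term to sharpen constants I obtain $\KL(z^\star\|z_{t+1})\le(1-\eta\tau/2)\KL(z^\star\|z_t)$. Lemma \ref{lemma:MWU_c_main} is essential here: it keeps every iterate in $\Delta'$, which is exactly what makes $L$ finite, since $\nabla\mathcal{H}$ blows up at the boundary.

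For \textbf{parameter convergence}, I would subtract the fixed-point relation from \eqref{equ:new_NPG_1} to get the linear error recursion $\theta_{t+1}-\theta^\star=(1-\eta\tau)(\theta_t-\theta^\star)-\eta Q(h_t-h^\star)$, and symmetrically for $\nu$. Taking squared norms and applying Young's inequality to the cross term yields a scalar recursion of the form $a_{t+1}\le(1-\eta\tau(1-\eta\tau))\,a_t+c_t$, where the forcing obeys $\|Q(h_t-h^\star)\|^2\le\|Q\|_\infty^2\|h_t-h^\star\|_1^2\le2\|Q\|_\infty^2\KL(z^\star\|z_t)$ by Pinsker and thus decays geometrically by the contraction step; the constant $C$ is exactly the accumulated prefactor generated by the Young step together with this Pinsker bound. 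Summing the resulting geometric recursion gives the stated bound, and the rate degrades from $(1-\eta\tau/2)$ to $(1-\eta\tau/4)$ because the parameter error feels the \emph{square root} of the policy-space contraction, via $\sqrt{1-\eta\tau/2}\le1-\eta\tau/4$.

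I expect the \textbf{main obstacle} to be the contraction step, and within it the precise constant-balancing that converts $1-\eta\tau+\eta^2 L^2$ into the advertised $1-\eta\tau/2$ under $\eta\le\tau/L^2$; this is where the interior-boundedness of Lemma \ref{lemma:MWU_c_main}, the antisymmetry of $M$, and the strong monotonicity of the entropy must be used simultaneously. The uniqueness and the parameter-recursion steps are comparatively mechanical once the contraction and the fixed-point characterization are in hand.
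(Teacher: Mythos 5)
Your proposal is correct and follows essentially the same route as the paper's own proof: the mirror-descent (MWU) view with the three-point inequality, strong monotonicity of the entropy-regularized operator (paper's Lemma \ref{lemma:MWU_b}), smoothness made finite by the interior-boundedness Lemma \ref{lemma:MWU_c_main}, Young and Pinsker for the contraction, and then the linear parameter-error recursion with a geometrically decaying forcing term. The only cosmetic differences are that the paper evaluates the operator perturbation between $z_t$ and $z_{t+1}$ rather than between $z_t$ and $z^\star$, and packages the final geometric bookkeeping as an explicit Lyapunov function $V_t = \|\theta_t-\theta^\star\|^2+\|\nu_t-\nu^\star\|^2+\tfrac{4C}{\eta\tau}(1-\eta\tau/2)^t$ instead of summing the recursion; both yield the stated bounds.
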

 
 To the best of our knowledge, this is the first policy  convergence guarantee for NPG (without optimism) for regularized matrix games. As the theorem above shows, parameter convergence can be established as well. In other words, though the vanilla NPG {descent-ascent} diverges for Problem \eqref{prob:entr_reg_matrix}, the variant we propose (Equations \eqref{equ:new_NPG_1}-\eqref{equ:new_NPG_2}) {\it implicitly regularizes} the parameter iterates to converge to a particular solution, in last-iterate. Recall from Lemma \ref{lemma:ncnc} that Problem \eqref{prob:entr_reg_matrix} is  a nonconvex-nonconcave minimax problem
 and there have been no convergence guarantees{, to the best of our knowledge,} of any symmetric and simultaneous-update algorithms in general (see \cite{yang2020global, rafique2018non, lin2019gradient} and references therein {as examples} for some structured nonconvex-nonconcave problems).  From Theorem \ref{thm:con_MWU}, the rate we can hope to achieve with NPG is $\mathcal{O}(\kappa^2 \log (1/\epsilon))$ (number of steps to reach a point $\epsilon$ close to the solution), where $\kappa$ is the condition number $L/\tau$ of the problem. In the next subsection, we see how adding optimism (\cite{rakhlin2013optimization}) will improve this rate of convergence. 
}


\subsection{Optimistic NPG (ONPG) for Matrix Games}

{In this subsection, we study the variant of the NPG updates in Equations \eqref{equ:new_NPG_1}-\eqref{equ:new_NPG_2} along with optimism} 
\citep{popov1980modification, rakhlin2013optimization,daskalakis2018limit,daskalakis2019last, mokhtari2020convergence}. In particular, we introduce the intermediate iterates $(\bar\theta_t,\bar\nu_t)$, and the following optimistic variant of the NPG (here ($\bar{\theta}_0, \bar{\nu}_0$) is initialized as ($\theta_0, \nu_0$)):
\$
&\bar{\theta}_{t+1} = (1 - \eta \tau) \theta_t - \eta Q h_{\bar{\nu}_t},~~{\theta}_{t+1} = (1 - \eta \tau) {\theta}_t - \eta Q h_{\bar{\nu}_{t+1}} \\
&\bar{\nu}_{t+1} = (1 - \eta \tau) \nu_t + \eta Q^{\top} g_{\bar{\theta}_t},~~
{\nu}_{t+1} = (1 - \eta \tau) {\nu}_t + \eta Q^{\top} g_{\bar{\theta}_{t+1}}.
\$
This optimistic update is motivated by the success of optimistic gradient methods in saddle-point problems recently analyzed in several papers including \cite{hsieh2019convergence, mokhtari2020unified, mokhtari2020convergence}.
Note that our algorithm is symmetric in $\theta$ and $\nu$ updates and both players update simultaneously.  
The update rules are also tabulated in Algorithm \ref{alg:optimistic_npg}. 

\begin{remark}[Connections to the literature]\label{remark:connections}
	Note that the natural PG update rule in Equations  \eqref{equ:new_NPG_1}-\eqref{equ:new_NPG_2} has a close relationship to the multiplicative weight update rule \citep{freund1997decision,arora2012multiplicative} in the policy space $(g_\theta,h_\nu)$, see Section C.3 in   \cite{agarwal2019optimality} for a detailed discussion. Similarly, the optimistic NPG update in Algorithm \ref{alg:optimistic_npg} also relates to the optimistic MWU update \citep{daskalakis2019last}. In fact, recent works \cite{wei2020linear,cen2021fast} have shown the last-iterate {\it policy convergence}  of OMWU for zero-sum matrix games (with \cite{wei2020linear} relying on the uniqueness assumption of the NE). Our goal, in contrast, is to study the (last-iterate) convergence behavior of the actual {\it policy parameters}  $(\theta,\nu)$, and go beyond the tabular zero-sum setting. 
\end{remark}

{ Inspired by the results which show that adding optimistic updates improves convergence rates \citep{rakhlin2013optimization}, we next explore our modified NPG updates with optimism, and show that the convergence rate does in fact improves 
 (in line with the comparison of the  performance of GDA  and optimistic GDA,  see e.g., \cite{fallah2020optimal}). }


In the next theorem, we show that our  Optimistic NPG Algorithm (Algorithm \ref{alg:optimistic_npg}) in fact converges linearly in last-iterate to a unique point in the set of NE in the parameter space, at a faster rate\footnote{ {Note that we say that the optimistic version achieves a faster rate, since the range of stepsizes which permits convergence is much larger for the optimistic variant. This can be noted from the fact that $L$ in equation \eqref{eq:MWU_L_m} will be larger than $\|Q\|_\infty + \tau$.}} than {the non-optimistic counterpart}  
in Equations  \eqref{equ:new_NPG_1}-\eqref{equ:new_NPG_2}.  The results are formally stated below. 
 \begin{theorem}
 \label{thm:param_con}
Let $\theta^\star = \frac{-Qh^\star}{\tau}$ and $\nu^\star = \frac{Q^{\top}g^\star}{\tau}$, {where $g^\star$ and $h^\star$ are the solutions to the regularized game \eqref{prob:entr_reg_matrix}}. Then on running Algorithm \ref{alg:optimistic_npg} with the stepsize satisfying $0 < \eta \leq \min \left\{ \frac{1}{2\tau + 2 \| Q \|_{\infty}}, \frac{1}{4 \| Q \|_{\infty}} \right\}$,
we have:
\begin{align}
\label{eq:lin_conv_ONPG_tab}
\| \theta_{t+1} - \theta^\star \|^2 + \| \nu_{t+1} - \nu^\star \|^2 &\leq (1 - \eta \tau/2)^t V_0,
\end{align}
where $V_0 = \| \theta_{0} - \theta^\star \|^2 +  \| \nu_{0} - \nu^\star \|^2 + \frac{2C}{\eta\tau}$ and $C = \left(1 + \frac{1}{\eta\tau} (1 - \eta \tau)^2 \right)4 \eta^2 \|Q \|_\infty^2\KL(z^\star \| z_0)$. Here $z_0 = (g_{\theta_0}, h_{\nu_0})$ and $z^\star = (g^\star, h^\star)$.
 \end{theorem}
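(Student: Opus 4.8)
The plan is to reduce parameter convergence to the last-iterate convergence of the policies in KL divergence, by exploiting the fact that the proposed fixed point $(\theta^\star,\nu^\star)$ is consistent with the update recursion. Since $\theta^\star = -Qh^\star/\tau$, we have $\eta\tau\,\theta^\star = -\eta Q h^\star$, and hence $\theta^\star = (1-\eta\tau)\theta^\star - \eta Q h^\star$; the analogous identity holds for $\nu^\star$. Subtracting this from the main optimistic update $\theta_{t+1} = (1-\eta\tau)\theta_t - \eta Q h_{\bar\nu_{t+1}}$ yields the affine error recursion
\begin{align}
\theta_{t+1}-\theta^\star &= (1-\eta\tau)(\theta_t-\theta^\star) - \eta Q\,(h_{\bar\nu_{t+1}}-h^\star), \\
\nu_{t+1}-\nu^\star &= (1-\eta\tau)(\nu_t-\nu^\star) + \eta Q^\top (g_{\bar\theta_{t+1}}-g^\star).
\end{align}
Thus the parameter error contracts at the homogeneous rate $(1-\eta\tau)$ up to a forcing term controlled by the deviation of the (intermediate) policies from the NE policies, so it suffices to show that these policy deviations decay geometrically.

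The heart of the argument is to establish the last-iterate linear convergence of the policies for the optimistic scheme, namely $\KL(z^\star\|z_{t+1}) \le (1-\eta\tau/2)\,\KL(z^\star\|z_t)$, so that $\KL(z^\star\|z_t)\le(1-\eta\tau/2)^t\KL(z^\star\|z_0)$. This mirrors the non-optimistic result of Theorem \ref{thm:con_MWU}, but now must handle the optimistic/extra-gradient structure. I would analyze a one-step Lyapunov potential combining $\KL(z^\star\|z_t)$ with a correction term measuring the gap between the main and intermediate iterates $z_t$ and $\bar z_t$, as is standard in OMWU-type analyses. The ingredients are: the strong monotonicity of the regularized game operator $Mz+\tau\nabla\cH(z)$ with modulus $\tau$ induced by the entropy regularization; its $L$-smoothness on $\cZ'$ from \eqref{eq:MWU_L_m}, which is legitimate precisely because Lemma \ref{lemma:MWU_c_main} keeps the iterates bounded away from the boundary where $\nabla\cH$ would blow up; and the three-point (generalized Pythagorean) property of the KL Bregman divergence. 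The stepsize restriction $\eta\le\min\{1/(2\tau+2\|Q\|_\infty),\,1/(4\|Q\|_\infty)\}$ is exactly what is required to absorb the cross terms from the optimistic lookahead and to keep the policies interior.

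With the KL contraction in hand, I would convert it into decay of the forcing term. By Pinsker's inequality, $\|h_{\bar\nu_{t+1}}-h^\star\| \le \|h_{\bar\nu_{t+1}}-h^\star\|_1 \le \sqrt{2\,\KL(h^\star\|h_{\bar\nu_{t+1}})}$, and since the intermediate iterate's KL is controlled by the main iterate's (up to stepsize-dependent constants), the squared forcing term is bounded by a multiple of $(1-\eta\tau/2)^t$. Substituting into the recursion for $\|\theta_{t+1}-\theta^\star\|^2+\|\nu_{t+1}-\nu^\star\|^2$ via a Young-type split $\|a+b\|^2\le(1+\gamma)\|a\|^2+(1+1/\gamma)\|b\|^2$ and then summing the resulting driven geometric recursion (a discrete Gr\"onwall argument) yields the stated rate. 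The decisive bookkeeping point is that, with $\gamma$ chosen of order $\eta\tau$, the homogeneous factor $(1+\gamma)(1-\eta\tau)^2$ is dominated by the forcing rate $(1-\eta\tau/2)$, so the overall squared-error rate is governed by the policy rate $(1-\eta\tau/2)^t$; the accumulated forcing contributes the additive constant $\tfrac{2C}{\eta\tau}$ inside $V_0$, with $C\propto\KL(z^\star\|z_0)$ carrying the factor $1+\tfrac{1}{\eta\tau}(1-\eta\tau)^2$ from the Young weights.

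The main obstacle is the second step: establishing the per-step KL contraction for the optimistic updates. Unlike the vanilla case, the optimistic dynamics are not an outright KL contraction, so one must design the right Lyapunov coupling between $z_t$ and $\bar z_t$ and carefully control the policy increment $h_{\bar\nu_{t+1}}-h_{\bar\nu_t}$ (and its $g$-counterpart) through $L$-smoothness, which is available only because Lemma \ref{lemma:MWU_c_main} confines the iterates to the interior $\cZ'$. Pinning down the stepsize thresholds so that every cross term is absorbed, and thereby obtaining the enlarged stepsize range that makes the optimistic rate faster than the non-optimistic one, is where most of the technical work lies.
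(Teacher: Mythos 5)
Your outer architecture coincides with the paper's proof: the fixed-point identity $\theta^\star = (1-\eta\tau)\theta^\star - \eta Q h^\star$, the error recursion with forcing term $\varepsilon_t = \eta Q(h^\star - h_{\bar{\nu}_{t+1}})$, the Pinsker bound $\|\varepsilon_t\|^2 \le 2\eta^2\|Q\|_\infty^2\,\KL(h^\star\|h_{\bar{\nu}_{t+1}})$, the Young split with weight of order $\eta\tau$, and the Lyapunov function $V_{t+1} = \|\theta_{t+1}-\theta^\star\|^2 + \|\nu_{t+1}-\nu^\star\|^2 + \tfrac{2C}{\eta\tau}(1-\eta\tau)^{t+1}$ are exactly the paper's steps. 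The gap is in what you yourself call the heart of the argument. The paper does \emph{not} prove the policy contraction for the optimistic scheme: it verifies that the parameter updates of Algorithm \ref{alg:optimistic_npg} induce precisely the OMWU updates in policy space and then invokes Theorem 1 of \cite{cen2021fast} (Lemma \ref{lemma:distribution_con_cen}), which gives $\max\{\KL(z^\star\|z_t),\,\tfrac12\KL(z^\star\|\bar{z}_{t+1})\} \le (1-\eta\tau)^t\,\KL(z^\star\|z_0)$ under exactly the stated stepsize $\eta \le \min\{1/(2\tau+2\|Q\|_\infty),\,1/(4\|Q\|_\infty)\}$.

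Your substitute for that step would not deliver what the theorem needs, for three concrete reasons. First, Lemma \ref{lemma:MWU_c_main} establishes interiority only for the \emph{non-optimistic} MWU iterates; no analogous lemma is available for the optimistic iterates $z_t,\bar{z}_t$, so the smoothness constant $L$ on $\cZ'$ in \eqref{eq:MWU_L_m} cannot be invoked without additional work. Second, even granting interiority, a strong-monotonicity-plus-smoothness analysis yields a stepsize restriction of the form $\eta \lesssim \tau/L^2$ — this is exactly Theorem \ref{thm:con_MWU} — and since $L$ exceeds $\|Q\|_\infty+\tau$ and blows up as the interiority margin shrinks, that range is strictly smaller than the one stated in Theorem \ref{thm:param_con}; the enlarged stepsize range is the entire content of the optimistic result relative to the non-optimistic one, and your route forfeits it. Third, the bookkeeping does not close: if the policy KL only contracts at rate $(1-\eta\tau/2)$, the forcing rate equals the homogeneous parameter rate, so the driven recursion cannot produce a clean $(1-\eta\tau/2)^t$ bound (you would have to settle for $(1-\eta\tau/4)^t$, which is precisely what Theorem \ref{thm:con_MWU} does). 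Obtaining the stated $(1-\eta\tau/2)^t$ requires the policy KL to decay at the strictly faster rate $(1-\eta\tau)^t$ — exactly what the cited OMWU analysis supplies and what a generic smoothness-based argument does not.
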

 
This result shows that the specific hidden bilinear minimax problem we are dealing with does not fall into the spurious categories  discussed in \cite{vlatakis2019poincare}, if we resort to the (optimistic) natural PG update. Note that achieving parameter convergence is a non-trivial task since we are dealing with a nonconvex-nonconcave minimax problem (see Lemma \ref{lemma:ncnc})
The proof relies on the specific structure of the softmax policy parametrization and the construction of a novel Lyapunov function (see \S \ref{sec:ONPG_Static_tabular_app} for more details). 
Next, we show how the optimistic  NPG algorithm solves the original matrix game without regularization.


\begin{corollary}
\label{cor:tab_matrix_connect_unreg}
If we run the {non-optimistic variant of NPG in Equations \eqref{equ:new_NPG_1}-\eqref{equ:new_NPG_2} or the optimistic version} in Algorithm \ref{alg:optimistic_npg} for time $T = \cO \left( \frac{\log n}{\eta \epsilon} \log \left( \frac{1}{\epsilon} \right) \right)$
and set $\tau = \epsilon/(8\log n)$, we have that the output $(\theta_T, \nu_T)$ is an $\epsilon$-NE of the original unregularized Problem \eqref{eq:hidden_bilinear_problem_og}.
\end{corollary}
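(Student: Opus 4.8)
The plan is to combine the last-iterate guarantee toward the regularized equilibrium (Theorem \ref{thm:con_MWU} for the non-optimistic update, Theorem \ref{thm:param_con} for the optimistic one) with a standard bound on the bias introduced by the entropy regularization. I would measure suboptimality for the unregularized game \eqref{eq:hidden_bilinear_problem_og} through the Nash (duality) gap
\[
\mathrm{Gap}(g,h) := \max_{h'\in\Delta(\cB)} g^\top Q h' - \min_{g'\in\Delta(\cA)} g'^\top Q h ,
\]
and aim to show $\mathrm{Gap}(g_{\theta_T},h_{\nu_T})\le\epsilon$, which is exactly the $\epsilon$-NE condition for \eqref{eq:hidden_bilinear_problem_og}. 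Writing $g_T := g_{\theta_T}$ and $h_T := h_{\nu_T}$, the decomposition
\[
\mathrm{Gap}(g_T,h_T) \le \mathrm{Gap}(g^\star,h^\star) + \big[\mathrm{Gap}(g_T,h_T) - \mathrm{Gap}(g^\star,h^\star)\big]
\]
separates the \emph{regularization bias} (first term) from the \emph{optimization error} (bracketed term), and I would bound each by a constant fraction of $\epsilon$.

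For the bias term, I would use the saddle-point property of $(g^\star,h^\star)$ for $f_\tau$ in \eqref{prob:entr_reg_matrix} together with the fact that $0\le\mathcal{H}(\pi)\le\log n$ on the simplex $\Delta(\{1,\dots,n\})$. From $f_\tau(g^\star,h^\star)\ge f_\tau(g^\star,h)$ for all $h$ one cancels $\tau\mathcal{H}(g^\star)$ and rearranges to get $g^{\star\top}Qh \le g^{\star\top}Qh^\star + \tau(\mathcal{H}(h^\star)-\mathcal{H}(h)) \le g^{\star\top}Qh^\star + \tau\log n$, hence $\max_{h'} g^{\star\top}Qh' \le f(g^\star,h^\star)+\tau\log n$; symmetrically $\min_{g'} g'^\top Qh^\star \ge f(g^\star,h^\star)-\tau\log n$. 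Subtracting yields $\mathrm{Gap}(g^\star,h^\star)\le 2\tau\log n$, which equals $\epsilon/4$ once we set $\tau=\epsilon/(8\log n)$ as prescribed.

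For the optimization-error term, I would exploit that $g\mapsto \max_{h'} g^\top Q h'$ and $h\mapsto \min_{g'} g'^\top Q h$ are both $\|Q\|_\infty$-Lipschitz in the $\ell_1$ norm (being pointwise extrema of bilinear forms over the simplex), so that $\mathrm{Gap}(g_T,h_T)-\mathrm{Gap}(g^\star,h^\star) \le \|Q\|_\infty\big(\|g_T-g^\star\|_1 + \|h_T-h^\star\|_1\big)$. It then remains to make the policy distance small. For the non-optimistic update, Theorem \ref{thm:con_MWU} gives $\KL(z^\star\|z_T)\le(1-\eta\tau/2)^T\KL(z^\star\|z_0)$, and Pinsker's inequality converts this directly into $\|g_T-g^\star\|_1+\|h_T-h^\star\|_1 \le 2\sqrt{\KL(z^\star\|z_T)}$. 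For the optimistic update, Theorem \ref{thm:param_con} controls $\|\theta_T-\theta^\star\|^2+\|\nu_T-\nu^\star\|^2$; here I would first note $\theta^\star=-Qh^\star/\tau$ softmaxes to exactly $g^\star$ (the Gibbs best response) and $\nu^\star$ to $h^\star$, and then invoke Lipschitzness of the softmax map to pass from parameter to policy distance. Either way the policy distance decays like $(1-\eta\tau/2)^{T/2}\le e^{-\eta\tau T/4}$ up to $\|Q\|_\infty$ and $\KL(z^\star\|z_0)$ factors.

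Finally I would combine: requiring the optimization error to be at most $3\epsilon/4$ (so the total is $\le\epsilon$) amounts to $e^{-\eta\tau T/4}\lesssim \epsilon/(\|Q\|_\infty\sqrt{\KL(z^\star\|z_0)})$, i.e. $T \ge \frac{4}{\eta\tau}\log\!\big(\mathrm{const}/\epsilon\big)$; substituting $\tau=\epsilon/(8\log n)$ gives $\frac{4}{\eta\tau}=\frac{32\log n}{\eta\epsilon}$ and a logarithmic factor $\log(1/\epsilon)$, matching $T=\cO\!\big(\frac{\log n}{\eta\epsilon}\log(1/\epsilon)\big)$. The step I expect to require the most care is the optimization-error bound, specifically translating the two different convergence modes (KL for the non-optimistic variant, parameter norm for the optimistic one) into the \emph{same} $\ell_1$ policy-distance bound and thence into the \emph{unregularized} Nash gap; the optimistic case in particular hinges on checking that the limit parameters $(\theta^\star,\nu^\star)$ map through softmax precisely onto the regularized equilibrium $(g^\star,h^\star)$, so that parameter convergence genuinely certifies policy convergence.
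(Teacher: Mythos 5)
Your proposal is correct and follows essentially the same route as the paper: the paper's own proof of this corollary simply defers to Remark 4 and the preceding discussion in \cite{cen2021fast}, which rests on exactly your decomposition — a regularization bias of at most $2\tau\log n$ for the regularized NE (via the saddle-point property and $0\le\mathcal{H}\le\log n$), plus a linearly decaying optimization error converted into a duality-gap bound through Pinsker/Lipschitz arguments. Your write-up is a self-contained expansion of that cited argument, including the observation (needed for the optimistic case) that $\theta^\star=-Qh^\star/\tau$ and $\nu^\star=Q^\top g^\star/\tau$ map through softmax exactly onto the regularized equilibrium $(g^\star,h^\star)$.
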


%
%


We extend these results to simple function approximation settings in the next section.
 

\section{Matrix Games with  Function Approximation}
\label{sec:ONPG_Matrix_FA}

%
%
 
 
To handle games with excessively  large action spaces,  we resort to policy parameterization with 
function approximation. In particular, consider the following problem: 
\begin{align}
\label{eq:objective_func_approx}
\min_{\theta \in \mathbb{R}^d} \ \max_{\nu \in  \mathbb{R}^d} ~~~ \ g_{\theta} ^{\top} Q h_{\nu},
\end{align}
where $g_{\theta}$ and $h_{\nu}$ are both parameterized in a softmax way as in \eqref{equ:policy_para_matrix}, with the linear function class  $p_\theta(a)=\phi_a^{\top} \theta$ (also called {\it log-linear} policy in \cite{agarwal2019optimality}),  
where $\phi_a \in \mathbb{R}^d$ is a low-dimensional feature representation of the action (see \cite{branavan2009reinforcement, agarwal2019optimality}) (Note that usually $d <n$). We define: 
\begin{align}
\label{eq:Phi_def}
\Phi = [\phi_1, \phi_2, \cdots , \phi_n]  ~\in \mathbb{R}^{d \times n}.
\end{align}

\begin{assumption}
\label{ass:full_rank}
$\Phi$ is a full rank matrix. In particular, assume that $\Phi = [M \ | \ 0]$, where $M \in \mathbb{R}^{d\times d}$ is an invertible $d \times d$ square matrix.
\end{assumption}

Note that {the full-rankness of $\Phi$ is a standard  assumption (see Assumption 6.2 in \cite{agarwal2019optimality}). It essentially requires the features to be the bases of some low-dimensional space. 
Furthermore, the results also extend to the case where the matrix $\Phi$ is of the from $[M \ | \ c \mathbbm{1}]$ where $\mathbbm{1}$ is the matrix of all 1s of appropriate dimension,  and $c$ is any constant.} 
{This  particular structure of the feature matrix, though being restrictive,   ensures that the constraint set of policies is convex, as shown next, otherwise the minimax theorem of $\min\max=\max\min$  might not hold, i.e., the Nash equilibrium for the parameterized game does not exist. Moreover, the assumption is also not as restrictive as it seems. For example, in applications of self-driving car and robotics, only a subset of actions (steering angles) is essential in controlling the agent, with other actions being insignificant/redundant. Our feature matrix encodes patterns like these. Moreover, as a first step studying  policy optimization in multi-agent learning with function approximation, we start with this simpler setting. Extending the ideas to more general FA settings is an interesting direction worth exploring.}


 \begin{remark}
 Note that the results in this section are presented for the case where the feature matrix is identical for both players purely for simplification of notation. The results continue to hold for the case with asymmetric features  as well (as long as the feature matrix also satisfies Assumption \ref{ass:full_rank}. 
 \end{remark}

As motivated in the previous section, we study the following regularized problem in order to solve \eqref{eq:objective_func_approx} efficiently:
\begin{align}
\label{eq:func_approx_reg}
\min_{\theta \in \mathbb{R}^d} \ \max_{\nu \in  \mathbb{R}^d}  \ ~~~ g_{\theta} ^{\top} Q h_{\nu} - \tau \mathcal{H}(g_{\theta}) + \tau \mathcal{H}(h_{\nu}), 
\end{align}
where $\mathcal{H}$ denotes the entropy function and $\tau>0$ is the regularization parameter. Note that this problem can still be nonconvex-nonconcave in general, given the example in \S\ref{sec:ONPG_Static_tabular} as a special case.

{We define the solution to this problem next, the Nash equilibrium in the parameterized policy classes, i.e., in-class NE. 

\begin{definition}[$\epsilon$-in-class Nash equilibrium]\label{def:para_eps_NE_FA_matrix}
	The policy parameter $(\tilde{\theta}, \tilde{\nu})$ is an $\epsilon$-{\it Nash equilibrium}  of the matrix game with function approximation (or  $\epsilon$-{\it in-class} NE), if it satisfies that for all  $i\in[N]$, 
\begin{align}
g_{\tilde{\theta}}^{\top}Q h_{\nu} - \epsilon \ \leq \ g_{\tilde{\theta}}^{\top}Q h_{\tilde{\nu}} \ \leq \ g_{{\theta}}^{\top}Q h_{\tilde{\nu}} + \epsilon , \qquad \forall \theta,~\nu~ \in \mathbb{R}^d.
\end{align}
Furthermore, when $\epsilon = 0$, we refer to it as the \textit{in-class Nash Equilibrium}.
\end{definition}

}

\subsection{Equivalent problem characterization}
 
In this subsection, we study the regularized problem \eqref{eq:func_approx_reg} under a log-linear parametrization and find an equivalent problem in the tabular case. 

First, in the following lemma, we characterize the set of distributions  covered by this parametrization, and study the equivalent problem in the space of probability vectors.

\begin{lemma}
\label{lemma:char_func_approx}
Under Assumption \ref{ass:full_rank}, the log-linear parametrization in Equation \eqref{equ:policy_para_matrix} covers all distributions in the following \textit{convex} set:
\begin{align}
\label{eq:Delta_tilde_def}
\tilde{\Delta} = \{ \mu: \mu \in \Delta, \mu_{d+1} = \mu_{d+2} = \cdots = \mu_n  \},
\end{align} 
and Problem \eqref{eq:func_approx_reg} is equivalent to
\begin{align}
\min_{g_{\theta} \in \tilde{\Delta}} \ \max_{h_{\nu}  \in \tilde{\Delta} } ~~~\ g_{\theta} ^{\top} Q h_{\nu} - \tau \mathcal{H}(g_{\theta}) + \tau \mathcal{H}(h_{\nu}). 
\label{eq:prob_contraint}
\end{align}
\end{lemma}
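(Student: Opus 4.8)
The plan is to split the claim into two parts: first, characterizing exactly which distributions are representable by the log-linear softmax parametrization under Assumption \ref{ass:full_rank}; second, using this characterization to rewrite Problem \eqref{eq:func_approx_reg} as the constrained tabular problem \eqref{eq:prob_contraint}. For the first part, I would start from the definition $g_\theta(a) \propto e^{\phi_a^\top \theta}$ and write $\Phi = [M \mid 0]$ with $M$ invertible. This means that for $a \in \{1,\dots,d\}$ the exponent is $\phi_a^\top \theta = [M^\top \theta]_a$, which can be made equal to \emph{any} real vector in $\mathbb{R}^d$ as $\theta$ ranges over $\mathbb{R}^d$ (since $M^\top$ is invertible and hence surjective); meanwhile for $a \in \{d+1,\dots,n\}$ we have $\phi_a = 0$, so the exponent is $0$ and all these coordinates share the common unnormalized weight $e^0 = 1$. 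After normalization, coordinates $1,\dots,d$ can realize any positive values summing to something less than $1$, while coordinates $d+1,\dots,n$ are forced to be equal to one another. This is precisely the set $\tilde\Delta$ in \eqref{eq:Delta_tilde_def}.

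The key steps, in order: (i) show $g_\theta \in \tilde\Delta$ for every $\theta$, i.e.\ the parametrization never leaves $\tilde\Delta$ — immediate from $\mu_{d+1}=\dots=\mu_n$ being forced by $\phi_{d+1}=\dots=\phi_n=0$; (ii) show the reverse inclusion, that every $\mu \in \tilde\Delta$ with $\mu_i > 0$ is attained — given a target $\mu$, solve for $\theta$ so that $[M^\top\theta]_a = \log(\mu_a/\mu_n)$ for $a\le d$, which is solvable because $M^\top$ is invertible, and verify the normalization works out because the common value on the tail coordinates is pinned by the softmax; (iii) handle the boundary/closure so that $\tilde\Delta$ (the full closed convex set) is the \emph{closure} of the attainable set, noting that strictly interior points are attained and boundary points are limits — this is where one must be slightly careful since softmax only produces strictly positive distributions, so strictly speaking the image is the relative interior of $\tilde\Delta$; (iv) verify $\tilde\Delta$ is convex, which is clear since it is the intersection of the simplex $\Delta$ (convex) with the linear subspace $\{\mu_{d+1}=\dots=\mu_n\}$. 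Once the characterization holds for both $g_\theta$ and $h_\nu$ (identical feature matrix by the standing assumption in this section), the equivalence of \eqref{eq:func_approx_reg} and \eqref{eq:prob_contraint} follows by simply substituting the constraint sets: minimizing/maximizing over $\theta,\nu \in \mathbb{R}^d$ of a function depending on $\theta,\nu$ only through $g_\theta, h_\nu$ is the same as minimizing/maximizing over $g_\theta, h_\nu$ ranging over their image set $\tilde\Delta$.

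The main obstacle I anticipate is the closure/boundary subtlety in step (iii): the softmax map produces only distributions in the \emph{relative interior} of $\tilde\Delta$, never exact boundary points where some $\mu_i = 0$. To claim the image is all of $\tilde\Delta$ (a closed set), I would argue that the entropy regularization $\tau > 0$ forces the optimizer into the interior anyway (the entropy gradient blows up at the boundary, as was already exploited in Lemma \ref{lemma:MWU_c_main}), so the distinction between $\tilde\Delta$ and its relative interior is immaterial for the optimization equivalence; alternatively, one states the characterization with the understanding that $\tilde\Delta$ is identified up to its relative interior and the optimum is attained in the interior. I would also double-check that the correspondence $\theta \mapsto g_\theta$ being many-to-one (softmax is invariant under adding a constant to all exponents, but here the tail is pinned to $0$ so the only ambiguity is the overall shift which is quotiented out by normalization) does not cause trouble — it does not, since we only need surjectivity onto $\tilde\Delta$, not a bijection, for the optimization-problem equivalence to go through.
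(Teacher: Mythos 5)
Your proposal is correct and follows essentially the same route as the paper's proof: reduce to the identity feature block (the paper via the change of variables $\tilde{\theta} = M\theta$, you via surjectivity of $M^\top$), note that the zero tail features force the last $n-d$ probabilities to be equal so the image lies in $\tilde{\Delta}$, exhibit an explicit preimage for any positive $\mu \in \tilde{\Delta}$, observe convexity, and then substitute the constraint set to get the equivalence of the two optimization problems. If anything, you are slightly more careful than the paper on two points: your construction $[M^\top\theta]_a = \log(\mu_a/\mu_n)$ normalizes exactly to $\mu$, whereas the paper's choice $\theta(a)=\log\mu(a)$ is off by the required constant shift (a harmless slip, since the tail exponents are pinned at zero); and your explicit handling of the boundary --- softmax reaches only the relative interior of $\tilde{\Delta}$, which suffices because the entropy regularization keeps the optimum interior --- is addressed in the paper only by a footnote permitting $-\infty$ parameters.
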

Lemma \ref{lemma:char_func_approx} characterizes the set of distributions which can be represented by log-linear parametrization. Therefore, when we try to solve the matrix game with such function approximation, the best we can hope for is to find an equilibrium within the set $\tilde{\Delta}$.

Next, we characterize the Nash equilibrium of the regularized Problem \eqref{eq:func_approx_reg} in the function approximation setting, and show its equivalence to another problem in the tabular softmax setting.  

\begin{theorem}
\label{thm:Nash_char_func_approx}
{An in-class Nash equilibrium $(\theta^\star, \nu^\star)$ of Problem \eqref{eq:func_approx_reg} under the function approximation setting 
{exists}, and any such in-class NE satisfies:}
\begin{align}
g_{\theta^\star}(a) = \frac{e^{-\frac{[\Psi^{\top} Q \Psi h_{\nu^\star}]_a}{\tau}}}{\sum_{a'} e^{-\frac{[\Psi^{\top} Q \Psi h_{\nu^\star}]_{a'}}{\tau}}}{,}~~~ 
h_{\nu^\star}(a) =  \frac{e^{\frac{[\Psi^{\top} Q^\top \Psi g_{\theta^\star}]_a}{\tau}}}{\sum_{a'} e^{\frac{[\Psi^{\top} Q^\top \Psi g_{\theta^\star}]_{a'}}{\tau}}}, \nonumber
\end{align}
where $\Psi \in \mathbb{R}^{n \times n}$ is defined as:
\begin{align}
\label{eq:psi_def}
\Psi = 
\begin{pmatrix}
I_d & \zero \\ \zero & \begin{matrix} \frac{1}{n-d} &\cdots & \frac{1}{n-d} \\ \cdots &\cdots & \cdots \\ \frac{1}{n-d} &\cdots & \frac{1}{n-d} \end{matrix}
\end{pmatrix}.
\end{align}
Furthermore, Problem \eqref{eq:func_approx_reg} is equivalent to\footnote{By equivalent, we mean that the two problems have the same value at the NE. We will also show the relationship between the solutions in Proposition \ref{eq:lemma:algo_lin_func_approx}} 
\begin{align}
\min_{g_{\theta} \in {\Delta}} \ \max_{h_{\nu}  \in {\Delta} } ~~~\ g_{\theta} ^{\top} \Psi^{\top} Q \Psi h_{\nu} - \tau \mathcal{H}(g_{\theta}) + \tau \mathcal{H}(h_{\nu}).
\end{align}
\end{theorem}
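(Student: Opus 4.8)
The plan is to prove Theorem~\ref{thm:Nash_char_func_approx} by first reducing the function-approximation problem to a lower-dimensional tabular problem via the structure of $\Phi$ from Assumption~\ref{ass:full_rank}, then applying the tabular NE characterization, and finally re-encoding everything back in terms of the full $n$-dimensional simplex through the matrix $\Psi$. The starting point is Lemma~\ref{lemma:char_func_approx}, which already tells us that the log-linear policy class covers exactly the convex set $\tilde{\Delta}$ of distributions whose last $n-d$ coordinates are equal, and that Problem~\eqref{eq:func_approx_reg} is equivalent to the constrained problem~\eqref{eq:prob_contraint} over $\tilde{\Delta}\times\tilde{\Delta}$. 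Since the objective $g^\top Q h - \tau\mathcal{H}(g) + \tau\mathcal{H}(h)$ is strictly convex in $g$ and strictly concave in $h$ when restricted to the \emph{convex} set $\tilde{\Delta}$ (the entropy is strictly convex and $\tilde\Delta$ is convex by Lemma~\ref{lemma:char_func_approx}), the minimax theorem applies, so an in-class NE exists and is unique; this settles the existence claim.

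The core computation is to write down the first-order optimality (KKT) conditions for the saddle point over $\tilde\Delta$. The key observation is that a distribution in $\tilde{\Delta}$ is fully determined by a $d$-dimensional reduced vector (its first $d$ coordinates, with the remaining mass spread uniformly over the last $n-d$ coordinates), and the averaging map from reduced to full coordinates is exactly captured by $\Psi$ in~\eqref{eq:psi_def}: the block $\tfrac{1}{n-d}\mathbbm{1}$ distributes the residual mass uniformly. I would parameterize $g_\theta = \Psi\, \hat g$ where $\hat g$ ranges over an appropriate simplex-like set, substitute into the objective to get the effective cost matrix $\Psi^\top Q \Psi$, and then observe that the entropy term, when the gradient is taken with respect to the reduced variables and pulled back, produces the softmax form stated in the theorem. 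Concretely, stationarity of the Lagrangian for $g$ (balancing $\Psi^\top Q \Psi h_{\nu^\star}$ against $\tau \nabla\mathcal{H}$ plus the simplex multiplier) yields $g_{\theta^\star}(a) \propto \exp\!\big(-[\Psi^\top Q \Psi h_{\nu^\star}]_a/\tau\big)$, and symmetrically for $h_{\nu^\star}$ with $Q$ replaced by $Q^\top$ and the sign flipped; the normalization gives the displayed fractions.

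For the \emph{equivalence} to the tabular problem~\eqref{eq:prob_contraint} restated with $\Psi^\top Q \Psi$ over the full simplex $\Delta$, the idea is that optimizing $g_\theta^\top \Psi^\top Q \Psi h_\nu - \tau\mathcal{H}(g_\theta) + \tau\mathcal{H}(h_\nu)$ freely over $\Delta\times\Delta$ produces a saddle point of exactly the softmax form above, because $\Psi$ has the effect of symmetrizing the last $n-d$ coordinates: any optimizer of the $\Psi$-transformed tabular problem automatically lands in $\tilde\Delta$ (the rows of $\Psi^\top Q \Psi$ corresponding to the last block are identical, forcing equal softmax weights there), so the unconstrained tabular optimum coincides with the constrained in-class optimum, matching values at the NE. I would verify this by checking that the stationarity conditions of the two problems coincide and that both objectives agree on $\tilde\Delta$.

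\textbf{The main obstacle} I anticipate is bookkeeping the relationship between the reduced $d$-dimensional variables and the full $n$-dimensional softmax, specifically making precise why the uniform-averaging block of $\Psi$ is the correct object and why $\Psi^\top Q \Psi$ (rather than, say, $\Phi$ directly) is the effective cost matrix — this requires carefully tracking how the constraint $\mu_{d+1}=\cdots=\mu_n$ interacts with taking gradients of the entropy, and confirming that the pulled-back entropy gradient still yields a clean softmax in the full coordinates. A subtle point to handle with care is that the entropy $\mathcal{H}$ in the full problem is the entropy of the full distribution, so when restricted to $\tilde\Delta$ its contribution is not simply the entropy of the reduced vector; I would need to check that the extra combinatorial factor from the $n-d$ equal coordinates only shifts the value by a constant and shifts the log-partition normalization, leaving the softmax \emph{form} (as stated) intact.
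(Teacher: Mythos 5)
Your proposal is correct and follows essentially the same route as the paper's proof: reduce to the constrained problem over $\tilde{\Delta}$ via Lemma \ref{lemma:char_func_approx}, obtain the softmax characterization from the first-order KKT conditions of the strictly convex--strictly concave saddle problem (where the averaging structure of $\Psi$ emerges --- in the paper, by solving for the Lagrange multipliers of the equality constraints $\mu_{d+1}=\cdots=\mu_n$), and then conclude equivalence to the $\Psi^{\top}Q\Psi$ tabular problem because its unconstrained NE automatically lies in $\tilde{\Delta}$, where the two objectives and stationarity conditions coincide and uniqueness applies. The entropy-pull-back subtlety you flag is real but resolves exactly as you anticipate: for $\mu=\Psi\hat{g}\in\tilde{\Delta}$ the last $n-d$ entries of $\nabla\mathcal{H}(\mu)$ are equal, so $\Psi^{\top}\nabla\mathcal{H}(\Psi\hat{g})=\nabla\mathcal{H}(\Psi\hat{g})$ and the softmax form survives, matching the paper's multiplier computation.
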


Note that the matrix $\Psi$ defined in Theorem \ref{thm:Nash_char_func_approx} is the invariant matrix for the set $\tilde{\Delta}$, i.e., $\Psi \mu = \mu$, $\forall \mu \in \tilde{\Delta}$.


\subsection{Optimistic NPG algorithm}


{From Section \ref{sec:ONPG_Static_tabular}, we see that the optimistic version of NPG leads to faster convergence (of both policy and parameters). This motivates us to focus on the optimistic version of the  methods. In this subsection, (and the ones that follow), we focus on optimistic methods instead of their non-optimistic counterparts. Note that similar to Section \ref{sec:ONPG_Static_tabular}, we can derive convergence rates for the non-optimistic versions as well, which would be slower than the corresponding optimistic versions.
}

As Theorem \ref{thm:Nash_char_func_approx} characterizes the solution to the function approximation setting to that of the tabular softmax setting, we modify the algorithm for function approximation setting as follows:
\begin{align}
\bar{\theta}_{t+1} = (1 - \eta \tau) \theta_t - \eta  [(M^\top)^{-1} \ |\ 0]  \tilde{P} Q h_{\bar{\nu}_t},\qquad 
{\theta}_{t+1} = (1 - \eta \tau) {\theta}_t - \eta  [(M^\top)^{-1} \ |\ 0] \tilde{P} Q h_{\bar{\nu}_{t+1}},
\end{align}
and a similar update for $\nu$ to reach the solution of the regularized problem under a log-linear parametrization. Here, the matrix $\tilde{P}$ is defined as
\begin{align*}
\tilde{P} = 
\begin{pmatrix}
I_d & \begin{matrix} \frac{-1}{n-d} &\cdots & \frac{-1}{n-d} \\ \cdots &\cdots & \cdots \\ \frac{-1}{n-d} &\cdots & \frac{-1}{n-d} \end{matrix} \\ \zero & \zero
\end{pmatrix}.
\end{align*}
The additional term involving the inverse of the feature matrix arises due to the nature of the log-linear function approximation. We make this formal in the following proposition. 

\begin{proposition}
\label{eq:lemma:algo_lin_func_approx}
Consider Algorithm \ref{alg:optimistic_npg_func} used to solve Problem \eqref{eq:func_approx_reg} under the log-linear parametrization in Equation \eqref{equ:policy_para_matrix} under Assumption \ref{ass:full_rank}. Then the iterates of Algorithm \ref{alg:optimistic_npg_func} have the same guarantees provided in Theorem \ref{thm:param_con}. Here, the NE parameter value to which the algorithm converges to is given by:
\begin{align}
\theta^\star = \frac{- [(M^\top)^{-1} \ |\ 0]  \tilde{P} Q h_{{\nu}^\star}}{\tau}, \ \ \ \ \nu^\star = \frac{[(M^\top)^{-1} \ | \ 0] \tilde{P} Q^\top g_{{\theta}^\star}}{\tau}. \nonumber 
\end{align}
\end{proposition}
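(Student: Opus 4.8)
Proposition \ref{eq:lemma:algo_lin_func_approx} asserts that the function-approximation algorithm (Algorithm \ref{alg:optimistic_npg_func}) inherits the exact convergence guarantees of Theorem \ref{thm:param_con}, and identifies the limit point $(\theta^\star,\nu^\star)$. The natural plan is to reduce the function-approximation dynamics to the tabular dynamics already analyzed, by exploiting the equivalence established in Lemma \ref{lemma:char_func_approx} and Theorem \ref{thm:Nash_char_func_approx}.

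**The plan.** First I would unpack the update rule and show that the parameter dynamics in $\theta,\nu$ induce, via the log-linear map $p_\theta(a)=\phi_a^\top\theta$, a dynamics on the induced \emph{policies} $(g_\theta,h_\nu)$ that coincides with the optimistic MWU on the \emph{transformed} tabular game with cost matrix $\Psi^\top Q\Psi$ over the full simplex, exactly the equivalent problem produced in Theorem \ref{thm:Nash_char_func_approx}. The key algebraic fact to establish is that under Assumption \ref{ass:full_rank} ($\Phi=[M\mid 0]$), the term $[(M^\top)^{-1}\mid 0]\tilde P Q h_{\bar\nu}$ is precisely what makes the softmax logits $\Phi^\top\theta$ evolve according to $-\eta\,\Psi^\top Q h_{\bar\nu}$ (up to the $(1-\eta\tau)$ contraction factor), so that the induced policy $g_\theta$ stays in $\tilde\Delta$ and follows the regularized MWU for the $\Psi$-transformed game. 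I would verify this by computing $\Phi^\top\theta_{t+1}$ from the update and checking that the redundancy enforced by $\tilde P$ and the block structure of $\Phi$ reproduce the invariance $\Psi\mu=\mu$ on $\tilde\Delta$ noted right after Theorem \ref{thm:Nash_char_func_approx}.

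**Transferring the convergence rate.** Once the induced policy dynamics are shown to be the optimistic NPG/MWU dynamics for the tabular game \eqref{prob:entr_reg_matrix} with $Q$ replaced by $\Psi^\top Q\Psi$, Theorem \ref{thm:param_con} applies verbatim to that transformed game, giving last-iterate linear convergence of the logits/parameters. The limit point formula follows by taking $t\to\infty$ in the fixed-point condition of the update: setting $\bar\theta_{t+1}=\theta_t=\theta^\star$ forces $\eta\tau\,\theta^\star = -\eta\,[(M^\top)^{-1}\mid 0]\tilde P Q h_{\nu^\star}$, which after dividing by $\eta\tau$ yields exactly the claimed $\theta^\star = \frac{-[(M^\top)^{-1}\mid 0]\tilde P Q h_{\nu^\star}}{\tau}$, and symmetrically for $\nu^\star$. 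Consistency of this fixed point with the in-class NE characterization of Theorem \ref{thm:Nash_char_func_approx} should be checked, i.e., that $\Phi^\top\theta^\star = -\Psi^\top Q\Psi h_{\nu^\star}/\tau$ reproduces the stated softmax form of $g_{\theta^\star}$.

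**Main obstacle.** The crux is the linear-algebra bookkeeping connecting the feature-space update (with the factor $[(M^\top)^{-1}\mid 0]\tilde P$) to the policy-space MWU on the $\Psi$-transformed simplex. Concretely, I must confirm that (i) the map $\theta\mapsto g_\theta$ restricted to the reachable set is a bijection onto $\tilde\Delta$, so that parameter convergence and policy convergence are genuinely equivalent here, and (ii) the inverse $(M^\top)^{-1}$ together with $\tilde P$ correctly ``undoes'' the feature map so the effective logit increment is $\Psi^\top Q h_{\bar\nu}$ rather than $Q h_{\bar\nu}$. This is where the $[M\mid 0]$ structure and the definitions of $\tilde P$ and $\Psi$ must mesh exactly; any mismatch in how the redundant coordinates $\mu_{d+1}=\cdots=\mu_n$ are handled would break the reduction. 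I expect this verification to be the main technical effort, after which the convergence statement is an immediate corollary of Theorem \ref{thm:param_con}.
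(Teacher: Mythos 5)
Your proposal is correct and takes essentially the same route as the paper: the paper's proof likewise works in logit space $\Phi^\top\theta$, uses $[\Phi\Phi^\top]^{-1}\Phi = [(M^\top)^{-1}\,|\,0]$, the invariance $\Psi\mu=\mu$ on $\tilde{\Delta}$, and the structure of $\tilde{P}$ to identify the iterates of Algorithm \ref{alg:optimistic_npg_func} with the optimistic NPG/MWU dynamics of the equivalent tabular game with matrix $\Psi^\top Q\Psi$ from Theorem \ref{thm:Nash_char_func_approx}, and then transfers the guarantees of Theorem \ref{thm:param_con}. The only (immaterial) difference is the direction of the bookkeeping: the paper starts from the transformed-game logit dynamics and solves for the parameter update, whereas you start from the parameter update and verify it induces those dynamics.
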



Next, we show how the optimistic  NPG algorithm solves the original matrix game without regularization.

\begin{corollary}
If we run Algorithm \ref{alg:optimistic_npg_func} for time $T = \cO \left( \frac{\log n}{\eta \epsilon} \log \left(\frac{1}{\epsilon} \right) \right)$
and set the regularization parameter $\tau =  \epsilon/(8\log n)$, we have that the output $(\theta_T, \nu_T)$ is an $\epsilon$-in-class NE (Definition \ref{def:para_eps_NE_FA_matrix}) 
of the unregularized Problem \eqref{eq:objective_func_approx}. 
\end{corollary}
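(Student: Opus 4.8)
The plan is to reduce this function-approximation corollary to its tabular counterpart, Corollary~\ref{cor:tab_matrix_connect_unreg}, by exploiting the exact equivalence between the regularized function-approximation game \eqref{eq:func_approx_reg} and a regularized tabular game established in Theorem~\ref{thm:Nash_char_func_approx}. That theorem shows \eqref{eq:func_approx_reg} is equivalent to the tabular problem $\min_{g\in\Delta}\max_{h\in\Delta} g^\top \tilde Q h - \tau\mathcal{H}(g) + \tau\mathcal{H}(h)$ with modified cost matrix $\tilde Q := \Psi^\top Q\Psi$, and Proposition~\ref{eq:lemma:algo_lin_func_approx} identifies the iterates of Algorithm~\ref{alg:optimistic_npg_func} with the tabular optimistic-NPG iterates run on $\tilde Q$, inheriting the linear last-iterate parameter convergence of Theorem~\ref{thm:param_con}. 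With this identification in hand, applying Corollary~\ref{cor:tab_matrix_connect_unreg} to the game defined by $\tilde Q$ immediately yields that, after $T = \cO\big(\tfrac{\log n}{\eta\epsilon}\log(1/\epsilon)\big)$ iterations with $\tau = \epsilon/(8\log n)$, the output is an $\epsilon$-NE of the unregularized tabular problem $\min_{g\in\Delta}\max_{h\in\Delta} g^\top\tilde Q h$.

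It then remains to translate an $\epsilon$-NE of the $\tilde Q$-game into an $\epsilon$-in-class NE of the original unregularized game \eqref{eq:objective_func_approx} in the sense of Definition~\ref{def:para_eps_NE_FA_matrix}. The bridge is the invariance property noted after Theorem~\ref{thm:Nash_char_func_approx}: $\Psi$ fixes every point of $\tilde\Delta$ (i.e.\ $\Psi\mu=\mu$ for $\mu\in\tilde\Delta$) and, by its block structure, maps the full simplex $\Delta$ onto $\tilde\Delta$ while preserving membership in the simplex. Consequently, for any $g,h\in\Delta$ one has $g^\top\tilde Q h = (\Psi g)^\top Q(\Psi h)$, so that maximizing $g^\top\tilde Q h$ over $h\in\Delta$ is the same as maximizing $(\Psi g)^\top Q h'$ over $h'\in\tilde\Delta$, and likewise for the minimization. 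Since Lemma~\ref{lemma:char_func_approx} shows $\tilde\Delta$ is precisely the set of policies realizable by the log-linear parametrization, the deviation sets in Definition~\ref{def:para_eps_NE_FA_matrix} are exactly $\tilde\Delta$; hence the in-class gap of $(g_{\theta_T},h_{\nu_T})$ with respect to $Q$ coincides with the full-simplex NE gap of the corresponding iterate with respect to $\tilde Q$, and the $\epsilon$-NE guarantee transfers verbatim.

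The iteration count and admissible stepsize carry over because the reduction does not inflate the relevant constants: as $\Psi$ is doubly stochastic, a direct row-sum estimate gives $\|\tilde Q\|_\infty = \|\Psi^\top Q\Psi\|_\infty \le \|Q\|_\infty$, so the stepsize condition of Theorem~\ref{thm:param_con} for the $\tilde Q$-game is no more restrictive than for $Q$, while the entropy bound $0\le\mathcal{H}(\cdot)\le\log n$ used to set $\tau=\epsilon/(8\log n)$ is unchanged since the equivalent tabular game is still over the $n$-dimensional simplex.

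The step I expect to require the most care is verifying the exact correspondence of \emph{iterates and gaps}, rather than a mere equivalence of optimal \emph{values}. Theorem~\ref{thm:Nash_char_func_approx} only asserts equality of the two games' values at the NE, so I would lean on the solution-level identification furnished by Proposition~\ref{eq:lemma:algo_lin_func_approx} to ensure that the point $(g_{\theta_T},h_{\nu_T})$ produced by Algorithm~\ref{alg:optimistic_npg_func} is genuinely the $\Psi$-image of the tabular iterate, so the gap identity $g^\top\tilde Q h=(\Psi g)^\top Q(\Psi h)$ can be invoked at the actual output and not only at the equilibrium. Once this identification is pinned down, the remaining estimates---the $\cO(\tau\log n)$ regularization bias and the linear convergence contributing the extra $\log(1/\epsilon)$ factor---are exactly those already handled in the tabular Corollary~\ref{cor:tab_matrix_connect_unreg}, so no genuinely new analysis is needed.
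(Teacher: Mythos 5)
Your proposal is correct and follows exactly the route the paper intends: the paper gives no separate proof of this corollary, instead relying implicitly on the reduction set up by Theorem \ref{thm:Nash_char_func_approx} (equivalence to the tabular game with cost matrix $\Psi^\top Q \Psi$) and Proposition \ref{eq:lemma:algo_lin_func_approx} (the iterates of Algorithm \ref{alg:optimistic_npg_func} inherit the guarantees of Theorem \ref{thm:param_con}), after which the tabular Corollary \ref{cor:tab_matrix_connect_unreg} applies verbatim. Your added care---verifying the gap identity $g^\top \Psi^\top Q \Psi h = (\Psi g)^\top Q (\Psi h)$ over $\tilde\Delta$, and the bound $\|\Psi^\top Q \Psi\|_\infty \le \|Q\|_\infty$ via double stochasticity of $\Psi$---supplies details the paper leaves unstated but does not change the argument.
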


\section{{Multi}-player Monotone Games}
\label{sec:Gen_Monotone}


 \paragraph{Monotone games.} Consider a  multi-player continuous game over simplexes, which strictly generalizes the zero-sum matrix game in \S\ref{sec:prelim}. The game is characterized by  $(\cN,\{\cA_i\}_{i\in[N]},\{f_i\}_{i\in[N]})$, where $\cN=[N]$ is the set of players. Without loss of generality, we assume $|\cA_i|=n$ for all $i\in[N]$. For notational convenience, let $\Delta$ denote the simplex over $\cA_i$, and $z:=(g_1,g_2,\cdots,g_N)\in\Delta^N$ denote the strategy profile of all $N$ players, with each $g_i\in\Delta$. We define the {\it pseudo-gradient} operator $F:\Delta^N\to \RR^{nN}$ as 
 $
 F(z) := [\nabla_{g_{i}} f_i (g_i, g_{-i})]_{i=1}^N.
 $
 To make the $N$-player game tractable, we make the following  standard assumptions on $F$ \citep{rosen1965existence,nemirovski2004prox,facchinei2007finite}. 
 
 \begin{assumption} [Monotonicity \& Smoothness]
\label{ass:monotone_smooth_F}
The operator $F$ is monotone and smooth, i.e., $\forall z, z' \in \Delta^N$ 
{\small\begin{align*}
\langle F(z) - F(z'), z - z' \rangle &\geq 0,  \qquad~~\| F(z) - F(z') \| \leq L \cdot\|z - z'\|,
\end{align*}}
where $L>0$ is the Lipschitz constant of the operator $F$. 
\end{assumption}

The goal is to find the NE, given by strategy $z^\star$ such that 
$
f_i(z_i^\star,z_{-i}^\star)\leq f_i(z_i,z_{-i}^\star),\ \forall~z_i\in\Delta,~i\in[N]. 
$
Under Assumption \ref{ass:monotone_smooth_F}, it is known that the NE exists \citep{rosen1965existence}. 

\paragraph{Policy parameterization \& regularized game.} To develop policy gradient methods, we parameterize each policy $g_i\in\Delta$ by $g_{\theta_i}$ in the softmax form as before, i.e., for any $a_i\in\cA_i$, $g_{\theta_i}(a_i)={e^{p_{\theta_i}(a_i)}}\cdot\big({\sum_{a_i' \in\cA_i} e^{p_{\theta_i}(a_i')}}\big)^{-1}$, where $\theta_i\in\RR^d$, and we consider both the tabular case with $p_{\theta_i}=\theta_i$ and the linear function approximation case with  $p_{\theta_i}(a_i)=\phi_{a_i}^{\top} \theta_i$. This parameterization leads to the following set of optimization problems: 
\#
\label{eq:N-payer_monotone_game_unreg}
\min_{\theta_i \in \mathbb{R}^n}~~ f_i(g_{\theta_i}, g_{\theta_{-i}}),\qquad \forall~i\in[N],
\#
whose solution $(\theta_1^\star,\theta_2^\star,\cdots,\theta_N^\star)$, if exists, corresponds to the Nash equilibrium under this parameterization. Note that \eqref{eq:N-payer_monotone_game_unreg} can also be viewed  as a nonconvex game (Lemma \ref{lemma:ncnc} is a special case) with a ``hidden'' {\it monotone variational inequality}  structure, which generalizes the class of hidden convex-concave problems discussed in \cite{flokas2021solving,mladenovic2021generalized}. 


Motivated by \S\ref{sec:prelim}, we also consider the regularized game in hope of stronger convergence guarantees for solving \eqref{eq:N-payer_monotone_game_unreg}. Specifically, the players solve
\#
\label{eq:N-payer_monotone_game_reg}
\min_{\theta_i \in \mathbb{R}^n}~~ f_i(g_{\theta_i}, g_{\theta_{-i}})- \tau \mathcal{H}(g_{\theta_i}),\qquad \forall~i\in[N],
\# 
where $\tau>0$ and $\mathcal{H}$ is the entropy function. 
With a small enough $\tau$, the solution to \eqref{eq:N-payer_monotone_game_reg} approximates that to \eqref{eq:N-payer_monotone_game_unreg}.

\subsection{Softmax parameterization}\label{sec:monotone_tabualr}

We first consider the tabular softmax parameterization  with $p_{\theta_i}=\theta_i\in\RR^{n}$ for all $i\in[N]$. In this case, the Nash equilibrium $\theta^\star = (\theta_1^\star, \theta_2^\star, \cdots, \theta_N^\star)$ of the   regularized monotone game   \eqref{eq:N-payer_monotone_game_reg} satisfies the following property. 


\begin{lemma}
\label{lemma:optimal_solution_N_player}
The NE of the game \eqref{eq:N-payer_monotone_game_reg}  exists. A  vector $\theta^\star = (\theta_1^\star, \theta_2^\star, \cdots, \theta_N^\star)$ is a NE of  \eqref{eq:N-payer_monotone_game_reg} if and only if:  $g_{\theta^\star_i}(a) \propto \exp \big( \frac{- [\nabla_{g_{\theta_i}} f_i(g_{\theta^\star_i}, g_{\theta^\star_{-i}})]_a}{\tau} \big)$ 
and the vector $(g_{\theta^\star_1}, g_{\theta^\star_2}, \cdots, g_{\theta^\star_N})$ is unique. We denote $g_i^\star := g_{\theta^\star_i}$.
\end{lemma}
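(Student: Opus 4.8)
The plan is to lift the regularized game \eqref{eq:N-payer_monotone_game_reg} from the parameter space to the policy space and analyze the associated variational inequality (VI). Under the tabular softmax map $\theta_i \mapsto g_{\theta_i}$, which surjects onto the relative interior of $\Delta$ and is invariant under shifts $\theta_i \mapsto \theta_i + c\mathbf{1}$, a parameter profile $\theta^\star$ is a NE of \eqref{eq:N-payer_monotone_game_reg} if and only if each induced policy $g_{\theta_i^\star}$ is a best response to $g_{\theta_{-i}^\star}$ in the policy-space game with costs $\tilde{f}_i(g_i, g_{-i}) := f_i(g_i, g_{-i}) - \tau\mathcal{H}(g_i)$. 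Since entropy is bounded and continuous on the closed simplex, $\tilde{f}_i$ is continuous on $\Delta^N$ and, being the sum of the convex $f_i(\cdot, g_{-i})$ and the strongly convex $-\tau\mathcal{H}$, is strongly convex in $g_i$; existence of a NE then follows from Rosen's existence theorem for convex games over compact convex strategy sets \citep{rosen1965existence}.

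Next I would record the structural fact driving uniqueness. The regularized pseudo-gradient is $\tilde{F}(z) = F(z) + \tau(\log z + \mathbf{1})$ (componentwise log, blockwise concatenation), where $\log z + \mathbf{1}$ is the gradient of the negative entropy $\sum_i \sum_a g_i(a)\log g_i(a)$. Its blockwise Hessian is $\mathrm{diag}(1/g_i(a)) \succeq I$ on $\Delta^N$ (since $g_i(a) \leq 1$), so the entropy term is $1$-strongly monotone; adding the monotone $F$ from Assumption \ref{ass:monotone_smooth_F} shows that $\tilde{F}$ is $\tau$-strongly monotone on $\mathrm{int}(\Delta^N)$.

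For the characterization I would first argue the NE is interior: as any coordinate $g_i(a) \to 0$ the partial derivative $\tau(\log g_i(a) + 1) \to -\infty$, so moving mass off the boundary into the interior strictly decreases $\tilde{f}_i$, and no best response can lie on $\partial\Delta$. At an interior minimizer of the strongly convex subproblem $\min_{g_i \in \Delta} \tilde{f}_i(g_i, g_{-i}^\star)$ the only active constraint is $\mathbf{1}^\top g_i = 1$, so stationarity reads $\nabla_{g_i} f_i(g_i^\star, g_{-i}^\star) + \tau(\log g_i^\star + \mathbf{1}) = \lambda_i \mathbf{1}$; solving for $g_i^\star$ yields exactly $g_{\theta_i^\star}(a) \propto \exp(-[\nabla_{g_{\theta_i}} f_i]_a/\tau)$. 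Convexity of $\tilde{f}_i$ makes this first-order condition both necessary (the ``only if'') and, since a stationary point of a strongly convex function is its global minimizer, sufficient (the ``if'', giving a best response and hence a NE).

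Finally, for uniqueness of the policy profile I would use strong monotonicity. Any two NE $z_1^\star, z_2^\star$ are interior, and by the stationarity above each satisfies the VI $\langle \tilde{F}(z^\star), z - z^\star\rangle \geq 0$ for all $z \in \Delta^N$ (indeed with equality, since $\tilde{F}_i(z^\star) = \lambda_i \mathbf{1}$ and $\mathbf{1}^\top(g_i - g_i^\star) = 0$). Substituting each into the other's inequality and adding gives $\langle \tilde{F}(z_1^\star) - \tilde{F}(z_2^\star), z_1^\star - z_2^\star\rangle \leq 0$, which combined with $\tau$-strong monotonicity forces $z_1^\star = z_2^\star$, pinning down $(g_1^\star, \dots, g_N^\star)$ uniquely (the parameters themselves remain free up to additive shifts). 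The main obstacle I anticipate is the boundary behavior of the entropy gradient: making the interiority argument and the resulting clean first-order condition rigorous, and correctly transferring strong monotonicity through the softmax parameterization so that the VI argument runs on the open simplex while existence is secured on the closed one.
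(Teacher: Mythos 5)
Your proposal is correct and follows essentially the same route as the paper's proof: pass to the policy space, where the regularized game is a strongly monotone variational inequality over the compact convex set $\Delta^N$ (giving existence and uniqueness, which the paper defers to \cite{rosen1965existence,facchinei2007finite}), derive the Gibbs form from the KKT stationarity condition with the simplex Lagrange multiplier, and realize the policy NE in parameter space via the explicit choice $\theta_i^\star = -\nabla_{g_i} f_i(g_i^\star, g_{-i}^\star)/\tau$. The only difference is presentational: you spell out the interiority and strong-monotonicity arguments (and the verification that monotonicity of $F$ yields per-player convexity) that the paper handles by citation.
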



Note that although the NE policy $(g_{\theta^\star_1}, g_{\theta^\star_2}, \cdots, g_{\theta^\star_N})$ is unique, the NE parameter $({\theta^\star_1}, {\theta^\star_2}, \cdots, {\theta^\star_N})$ is not necessarily the case. Motivated by \S\ref{sec:prelim} and \S\ref{sec:ONPG_Static_tabular}, we propose the following  update-rule for solving \eqref{eq:N-payer_monotone_game_reg}: 
$\forall~\text{players}~i \in [N]$, 
\$
\bar{\theta}_i^{t+1} &= (1 - \eta \tau)\theta_i^t -\eta \nabla_{g_{\theta_i}} f_i(g_{\bar{\theta}^t_i}, g_{\bar{\theta}^t_{-i}}),\qquad\quad 
\theta_i^{t+1} = (1 - \eta \tau)\theta_i^t -\eta \nabla_{g_{\theta_i}} f_i(g_{\bar{\theta}^{t+1}_i}, g_{\bar{\theta}^{t+1}_{-i}}).
\$
We refer to the update-rule as {\it optimistic NPG} (as summarized in Algorithm \ref{alg:ogda_monotone}), as it corresponds to the optimistic version of the ({specific instance of}) natural PG direction for the regularized objective \eqref{eq:N-payer_monotone_game_reg}.{ We choose this specific instance of}  NPG due to the pitfall discussed in \S\ref{sec:prelim}; and the optimistic update is meant to obtain fast last-iterate convergence. 
See \S\ref{sec:append:ONPG_update} for a  detailed derivation of the update rule. 




As shown in \S\ref{sec:prelim}, the problem \eqref{eq:N-payer_monotone_game_unreg} is nonconvex in the policy parameter space, and can be  challenging in general. Our strategy is to show that our algorithm solves the regularized problem \eqref{eq:N-payer_monotone_game_reg} fast, with last-iterate  parameter convergence (see Theorem \ref{thm:eg_og_conv_mono}), which, with small enough $\tau$, also solves the  nonconvex game \eqref{eq:N-payer_monotone_game_unreg} (see Corollary \ref{cor:small_tau_N_player}).


\begin{theorem}
\label{thm:eg_og_conv_mono}
Let $z^\star = (g_i^\star)_{i=1}^N$ be the unique Nash equilibrium given in Lemma \ref{lemma:optimal_solution_N_player}. Also, we denote $z_t = (g_{\theta^t_i})_{i=1}^N$. Then for  Algorithm \ref{alg:ogda_monotone} with stepsize  $0 < \eta <  \frac{1}{2(N+4)L + 2\tau}$, we have:
\begin{align}
\max \left\{ \KL(z^\star \| z_{t} ), \KL(z^\star \| \bar{z}_{t+1} ) \right\} &\leq (1 - \eta \tau)^t 2 \KL (z^\star \| z_0),\qquad\quad  
\| \theta_{t+1} - \theta^\star \|^2  \leq (1 - \eta \tau/2)^t V_0,
\end{align}
where $\theta^\star_i = \frac{-\nabla_{g_{\theta_i}} f_i(g_{i}^\star, g_{{-i}}^\star)}{\tau}$, $V_0 =\| \theta^{t} - \theta^\star \|^2 + \frac{2NC}{\eta\tau}$, and $C = 4\eta^2L^2\left(1 + \frac{1}{\eta\tau} (1 - \eta \tau)^2 \right) \KL(z^\star \| z_{0})$. 
\end{theorem}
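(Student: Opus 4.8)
The plan is to follow the same two-stage strategy used for Theorem~\ref{thm:param_con}: first establish linear last-iterate convergence of the \emph{policies} in KL divergence, then bootstrap this into convergence of the \emph{parameters}. First I would rewrite the parameter update in policy space. With the softmax parameterization $g_{\theta_i}(a)\propto e^{\theta_i(a)}$, the correction step $\theta_i^{t+1}=(1-\eta\tau)\theta_i^t-\eta\nabla_{g_{\theta_i}}f_i(g_{\bar\theta^{t+1}})$ is exactly an optimistic multiplicative-weights / optimistic mirror-descent step with the negative-entropy mirror map,
\[
g_i^{t+1}(a)\propto (g_i^t(a))^{1-\eta\tau}\exp\!\big(-\eta[\nabla_{g_{\theta_i}}f_i(g_{\bar\theta^{t+1}})]_a\big),
\]
paralleling \eqref{eq:policy_evol_MWU}, with the prediction step playing the analogous role for $\bar z_{t+1}$; here the KL divergence is the Bregman divergence of the entropy regularizer. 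I would also record the two fixed-point facts I need: by Lemma~\ref{lemma:optimal_solution_N_player} the unique policy NE satisfies $g_i^\star(a)\propto\exp(-[\nabla_{g_{\theta_i}}f_i(g^\star)]_a/\tau)$, and (by direct substitution using $\theta_i^\star=-\nabla_{g_{\theta_i}}f_i(g^\star)/\tau$) the point $\theta^\star$ is a genuine fixed point of the parameter map, i.e. $\theta_i^\star=(1-\eta\tau)\theta_i^\star-\eta\nabla_{g_{\theta_i}}f_i(g^\star)$.

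The policy-space contraction is the core estimate. For each player I would apply the three-point identity for the KL--Bregman divergence to the prediction and correction steps, producing terms of the form $\eta\langle \nabla_{g_{\theta_i}}f_i(\bar z_{t+1}),\, g_i^{t+1}-g_i^\star\rangle$ together with negative divergence terms. Summing over $i\in[N]$, the aggregated inner product becomes $\eta\langle F(\bar z_{t+1}),\,z_{t+1}-z^\star\rangle$; monotonicity of $F$ (Assumption~\ref{ass:monotone_smooth_F}) together with the fixed-point characterization of $z^\star$ lets me lower-bound this by a nonnegative quantity, while the optimism (evaluating the gradient at $\bar z_{t+1}$ rather than $z_t$) lets me cancel the error $\eta\langle F(\bar z_{t+1})-F(\bar z_t),\,\cdot\rangle$ against the negative divergences using $L$-Lipschitzness and Young's inequality. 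The entropy regularization supplies the crucial $\tau$-term: the $(1-\eta\tau)$ exponent makes the regularized operator strongly monotone in the Bregman geometry, yielding the multiplicative contraction factor $(1-\eta\tau)$. Collecting terms and choosing $\eta<\frac{1}{2(N+4)L+2\tau}$ --- where the $(N+4)L$ arises from bounding the cross-player Lipschitz error when passing between the product-space Euclidean norm and the per-player KL terms --- yields the stated bound $\max\{\KL(z^\star\|z_t),\KL(z^\star\|\bar z_{t+1})\}\le(1-\eta\tau)^t\,2\KL(z^\star\|z_0)$ after iterating.

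For the parameter bound I would subtract the fixed-point identity from the correction update to obtain $\theta_i^{t+1}-\theta_i^\star=(1-\eta\tau)(\theta_i^t-\theta_i^\star)-\eta\big(\nabla_{g_{\theta_i}}f_i(g_{\bar\theta^{t+1}})-\nabla_{g_{\theta_i}}f_i(g^\star)\big)$. By $L$-Lipschitzness of $F$ and Pinsker's inequality, the last term has norm at most $\eta L\sqrt{2\,\KL(z^\star\|\bar z_{t+1})}$, which by the policy bound decays like $(1-\eta\tau)^{t/2}$; squared and summed over the $N$ players it is at most $C\,(1-\eta\tau)^t$ with $C=4\eta^2L^2\big(1+\tfrac{1}{\eta\tau}(1-\eta\tau)^2\big)\KL(z^\star\|z_0)$. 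Squaring the scalar recursion $a_{t+1}\le(1-\eta\tau)a_t+\varepsilon_{t+1}$ (with $a_t=\|\theta^t-\theta^\star\|$ and $\varepsilon_t$ the geometrically decaying error), using Young's inequality to absorb the cross term and relax the rate from $(1-\eta\tau)$ to $(1-\eta\tau/2)$, and telescoping the geometric sum, gives $\|\theta_{t+1}-\theta^\star\|^2\le(1-\eta\tau/2)^t V_0$ with $V_0=\|\theta^t-\theta^\star\|^2+\tfrac{2NC}{\eta\tau}$.

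I expect the main obstacle to be the policy-space contraction: unlike the zero-sum case of Theorem~\ref{thm:param_con}, the general monotone operator couples all $N$ players, so the optimism-based cancellation of the Lipschitz error terms must be carried out in the product Bregman geometry, and it is precisely this bookkeeping that forces the $(N+4)L$ dependence in the admissible stepsize. Once that estimate is in hand, the passage to parameter convergence is essentially routine, relying only on the fixed-point structure of $\theta^\star$ and Pinsker's inequality.
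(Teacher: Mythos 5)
Your proposal is correct and follows essentially the same route as the paper's proof: an optimistic mirror-descent (KL/Bregman three-point) analysis in policy space, combining monotonicity of $F$ with optimism-based cancellation of the Lipschitz mismatch terms via Young's and Pinsker's inequalities to get the $(1-\eta\tau)$ contraction (with the $(N+4)L$ stepsize bookkeeping exactly as you describe), followed by the parameter recursion $\theta_i^{t+1}-\theta_i^\star=(1-\eta\tau)(\theta_i^t-\theta_i^\star)-\eta(\nabla_{g_{\theta_i}}f_i(g_{\bar\theta^{t+1}})-\nabla_{g_{\theta_i}}f_i(g^\star))$ whose error is controlled by $\KL(z^\star\|\bar z_{t+1})$ through Lipschitzness and Pinsker, and a Lyapunov/telescoping step that relaxes the rate to $(1-\eta\tau/2)$. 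The only cosmetic difference is that the paper packages the three-point estimates as two standalone lemmas (its Lemma \ref{Lemma_1} and Equations \eqref{eq:common_1}--\eqref{eq:common_2}) and takes care to apply monotonicity only at matched points (gradient and difference both at $\bar z_{t+1}$), a detail your sketch glosses over but which your optimism-cancellation step implicitly handles.
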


The proof  follows by  first showing  convergence in the policy space, in which we are dealing with a strongly convex problem under convex constraints. We then use this result, along with a novel  Lyapunov function to demonstrate the convergence in the parameter space, in which it is a nonconvex problem. The proof technique might of independent interest, and might be generalized to showing convergence in other nonconvex games with a hidden monotonicity  structure. 



\begin{remark}
The proof for Theorem \ref{thm:eg_og_conv_mono} follows by first showing the convergence rate of the Proximal Point method, and then observing that Optimistic methods approximate this method and could potentially achieve the same convergence rates  (see \cite{mokhtari2020convergence} for a unified analysis). We provide a convergence analysis for the Proximal Point and Extragradient methods in \S \ref{app:sec_EG_PP_proofs}. 
\end{remark}

Now, we present the convergence of Algorithm \ref{alg:ogda_monotone} to an $\epsilon$-NE of the un-regularized problem \eqref{eq:N-payer_monotone_game_unreg}. 
\begin{corollary}
\label{cor:small_tau_N_player}
If we run Algorithm \ref{alg:ogda_monotone} for time $T = \mathcal{O}\left( \frac{N\log n}{\eta \epsilon} \log \left( \frac{1}{\epsilon} \right) \right)$
and set the regularization parameter $\tau =  \epsilon/(4N\log n)$, we have that ${\theta^\top} = [{\theta_1^\top}, {\theta_2^\top}, \cdots, {\theta_N^\top}]$, the iterate at time $T$, is an $\epsilon$-NE of Problem \eqref{eq:N-payer_monotone_game_unreg}.
\end{corollary}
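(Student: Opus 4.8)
The plan is to combine the last-iterate policy convergence for the regularized game (Theorem \ref{thm:eg_og_conv_mono}) with a bound on how much the entropy regularization perturbs the Nash gap of the original game \eqref{eq:N-payer_monotone_game_unreg}. Write $\mathrm{gap}_i(z) := f_i(z_i, z_{-i}) - \min_{z_i' \in \Delta} f_i(z_i', z_{-i})$ for the best-response gap of player $i$ in the unregularized game; the target is to show $\mathrm{gap}_i(z_T) \le \epsilon$ for every $i$, which is exactly the $\epsilon$-NE condition.

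First I would bound the regularization bias. Let $z^\star=(g_i^\star)_i$ be the unique regularized NE from Lemma \ref{lemma:optimal_solution_N_player}, and let $\tilde g_i \in \arg\min_{g_i} f_i(g_i, g^\star_{-i})$ be an unregularized best response. Since $g_i^\star$ minimizes $f_i(\cdot, g^\star_{-i}) - \tau \mathcal{H}(\cdot)$, I get $f_i(g_i^\star, g^\star_{-i}) - \tau\mathcal{H}(g_i^\star) \le f_i(\tilde g_i, g^\star_{-i}) - \tau\mathcal{H}(\tilde g_i)$, and rearranging with the bound $0 \le \mathcal{H} \le \log n$ gives $\mathrm{gap}_i(z^\star) \le \tau(\mathcal{H}(g_i^\star)-\mathcal{H}(\tilde g_i)) \le \tau \log n$. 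With the stated choice $\tau = \epsilon/(4N\log n)$ this is at most $\epsilon/(4N) \le \epsilon/2$, so the regularized NE is already an $\epsilon/2$-NE of \eqref{eq:N-payer_monotone_game_unreg}.

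Next I would transfer this property from $z^\star$ to the actual iterate $z_T$ using Lipschitz continuity of each $f_i$ on the compact domain $\Delta^N$ (which holds since each $f_i$ is continuously differentiable, hence has bounded gradient, with some constant $L_f$). Decompose $\mathrm{gap}_i(z_T) = [f_i(z_i^T,z_{-i}^T)-f_i(z_i^\star,z_{-i}^\star)] + \mathrm{gap}_i(z^\star) + [\min_{z_i} f_i(z_i, z_{-i}^\star) - \min_{z_i} f_i(z_i,z_{-i}^T)]$. The first bracket is bounded by $L_f\|z_T - z^\star\|$; the third, a difference of two value functions that are each $L_f$-Lipschitz in the opponent profile (a Danskin/envelope argument: a pointwise minimum of $L_f$-Lipschitz functions is $L_f$-Lipschitz), is likewise bounded by $L_f\|z_T - z^\star\|$. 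Hence $\mathrm{gap}_i(z_T) \le \epsilon/2 + 2L_f\|z_T - z^\star\|$.

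Finally I would invoke Theorem \ref{thm:eg_og_conv_mono}: $\KL(z^\star\|z_T) \le (1-\eta\tau)^T \, 2\KL(z^\star\|z_0)$, and by Pinsker's inequality together with Cauchy--Schwarz across the $N$ players, $\|z_T-z^\star\| \le \sqrt{2N\,\KL(z^\star\|z_T)}$. Requiring $2L_f\|z_T-z^\star\| \le \epsilon/2$ forces $\KL(z^\star\|z_T) = \Theta(\epsilon^2/(N L_f^2))$, and solving $(1-\eta\tau)^T 2\KL(z^\star\|z_0) \le \Theta(\epsilon^2/(NL_f^2))$ for $T$ yields $T = O\big(\tfrac{1}{\eta\tau}\log\tfrac{1}{\epsilon}\big) = O\big(\tfrac{N\log n}{\eta\epsilon}\log\tfrac1\epsilon\big)$ after substituting $\tau = \epsilon/(4N\log n)$, matching the claim. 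The main obstacle is the transfer step: one must argue that the best-response gap is Lipschitz in the whole profile despite the inner $\min$, which needs Lipschitzness of each $f_i$ in \emph{all} its arguments (not merely the own-gradient bound encoded by the pseudo-gradient $F$) plus the envelope argument for the value functions; the delicate bookkeeping is to ensure that $L_f$ and the $\sqrt{N}$ Pinsker factor enter only the hidden constant and do not alter the advertised $\log n$, $1/\epsilon$, and $\log(1/\epsilon)$ dependence.
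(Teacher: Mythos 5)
Your proposal is correct, and it reaches the stated rate with the same three pillars the paper uses: the last-iterate KL contraction of Theorem \ref{thm:eg_og_conv_mono} combined with Pinsker's inequality, a Lipschitz transfer of best-response gaps, and the $\tau\log n$ entropy bias with the identical choices of $\tau$ and $T$. The decomposition, however, is genuinely reordered. The paper works with the aggregate regularized duality gap: it telescopes $\text{DG}_\tau(z_T)=\sum_i[f_i^\tau(z_i^T,z_{-i}^T)-\min_{\tilde g_i}f_i^\tau(\tilde g_i,z_{-i}^T)]$ around $z^\star$, bounds all three resulting differences by Lipschitzness of the \emph{regularized} costs $f_i^\tau$ to get $\text{DG}_\tau(z_T)\le C_2 N\sqrt{\KL(z^\star\|z_T)}$, and only at the end converts to the unregularized gap via $\text{DG}\le \text{DG}_\tau + 2N\tau\log n$. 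You instead absorb the regularization bias at the regularized NE itself — using the optimality of $g_i^\star$ for $f_i(\cdot,g^\star_{-i})-\tau\mathcal{H}(\cdot)$ and $0\le\mathcal{H}\le\log n$ to show $\mathrm{gap}_i(z^\star)\le\tau\log n$ — and then Lipschitz-transfer the \emph{unregularized} per-player gap from $z^\star$ to $z_T$ with the envelope argument for $\min_{z_i}f_i(z_i,\cdot)$. Your ordering buys something concrete: every Lipschitz argument in your proof involves only the unregularized $f_i$, so you never need the entropy term to be Lipschitz. The paper's justification ("$f_i^\tau$ are Lipschitz since they are continuous functions defined on a compact domain") is not rigorous as stated — entropy has unbounded gradient at the simplex boundary, so Lipschitzness of $f_i^\tau$ really requires an additional argument that iterates and $z^\star$ stay in the interior, in the spirit of Lemma \ref{lemma:MWU_c_main}. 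What the paper's formulation buys in exchange is compactness: handling the gap summed over players and keeping the deviation variables $\tilde g_i$ inside a single max makes the bookkeeping shorter. Both proofs share one unstated assumption, which you correctly flag: Lipschitzness of each $f_i$ in \emph{all} arguments (including opponents'), which Assumption \ref{ass:monotone_smooth_F} on the pseudo-gradient $F$ alone does not supply; and your loose $\sqrt{N}$ in the Pinsker step is harmless since it only enters the constant inside the $\mathcal{O}(\cdot)$ and the logarithm.
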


We extend the results to certain function approximation settings in \S\ref{appendix:monotone_FA} in the Appendix.

\section{Optimistic NPG for Markov Games}\label{sec:ONPG_Markov}



%
%

{We now generalize our results   to the sequential  decision-making case of Markov games.}  


\paragraph{Model.}  A two-player zero-sum Markov game is characterized by the tuple $(\cS,\cA,\cB,P,r,\gamma)$, where $\cS$ is the state space; $\cA,\cB$ are the action spaces of players $1$ and $2$, respectively; $P:\cS\times\cA\times\cB\to \Delta(\cS)$  denotes the transition probability of states; $r: \cS\times\cA\times \cB\to [0,1]$ denotes the bounded reward function 
 of player $1$ (thus $-r$ is the reward function of player $2$); and $\gamma\in[0,1)$ is the discount  factor. The goal of player $1$ (player $2$) is to minimize (maximize) the long-term accumulated  discounted reward.  

Specifically, at each time $t$, player $1$ (player $2$) has a {Markov}  stationary policy $g:\cS\to\Delta(\cA)$ ($h:\cS\to\Delta(\cB)$). 
The state makes a transition   from $s_t$ to $s_{t+1}$ following the probability distribution  $P(\cdot\given s_t,a_t,b_t)$, given 
$(a_t,b_t)$. 
As in the Markov decision process model, one can define the  \emph{state-value function} under a pair of joint policies $(g,h)$ as  
\$
V^{g,h}(s):=\EE_{a_t\sim g(\cdot\given s_t),b_t\sim h(\cdot\given s_t)}\bigg[\sum_{t\geq 0}\gamma^tr(s_t,a_t,b_t)\bigggiven s_0=s\bigg]. 
\$
Also, the \emph{state-action/Q-value function} 
under $(g,h)$ is  defined as 
\small 
\$
Q^{g,h}(s,a,b) &:=\EE_{a_t\sim g(\cdot\given s_t),b_t\sim h(\cdot\given s_t)}\bigg[\sum_{t\geq 0}\gamma^tr(s_t,a_t,b_t)\bigggiven s_0=s,a_0=a,b_0=b\bigg]. 
\$
\normalsize
Similar as the matrix game setting, a common  solution concept in Markov game is also  the  (Markov perfect) \emph{Nash equilibrium} policy pair $(g^\star,h^\star)$, which satisfies the following saddle-point inequality: 
\#\label{equ:def_NE_immed}
 V^{g,h^\star}(s)\leq V^{g^\star,h^\star}(s)\leq V^{g^\star,h}(s),\qquad \forall \ s\in\cS.  
 \#
 It follows from  \cite{shapley1953stochastic,filar2012competitive} that  there  exists a Nash equilibrium  $(g^\star,h^\star)\in\Delta(\cA)^{|\cS|}\times \Delta(\cB)^{|\cS|}$ for finite two-player  discounted zero-sum MGs. The state-value $V^\star:=V^{g^\star,h^\star}$ is referred to as the \emph{value of the game}. The corresponding Q-value function is denoted by $Q^\star$.

We focus on the softmax parameterization $g_\theta$ and $h_\nu$ of the policies $g$ and $h$, respectively.  

 \paragraph{Policy parameterization.} Following the matrix game setting, we also use the softmax parameterization of the policies. Specifically, for any $\theta,\nu\in\RR^d$, $(s,a,b)\in\cS\times\cA\times\cB$, 
  \begin{align}\label{equ:policy_para_markov}
g_{\theta}(a\given s) = \frac{e^{p_{\theta}(s,a)}}{ \sum\limits_{a' \in\cA} e^{p_{\theta}(s,a')} }, ~~~~ \qquad h_{\nu}(b\given s) = \frac{e^{q_{\nu}(s,b)}}{ \sum\limits_{b' \in\cB} e^{q_{\nu}(s,b')}}. 
\end{align}
Note that for any $s\in\cS$, $\sum_{a}g_\theta(a\given s)=\sum_{b}h_\nu(b\given s)=1$. We will consider both 

1) the tabular case with $p_{\theta}=\theta$ and $q_{\nu}=\nu$, where $\theta\in\RR^{|\cA|\times |\cS|}$ and $\nu\in\RR^{|\cB|\times|\cS|}$. We will use $\theta_s$ and $\nu_s$ to denote the parameters corresponding to state $s$, i.e., $\theta_s = [\theta_{s,1}, \theta_{s,2}, \cdots, \theta_{s, |A|}]$ and similarly $\nu_s$; 

2) the linear function approximation case with  $p_{\theta}(s,a)=\theta\cdot\phi(s,a)$ and $q_{\nu}(s,a)=\nu\cdot\phi(s,a)$ (See \S\ref{sec:appendix_func_approx_markov_results} for more details)\footnote{Once again, note that we use the same features for both players for notational convenience. Our results continue to hold when the two players have different features $\phi$.}. 

The parameterization thus leads to the following definition of the solution concept. 



\begin{definition}[$\epsilon$-in-class-NE for Markov games]\label{def:para_eps_NE}
	The policy parameter pair $(\tilde{\theta},\tilde{\nu})$ is an $\epsilon$-{\it in-class Nash equilibrium} 
	if it satisfies that for all  $s\in\cS$, 
\begin{align}
V^{\tilde{\theta}, \nu}(s) - \epsilon \geq V^{\tilde{\theta},\tilde{\nu}}(s) \geq V^{\theta,\tilde{\nu}}(s) + \epsilon, \qquad \forall \theta, \nu \in \mathbb{R}^d,
\end{align}
where $V^{\theta,\nu}(s)=V^{g_\theta,h_\nu}(s)$ denotes  the value of the parameterized policy pair $(g_\theta,h_\nu)$. Note that if we are in the tabular setting, we will have $d = n$, and the definition covers  that of the standard $\epsilon$-NE for Markov games. We also define the $\epsilon$-in-class NE $Q$-value  accordingly.
\end{definition}

Given that matrix games considered in \S\ref{sec:prelim} is a special case of the Markov games with $|\cS|=1$ and $\gamma=0$,  
Lemma \ref{lemma:ncnc} implies that finding the NE in Definition \ref{def:para_eps_NE} is  nonconvex-nonconcave in general, and can be challenging to solve. 
We show in the following lemma that, for the tabular and linear function approximation settings we consider, such a parameterized NE exists. 


\begin{lemma}[Existence of parameterized/in-class NE]\label{lemma:exist_para_NE}
	Under policy parameterization \eqref{equ:policy_para_markov} with tabular parameterization, the  in-class NE defined in Definition \ref{def:para_eps_NE} exists.
\end{lemma}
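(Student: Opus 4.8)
The plan is to reduce the existence of an in-class equilibrium to the classical existence theorem for zero-sum Markov games and then discharge the ``in-class'' (softmax, hence strictly-interior) restriction through a perturbation argument. Concretely, I would first invoke \cite{shapley1953stochastic,filar2012competitive}: the finite discounted zero-sum game admits a value $V^\star$ and a pair of Markov stationary Nash policies $(g^\star,h^\star)$ satisfying the saddle-point inequality \eqref{equ:def_NE_immed} against \emph{all} policy deviations. The only gap between such a pair and the $\epsilon$-in-class equilibrium of Definition \ref{def:para_eps_NE} is that $(g^\star,h^\star)$ may sit on the boundary of the simplices, whereas the tabular softmax map \eqref{equ:policy_para_markov} with $p_\theta=\theta$, $q_\nu=\nu$ realizes exactly the \emph{relative interior} of $\Delta(\cA)^{|\cS|}\times\Delta(\cB)^{|\cS|}$, i.e.\ the strictly positive product policies.

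The one structural fact I would establish first is that the value $V^{g,h}(s)$ is Lipschitz continuous in each player's policy, uniformly over the other player's policy, with a constant of order $(1-\gamma)^{-2}$; this is a standard performance-difference/simulation-lemma estimate using $r\in[0,1]$. With this in hand, I would construct the equilibrium by pushing the boundary solution into the interior: set $\tilde g=(1-\delta)g^\star+\delta\, u$ and $\tilde h=(1-\delta)h^\star+\delta\, u$, where $u$ denotes the uniform policy, so that $\tilde g,\tilde h$ are strictly positive and hence equal $g_{\tilde\theta},h_{\tilde\nu}$ for some representable $\tilde\theta,\tilde\nu\in\RR^d$ (e.g.\ $\tilde\theta_{s,a}=\log \tilde g(a\given s)$). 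Using the uniform Lipschitz bound together with the saddle property of $(g^\star,h^\star)$, the best-response gaps of $(\tilde g,\tilde h)$ exceed those of $(g^\star,h^\star)$—which are nonpositive—by at most $O\big(\delta/(1-\gamma)^2\big)$.

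Choosing $\delta$ small enough makes both best-response gaps at most $\epsilon$, so that $(\tilde g,\tilde h)$ is a (standard) $\epsilon$-Nash equilibrium against all deviations. Since the deviations $g_\theta,h_\nu$ with $\theta,\nu\in\RR^d$ form a subset of all policies, the defining inequalities of Definition \ref{def:para_eps_NE} hold a fortiori, and $(\tilde\theta,\tilde\nu)$ is therefore an $\epsilon$-in-class NE; density of the interior in each simplex further shows that restricting the deviations to $\RR^d$ does not change the relevant infima and suprema, so the in-class notion in fact coincides with the standard one at the same tolerance. This yields the claimed existence for every $\epsilon>0$.

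The main obstacle is precisely that one cannot hope for the extremum defining a best response to be \emph{attained} inside the class: against a fixed interior opponent, the best response is an optimal policy of an induced MDP, which is generically deterministic and hence on the boundary. Consequently an exact interior saddle point need not exist, and the argument must be genuinely approximate—leaning on the opponent-uniform Lipschitz continuity of $V$ to trade the interior perturbation $\delta$ against the tolerance $\epsilon$. Establishing that Lipschitz estimate with a clean constant that is uniform over the opponent's (possibly boundary) policy is the technical heart of the proof.
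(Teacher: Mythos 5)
There is a genuine gap: your argument proves a strictly weaker statement than the lemma. You construct, for every $\epsilon>0$, an $\epsilon$-in-class NE by pushing a (possibly boundary) Shapley equilibrium into the interior, and you explicitly concede that an exact interior saddle point ``need not exist.'' But Lemma \ref{lemma:exist_para_NE} asserts existence of the exact in-class NE ($\epsilon=0$ in Definition \ref{def:para_eps_NE}), and the paper genuinely needs this exact object downstream: this lemma is what licenses the definition of the regularized NE values $V_\tau^\star$, $Q_\tau^\star$ and of the exact parameter targets $(\theta^\star,\nu^\star)$ in \eqref{def:optimal_para_MG_tab}, to which Theorem \ref{thm:Markov_game_convergence} establishes last-iterate convergence. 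A family of $\epsilon$-equilibria with $\epsilon\to 0$ — which, by the very boundary obstruction you identify, may have no limit point inside the representable class — cannot serve as such a convergence target.

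The missing idea is the one the paper's proof is built on: it treats the \emph{regularized} game (its proof is explicitly stated ``for the regularized game''), and entropy regularization is precisely what defuses your obstruction. The paper defines the Shapley-type operator \eqref{eq:V_contrac_operator} in which each state's game is solved over the parameterized policies; this operator is a $\gamma$-contraction in the sup norm and hence has a unique fixed point $V^\star$ by the Banach fixed-point theorem; and, crucially, at each state the regularized matrix game has an \emph{exact} solution that is strictly interior (a softmax of the induced Q-values, Theorem \ref{thm:con_MWU}), hence exactly representable by tabular parameters, so the max-min in \eqref{eq:V_contrac_operator} is attained by some $(\theta_s^\star,\nu_s^\star)$. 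Lemma 4.3.3 of \cite{filar2012competitive} then lifts the per-state saddle property to the full Markov game, giving $V^{\theta^\star,\nu}\geq V^\star=V^{\theta^\star,\nu^\star}\geq V^{\theta,\nu^\star}$ for all $\theta,\nu$, i.e., an exact in-class NE. Your skepticism about exactness is in fact well-founded for the \emph{unregularized} game, where best responses are generically deterministic while the representable set is open; but your proof never engages with regularization, which is the mechanism that makes the exact statement — the one the paper actually uses — true. So the proposal is not a valid proof of the lemma as deployed in the paper, and its perturbation route cannot be repaired to yield exactness without importing something like the regularized per-state structure.
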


Motivated by \S\ref{sec:ONPG_Static_tabular} and \S\ref{sec:ONPG_Matrix_FA}, we consider the modified version of NPG with optimism to solve this problem.







 \paragraph{Optimistic NPG.} 
 Following \S\ref{sec:prelim}, we also consider the  {\it regularized} Markov games  \citep{geist2019theory,zhang2020model,cen2021fast}, in hope of  favorable convergence guarantees. Define the regularized value functions as  
 \small
 \begin{align}
 \label{eq:reg:V_func}
 V_{\tau}^{\theta, \nu} (s) &:= \mathbb{E} _{a_t\sim g_\theta(\cdot\given s_t),b_t\sim h_\nu(\cdot\given s_t)}\left[ \sum_{t=0}^{\infty} \gamma^t \big( r_t - \tau \log g_\theta(a_t | s_t) + \tau \log h_\nu(b_t|s_t) \big) \Biggiven s_0 = s \right],
 \end{align}
 \normalsize
where $r_t=r(s_t, a_t, b_t)$, $\tau < 1$, 
 and 
\#\label{eq:def_optimal_Q_func}
Q_{\tau}^{\theta, \nu} (s, a, b) := r(s, a, b) + \gamma \mathbb{E}_{s' \sim \mathbb{P}(\cdot|s, a, b)}[ V_{\tau}^{\theta, \nu} (s')].
 \#
 {We denote by $V_\tau^\star$ and $Q_\tau^\star$, the NE value and Q-functions respectively,  for the regularized Markov game (``regularized NE''), i.e., $V_\tau^\star = \min_\theta \max_\nu \ V_{\tau}^{\theta, \nu}$ and $Q_\tau^\star$ is the corresponding Q-function. Note that their existence follows along similar lines as Lemma \ref{lemma:exist_para_NE}.}
As a generalization of regularized matrix games in \S\ref{sec:prelim}, the non-convergence pitfall of vanilla NPG also occurs. 
We also define the following notation 
\begin{align}\label{equ:f_tau}
f_{\tau} \big(Q(s); g_\theta(\cdot\given  s),h_\nu(\cdot\given s)\big):= -\ g_\theta(\cdot\given s) ^{\top} Q(s) h_\nu(\cdot\given s)- \tau \mathcal{H}(g_\theta(\cdot\given s)) + \tau \mathcal{H}(h_\nu(\cdot\given s)).
\end{align}
\normalsize

\subsection{Convergence guarantees}

To stabilize the algorithm, we propose the update rule where  the parameters $(\theta, \nu)$ for all states are updated at a faster time scale,  and the $Q$ matrix is updated at a slower time scale. To be more precise, at every time $t$ of the outer loop, we solve the matrix game \footnote{Note that here we use the fact that  $\min_{g_\theta(\cdot\given s)}\max_{h_\nu(\cdot\given s)} [f_\tau \cdots]$ is equivalent to $\min_{\theta_s\in\RR^{|A|}}\max_{\nu_s\in\RR^{|A|}} [f_\tau \cdots]$ for each $s$.} 
\begin{align}\label{equ:tabular_minimax_each_s}
 \min_{{\theta_s} \in \mathbb{R}^n} \ \max_{{\nu_s}  \in \mathbb{R}^n } ~~~~f_{\tau} (Q(s); g_\theta(\cdot\given s),h_\nu(\cdot\given s)),
\end{align} 
for each state $s \in \cS$ by running $T_{inner}$ iterations of the Optimistic NPG algorithm (Algorithm \ref{alg:optimistic_npg}). 
At the end of each inner loop, the outer loop updates the $Q$ matrix for each state $s \in \cS$ as $Q_{t+1}(s, a, b) = r(s, a, b)  + \gamma \mathbb{E}_{s' \sim \mathbb{P}(\cdot|s, a, b)} [ f_{\tau} (Q_t(s'); g_{\theta_{T_{inner}}}(\cdot\given s'), h_{\nu_{T_{inner}}}(\cdot\given s'))]$. 

The complete algorithm is presented in Algorithm \ref{alg:markov_game}. Note that we use the name ONPG for Markov games because the inner matrix game is solved using the ONPG 
updates. The two-timescale-type update rule (between the policy and value updates) for solving infinite-horizon  Markov games has also been used before in \cite{sayin2021decentralized,cen2021fast,wei2021last}.

Next, we provide a convergence result for the performance of Algorithm \ref{alg:markov_game} for the regularized Markov game.  




\begin{theorem}
\label{thm:Markov_game_convergence}
Let $Q^\star_{\tau}$ be the NE $Q$-value of the regularized Markov Game under the tabular parametrization. Choose the stepsize $\eta = \frac{1-\gamma}{2(1 + \tau(\log n + 1 - \gamma))}$ for the inner loop in Algorithm \ref{alg:markov_game}. Let $T$ denote the total number of iterations $(T_{outer}\cdot T_{inner})$. Then, after
\begin{align}
T_{inner} &= \mathcal{O} \left( \frac{1}{\eta \tau} \left( \log \frac{1}{\epsilon} + \log \frac{1}{1 - \gamma} + \log \log n + \log \frac{1}{\eta} \right) \right), \nonumber \\
T_{outer} &= \mathcal{O} \left( \frac{1}{1 - \gamma} \left( \log \frac{d}{\epsilon} + \log \left( \frac{8}{\tau}  \left(1 + C^2 \|Q^\star\|^2_F \right) \right)  +\log \frac{1 + \tau \log n}{1 - \gamma} \right) \right),
\end{align}
iterations, we have $\| Q_T - Q^\star_{\tau} \|_\infty \leq \epsilon$ and $\max\{ \| \theta_T - \theta^\star \| , \| \nu_T - \nu^\star \| \} \leq \epsilon$
where $(Q_T, \theta_T, \nu_T)$ is the output of Algorithm \ref{alg:markov_game} after $T$ iterations, and $(\theta^\star, \nu^\star)$ are defined in Equation \eqref{def:optimal_para_MG_tab}. 
\end{theorem}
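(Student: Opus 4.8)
The plan is to decouple the outer $Q$-iteration from the inner matrix-game solve and treat the overall scheme as an \emph{inexact value iteration} on the regularized Bellman operator. First I would define the regularized minimax value map $\mathrm{val}_\tau(Q(s)) := \min_{g}\max_{h} f_\tau(Q(s); g, h)$ and the associated operator $(\mathcal{T}_\tau Q)(s,a,b) := r(s,a,b) + \gamma\,\EE_{s'\sim P(\cdot|s,a,b)}[\mathrm{val}_\tau(Q(s'))]$, whose unique fixed point is $Q_\tau^\star$. Since $g,h$ are probability vectors, $g^\top Q h$ is $1$-Lipschitz in $Q$ under $\|\cdot\|_\infty$, and taking min-max preserves this; hence $\mathrm{val}_\tau$ is $1$-Lipschitz and $\mathcal{T}_\tau$ is a $\gamma$-contraction in $\|\cdot\|_\infty$. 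This handles the outer timescale.

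Second, I would quantify the inner-loop error. At outer step $t$ the algorithm runs $T_{inner}$ iterations of Optimistic NPG (Algorithm \ref{alg:optimistic_npg}) on the matrix game with payoff $Q_t(s)$; by Theorem \ref{thm:param_con} the inner policies converge linearly in KL at rate $(1-\eta\tau/2)$, and since $f_\tau(Q_t(s);\cdot,\cdot)$ is $\tau$-strongly-convex--strongly-concave, the value gap $|f_\tau(Q_t(s); g_{T_{inner}}, h_{T_{inner}}) - \mathrm{val}_\tau(Q_t(s))|$ is controlled, via Pinsker's inequality and Lipschitzness of $f_\tau$, by the distance to the inner NE and therefore contracts geometrically in $T_{inner}$. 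Writing $Q_{t+1}$ as the inexact application of $\mathcal{T}_\tau$ to $Q_t$ gives $\|Q_{t+1}-\mathcal{T}_\tau Q_t\|_\infty \le \gamma\,\epsilon_{\mathrm{inner}}$, so combining with the contraction yields the recursion $\|Q_{t+1}-Q_\tau^\star\|_\infty \le \gamma\|Q_t - Q_\tau^\star\|_\infty + \gamma\,\epsilon_{\mathrm{inner}}$. Unrolling over $T_{outer}$ steps and balancing $\gamma^{T_{outer}}\|Q_0-Q_\tau^\star\|_\infty$ against $\frac{\gamma\epsilon_{\mathrm{inner}}}{1-\gamma}$ produces the stated $T_{inner}$ and $T_{outer}$ (the $\log\log n$, $\log(1/\eta)$ and $\|Q^\star\|_F$ terms arising from the explicit constants $V_0$ and $C$ in Theorem \ref{thm:param_con} and from $\|Q_\tau^\star\|_\infty \le \frac{1+\tau\log n}{1-\gamma}$, which also dictates the stepsize choice), giving $\|Q_T - Q_\tau^\star\|_\infty \le \epsilon$.

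Third, for the parameter bound I would use a triangle inequality against the moving target. Let $\theta^\star(Q)$ denote the NE parameter of the matrix game with payoff $Q$, so that $\theta^\star = \theta^\star(Q_\tau^\star)$ (Equation \eqref{def:optimal_para_MG_tab}). Then $\|\theta_T - \theta^\star\| \le \|\theta_T - \theta^\star(Q_{\mathrm{last}})\| + \|\theta^\star(Q_{\mathrm{last}}) - \theta^\star(Q_\tau^\star)\|$, where $Q_{\mathrm{last}}$ is the payoff used in the final inner solve. The first term is exactly the last-iterate parameter error from Theorem \ref{thm:param_con} and is driven below $\epsilon/2$ by $T_{inner}$; the second term requires a \emph{sensitivity} bound showing $\theta^\star(Q) = -Q\,h^\star(Q)/\tau$ is Lipschitz in $Q$. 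Since the regularized inner NE policy $h^\star(Q)$ is the solution of a $\tau$-strongly-monotone problem, it is $O(1/\tau)$-Lipschitz in $Q$, so $\|\theta^\star(Q_1)-\theta^\star(Q_2)\| = O(\frac{1}{\tau}(\|Q\|_\infty+1)\|Q_1-Q_2\|_\infty)$, and combining with $\|Q_{\mathrm{last}}-Q_\tau^\star\|_\infty \le \epsilon$ drives the drift term below $\epsilon/2$.

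The main obstacle is the interaction between warm-starting and the moving inner target. Because the inner loop is warm-started while the payoff $Q_t$ drifts, the initial Lyapunov value $V_0$ (and hence $\epsilon_{\mathrm{inner}}$) at each outer step depends on the KL from the current inner NE to the previous inner iterate; I would control this by invoking Lemma \ref{lemma:MWU_c_main} to keep all policies uniformly bounded away from the simplex boundary (so the relevant KL stays $O(\log n)$ uniformly) and by using the contraction of $Q_t$ to ensure the target $\theta^\star(Q_t)$ drifts geometrically. Establishing the Lipschitz sensitivity of $h^\star(Q)$ and $\theta^\star(Q)$ with the correct $1/\tau$ dependence --- via strong convexity of the regularized objective rather than a generic implicit-function argument --- is the key technical step that ties the two timescales together and yields the stated parameter-convergence rate.
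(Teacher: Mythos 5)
Your proposal is correct and follows the same overall architecture as the paper's proof: an inexact value-iteration argument for the outer loop (the paper obtains $\|Q_t - Q^\star_\tau\|_\infty \leq \epsilon_{\mathrm{inner}} + \gamma^t \|Q_0 - Q^\star_\tau\|_\infty$ by invoking Lemma \ref{lemma:distribution_con_cen} together with Theorem 2 of \cite{cen2021fast}, where you derive the same recursion directly from the $\gamma$-contraction of the soft Bellman operator), and, for the parameters, the identical two-term decomposition $\|\theta_t - \theta^\star\|^2 \leq 2\|\theta_t - \theta^\star_{Q_t}\|^2 + 2\|\theta^\star_{Q_t} - \theta^\star\|^2$, with the first term killed by Theorem \ref{thm:param_con} over $T_{inner}$ iterations and the second by a sensitivity bound on the inner NE as a function of $Q$. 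The one genuine difference is how that sensitivity bound (the paper's Lemma \ref{lemma:smooth_wrtQ}) is established. The paper proves Lipschitzness of the inner NE policies in $Q$ via the implicit function theorem applied to the KKT system, exploiting the skew-symmetric-plus-strongly-convex structure of the Jacobian; this yields a finite but non-explicit constant $C$. You instead propose the standard perturbation bound for $\tau$-strongly monotone variational inequalities: since the VI operators for two payoff matrices differ only in the bilinear term, $\|z^\star_{Q_1} - z^\star_{Q_2}\| \leq \frac{1}{\tau}\sup_z \|F_{Q_1}(z) - F_{Q_2}(z)\| \leq \frac{\sqrt{2}}{\tau}\|Q_1 - Q_2\|_F$. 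This argument is valid here (entropy regularization keeps the inner NE in the interior of the simplex, so the VI characterization applies, and negative entropy is $1$-strongly convex w.r.t.\ $\ell_2$ on the simplex), it is more elementary, and it yields an explicit $O(1/\tau)$ constant in place of the paper's $C$; since this constant enters $T_{outer}$ only logarithmically, the stated rates are unaffected either way. One small remark: your worry about warm-starting the inner loop is moot, because Algorithm \ref{alg:markov_game} calls Algorithm \ref{alg:optimistic_npg}, which re-initializes at $\theta_0 = \nu_0 = 0$ (uniform policies) at every outer step, so the initial KL to the inner NE is at most $\log n$ uniformly --- which is exactly the bound your proposed fix supplies.
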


Finally, we show how the optimistic  NPG algorithm solves the original Markov game without regularization.
\begin{corollary}
\label{cor:MG_relation_unreg}
If we run Algorithm \ref{alg:markov_game}  for time $T_{inner} =  \mathcal{O} \left(  \frac{\log n}{(1 -\gamma)^2\epsilon} \log \left( \frac{1}{\epsilon} \right) \right)$, $T_{outer} =  \mathcal{O} \left( \frac{1}{(1 -\gamma)} \log \left( \frac{1}{\epsilon} \right) \right)$, and setting $\tau = \mathcal{O} ((1-\gamma) \epsilon/\log n)$, the output $(\theta_T, \nu_T)$ will be an $\epsilon$-in-class NE (Definition \ref{def:para_eps_NE}) of the original unregularized Markov game.
\end{corollary}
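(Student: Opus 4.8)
The plan is to treat this as a regularization-removal argument: I would decompose the total error of the output policy into an \emph{optimization error}, measuring how far $(\theta_T,\nu_T)$ is from the regularized NE and controlled by Theorem~\ref{thm:Markov_game_convergence}, and a \emph{regularization bias}, measuring how far the regularized game is from the unregularized one and controlled by the magnitude of $\tau$. I would quantify the final guarantee through the \emph{exploitability} (best-response gap) of the output policy $(\hat g,\hat h):=(g_{\theta_T},h_{\nu_T})$ in the unregularized game, phrasing everything in terms of best responses so as to stay agnostic to the sign convention in Definition~\ref{def:para_eps_NE}, and show that both error sources can be driven to $\mathcal{O}(\epsilon)$ under the stated choices of $T_{inner}$, $T_{outer}$, and $\tau$.

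For the regularization bias, the key elementary fact is that the per-step entropy correction is bounded, $0\le \mathcal{H}(\pi)\le \log n$ for any $\pi\in\Delta$. Since the regularized value in \eqref{eq:reg:V_func} differs from the unregularized value only through the discounted sum of the entropy terms $\tau(\mathcal{H}(g(\cdot\given s_t))-\mathcal{H}(h(\cdot\given s_t)))$, I would first establish the uniform bound
\begin{align}
\big| V_\tau^{g,h}(s) - V^{g,h}(s) \big| \le \frac{\tau\log n}{1-\gamma}, \qquad \forall\, (g,h),\ s\in\cS.
\end{align}
Feeding this into a best-response comparison then shows that the unregularized exploitability of \emph{any} policy pair is at most its regularized exploitability plus $\tfrac{2\tau\log n}{1-\gamma}$: for player~$2$, combining $V^{\hat g,h}\le V_\tau^{\hat g,h}+\tfrac{\tau\log n}{1-\gamma}$ with $V_\tau^{\hat g,\hat h}\le V^{\hat g,\hat h}+\tfrac{\tau\log n}{1-\gamma}$ yields $\max_h V^{\hat g,h}-V^{\hat g,\hat h}\le (\max_h V_\tau^{\hat g,h}-V_\tau^{\hat g,\hat h})+\tfrac{2\tau\log n}{1-\gamma}$, and symmetrically for player~$1$.

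For the optimization error, I would invoke Theorem~\ref{thm:Markov_game_convergence} to obtain $\|Q_T-Q^\star_\tau\|_\infty\le\epsilon$ and $\max\{\|\theta_T-\theta^\star\|,\|\nu_T-\nu^\star\|\}\le\epsilon$, and convert these into a bound on the \emph{regularized} exploitability of $(\hat g,\hat h)$. Since the inner loop returns the near-optimal solution of the regularized matrix game defined by $Q_T\approx Q^\star_\tau$ at every state, accuracy of the $Q$-estimate together with the fact that the regularized Bellman operator is a $\gamma$-contraction in $\|\cdot\|_\infty$ bounds how much either player can gain by deviating in the regularized Markov game by $\mathcal{O}(\epsilon/(1-\gamma))$; Lipschitzness of the softmax map over the bounded parameter region then lets the parameter closeness absorb the residual policy error. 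Combining with the bias bound above shows the output has unregularized exploitability $\mathcal{O}(\epsilon)+\tfrac{2\tau\log n}{1-\gamma}$.

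Finally, I would verify the parameter substitution. Setting $\tau=\mathcal{O}((1-\gamma)\epsilon/\log n)$ makes the bias term $\tfrac{2\tau\log n}{1-\gamma}=\mathcal{O}(\epsilon)$, and since the stepsize $\eta=\tfrac{1-\gamma}{2(1+\tau(\log n+1-\gamma))}$ is of order $1-\gamma$ for small $\tau$, substituting into the iteration counts of Theorem~\ref{thm:Markov_game_convergence} gives $T_{inner}=\mathcal{O}\big(\tfrac{1}{\eta\tau}\log\tfrac1\epsilon\big)=\mathcal{O}\big(\tfrac{\log n}{(1-\gamma)^2\epsilon}\log\tfrac1\epsilon\big)$ and $T_{outer}=\mathcal{O}\big(\tfrac{1}{1-\gamma}\log\tfrac1\epsilon\big)$, with the remaining logarithmic factors in $1-\gamma$, $\log n$, $d$, and $\tfrac1\tau$ all absorbed into $\log\tfrac1\epsilon$. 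Rescaling the target accuracy by a constant then delivers the claimed $\epsilon$-in-class NE. The main obstacle I expect is the third step: cleanly converting the $\ell_\infty$ accuracy of $Q_T$ and the Euclidean parameter accuracy into a genuine best-response bound in the unregularized game, in particular propagating the per-state matrix-game sub-optimality through the discounted dynamics without losing extra factors of $\tfrac1{1-\gamma}$ and ensuring the softmax map is Lipschitz uniformly over the bounded iterates so that parameter convergence indeed translates into policy convergence.
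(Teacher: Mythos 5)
Your proposal is correct and takes essentially the same route as the paper, which treats this corollary exactly as it treats its matrix-game and monotone-game analogues (Corollaries \ref{cor:tab_matrix_connect_unreg} and \ref{cor:small_tau_N_player}): decompose the error into the optimization error controlled by Theorem \ref{thm:Markov_game_convergence} plus a regularization bias bounded by $\tau\log n/(1-\gamma)$ via the entropy bound, then choose $\tau = \mathcal{O}((1-\gamma)\epsilon/\log n)$ and note that the target accuracy enters $T_{inner}$ and $T_{outer}$ only logarithmically. The conversion step you flag as the main obstacle (turning $\ell_\infty$-accuracy of $Q_T$ and parameter accuracy into an exploitability bound) costs only additional $1/(1-\gamma)$ factors inside logarithms, so it does not affect the stated iteration complexities.
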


\begin{remark}
We remark that Theorem \ref{thm:Markov_game_convergence}, to the best of our knowledge, is the first to show policy parameter convergence in Markov games with policy parametrization, and is different from the policy  convergence results of several recent works \cite{zhao2021provably,wei2021last,cen2021fast} (see \S\ref{sec:related_work} for a detailed comparison). 
\end{remark}

We extend these results to simple function approximation settings in \S\ref{sec:appendix_func_approx_markov_results} in the Appendix.


\section{Simulations}\label{sec:simulations}

We now provide simulation results to corroborate our theoretical results. 
First, we study matrix games under the tabular setting in Figure \ref{fig:compare_vanilla_mod_NPG}. Here, we show the behavior of vanilla NPG and our proposed variant   (Equations \eqref{equ:new_NPG_1}-\eqref{equ:new_NPG_2}). We plot the first element of the iterate, i.e., $\theta(1)$ on the y-axis. It is shown that  even for vanishingly small stepsizes, vanilla NPG diverges, whereas the proposed variant converges  even with reasonable step-size choices. The cost matrix $Q$ is taken to be an identity matrix of dimension $5$.
 
Next, we confirm the convergence of our  variant of NPG in Figure \ref{fig:sub22}. In this figure, we compare the behavior of ONPG and NPG, and show that ONPG admits convergence for larger stepsizes. Smaller stepsizes that enables NPG convergence would lead to a slower  convergence rate than ONPG. This is in line with our results in Theorems \ref{thm:con_MWU} and \ref{thm:param_con}.


\begin{figure}[H]
\centering
\begin{subfigure}{.45\textwidth}
  \centering
  \includegraphics[width=1\linewidth]{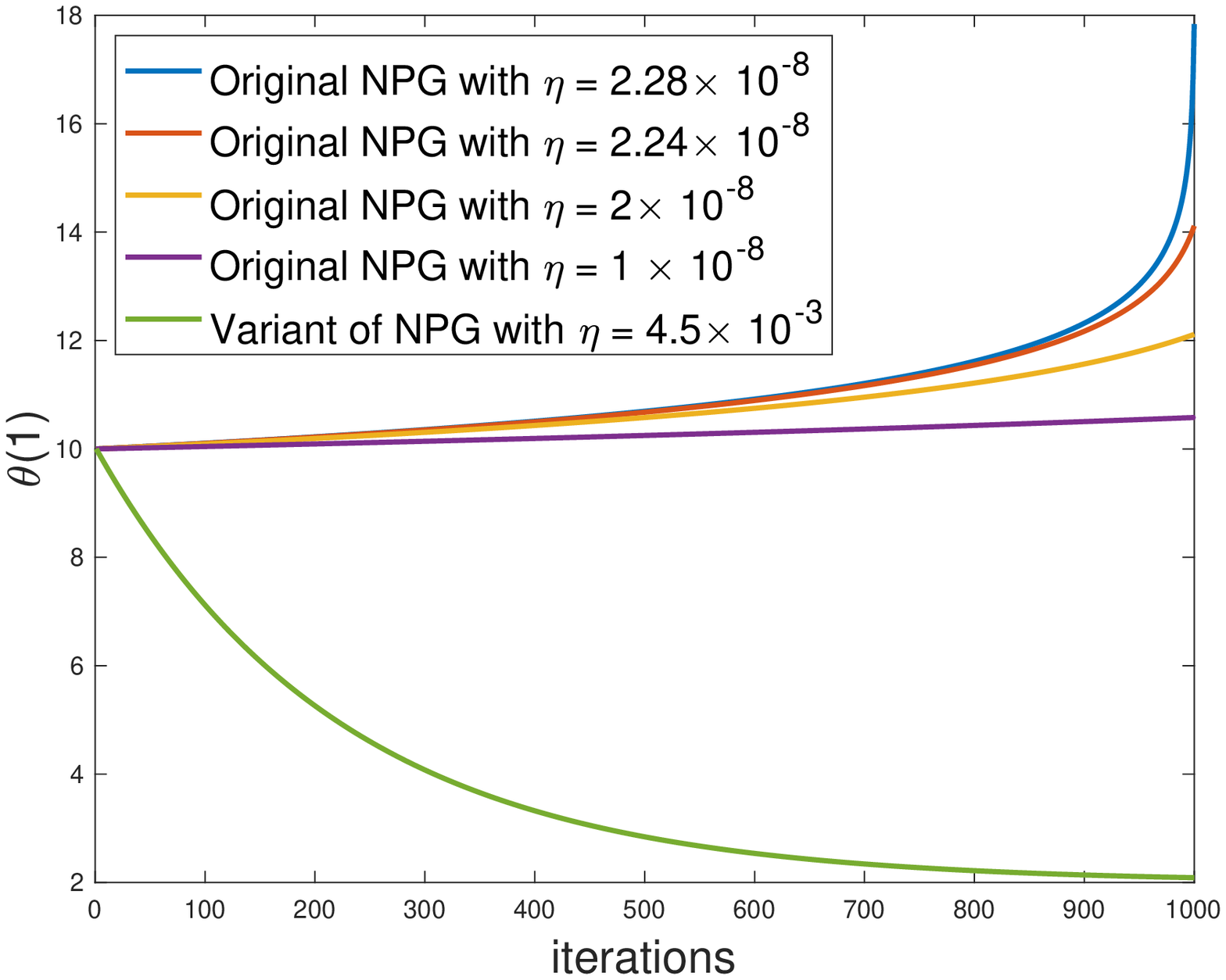}
  \caption{Vanilla NPG vs proposed variant of NPG.}
  \label{fig:compare_vanilla_mod_NPG}
\end{subfigure}%
\begin{subfigure}{.45\textwidth}
  \centering
  \includegraphics[width=1\linewidth]{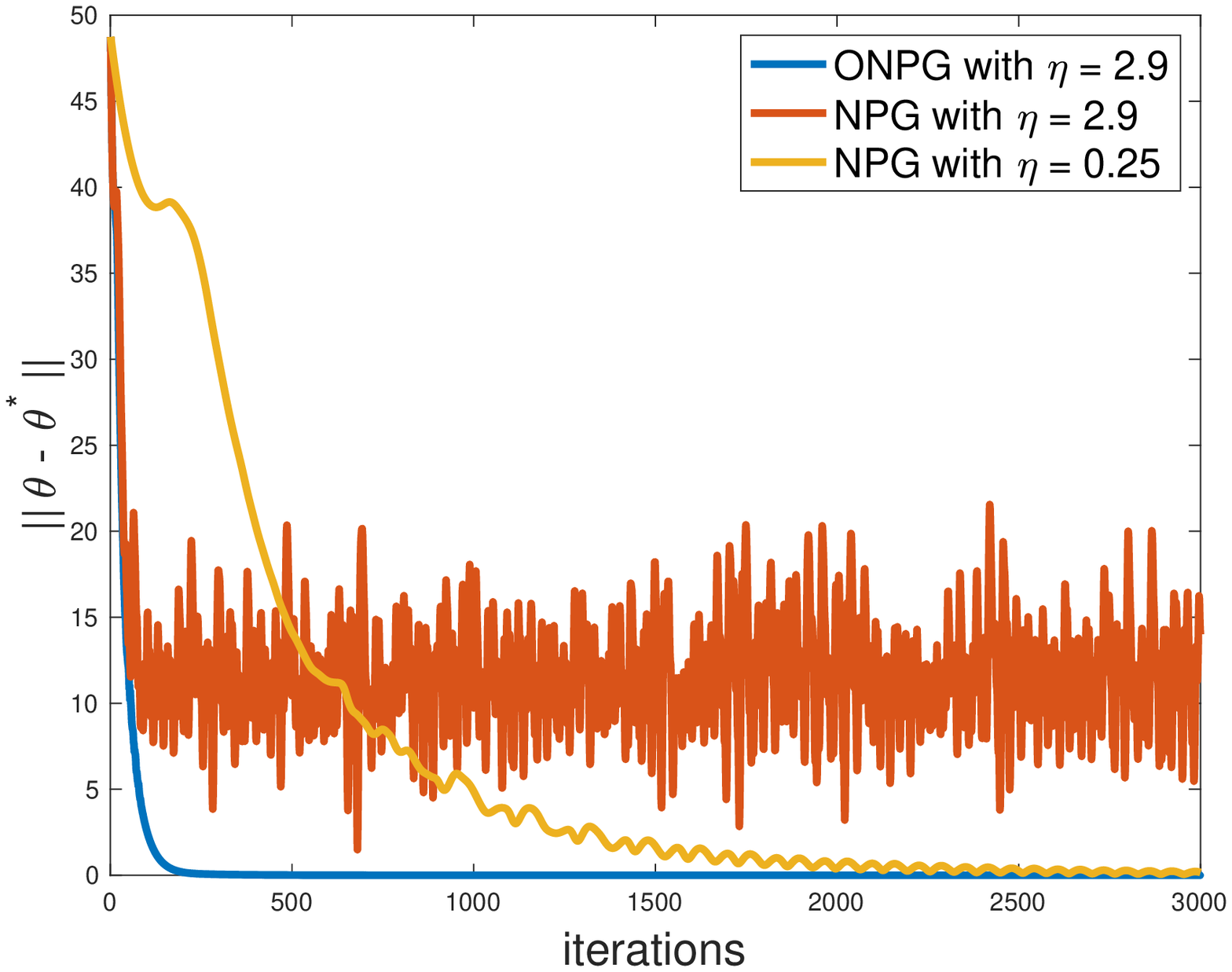}
  \caption{NPG  vs ONPG.}
  \label{fig:sub22}
\end{subfigure}
\caption{Comparison of vanilla NPG, proposed variant of NPG and ONPG in matrix game under the tabular setting, in terms of parameter convergence.}
\label{fig:test}
\end{figure}

 \subsection{ONPG in Markov games with function approximation}

Figures \ref{fig:sub1} and \ref{fig:sub2} study the behavior of Algorithm \ref{alg:markov_game_func} in  Markov games with log-linear function approximation, and corroborate the results of Theorem \ref{thm:Markov_game_convergence_func}. Here we take the feature matrix $\Phi \in \mathbb{R}^{10 \times 100}$, and $|\cS| = 10$, i.e.,  there are $10$  states. The first $10$ columns correspond to the first action of each of the $10$ states. 
This means that $\Phi_{(s, 1)} = e_s$ for $s = \{ 1, 2, \cdots, 10\}$ where $e_s$ is a standard basis vector with element $1$ at position $s$.  We take the discount factor $\gamma = 0.8$. We take the transition probability to be uniform for each state action pair, i.e., $\mathbb{P}(\cdot | s, a, b) = 1/10$ for all $(s, a, b)$, i.e., $\mathbb{P}(s' | s, a, b) = 1/10$ for all state $s' \in \cS$. Finally, we take the regularization parameter $\tau = 0.1$. 


\begin{figure}[H]
\centering
\begin{subfigure}{.45\textwidth}
  \centering
  \includegraphics[width=1\linewidth]{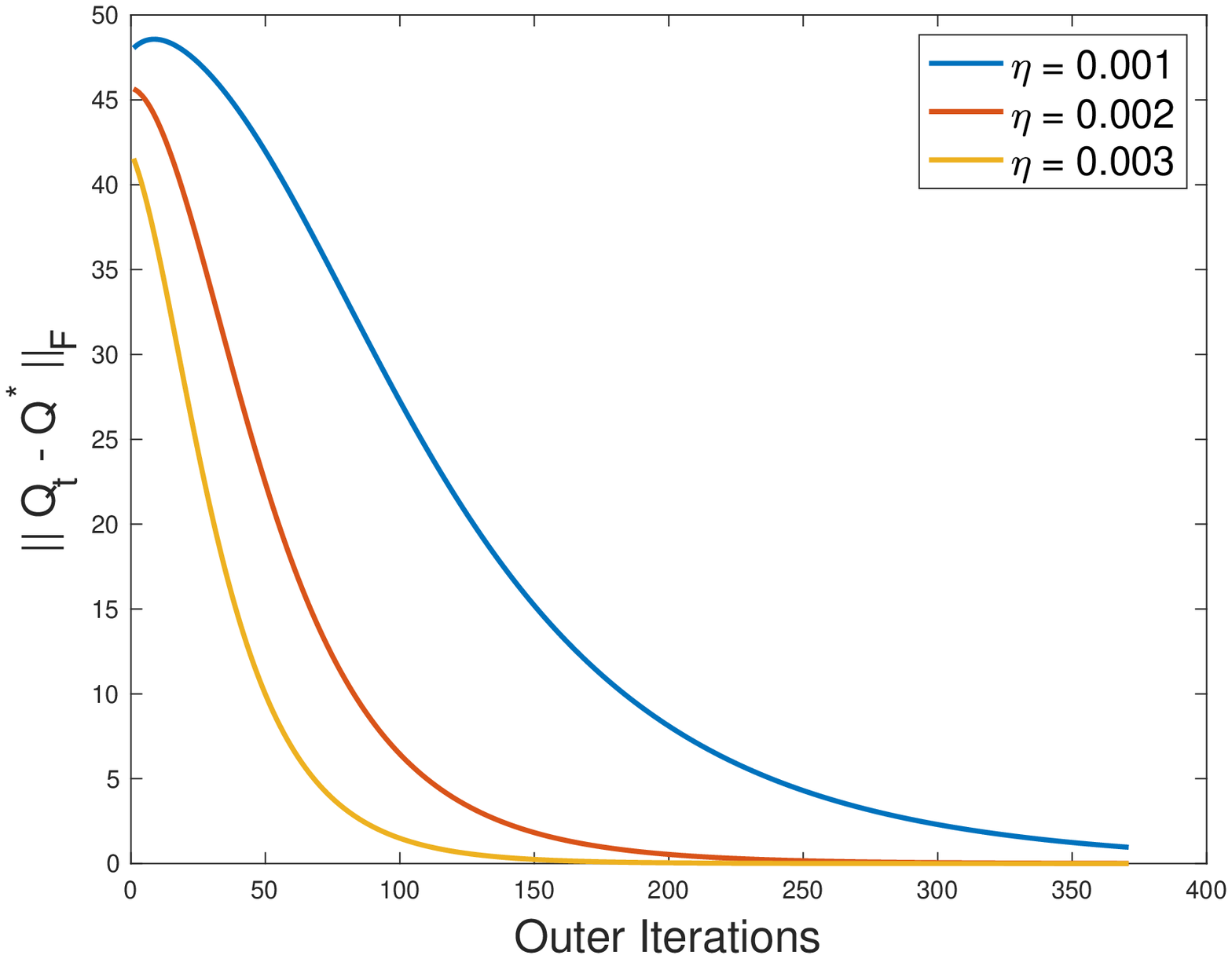}
  \caption{$Q$-matrix to in-class NE $Q$-matrix.}
  \label{fig:sub1}
\end{subfigure}%
\begin{subfigure}{.45\textwidth}
  \centering
  \includegraphics[width=1\linewidth]{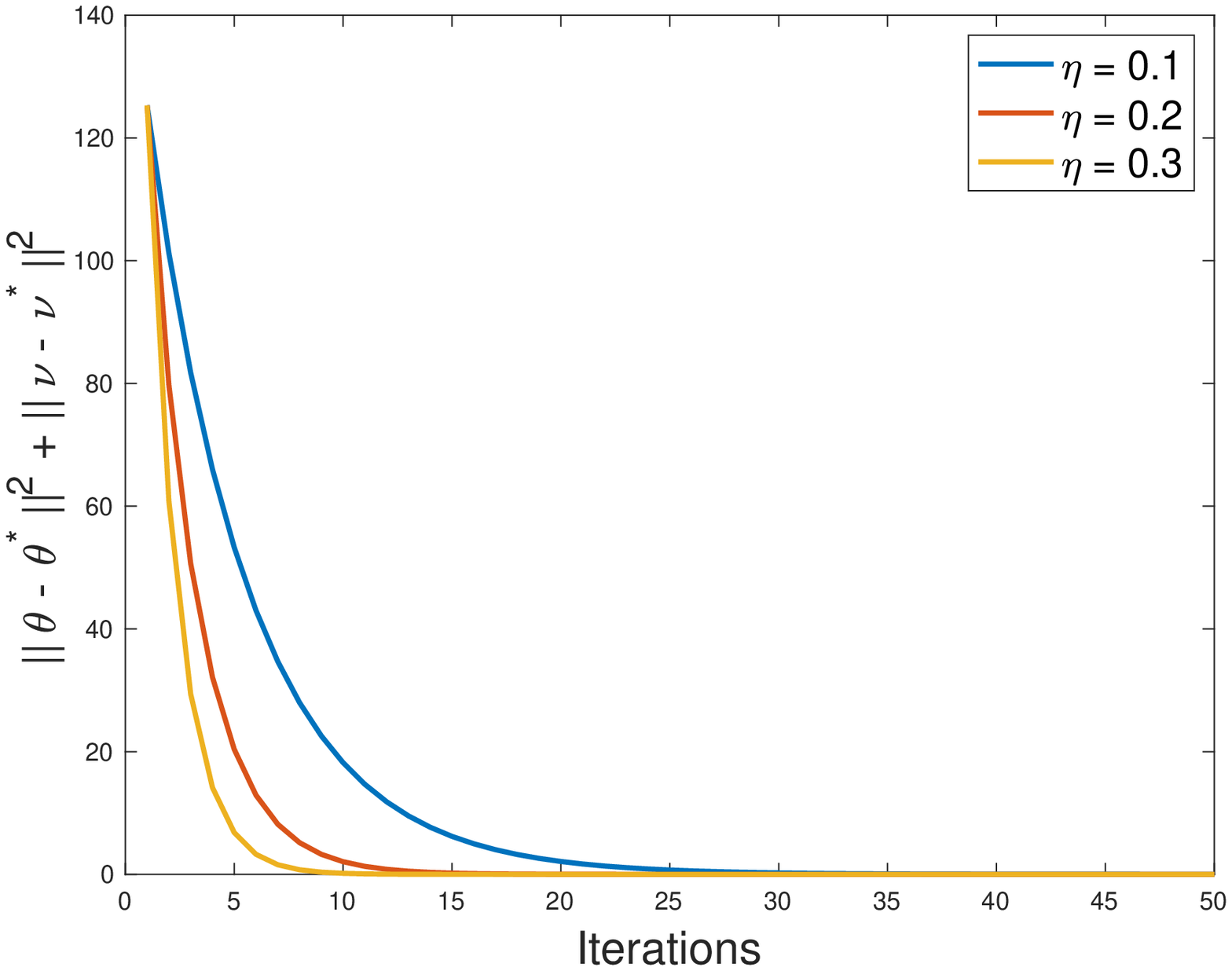}
  \caption{Parameters to the in-class NE parameters.}
  \label{fig:sub2}
\end{subfigure}
\caption{Convergence in Markov Games with linear function approximation.}
\label{fig:test}
\end{figure}


\section{Concluding Remarks}\label{sec:conclusions}

In this paper, we study the global last-iterate parameter convergence of symmetric policy gradient methods for multi-agent learning. We identified the non-convergence issue of vanilla natural PG in policy parameters, even in presence of regularized reward function,  and developed    variants of natural PG methods  that enjoy last-iterate parameter convergence. We have then expanded the scope of the symmetric PG methods for multi-agent learning, and incorporated function approximation to handle large state-action spaces. Future work includes embracing more general function approximation in policy parameterization, and exploring the power of our approach in  nonconvex-nonconcave minimax optimization with other hidden convex structures. 


\bibliographystyle{ims}
\bibliography{main,mybibfile}

\newpage
\appendix


%
%




\section{Missing Definitions and Proofs in \S\ref{sec:prelim}}\label{sec:append_background_NPG}





\subsection{Proof of Lemma \ref{lemma:ncnc}}

We show that the problem 
\begin{align}
\min_{\theta \in \mathbb{R}^n} \ \max_{\nu \in  \mathbb{R}^n}  \ g_{\theta} ^{\top} Q h_{\nu},
\end{align}
is nonconvex-nonconcave.

Let
\begin{align}
Q = \begin{bmatrix}
1 & 0 \\
0 & 1
\end{bmatrix}.
\end{align}

Consider $\theta_1 = (0,0)$ and $\theta_2 = (\log 4, \log 9)$. This implies $g_{\theta_1} = (1/2, 1/2)^\top$ and $g_{\theta_2} = (4/13, 9/13)^\top$. 
Also, from the form of $Q$, we have $Qh_{\nu} = [h_{\nu}(1),  h_{\nu}(2)]^\top,~ \ \forall \nu$. Now, for $[h_{\nu}(1),  h_{\nu}(2)] = [1/3, 2/3]$, we have
\begin{align}
\frac{1}{2} (g_{\theta_1} ^{\top} Q h_{\nu} + g_{\theta_2} ^{\top} Q h_{\nu}) < g_{(\theta_1 + \theta_2)/2}Q h_{\nu},
\end{align}
which implies nonconvexity in $\theta$.

Similarly, taking $\nu_1 = (0,0)$ and $\nu_2 = (\log 4, \log 9)$ (which implies $h_{\nu_1} = (1/2, 1/2)^\top$ and $h_{\nu_2} = (4/13, 9/13)^\top$, and taking $g_{\theta} = (2/3, 1/3)^\top$, we have
\begin{align}  
\frac{1}{2} (g_{\theta} ^{\top} Q h_{\nu_1} + g_{\theta} ^{\top} Q h_{\nu_2}) > g_{\theta}Q h_{(\nu_1 + \nu_2)/2},
\end{align}
which implies nonconcavity in $\nu$. 

Note that adding regularization does not get rid of this convexity. For example consider the specific case when $h_\nu$ is a constant policy and the matrix $Q$ is 0. We show the nonconvexity of the function $- \tau \mathcal{H}(g_{\theta})$ in $\theta$ next. Consider $\theta_1 = (0,0)$ and $\theta_2 = (\log 4, \log 9)$. We have $g_{\theta_1} = (1/2, 1/2)$ and $g_{\theta_2} = (4/13, 9/13)$. Furthermore, we have $g_{(\theta_1 + \theta_2)/2} = (1/3, 2/3)$, and we can see that:
\begin{align}
\frac{-\tau}{2} ( \mathcal{H}(g_{\theta_1}) + \mathcal{H}(g_{\theta_2}) ) < -\tau \mathcal{H}(g_{(\theta_1 + \theta_2)/2})
\end{align}
which shows nonconvexity of $- \tau \mathcal{H}(g_{\theta})$. This completes the proof of the lemma.\hfil \qed

\subsection{Vanilla NPG for matrix games}\label{sec:vanilla_NPG_deriv}

Next, we compute the Fisher Information Matrix $F_\theta(\theta) = \EE_{a\sim g_\theta}\big[\big(\nabla_\theta\log g_\theta(a)\big)\big(\nabla_\theta\log g_\theta(a)\big)^\top\big]$. For the softmax parametrization, we have:
\begin{align} 
\nabla_\theta\log g_\theta(a) = \nabla_\theta \left( \theta(a) - \log\Big(\sum_{a' \in \mathcal{A}} e^{\theta(a')} \Big)\right) = [-g_{\theta}(1), -g_\theta(2), \cdots, 1 - g_\theta(a), \cdots, -g_\theta(n) ]^{\top}.
\end{align}
Now, consider the $(i,i)^{th}$ element of the Fisher information matrix. We have:
\begin{align}
[F_\theta(\theta)]_{ii} = g_\theta(i) (1 - g_\theta(i) ) (1 - g_\theta(i)) + \sum_{j \neq i} g_{\theta}(j) g_\theta(i)^2 = g_\theta(i) (1 - g_\theta(i) ).
\end{align}
Similarly, we have the $(i,j)^{th}$ element, where $i \neq j$ is given by:
\begin{align}
[F_\theta(\theta)]_{ij} &= (1 - g_\theta(i) - g_\theta(j)) g_\theta(i) g_\theta(j) - g_\theta(i)(1 - g_\theta(i))g_\theta(j) - g_\theta(j) (1 - g_\theta(j)) g_\theta(i) = -g_\theta(i)g_\theta(j).
\end{align}
Therefore, the matrix $F_\theta(\theta)$ can be succinctly written as:
\begin{align}
F_\theta(\theta) = \text{diag} (g_\theta) - g_\theta g_\theta^{\top},
\end{align}
where $\text{diag} (g_\theta)$ is a diagonal matrix with entries $g_\theta$. Note that this is in fact $\nabla_\theta g_\theta$ (see \cite{mei2020global}), i.e., 
\begin{align}
\nabla_\theta g_\theta = \text{diag} (g_\theta) - g_\theta g_\theta^{\top}.
\end{align}
Therefore, we have:
\begin{align}
F^{\dagger}_\theta(\theta) \nabla_\theta g_\theta = I.
\end{align}
The update of the vanilla NPG thus simplifies to the following: 
\begin{align}
{\theta}_{t+1} &={\theta}_{t}-\eta \cdot F^{\dagger}_\theta(\theta_t)\cdot\frac{\partial f_\tau(\theta_t,\nu_t)}{\partial\theta} ={\theta}_{t}-\eta\frac{\partial f_\tau(\theta_t,\nu_t)}{\partial g_\theta} \nonumber \\
&= \theta_t - \eta \left( Qh_{\nu_t} + \tau (\mathbbm{1} + \log g_{\theta_t}) \right).
\end{align}
However, since $g_\theta(a) = \frac{e^{\theta(a)}}{\sum_{a' \in \mathcal{A}} e^{\theta(a')}}$, we have
\begin{align}
\theta_{t+1}(a) = (1 - \eta \tau) \theta_t(a) - \eta \bigg( [Q h_{{\nu}_t}]_a + \tau - \tau \log \sum_{a' \in\cA} e^{{\theta_t}(a')}\bigg).
\end{align}
A similar update for $\nu$ leads to the updates in Equations \eqref{equ:vanilla_NPG_1}-\eqref{equ:vanilla_NPG_2}. Note that when we write a constant in the update, we mean a constant vector with all elements being the same.

\subsection{Proof of  Lemma \ref{lemma:pitfall_NPG}} 

We restate the lemma here first for convenience:

\begin{lemma}[Pitfall of vanilla NPG]
	There exists a game \eqref{prob:entr_reg_matrix} with $\tau \geq 0$ (we allow for unregularized games as well) and a dummy player 2, i.e., $|\cB|=1$, for which the updates \eqref{equ:vanilla_NPG_1}-\eqref{equ:vanilla_NPG_2} do not converge for any $\eta>0$. 
\end{lemma}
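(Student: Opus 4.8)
The plan is to exhibit an explicit single-agent instance and show that the vanilla NPG map has no fixed point, which forces non-convergence. Since $|\cB| = 1$, player 2 is a dummy, so I only track player 1's update. With $\tau \geq 0$ and a degenerate opponent, the update \eqref{equ:vanilla_NPG_1} reduces to
\begin{align}
\label{eq:pitfall_reduced}
\theta_{t+1} = (1 - \eta\tau)\theta_t - \eta q + \eta\tau\Bigl(\log\textstyle\sum_{a'} e^{\theta_t(a')} - 1\Bigr)\mathbbm{1},
\end{align}
where $q := Qh_\nu \in \RR^n$ is a fixed vector (constant because $h_\nu$ is forced, $\cB$ being a singleton). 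First I would argue that \emph{if} the iteration converges to some limit $\theta^\infty$, then by continuity $\theta^\infty$ must be a fixed point of the map in \eqref{eq:pitfall_reduced}. The strategy is then to choose $q$ (equivalently $Q$) so that this map provably has no fixed point, yielding the contradiction for every $\eta > 0$.

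The key step is the fixed-point analysis. Setting $\theta = \theta^\infty$ in \eqref{eq:pitfall_reduced} and rearranging gives $\eta\tau\,\theta^\infty = -\eta q + \eta\tau\bigl(\log\sum_{a'} e^{\theta^\infty(a')} - 1\bigr)\mathbbm{1}$, i.e. a fixed point must satisfy $\theta^\infty = -q/\tau + c\,\mathbbm{1}$ for the scalar $c = \log\sum_{a'}e^{\theta^\infty(a')} - 1$. For the unregularized case $\tau = 0$ the analysis is cleanest: the map becomes $\theta_{t+1} = \theta_t - \eta q$, a pure translation by $-\eta q$, which has no fixed point whenever $q \neq 0$, and in fact $\|\theta_t\| \to \infty$ linearly. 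So taking any $n \geq 2$ with a $Q$ whose single relevant column $q$ is nonzero already settles $\tau = 0$. For $\tau > 0$ I would track the component of $\theta_t$ along $\mathbbm{1}$ versus the components orthogonal to it: the orthogonal part contracts toward $-q_\perp/\tau$ (so it does converge), but the self-consistency equation for the scalar $c$ — coupling $c$ to $\log\sum_{a'} e^{\theta^\infty(a')}$ through the full vector $\theta^\infty = -q/\tau + c\mathbbm{1}$ — is what can be made unsolvable, since the softmax normalization identity forces a constraint that an arbitrary shift $c$ need not satisfy. Concretely, I would exploit the invariance that adding a constant to $\theta$ leaves $g_\theta$ unchanged, so the dynamics along $\mathbbm{1}$ decouple and can be shown to drift (the $\mathbbm{1}$-component obeys a scalar affine recursion whose fixed-point equation has no solution for a suitable $q$), making $\|\theta_t\|\to\infty$.

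The cleanest route, and the one I would actually write, is to reduce both cases to the $\mathbbm{1}$-direction drift. Decompose $\theta_t = \alpha_t \mathbbm{1} + \theta_t^\perp$ with $\theta_t^\perp \perp \mathbbm{1}$. Because $\log\sum e^{\theta_t(a')}$ and $g_{\theta_t}$ depend on $\theta_t$ only through $\theta_t^\perp$ and $\alpha_t$ additively, the perpendicular component satisfies a contraction $\theta_{t+1}^\perp = (1-\eta\tau)\theta_t^\perp - \eta q^\perp$ and converges, while the scalar $\alpha_t$ satisfies an affine recursion driven by $-\eta\bar q + \eta\tau(\log\sum e^{\theta_t(a')} - 1)$ where $\bar q$ is the mean of $q$; one checks that the stationary equation for $\alpha$ reduces to an equation with no finite solution for appropriately chosen $q$ (or, for $\tau=0$, is a nontrivial translation). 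The \textbf{main obstacle} I anticipate is handling $\tau > 0$ rigorously: the $\alpha$-recursion is coupled to $\theta_t^\perp$ through the log-sum-exp term, so I must show the divergence of $\alpha_t$ survives even after the perpendicular part has converged — this requires verifying that the limiting scalar fixed-point equation genuinely has no solution rather than merely being awkward. The simplest airtight choice is to foreground the $\tau=0$ translation example (which is immediate and already proves the stated claim, since the lemma only asserts existence of \emph{some} game with $\tau\geq 0$), and treat $\tau>0$ as a secondary construction. I expect the authors use exactly this $\tau = 0$ translation idea as the backbone.
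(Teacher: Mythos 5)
Your proposal is correct, and its logical skeleton --- if the iterates converge, continuity forces the limit to be a fixed point of the update map, so it suffices to exhibit a game whose map has no fixed point --- is exactly the paper's; the $\tau=0$ translation example you foreground is complete and, since the lemma only asserts existence of some game with $\tau\geq 0$, it already proves the statement. The differences are in emphasis and in what gets finished. The paper's featured case is $\tau>0$: from the fixed-point equation it reads off that any limit would force $g_{\theta^\star}(a) = \exp\left(-[Q]_a/\tau - 1\right)$, then picks $Q = [-2\ |\ -2]^\top$, $\tau=1$, so this quantity equals $e>1$, contradicting that $g_{\theta^\star}$ is a probability distribution; the $\tau=0$ observation is a one-line aside. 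You invert that emphasis, and your $\tau>0$ analysis stops one step short of a proof: substituting $\theta^\infty = -q/\tau + c\mathbbm{1}$ into $c = \log\sum_{a'}e^{\theta^\infty(a')}-1$ makes $c$ cancel identically, leaving the solvability condition $\sum_{a'}e^{-q(a')/\tau} = e$, which involves $q$ only --- so either every $c$ yields a fixed point or none does, and any $q$ violating the condition (e.g., the paper's choice, equivalent to the paper's ``invalid probability'' contradiction) finishes the argument. You should state that condition and an explicit $q$ rather than saying the equation ``can be made unsolvable''; note also the constraint is on $q$, not on the shift $c$, so your phrasing there is slightly backwards. Your $\alpha_t$/$\theta_t^\perp$ decomposition is a genuine addition the paper does not contain: it upgrades ``no limit exists'' to ``the $\mathbbm{1}$-component drifts linearly to infinity,'' which is more informative, though unnecessary for the claim. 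Finally, your closing guess is wrong: the authors' backbone is the $\tau>0$ example, precisely because the paper's message is that entropy regularization does not rescue parameter convergence.
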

\begin{proof}
Consider the $\theta$ update under NPG:
\begin{align}
\theta_{t+1}(a) = (1 - \eta \tau) \theta_t(a) - \eta \bigg( [Q h_{{\nu}_t}]_a + \tau - \tau \log \sum_{a' \in\cA} e^{{\theta}_t(a')}\bigg).
\end{align}
From here, it is easy to see that it need not converge for the case where $\tau = 0$, since this would require $Qh_{\nu_t} = 0$ which need not be the case (For example consider $Q = [1 \ ~|~ \ 1]^\top$, and $h_{\nu} = [1]$. In this case, $h_{\nu_t} = 1$ for any parameter $\nu_t$). 

Next, we consider the case where $\tau > 0$.
Suppose $\theta$ converges to some point $\theta^\star$. Since $|\mathcal{B}| = 1$, we have $h_{\nu_t} = [1]$. 
Substituting the point $\theta^\star$ into the update we have:
\begin{align}
\theta^\star(a) = (1 - \eta \tau) \theta^\star(a) - \eta \bigg( [Q]_a + \tau - \tau \log \sum_{a' \in\cA} e^{{\theta^\star}(a')}\bigg).
\end{align}
This implies:
\begin{align}
\eta \tau \theta^\star(a) = -\eta \tau \left( \frac{[Q]_a}{\tau} + 1\right) + \eta \tau \log \sum_{a' \in\cA} e^{{\theta^\star}(a')}.
\end{align}
This leads to:
\begin{align}
\label{eq:contra_almost}
\log e^{\theta^\star(a)} -  \log \sum_{a' \in\cA} e^{{\theta^\star}(a')} = -  \frac{[Q]_a}{\tau} - 1.
\end{align}
However, 
\begin{align}
\log e^{\theta^\star(a)} -  \log \sum_{a' \in\cA} e^{{\theta^\star}(a')} = \log \frac{e^{\theta^\star(a)}}{ \sum_{a' \in\cA} e^{{\theta^\star}(a')}} = \log g_{\theta^\star}(a).
\end{align}
Substituting this in Equation \eqref{eq:contra_almost}, we have:
\begin{align}
g_{\theta^\star}(a) = \exp \left( -  \frac{[Q]_a}{\tau} - 1 \right).
\end{align}
This need not be a valid probability measure. For example, consider $Q = [-2 \ ~|~ \-2]^{\top}$ 
and $\tau = 1$, we have:
\begin{align}
g_{\theta^\star}(1) = e > 1,
\end{align}
which contradicts the fact that $g_\theta$ is a probability measure. This implies that the original NPG updates cannot have a fixed point, and therefore does not converge for any stepsize $\eta>0$.
\end{proof}


\section{Missing Details and Proofs in  \S\ref{sec:ONPG_Static_tabular}}
\label{sec:ONPG_Static_tabular_app}

\begin{remark}
We note that all results presented in this section also follow for the case where the action spaces for both players are asymmetric. However, we stick to the case where the number of actions is the same for both players, for ease of exposition. 
\end{remark}

\begin{algorithm}[tb]
   \caption{Optimistic NPG}
   \label{alg:optimistic_npg}
\begin{algorithmic}
   \STATE {\bfseries Initialize:} $\theta_0 = 0$ and $\nu_0 = 0$.
   \FOR{$t=1, 2, \cdots$}
   \STATE $\bar{\theta}_{t+1} = (1 - \eta \tau) \theta_t - \eta Q h_{\bar{\nu}_t} $
   \STATE $\bar{\nu}_{t+1} = (1 - \eta \tau) \nu_t + \eta Q^{\top} g_{\bar{\theta}_t}$
   \STATE 
   \STATE ${\theta}_{t+1} = (1 - \eta \tau) {\theta}_t - \eta Q h_{\bar{\nu}_{t+1}}$
   \STATE ${\nu}_{t+1} = (1 - \eta \tau) {\nu}_t + \eta Q^{\top} g_{\bar{\theta}_{t+1}}$
   \ENDFOR
\end{algorithmic}
\end{algorithm}

\subsection{Proof of Theorem \ref{thm:con_MWU}}
\label{subsec:MWU_app}

\subsubsection{Policy convergence} 

Consider the following modified NPG updates for the regularized game:
\begin{align}
{\theta}_{t+1} &=(1 - \eta \tau) \theta_t - \eta Q h_{{\nu}_t},\label{equ:new_NPG_1_MWU}\\ 
{\nu}_{t+1} &= (1 - \eta \tau) \nu_t + \eta Q^{\top} g_{{\theta}_t}.\label{equ:new_NPG_2_MWU}
\end{align}
Note that these updates correspond to the popular Multiplicative Weights Update  \citep{freund1997decision,arora2012multiplicative}  for the regularized game in policy space   (we succinctly represent $g_{\theta_t}$ and $h_{\nu_t}$ as $g_t$ and $h_t$,  respectively), i.e., 
\begin{align}
g_{t+1}(a)  \propto {g}_t(a)^{(1 - \eta \tau)} e^{-\eta [Q h_t]_a }, \nonumber \\
h_{t+1}(b)  \propto {h}_t(b)^{(1 - \eta \tau)} e^{\eta [Q^{\top} g_t]_b }. 
\end{align}

We can write these updates as a mirror descent update with Bregman function given by the negative entropy (i.e., the corresponding Bregman distance is the KL divergence) as follows: 
\begin{align}
g_{t+1} &= \argmin_{g \in \Delta} \ \{ \langle Qh_t + \tau \nabla_g \cH(g_t), g \rangle + \KL (g \| g_t) \}, \nonumber \\
h_{t+1} &= \argmax_{h \in \Delta} \ \{ \langle Q^{\top} g_t - \tau \nabla_h \cH(h_t), h \rangle - \KL (h \| h_t) \}.
\end{align}

Note that we can write these updates succinctly as one Mirror Descent update in the following form:
\begin{align}
z_{t+1} &= \argmin_{z \in \cZ} \ \{ \langle Mz_t + \tau \nabla \cH(z_t), z \rangle+ \KL (z \| z_t) \}, 
\end{align}
where $z = [g; h]$, $\cZ \in \Delta \times \Delta$ and with slight abuse of notation, we define $\nabla \cH(z) = [\nabla_g \cH(g); \nabla_h \cH(h)]$. 
Also, we define the matrix 
\begin{align}
M = \begin{pmatrix}
0 & Q \\ -Q^{\top} & 0
\end{pmatrix}.
\end{align}

We can now use properties of mirror decent to analyze the iterates of MWU.

First, we have the following two lemmas which follow from \cite{bauschke2003bregman}, Proposition 2.3, and Lemma D.4 in \cite{sokota2022unified}  
which will be used to derive the final convergence rate:
\begin{lemma}
\label{lemma:MWU_a}
For all $z \in \cZ$, we have
\begin{align}
\eta \langle Mz_t  + \tau \nabla \cH(z_t), z_{t+1} - z \rangle \leq \KL (z \| z_t) - \KL (z \| z_{t+1}) - \KL (z_{t+1} \| z_t). 
\end{align}
\end{lemma}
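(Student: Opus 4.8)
The plan is to recognize the update defining $z_{t+1}$ as a single mirror-descent (proximal) step whose Bregman regularizer is the negative Shannon entropy, so that the induced Bregman divergence is exactly $\KL$, and then to combine the first-order optimality condition for the step with the three-point Bregman identity. Concretely, I would set $\omega(z) = \sum_i z_i \log z_i$ (negative entropy on $\cZ = \Delta\times\Delta$), so that $\KL(x\|y) = D_\omega(x,y) := \omega(x) - \omega(y) - \langle \nabla\omega(y),\, x-y\rangle$, and write the update as $z_{t+1} = \argmin_{z\in\cZ}\{\eta\langle Mz_t + \tau\nabla\cH(z_t),\, z\rangle + \KL(z\|z_t)\}$, with the stepsize $\eta$ multiplying only the linear term (consistent with the MWU form $g_{t+1}\propto g_t^{1-\eta\tau}e^{-\eta[Qh_t]}$).

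Since $z_{t+1}$ minimizes a strongly convex objective over the convex set $\cZ$, the first-order (variational-inequality) optimality condition gives, for every $z\in\cZ$,
\begin{align}
\big\langle \eta\big(Mz_t + \tau\nabla\cH(z_t)\big) + \nabla\omega(z_{t+1}) - \nabla\omega(z_t),\ z - z_{t+1}\big\rangle \geq 0,
\end{align}
where I use $\nabla_z \KL(z\|z_t) = \nabla\omega(z) - \nabla\omega(z_t)$. At this point I would invoke Lemma \ref{lemma:MWU_c_main} to note the iterates remain in the relative interior of the simplex, so that $\nabla\omega$ is well defined and the stationarity condition is legitimate. Rearranging isolates the quantity of interest, $\eta\langle Mz_t + \tau\nabla\cH(z_t),\, z_{t+1} - z\rangle$, and bounds it above by $\langle \nabla\omega(z_{t+1}) - \nabla\omega(z_t),\ z - z_{t+1}\rangle$.

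The last step is purely algebraic: applying the Bregman three-point identity
\begin{align}
\langle \nabla\omega(z_{t+1}) - \nabla\omega(z_t),\ z - z_{t+1}\rangle = D_\omega(z, z_t) - D_\omega(z, z_{t+1}) - D_\omega(z_{t+1}, z_t),
\end{align}
which follows directly from expanding the definition of $D_\omega$, and substituting $D_\omega = \KL$ yields exactly the claimed inequality.

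I do not expect a genuine obstacle here, as this is the classical mirror-descent lemma (the text itself attributes it to \cite{bauschke2003bregman} and Lemma D.4 of \cite{sokota2022unified}). The only points requiring care are bookkeeping ones: tracking that $\eta$ scales only the linear term and not the Bregman term, keeping the sign conventions of $\nabla\cH$ consistent across the min-player and max-player blocks stacked in $z$ (and inside $M$), and confirming via Lemma \ref{lemma:MWU_c_main} that the iterates stay away from the boundary so that both the entropy gradient and the three-point identity are valid.
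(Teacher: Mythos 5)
Your proposal is correct and is exactly the classical mirror-descent argument (first-order optimality of the proximal step plus the Bregman three-point identity) that the paper itself invokes only by citation to \cite{bauschke2003bregman} and Lemma D.4 of \cite{sokota2022unified}, so it supplies the standard derivation behind the paper's reference rather than deviating from it. Your bookkeeping remarks are also on target: the stepsize $\eta$ must indeed multiply only the linear term (the paper's displayed argmin omits it, though the lemma statement and the MWU form $g_{t+1}\propto g_t^{1-\eta\tau}e^{-\eta[Qh_t]_a}$ require it), and strict positivity of the iterates (guaranteed by the multiplicative form of the update, or by Lemma \ref{lemma:MWU_c_main}) legitimizes the stationarity condition.
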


\begin{lemma}
\label{lemma:MWU_b}
For all $z \in \cZ$, we have
\begin{align}
\eta \langle Mz + \tau \nabla \cH(z), z^* - z \rangle \leq -\eta \tau (\KL (z \| z^*) + \KL (z^* \| z) ).
\end{align}
\end{lemma}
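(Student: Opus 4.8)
The plan is to read the map $z\mapsto Mz+\tau\nabla\cH(z)$ as the variational-inequality operator of the regularized saddle problem \eqref{prob:entr_reg_matrix_policy} and to exploit two independent structural facts: the skew-symmetry of $M$ and the strong convexity of the negative entropy (which is precisely the mirror map generating the $\KL$ divergence). First I would recenter the inner product at the regularized NE $z^\star=(g^\star,h^\star)$ by writing
\begin{align}
\big\langle Mz+\tau\nabla\cH(z),\ z^\star - z\big\rangle
= \big\langle \big(Mz+\tau\nabla\cH(z)\big)-\big(Mz^\star+\tau\nabla\cH(z^\star)\big),\ z^\star-z\big\rangle
+ \big\langle Mz^\star+\tau\nabla\cH(z^\star),\ z^\star-z\big\rangle,
\end{align}
so that the monotone ``difference'' term and the optimality term at $z^\star$ can be treated separately.

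For the difference term I would first discard the bilinear coupling: since $M$ is skew-symmetric ($M^\top=-M$), one has $\langle M(z-z^\star),\,z-z^\star\rangle=0$, so the game interaction contributes nothing. What remains is the entropy piece $\tau\langle\nabla\cH(z)-\nabla\cH(z^\star),\,z^\star-z\rangle$. Writing $z=[g;h]$ and using that $\phi:=-\cH$ is the block-separable negative entropy whose Bregman divergence is exactly $\KL$, I would invoke the symmetrized three-point identity $\langle\nabla\phi(z)-\nabla\phi(z^\star),\,z-z^\star\rangle = D_\phi(z,z^\star)+D_\phi(z^\star,z)=\KL(z\|z^\star)+\KL(z^\star\|z)$ (see \cite{bauschke2003bregman}). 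Assembling the two player blocks, the difference term evaluates to exactly $-\tau\big(\KL(z\|z^\star)+\KL(z^\star\|z)\big)$, which is already the target right-hand side.

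For the optimality term I would use that $z^\star$ is the \emph{unique} solution of the regularized game (Theorem \ref{thm:con_MWU}) and, by Lemma \ref{lemma:MWU_c_main}, lies in the interior $\Delta'\times\Delta'$ (so $\nabla\cH(z^\star)$ is finite). Convex-concavity of the regularized objective then yields the first-order/VI inequality $\langle Mz^\star+\tau\nabla\cH(z^\star),\,z-z^\star\rangle\ge 0$ for all $z\in\cZ$, equivalently $\langle Mz^\star+\tau\nabla\cH(z^\star),\,z^\star-z\rangle\le 0$. Combining this with the difference term and multiplying by $\eta>0$ gives the claim. The main obstacle — everything else being routine skew-symmetry and Bregman bookkeeping — is justifying this optimality step cleanly: one must argue that the saddle point satisfies the stated VI inequality over the \emph{entire} simplex product (not merely a stationarity condition), which relies on uniqueness and strict interiority of the regularized NE together with the convex-concave structure; care is also needed in tracking the sign of the entropy regularization across the $g$- and $h$-blocks so that the two $\KL$ terms combine with the correct (negative) sign.
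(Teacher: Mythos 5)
Your proposal is correct, and in substance it is exactly the argument the paper outsources to its citations (Proposition 2.3 of Bauschke--Borwein for the symmetrized Bregman identity, and Lemma D.4 of Sokota et al.\ for the equilibrium variational inequality): skew-symmetry of $M$ kills the bilinear part of the difference term, the identity $\langle \log z-\log z^\star,\,z-z^\star\rangle=\KL(z\|z^\star)+\KL(z^\star\|z)$ evaluates the entropic part, and first-order/VI optimality of the interior regularized NE $z^\star$ disposes of the anchor term --- which is in fact exactly zero, since at an interior saddle point each block of $Mz^\star+\tau\nabla\cH(z^\star)$ is a multiple of $\mathbbm{1}$ and hence orthogonal to differences of points in the simplex.

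One point you flag but should pin down explicitly: the lemma and your middle computation are consistent only if $\nabla\cH$ in the operator denotes $\log z+\mathbbm{1}$, the gradient of \emph{negative} entropy. With the paper's literal definition $\cH(\pi)=-\sum_a\pi(a)\log\pi(a)$ (concave entropy), one gets $\tau\langle\nabla\cH(z)-\nabla\cH(z^\star),\,z^\star-z\rangle=+\tau\big(\KL(z\|z^\star)+\KL(z^\star\|z)\big)$, the wrong sign, and the lemma as stated would be false; moreover $Mz^\star+\tau\nabla\cH(z^\star)$ would no longer be the gradient field of $f_\tau$, so the VI step would also break. The convention you need (and implicitly use) is the one under which $Mz+\tau\nabla\cH(z)$ is the VI operator of the regularized game, which is also the reading forced by the paper's own mirror-descent rewriting of the MWU iteration; under that reading every step of your argument is exact, and the proof stands.
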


In the next lemma, we show that the iterates of MWU on the regularized problem will be bounded away from the boundary of the simplex.
\begin{lemma}
\label{lemma:MWU_c}For $\eta < 1/\tau$, the iterates of regularized MWU stay within a set $\Delta' \subset \Delta$ which is bounded away from the boundary of the simplex, i.e., $x_i \geq \delta > 0$ 
for some $\delta>0$, $\forall x \in \Delta'$. 
\end{lemma}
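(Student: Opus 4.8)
The plan is to work directly with the multiplicative (MWU) form of the updates in policy space and show that each coordinate is kept away from $0$ by a self-correcting mechanism: whenever a coordinate becomes small, the update pushes it back up. I would treat the $g$-update; the $h$-update is symmetric. Writing the update explicitly as $g_{t+1}(a) = g_t(a)^{1-\eta\tau} e^{-\eta[Qh_t]_a}/Z_t$ with $Z_t = \sum_{a'} g_t(a')^{1-\eta\tau}e^{-\eta[Qh_t]_{a'}}$, the two ingredients I would use are: (i) since $h_t\in\Delta$, the linear term is bounded, $|[Qh_t]_a|\le \|Q\|_\infty$ for all $a,t$ (each $[Qh_t]_a$ is a convex combination of the entries of row $a$); and (ii) the exponent satisfies $1-\eta\tau\in(0,1)$ precisely because $\eta<1/\tau$. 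Fact (ii) is the crux: it makes $x\mapsto x^{1-\eta\tau}$ concave, so by Jensen (maximized at the uniform distribution) $\sum_{a'} g_t(a')^{1-\eta\tau}\le n^{\eta\tau}$, and it is also what makes the coordinate-wise map contract toward the interior rather than the boundary.

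Combining (i) and (ii), I would derive a one-step lower bound on the smallest coordinate $m_t := \min_a g_t(a)$. Bounding the numerator below by $e^{-\eta\|Q\|_\infty}\, g_t(a)^{1-\eta\tau}$ and the normalizer above by $e^{\eta\|Q\|_\infty} n^{\eta\tau}$ yields $m_{t+1}\ge \phi(m_t)$, where $\phi(m) := e^{-2\eta\|Q\|_\infty}\, n^{-\eta\tau}\, m^{1-\eta\tau}$. This scalar map is increasing, and since its exponent lies in $(0,1)$ it has a unique positive fixed point $m^\star = \tfrac1n\, e^{-2\|Q\|_\infty/\tau}$, with $\phi(m)\ge m$ for all $0<m\le m^\star$ and $\phi(m)\le m$ for $m\ge m^\star$.

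I would then close the argument by induction: setting $\delta := \min\{m_0, m^\star\}>0$, if $m_t\ge\delta$ then monotonicity of $\phi$ together with $\phi(\delta)\ge\delta$ (which holds because $\delta\le m^\star$) gives $m_{t+1}\ge\phi(m_t)\ge\phi(\delta)\ge\delta$. Hence every coordinate of $g_t$ stays at least $\delta$, and repeating the identical argument for $h$ gives a corresponding constant; taking the smaller of the two defines the desired interior set $\Delta'$. The main obstacle — and the only place the hypothesis $\eta<1/\tau$ is truly essential — is establishing that the self-map $\phi$ has a repelling structure at the boundary, which hinges entirely on $1-\eta\tau\in(0,1)$; this single fact simultaneously furnishes the concavity bound on the normalizer and the fixed-point behavior of $\phi$. (If instead $\eta\tau=1$ the conclusion is immediate, since then $g_{t+1}(a)\propto e^{-\eta[Qh_t]_a}$ is directly bounded below; whereas $\eta\tau>1$ would amplify small coordinates and break the clean monotone analysis.)
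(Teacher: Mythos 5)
Your proof is correct, but it takes a genuinely different route from the paper's. You work directly with the multiplicative form of the update and run a scalar fixed-point/induction argument on the minimum coordinate: the one-step bound $g_{t+1}(a)\ge e^{-2\eta\|Q\|_\infty}n^{-\eta\tau}\,g_t(a)^{1-\eta\tau}$ (whose normalizer bound $\sum_{a'}g_t(a')^{1-\eta\tau}\le n^{\eta\tau}$ follows from Jensen) gives $m_{t+1}\ge\phi(m_t)$ with $\phi(m)=e^{-2\eta\|Q\|_\infty}n^{-\eta\tau}m^{1-\eta\tau}$, and the repelling fixed point $m^\star=\tfrac1n e^{-2\|Q\|_\infty/\tau}$ yields the invariant lower bound $\delta=\min\{m_0,m^\star\}$. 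The paper instead uses the mirror-descent one-step inequality to show that $\KL(g^\star\|g_t)$ stays uniformly bounded, specifically $\KL(g^\star\|g_t)\le 2\log n+\tfrac{2\|Q\|_\infty}{\tau}+\KL(g^\star\|g_0)$, and then converts this KL bound into a coordinate lower bound via the interiority of the regularized NE, obtaining $g_{t,i}\ge\exp\bigl(-\tfrac{1}{g^\star_{\min}}(2\log n+\tfrac{2\|Q\|_\infty}{\tau}+\KL(g^\star\|g_0)+\cH(g^\star))\bigr)$. Your argument is more elementary and self-contained: it needs no existence or interiority of the NE, no mirror-descent machinery, and it produces a fully explicit constant $\delta$ depending only on $n$, $\|Q\|_\infty$, $\tau$, and the initialization. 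The paper's argument, by contrast, reuses exactly the KL-recursion machinery that drives the subsequent convergence analysis (Theorem \ref{thm:con_MWU}), so it comes essentially for free in context, at the price of a constant that depends on the unknown quantities $g^\star_{\min}$ and $\KL(g^\star\|g_0)$. Both establish the same uniform-in-$t$ interior bound required by the smoothness constant $L$ in \eqref{eq:MWU_L_m}.
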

\begin{proof}
Consider the update of $g$.
We have the following property from Mirror Descent (see \cite{beck2003mirror}) 
\begin{align}
\KL (g^* \| g_{t+1} ) &\leq (1- \eta \tau) \KL(g^* \| g_t) - (1 - \eta \tau) \KL (g_{t+1} \| g_t) - \eta \tau \cH (g_{t+1}) + \eta \tau \cH(g^*) \nonumber \\
& \qquad \qquad - \eta \langle g_{t+1} - g^*, Qh_t \rangle \nonumber \\
&\leq (1- \eta \tau) \KL(g^* \| g_t) + 2\eta \tau \log n + 2\eta \| Q \|_{\infty}.
\end{align}
This implies that 
\begin{align}
\label{eq:KL_UB}
\KL (g^* \| g_{t} ) \leq 2 \log n + \frac{2 \| Q \|_{\infty}}{\tau} + \KL (g^* \| g_{0} ). 
\end{align}
From the definition of the $\KL$ divergence and Equation \eqref{eq:KL_UB}, we have:
\begin{align}
g_{t,i} \geq \exp \left( \frac{-1}{g^*_{\min}} \left(2 \log n + \frac{2 \| Q \|_{\infty}}{\tau}+ \KL (g^* \| g_{0} ) + \cH(g^*) \right)  \right) > 0, \qquad \forall  \ i{,~~ \text{and}~~  \forall\ t}.
\end{align}
Here $g_{\min}^*$ is the smallest value of the Nash Equilibrium policy (which is greater than $0$ since the NE policy of the regularized game is in the interior of the simplex). This completes the proof of the Lemma.
\end{proof}

Since the iterates lie within $\Delta'$, we let $L$ denote the Lipschitz constant of $Mz + \tau \nabla \cH(z)$ {(a continuous  function over $\Delta\times\Delta$ whose norm  approaches infinity as $z$ approaches the boundary)} in the set $\cZ'$ (where $\cZ' = \Delta' \times \Delta'$), i.e., 
\begin{align}
\label{eq:MWU_L}
\| (Mz_1 + \tau \nabla \cH(z_1)) - (Mz_2 + \tau \nabla \cH(z_2)) \| \leq L \|z_1 - z_2 \| , \qquad \forall z_1, z_2 \in \cZ'.
\end{align}

Now, we use these lemmas to derive the convergence rate of MWU for the regularized problem.

\begin{theorem}
\label{thm:Policy_conv_MWU}
Consider the modified NPG updates in Equation \eqref{equ:new_NPG_1_MWU}-\eqref{equ:new_NPG_2_MWU}   with stepsize satisfying $0 \leq \eta \leq \tau/L^2$. We then have:
\begin{align}
\KL (z^* \| z_{t+1}) \leq \left( 1 - \frac{\eta \tau}{2} \right) \KL (z^* \| z_{t}).
\end{align}
\end{theorem}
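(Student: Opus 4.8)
The plan is to run the classical linear-convergence argument for mirror descent on the operator $G(z) := Mz + \tau \nabla\cH(z)$, which is strongly monotone in the KL geometry thanks to the entropy regularization. The two ingredients already handed to us are exactly the two variational inequalities needed: Lemma \ref{lemma:MWU_a} is the optimality (three-point) inequality for the proximal step, and Lemma \ref{lemma:MWU_b} encodes $\tau$-strong monotonicity around the equilibrium $z^*$. The only genuinely non-trivial point is that this is the \emph{non-optimistic} update: the step uses the gradient $G(z_t)$ at the current point, whereas strong monotonicity must be applied to $G(z_{t+1})$. Reconciling the two requires passing through the $L$-smoothness \eqref{eq:MWU_L}, and this is precisely why the stepsize ceiling scales like $\tau/L^2$ (rather than the larger $\mathcal O(1/L)$ that the optimistic variant will enjoy).

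Concretely, I would first instantiate Lemma \ref{lemma:MWU_a} at $z = z^*$ to obtain
\[
\eta \langle G(z_t),\, z_{t+1} - z^* \rangle \le \KL(z^* \| z_t) - \KL(z^* \| z_{t+1}) - \KL(z_{t+1} \| z_t),
\]
and then instantiate Lemma \ref{lemma:MWU_b} at $z = z_{t+1}$, rearranged as
\[
\eta \langle G(z_{t+1}),\, z_{t+1} - z^* \rangle \ge \eta\tau\big( \KL(z_{t+1} \| z^*) + \KL(z^* \| z_{t+1}) \big).
\]
Adding these two displays cancels the inner products except for the gradient mismatch $\eta\langle G(z_t) - G(z_{t+1}),\, z_{t+1} - z^*\rangle$, which by Cauchy--Schwarz and \eqref{eq:MWU_L} is bounded below by $-\eta L \|z_t - z_{t+1}\|\,\|z_{t+1}-z^*\|$. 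After rearrangement this yields
\[
(1+\eta\tau)\KL(z^*\|z_{t+1}) + \KL(z_{t+1}\|z_t) + \eta\tau \KL(z_{t+1}\|z^*) \le \KL(z^*\|z_t) + \eta L \|z_t - z_{t+1}\|\,\|z_{t+1}-z^*\|.
\]

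The final step is to dispose of the cross term, and the key observation is \emph{which} KL term to feed it back into. Applying Young's inequality as $\eta L \|z_t - z_{t+1}\|\,\|z_{t+1}-z^*\| \le \tfrac12\|z_{t+1}-z_t\|^2 + \tfrac{\eta^2 L^2}{2}\|z_{t+1}-z^*\|^2$, the first piece is absorbed by $-\KL(z_{t+1}\|z_t)$ via Pinsker's inequality $\KL(z_{t+1}\|z_t)\ge \tfrac12\|z_{t+1}-z_t\|^2$. For the second piece I would use Pinsker in the form $\|z_{t+1}-z^*\|^2 \le 2\KL(z_{t+1}\|z^*)$, so that it is absorbed by the strong-monotonicity term $\eta\tau\KL(z_{t+1}\|z^*)$ (not by $\KL(z^*\|z_{t+1})$); this is legitimate exactly because $\eta \le \tau/L^2$ forces $\eta^2 L^2 \le \eta\tau$. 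All of the auxiliary terms then cancel, leaving the clean contraction $(1+\eta\tau)\KL(z^*\|z_{t+1}) \le \KL(z^*\|z_t)$, and since $\eta\tau < 1$ one has the elementary scalar bound $\tfrac{1}{1+\eta\tau} \le 1 - \tfrac{\eta\tau}{2}$, which gives the claimed rate. Throughout, the boundedness of the iterates away from the simplex boundary (Lemma \ref{lemma:MWU_c}) is what justifies invoking a finite Lipschitz constant $L$ in \eqref{eq:MWU_L}.

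The main obstacle is thus the gradient-mismatch term: strong monotonicity is unavailable at the point where the step's gradient is evaluated, so it must be transferred across one iterate through the Lipschitz bound, at the cost of a quadratic error that only a sufficiently small stepsize can control. The decisive subtlety is routing that quadratic error into $\KL(z_{t+1}\|z^*)$ rather than $\KL(z^*\|z_{t+1})$, so that the $\eta\tau$-strong-monotonicity surplus pays for it exactly when $\eta L^2 \le \tau$; otherwise one would only recover a weaker factor.
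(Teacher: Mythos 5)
Your proposal is correct and follows essentially the same route as the paper's proof: instantiate Lemma \ref{lemma:MWU_a} at $z^*$ and Lemma \ref{lemma:MWU_b} at $z_{t+1}$, control the gradient mismatch via the Lipschitz bound \eqref{eq:MWU_L} and Young's inequality, absorb the two quadratic error terms through Pinsker's inequality into $\KL(z_{t+1}\|z_t)$ and the strong-monotonicity surplus $\eta\tau\KL(z_{t+1}\|z^*)$ using $\eta\le\tau/L^2$, and finish with $(1+\eta\tau)^{-1}\le 1-\eta\tau/2$. The only difference is presentational (adding the two variational inequalities versus the paper's single chain of inequalities), not mathematical.
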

\begin{proof}
Note that the constraint $\eta \leq \tau/L^2$ will automatically satisfy $\eta < 1/\tau$ (as needed by Lemma \ref{lemma:MWU_c}) since $\tau \leq L$.

We have the following string of inequalities:

\begin{align}
\KL (z^* \| z_{t+1}) &\leq^{*1}  KL (z^* \| z_t) - \KL (z_{t+1} \| z_t ) + \eta \langle F(z_t) + \tau \nabla \cH (z_t) , z^* - z_{t+1} \rangle \nonumber \\
&= \KL (z^* \| z_t) - \KL (z_{t+1} \| z_t ) + \eta \langle F(z_{t+1}) + \tau \nabla g(z_{t+1}) , z^* - z_{t+1} \rangle \nonumber \\
& \qquad + \eta \langle  (F(z_t) + \tau \nabla \cH(z_t))  - (F(z_{t+1}) + \tau \nabla \cH (z_{t+1})), z^* - z_{t+1} \rangle \nonumber \\
&\leq^{*2} \KL (z^* \| z_t) - \KL (z_{t+1} \| z_t ) -\eta \tau \left( \KL(z_{t+1} \| z^*) + \KL(z^* \| z_{t+1}) \right)  \nonumber \\
& \qquad + \eta \langle  (F(z_t) + \tau \nabla \cH(z_t))  - (F(z_{t+1}) + \tau \nabla \cH (z_{t+1})), z^* - z_{t+1} \rangle \nonumber \\
&\leq^{*3}  \KL (z^* \| z_t) - \KL (z_{t+1} \| z_t ) -\eta \tau \left( \KL(z_{t+1} \| z^*) + \KL(z^* \| z_{t+1}) \right)  \nonumber \\
& \qquad \qquad + \eta L \| z_{t+1} - z_t \| \| z^* - z_{t+1} \| \nonumber \\
&\leq^{*4} \KL (z^* \| z_t) - \KL (z_{t+1} \| z_t ) -\eta \tau \left( \KL(z_{t+1} \| z^*) + \KL(z^* \| z_{t+1}) \right)  \nonumber \\
& \qquad \qquad + \frac{1}{2} \| z_{t+1} - z_t \|^2 + \frac{\eta^2 L^2}{2}  \| z^* - z_{t+1} \|^2 \nonumber \\
& \leq^{*5} \KL (z^* \| z_t) - \KL (z_{t+1} \| z_t ) - \KL(z_{t+1} \| z^*) + \eta^2L^2 \KL(z_{t+1} \| z^*) \nonumber \\
& \qquad \qquad \qquad - \eta \tau \KL(z_{t+1} \| z^*) - \eta \tau \KL (z^* \| z_{t+1} ) \nonumber \\
& \leq^{*6} \KL (z^* \| z_t) - \eta \tau \KL (z^* \| z_{t+1}).
\end{align}
Here $(*1)$ follows from Lemma \ref{lemma:MWU_a}, $(*2)$ follows from Lemma  \ref{lemma:MWU_b}, $(*3)$ follows from Equation \eqref{eq:MWU_L}, $(*4)$ follows from Young's inequality, $(*5)$ follows from Pinskers inequality and  $(*6)$ follows from $\eta \leq \tau/L^2$.
Therefore, we have 
\begin{align}
\KL (z^* \| z_{t+1}) \leq \frac{1}{ 1 + \eta \tau} \KL (z^* \| z_{t}) \leq \left( 1 - \frac{\eta \tau}{2} \right) \KL (z^* \| z_{t}),
\end{align}
which completes the proof.
\end{proof}

\subsubsection{Parameter convergence} 

In this section, we show convergence of the policy parameters. We have the following theorem. 

\begin{theorem}
\label{thm:Para_conv_MWU}
Consider the modified NPG updates in Equation \eqref{equ:new_NPG_1_MWU}-\eqref{equ:new_NPG_2_MWU}  with stepsize satisfying $0 \leq \eta \leq \tau/L^2$. We then have:
\begin{align}
\| \theta_{t+1} - \theta^\star \|^2 + \| \nu_{t+1} - \nu^\star \|^2  \leq (1 - \eta \tau/4)^t \Bigg(\| \theta_{0} - \theta^\star \|^2 + \| \nu_{0} - \nu^\star \|^2 +  \frac{4C}{\eta\tau} \Bigg),
\end{align}
where $\theta^\star = \frac{-Qh^\star}{\tau}$, $\nu^\star = \frac{Q^{\top}g^\star}{\tau}$ and $C =  \left(1 + \frac{1}{\eta\tau} (1 - \eta \tau)^2 \right)4 \eta^2 \|Q \|_\infty^2\KL(z^\star \| z_0)$. 
\end{theorem}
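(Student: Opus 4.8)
The plan is to exploit the fact that, unlike the policy dynamics, the \emph{parameter} dynamics of the modified NPG update \eqref{equ:new_NPG_1_MWU}-\eqref{equ:new_NPG_2_MWU} is an affine contraction toward $(\theta^\star,\nu^\star)$ driven by a vanishing perturbation, and then to close the argument using the policy-space rate already established in Theorem \ref{thm:Policy_conv_MWU}. First I would verify that $(\theta^\star,\nu^\star)=(-Qh^\star/\tau,\ Q^\top g^\star/\tau)$ is a fixed point of the update: substituting $\theta_t=\theta^\star$, $h_{\nu_t}=h^\star$ gives $(1-\eta\tau)\theta^\star-\eta Qh^\star=\theta^\star+\eta\tau(Qh^\star/\tau)-\eta Qh^\star=\theta^\star$, and symmetrically for $\nu^\star$; here the $-\eta\tau\theta_t$ term, which is exactly the piece retained from the entropy regularization, supplies the contraction and is the reason the non-convergence pitfall of \S\ref{sec:prelim} disappears. (One also checks $g_{\theta^\star}=g^\star$ and $h_{\nu^\star}=h^\star$, since $g^\star(a)\propto e^{-[Qh^\star]_a/\tau}$ is the regularized best-response condition, so $\theta^\star,\nu^\star$ are the canonical representatives the iterates select.) Subtracting the fixed-point identity from the update yields the exact error recursions
\begin{align}
\theta_{t+1}-\theta^\star &= (1-\eta\tau)(\theta_t-\theta^\star) - \eta Q(h_{\nu_t}-h^\star), \nonumber\\
\nu_{t+1}-\nu^\star &= (1-\eta\tau)(\nu_t-\nu^\star) + \eta Q^\top(g_{\theta_t}-g^\star). \nonumber
\end{align}

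Next I would take squared norms, apply Young's inequality $\|a+b\|^2\le(1+\lambda)\|a\|^2+(1+1/\lambda)\|b\|^2$ to each line, and add them. Writing $D_t:=\|\theta_t-\theta^\star\|^2+\|\nu_t-\nu^\star\|^2$, this gives
\begin{align}
D_{t+1} \le (1+\lambda)(1-\eta\tau)^2 D_t + (1+1/\lambda)\eta^2\big(\|Q(h_{\nu_t}-h^\star)\|^2 + \|Q^\top(g_{\theta_t}-g^\star)\|^2\big). \nonumber
\end{align}
The perturbation is controlled by the policy gap: using $\|Q(h_{\nu_t}-h^\star)\|\le\|Q\|_\infty\|h_{\nu_t}-h^\star\|_1$ (and symmetrically) followed by Pinsker's inequality $\|g^\star-g_{\theta_t}\|_1^2+\|h^\star-h_{\nu_t}\|_1^2\le 2\KL(z^\star\|z_t)$, the bracketed term is at most $2\|Q\|_\infty^2\,\KL(z^\star\|z_t)$. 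Invoking Theorem \ref{thm:Policy_conv_MWU}, $\KL(z^\star\|z_t)\le(1-\eta\tau/2)^t\KL(z^\star\|z_0)$, so the recursion reads $D_{t+1}\le \rho\,D_t + \beta(1-\eta\tau/2)^t$ with $\rho=(1+\lambda)(1-\eta\tau)^2$ and $\beta=2(1+1/\lambda)\eta^2\|Q\|_\infty^2\,\KL(z^\star\|z_0)$.

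Finally I would choose the free parameter $\lambda$ (the value $\lambda=\eta\tau/(1-\eta\tau)^2$ reproduces the $1+\tfrac{1}{\eta\tau}(1-\eta\tau)^2$ factor appearing in $C$) so that $\rho\le 1-\eta\tau/4$; the stepsize bound $\eta\le\tau/L^2\le1/\tau$ guarantees $\eta\tau<1$ (indeed $\eta\tau\le\tau^2/L^2$, which is small since $L\gg\tau$), making the residual $\eta^2\tau^2$ term harmless against $\eta\tau$. I would then close the recursion via the potential $\Phi_t:=D_t+A\,\KL(z^\star\|z_t)$ with $A=\tfrac{4\beta}{\eta\tau\,\KL(z^\star\|z_0)}$: because the KL contracts at rate $1-\eta\tau/2$, strictly faster than the target $1-\eta\tau/4$, the cross term is absorbed and one obtains $\Phi_{t+1}\le(1-\eta\tau/4)\Phi_t$, hence the stated geometric bound with additive constant $\frac{4C}{\eta\tau}=A\,\KL(z^\star\|z_0)$. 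I expect the main obstacle to be precisely this coupling: the parameter error does \emph{not} contract on its own, since its driving term never vanishes within a single step, so the crux is that the policies converge faster than the rate claimed for the parameters, which is what lets the geometric sum telescope into a bounded, decaying residual rather than an accumulating one; matching the exact constants in $C$ and verifying $\rho\le 1-\eta\tau/4$ under the given stepsize is then a routine computation.
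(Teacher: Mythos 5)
Your proposal is correct and follows essentially the same route as the paper's proof: the same error recursion $\theta_{t+1}-\theta^\star=(1-\eta\tau)(\theta_t-\theta^\star)-\eta Q(h_{\nu_t}-h^\star)$, the same Young's-inequality split (your choice $\lambda=\eta\tau/(1-\eta\tau)^2$ reproduces the paper's constants exactly), the same Pinsker-plus-Theorem~\ref{thm:Policy_conv_MWU} bound on the perturbation, and the same Lyapunov-function closing step. The only cosmetic difference is that your potential adds $A\,\KL(z^\star\|z_t)$ itself, whereas the paper adds the deterministic envelope $\tfrac{4C}{\eta\tau}(1-\eta\tau/2)^t$; these play an identical role, and your constants are if anything slightly tighter than the stated ones.
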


\begin{proof}

We begin by first providing the intuition of the proof, when the opponent is playing the NE strategy. We denote the NE strategies of the players as $g^\star$ and $h^\star$. Then, the NPG update has the following form: 
\begin{align}\label{equ:psedo_update_theta_MWU}
\theta_{t+1} &= (1-\eta \tau) \theta_t - \eta Q h^\star. 
\end{align}

We know that the NE satisfy:
\begin{align}
g^\star(a) = \frac{e^{-[Qh^\star]_a/\tau}}{\sum_{a' \in \mathcal{A}} e^{-[Qh^\star]_{a'}/\tau}} =  \frac{e^{-[Qh^\star]_a/\tau}}{K},
\end{align}
where we define $K:=\sum_{a' \in \mathcal{A}} e^{-[Qh^\star]_{a'}/\tau}$. 
Taking $\log$ on both sides, we have:
\begin{align}
 -\eta Q h^\star = \eta \tau \log g^\star + \eta \tau \log K.
\end{align}
Substituting this back into the $\theta$ update in \eqref{equ:psedo_update_theta_MWU}, we have:
\begin{align}
\theta_{t+1} &= (1-\eta \tau) \theta_t +  \eta \tau \log g^\star + \eta \tau \log K \nonumber \\
&=  \theta_t - \eta \tau \theta_t +  \eta \tau \log g^\star +  \eta \tau \log K- \eta \tau \log Z_{\theta_t} + \eta \tau \log Z_{\theta_t} \nonumber \\
&= \theta_t   +  \eta \tau \log g^\star - \eta \tau \log \left( \frac{e^{\theta_t}}{Z_{\theta_t}} \right) +  \eta \tau \log \left(  \frac{K}{Z_{\theta_t}} \right) \nonumber \\
&= \theta_t   +  \eta \tau \log g^\star - \eta \tau \log g_{\theta_t}  +  \eta \tau \log \left(  \frac{K}{Z_{\theta_t}} \right) = \theta_t   + \eta \tau \log \left( \frac{g^\star}{g_{\theta_t}} \right)  +  \eta \tau \log \left(  \frac{K}{Z_{\theta_t}} \right) \nonumber \\
&=  \theta_t   +  \eta \tau \log \left(  \frac{ g^\star K }{ g_{\theta_t} Z_{\theta_t} } \right).
\end{align}
We can further simplify this as:
\begin{align}
\theta_{t+1} &= \theta_t + \eta \tau \log \left(  \frac{e^{- [Qh^\star]/\tau}}{e^{ \theta_t}} \right),
\end{align}
which is nothing but:
\begin{align}
\theta_{t+1} &= \theta_t + \eta \tau \left( \frac{- Qh^\star}{\tau} - \theta_t \right).
\end{align}
Note that this is the Gradient Descent update on the strongly convex function $\frac{1}{2} \big\| \frac{- Qh^\star}{\tau} - \theta\big\|^2$ with stepsize $\eta \tau$. This update leads to the following convergence guarantees: 
\begin{align}
\| \theta_{t+1} - \theta^\star \|^2 \leq (1 - \eta \tau)^2 \| \theta_{t} - \theta^\star \|^2,
\end{align}
where $\theta^\star = \frac{- Qh^\star}{\tau}$. \\

The analysis above shows that if one of the players is already at  the NE strategy, the parameters of the second player converges to the NE at a linear rate. However, the original NPG update for $\theta$ is given by
\begin{align}
\theta_{t+1} &= (1 - \eta \tau) \theta_t -  \eta Q h^\star  + \eta Q ( h^\star - h_{{\nu}_{t}}). 
\end{align}
Since $h_{\bar{\nu}_{t+1}}$ converges to $h^\star$ at a linear rate (from Theorem \ref{thm:Policy_conv_MWU}), we expect the term 
\begin{align}
\varepsilon_t = \eta Q ( h^\star - h_{{\nu}_{t}}),
\end{align}
to be small, and goes to 0. This is formalized in what follows.

The NPG update for $\theta$ can be re-written using $\varepsilon_t$ as:
\begin{align}
\theta_{t+1} &= \theta_t + \eta \tau \left( \frac{-Qh^\star}{\tau} - \theta_t \right) + \varepsilon_t .
\end{align}
Once again, defining $\theta^\star = \frac{-Qh^\star}{\tau}$, we have:
\begin{align}
\| \theta_{t+1} - \theta^\star \|^2 &= \| \theta_t + \eta \tau \left( \theta^\star - \theta_t \right) + \varepsilon_t  - \theta^\star \|^2 = \| (1 - \eta \tau) (\theta_t - \theta^\star) + \varepsilon_t \|^2 \nonumber \\
&= (1 - \eta \tau)^2 \| \theta_t - \theta^\star \|^2 + 2(1 - \eta \tau) (\theta_t - \theta^\star)^{\top} \varepsilon_t + \| \varepsilon_t \|^2 \nonumber \\
&\leq^{*1}  (1 - \eta \tau)^2 \| \theta_t - \theta^\star\|^2 + \eta \tau \| \theta_t - \theta^\star\|^2 + \frac{1}{\eta\tau} (1 - \eta \tau)^2  \| \varepsilon_t \|^2 +  \| \varepsilon_t \|^2 \nonumber \\
&= (1 - \eta \tau + \eta^2 \tau^2)  \| \theta_t - \theta^\star\|^2 + \left(1 + \frac{1}{\eta\tau} (1 - \eta \tau)^2 \right)\| \varepsilon_t \|^2,
\label{eq:call_back_para_conv}
\end{align}
where $*1$ follows from Young's inequality. 

Next, we analyze the error term $\|\varepsilon_t\|$. We have:
\begin{align}
\|\varepsilon_t\|^2 &= \| \eta Q ( h^\star - h_{{\nu}_{t}}) \|^2 \leq \eta^2 \|Q \|^2_\infty \|( h^\star - h_{{\nu}_{t}}) \|^2_1 \leq^{*1} 2 \eta^2 \|Q \|^2_\infty (\KL(h^\star \| h_{{\nu}_{t}})  ) .
\end{align}
Here $*1$ follows from Pinsker's Inequality. Now, writing the same inequality for $\nu$, we have:
\begin{align}
&\| \theta_{t+1} - \theta^\star \|^2 + \| \nu_{t+1} - \nu^\star \|^2 \leq (1 - \eta \tau + \eta^2 \tau^2) (\| \theta_{t} - \theta^\star \|^2 + \| \nu_{t} - \nu^\star \|^2) \nonumber \\
& \qquad\qquad\qquad\qquad\qquad\qquad\qquad + \left(1 + \frac{1}{\eta\tau} (1 - \eta \tau)^2 \right) 2 \eta^2  \|Q \|^2_\infty \KL(z^\star \| {z}_{t} )  \\ 
&\quad\leq (1 - \eta \tau + \eta^2 \tau^2) (\| \theta_{t} - \theta^\star \|^2 + \| \nu_{t} - \nu^\star \|^2)  + \left(1 + \frac{1}{\eta\tau} (1 - \eta \tau)^2 \right) 4 \eta^2 C \|Q \|^2_\infty \left( 1 - \frac{\eta \tau}{2} \right)^t \KL(z^\star \| z_0).\nonumber  
\end{align}
Define:
\begin{align}
C = \left(1 + \frac{1}{\eta\tau} (1 - \eta \tau)^2 \right)4 \eta^2 \|Q \|_\infty^2\KL(z^\star \| z_0) .
\end{align}
Substituting back in Equation \eqref{eq:call_back_para_conv}, along with the corresponding expression for $\nu$, we have:
\begin{align}
\| \theta_{t+1} - \theta^\star \|^2 + \| \nu_{t+1} - \nu^\star \|^2 \leq (1 - \eta \tau + \eta^2 \tau^2) (\| \theta_{t} - \theta^\star \|^2 + \| \nu_{t} - \nu^\star \|^2) + C \left( 1 - \frac{\eta \tau}{2} \right)^t.
\end{align}
For $\eta \tau < 1/2$ we have:
\begin{align}
\| \theta_{t+1} - \theta^\star \|^2 + \| \nu_{t+1} - \nu^\star \|^2 \leq (1 - \eta \tau/4) (\| \theta_{t} - \theta^\star \|^2 + \| \nu_{t} - \nu^\star \|^2 )+ C(1 - \eta \tau/2)^t.
\end{align}
Consider the Lyapunov function:
\begin{align}
V_{t+1} = \| \theta_{t+1} - \theta^\star \|^2 + \| \nu_{t+1} - \nu^\star \|^2 + \frac{4C}{\eta\tau} (1 - \eta \tau/2)^{t+1}.
\end{align}
We have:
\begin{align}
V_{t+1} &\leq (1 - \eta \tau/4)  (\| \theta_{t} - \theta^\star \|^2 + \| \nu_{t} - \nu^\star \|^2) + C (1 - \eta \tau/2)^t + \frac{4C}{\eta\tau} (1 - \eta \tau/2) (1 - \eta \tau/2)^t \nonumber \\
& = (1 - \eta \tau/4) (\| \theta_{t} - \theta^\star \|^2 + \| \nu_{t} - \nu^\star \|^2)   +  \frac{4C}{\eta\tau} (1 - \eta \tau)^t (1 - \eta \tau/4) \nonumber \\
&= (1 - \eta \tau/4) \left( \| \theta_{t} - \theta^\star \|^2 + \| \nu_{t} - \nu^\star \|^2 + \frac{4C}{\eta\tau} (1 - \eta \tau)^t \right) = (1 - \eta\tau/4) V_t.
\end{align}
This shows linear convergence of the parameter $\theta$ to $\theta^\star$ since:
\begin{align}
\| \theta_{t+1} - \theta^\star \|^2 + \| \nu_{t+1} - \nu^\star \|^2 \leq V_{t+1} \leq (1 - \eta \tau/4)^t V_0.
\end{align}
This completes the proof. 
\end{proof}

\subsection{Proof of Theorem \ref{thm:param_con}}

We first prove the following result which follows from Theorem 1 in \cite{cen2021fast}. 

 \begin{lemma}
 \label{lemma:distribution_con_cen}
 [Theorem 1, \cite{cen2021fast}]
Consider Algorithm \ref{alg:optimistic_npg}. Suppose that the learning rates satisfy:
\begin{align}
0 &< \eta \leq \min \left\{ \frac{1}{2\tau + 2 \| Q \|_{\infty}}, \frac{1}{4 \| Q \|_{\infty}} \right\}.
\end{align}
Let $z_t = (g_{\theta_t}, h_{\nu_t})$ and  $\bar{z}_t = (g_{\bar{\theta}_t}, h_{\bar{\nu}_t})$. Then:
{\small\begin{align}
& \max \left\{ \KL(z^\star \| z_t), \frac{1}{2} \KL(z^\star \| \bar{z}_{t+1} ) \right\} 
\leq (1 - \eta \tau)^t \KL(z^\star \| z_0).
\end{align}}
\end{lemma}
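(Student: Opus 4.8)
The plan is to recognize that Algorithm \ref{alg:optimistic_npg}, though written in the parameter space $(\theta,\nu)$, induces in the policy space $(g_\theta,h_\nu)$ exactly the entropy-regularized optimistic multiplicative weights update (OMWU) dynamics analyzed in \cite{cen2021fast}. Once this correspondence is made precise, the claimed contraction is a direct instance of their Theorem 1, whose step-size condition coincides with the stated range $0<\eta\le\min\{1/(2\tau+2\|Q\|_\infty),\,1/(4\|Q\|_\infty)\}$. Thus the real content of the proof is the translation, not a re-derivation of the contraction.

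First I would convert the parameter updates into policy updates. Since $g_{\theta_t}(a)\propto e^{\theta_t(a)}$, we have $\theta_t(a)=\log g_{\theta_t}(a)+c_t$ for a constant $c_t$ independent of $a$. Substituting into the intermediate step $\bar\theta_{t+1}=(1-\eta\tau)\theta_t-\eta Q h_{\bar\nu_t}$ and applying the softmax, the additive constants cancel under the normalization, yielding
\begin{align}
g_{\bar\theta_{t+1}}(a)\ \propto\ g_{\theta_t}(a)^{1-\eta\tau}\exp\bigl(-\eta[Q h_{\bar\nu_t}]_a\bigr),
\end{align}
and analogously for the extrapolation step $\theta_{t+1}=(1-\eta\tau)\theta_t-\eta Q h_{\bar\nu_{t+1}}$ and for the $\nu/h$ updates (with $+\eta Q^\top g$ in the exponent). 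These are precisely the two-step optimistic MWU recursions for the entropy-regularized saddle-point problem \eqref{prob:entr_reg_matrix_policy}, in which the factor $(1-\eta\tau)$ is the shrinkage toward the uniform distribution induced by the entropy term. The same derivation applied to $\bar z_t$ shows that the extrapolated iterates also match the OMWU auxiliary sequence.

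Having established that $(g_{\theta_t},h_{\nu_t})$ and $(g_{\bar\theta_t},h_{\bar\nu_t})$ obey the same recursion as the primal-dual OMWU iterates in \cite{cen2021fast}, I would invoke their Theorem 1: for the stated range of $\eta$, the unique regularized NE $z^\star$ (whose uniqueness is already recorded in Theorem \ref{thm:con_MWU}) is a fixed point, and the pair $(z_t,\bar z_{t+1})$ satisfies the last-iterate linear bound $\max\{\KL(z^\star\|z_t),\tfrac{1}{2}\KL(z^\star\|\bar z_{t+1})\}\le(1-\eta\tau)^t\KL(z^\star\|z_0)$. The main obstacle — and the only step needing care — is verifying that the initialization $\bar z_0=z_0$ and the normalizing-constant conventions line up exactly with those of \cite{cen2021fast}, so that their one-step inequality and Lyapunov argument transfer without modification; the contraction itself is theirs and need not be reproduced.
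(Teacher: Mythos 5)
Your proposal is correct and takes essentially the same route as the paper's own proof: the paper likewise converts the parameter updates into policy-space recursions by manipulating the normalizing constants $Z_\theta$, showing $g_{\bar\theta_{t+1}}(a)\propto g_{\theta_t}(a)^{1-\eta\tau}e^{-\eta[Qh_{\bar\nu_t}]_a}$ (and the analogous relations for $g_{t+1}$, $\bar h_{t+1}$, $h_{t+1}$), and then invokes Theorem 1 of \cite{cen2021fast} verbatim. Your attention to the initialization $\bar z_0 = z_0$ and the cancellation of additive constants under the softmax is exactly the content of the paper's derivation, so nothing is missing.
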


\begin{proof}
Let $g_t$ and $\bar{g}_t$ denote $g_{\theta_t}$ and $g_{\bar{\theta}_t}$ respectively. Also, for any parameter $\theta$, we denote $Z_\theta$ to be the normalizing constant $\sum_{a' \in \mathcal{A}} e^{\theta(a')}$ (Define $Z_\nu$ similarly). We have:
 \begin{align}
 \bar{g}_{t+1}(a)  &=  \frac{e^{\bar{\theta}_{t+1}(a)}}{ \sum_{a' \in \mathcal{A}} e^{\bar{\theta}_{t+1}(a')} } \nonumber \\
 &\propto e^{\bar{\theta}_{t+1}(a)} \ = \ e^{(1 - \eta \tau) {\theta}_t(a) - \eta [Q h_{\bar{\nu}_t}]_a } \ = \ e^{(1 - \eta \tau) {\theta}_t(a) + \log Z_{{\theta}_t} - \log Z_{{\theta}_t} - \eta [Q h_{\bar{\nu}_t}]_a } \nonumber \\
 &= e^{(1 - \eta \tau) \log e^{{\theta}_t(a)} + \log Z_{{\theta}_t} - \log Z_{{\theta}_t} - \eta [Q h_{\bar{\nu}_t}]_a } \ = \ e^{(1 - \eta \tau) \log \left( \frac{e^{{\theta}_t(a)}}{Z_{{\theta}_t} } \right)  + \log Z_{{\theta}_t} - \eta [Q h_{\bar{\nu}_t}]_a } \nonumber \\
& \propto e^{(1 - \eta \tau) \log \left( \frac{e^{{\theta}_t(a)}}{Z_{{\theta}_t} } \right) - \eta [Q h_{\bar{\nu}_t}]_a } \ = \ e^{(1 - \eta \tau) \log {g}_t(a) - \eta [Q h_{\bar{\nu}_t}]_a } \ = \ e^{\log {g}_t(a)^{(1 - \eta \tau)} - \eta [Q h_{\bar{\nu}_t}]_a } \nonumber \\
&={g}_t(a)^{(1 - \eta \tau)} e^{-\eta [Q h_{\bar{\nu}_t}]_a }. 
 \end{align} 
Therefore:
\begin{align}
 \bar{g}_{t+1}(a)  \propto {g}_t(a)^{(1 - \eta \tau)} e^{-\eta [Q h_{\bar{\nu}_t}]_a }. 
\end{align}
Similarly, we have:
\begin{align}
{g}_{t+1}(a)  &\propto {g}_t(a)^{(1 - \eta \tau)} e^{-\eta [Q h_{\bar{\nu}_{t+1}}]_a }, \nonumber \\
\bar{h}_{t+1}(a)  &\propto {h}_t(a)^{(1 - \eta \tau)} e^{\eta [Q^{\top} g_{\bar{\theta}_t}]_a }, \nonumber \\
{h}_{t+1}(a)  &\propto {h}_t(a)^{(1 - \eta \tau)} e^{\eta [Q^{\top} g_{\bar{\theta}_{t+1}}]_a },
\end{align}
which is the same as the OMW updates for the regularized problem in \cite{cen2021fast}. Therefore, by Theorem 1 in  \cite{cen2021fast}, we have convergence of $g_\theta$ and $h_\nu$ to the solution of the regularized min-max problem.  
\end{proof}

We begin by first providing the intuition of the proof, when the opponent is playing the NE strategy. We denote the NE strategies of the players as $g^\star$ and $h^\star$. Then, the optimistic NPG update has the following form: 
\begin{align}\label{equ:psedo_update_theta}
\theta_{t+1} &= (1-\eta \tau) \theta_t - \eta Q h^\star. 
\end{align}

From \cite{mertikopoulos2016learning}, we know that the NE satisfy:
\begin{align}
g^\star(a) = \frac{e^{-[Qh^\star]_a/\tau}}{\sum_{a' \in \mathcal{A}} e^{-[Qh^\star]_{a'}/\tau}} =  \frac{e^{-[Qh^\star]_a/\tau}}{K},
\end{align}
where we define $K:=\sum_{a' \in \mathcal{A}} e^{-[Qh^\star]_{a'}/\tau}$. 
Taking $\log$ on both sides, we have:
\begin{align}
 -\eta Q h^\star = \eta \tau \log g^\star + \eta \tau \log K.
\end{align}
Substituting this back into the $\theta$ update in \eqref{equ:psedo_update_theta}, we have:
\begin{align}
\theta_{t+1} &= (1-\eta \tau) \theta_t +  \eta \tau \log g^\star + \eta \tau \log K \nonumber \\
&=  \theta_t - \eta \tau \theta_t +  \eta \tau \log g^\star +  \eta \tau \log K- \eta \tau \log Z_{\theta_t} + \eta \tau \log Z_{\theta_t} \nonumber \\
&= \theta_t   +  \eta \tau \log g^\star - \eta \tau \log \left( \frac{e^{\theta_t}}{Z_{\theta_t}} \right) +  \eta \tau \log \left(  \frac{K}{Z_{\theta_t}} \right) \nonumber \\
&= \theta_t   +  \eta \tau \log g^\star - \eta \tau \log g_{\theta_t}  +  \eta \tau \log \left(  \frac{K}{Z_{\theta_t}} \right) = \theta_t   + \eta \tau \log \left( \frac{g^\star}{g_{\theta_t}} \right)  +  \eta \tau \log \left(  \frac{K}{Z_{\theta_t}} \right) \nonumber \\
&=  \theta_t   +  \eta \tau \log \left(  \frac{ g^\star K }{ g_{\theta_t} Z_{\theta_t} } \right).
\end{align}
We can further simplify this as:
\begin{align}
\theta_{t+1} &= \theta_t + \eta \tau \log \left(  \frac{e^{- [Qh^\star]/\tau}}{e^{ \theta_t}} \right),
\end{align}
which is nothing but:
\begin{align}
\theta_{t+1} &= \theta_t + \eta \tau \left( \frac{- Qh^\star}{\tau} - \theta_t \right).
\end{align}
Note that this is the Gradient Descent update on the strongly convex function $\frac{1}{2} \big\| \frac{- Qh^\star}{\tau} - \theta\big\|^2$ with stepsize $\eta \tau$. Note that this update leads to the following convergence guarantees: 
\begin{align}
\| \theta_{t+1} - \theta^\star \|^2 \leq (1 - \eta \tau)^2 \| \theta_{t} - \theta^\star \|^2,
\end{align}
where $\theta^\star = \frac{- Qh^\star}{\tau}$. \\

The analysis above shows that if one of the players is already at  the NE strategy, the parameters of the second player converges to the NE at a linear rate. However, the original OGDA update for $\theta$ is given by
\begin{align}
\theta_{t+1} &= (1 - \eta \tau) \theta_t -  \eta Q h^\star  + \eta Q ( h^\star - h_{\bar{\nu}_{t+1}}). 
\end{align}
Since $h_{\bar{\nu}_{t+1}}$ converges to $h^\star$ at a linear rate (from Lemma \ref{lemma:distribution_con_cen}), we expect the term 
\begin{align}
\varepsilon_t = \eta Q ( h^\star - h_{\bar{\nu}_{t+1}}),
\end{align}
to be small, and goes to 0. This is formalized in what follows.

The OGDA update for $\theta$ can be re-written using $\varepsilon_t$ as:
\begin{align}
\theta_{t+1} &= \theta_t + \eta \tau \left( \frac{-Qh^\star}{\tau} - \theta_t \right) + \varepsilon_t .
\end{align}
Once again, defining $\theta^\star = \frac{-Qh^\star}{\tau}$, we have:
\begin{align}
\| \theta_{t+1} - \theta^\star \|^2 &= \| \theta_t + \eta \tau \left( \theta^\star - \theta_t \right) + \varepsilon_t  - \theta^\star \|^2 = \| (1 - \eta \tau) (\theta_t - \theta^\star) + \varepsilon_t \|^2 \nonumber \\
&= (1 - \eta \tau)^2 \| \theta_t - \theta^\star \|^2 + 2(1 - \eta \tau) (\theta_t - \theta^\star)^{\top} \varepsilon_t + \| \varepsilon_t \|^2 \nonumber \\
&\leq^{*1}  (1 - \eta \tau)^2 \| \theta_t - \theta^\star\|^2 + \eta \tau \| \theta_t - \theta^\star\|^2 + \frac{1}{\eta\tau} (1 - \eta \tau)^2  \| \varepsilon_t \|^2 +  \| \varepsilon_t \|^2 \nonumber \\
&= (1 - \eta \tau + \eta^2 \tau^2)  \| \theta_t - \theta^\star\|^2 + \left(1 + \frac{1}{\eta\tau} (1 - \eta \tau)^2 \right)\| \varepsilon_t \|^2,
\label{eq:call_back_para_conv}
\end{align}
where $*1$ follows from Young's inequality. 

Next, we analyze the error term $\|\varepsilon_t\|$. We have:
\begin{align}
\|\varepsilon_t\|^2 &= \| \eta Q ( h^\star - h_{\bar{\nu}_{t+1}}) \|^2 \leq \eta^2 \|Q \|^2_\infty \|( h^\star - h_{\bar{\nu}_{t+1}}) \|^2_1 \leq^{*1} 2 \eta^2 \|Q \|^2_\infty (\KL(h^\star \| h_{\bar{\nu}_{t+1}})  ) .
\end{align}
Here $*1$ follows from Pinsker's Inequality. Now, writing the same inequality for $\nu$, we have:
\begin{align}
&\| \theta_{t+1} - \theta^\star \|^2 + \| \nu_{t+1} - \nu^\star \|^2 \leq (1 - \eta \tau + \eta^2 \tau^2) (\| \theta_{t} - \theta^\star \|^2 + \| \nu_{t} - \nu^\star \|^2) \nonumber \\
& \qquad\qquad \qquad \qquad \qquad  + \left(1 + \frac{1}{\eta\tau} (1 - \eta \tau)^2 \right) 2 \eta^2  \|Q \|^2_\infty \KL(z^\star \| \bar{z}_{t+1} ) \\
&\quad\leq (1 - \eta \tau + \eta^2 \tau^2) (\| \theta_{t} - \theta^\star \|^2 + \| \nu_{t} - \nu^\star \|^2)  + \left(1 + \frac{1}{\eta\tau} (1 - \eta \tau)^2 \right) 4 \eta^2 C \|Q \|^2_\infty (1 - \eta \tau)^t \KL(z^\star \| z_0).\notag
\end{align}
Define:
\begin{align}
C = \left(1 + \frac{1}{\eta\tau} (1 - \eta \tau)^2 \right)4 \eta^2 \|Q \|_\infty^2\KL(z^\star \| z_0) .
\end{align}
This gives us (Using Lemma \ref{lemma:distribution_con_cen}):
\begin{align}
\| \theta_{t+1} - \theta^\star \|^2 + \| \nu_{t+1} - \nu^\star \|^2 \leq (1 - \eta \tau + \eta^2 \tau^2) (\| \theta_{t} - \theta^\star \|^2 + \| \nu_{t} - \nu^\star \|^2) + C (1 - \eta \tau)^t.
\end{align}
For $\eta \tau < 1/2$ we have:
\begin{align}
\| \theta_{t+1} - \theta^\star \|^2 + \| \nu_{t+1} - \nu^\star \|^2 \leq (1 - \eta \tau/2) (\| \theta_{t} - \theta^\star \|^2 + \| \nu_{t} - \nu^\star \|^2 )+ C(1 - \eta \tau)^t.
\end{align}
Consider the Lyapunov function:
\begin{align}
V_{t+1} = \| \theta_{t+1} - \theta^\star \|^2 + \| \nu_{t+1} - \nu^\star \|^2 + \frac{2C}{\eta\tau} (1 - \eta \tau)^{t+1}.
\end{align}
We have:
\begin{align}
V_{t+1} &\leq (1 - \eta \tau/2)  (\| \theta_{t} - \theta^\star \|^2 + \| \nu_{t} - \nu^\star \|^2) + C (1 - \eta \tau)^t + \frac{2C}{\eta\tau} (1 - \eta \tau) (1 - \eta \tau)^t \nonumber \\
& = (1 - \eta \tau/2) (\| \theta_{t} - \theta^\star \|^2 + \| \nu_{t} - \nu^\star \|^2)   +  \frac{2C}{\eta\tau} (1 - \eta \tau)^t (1 - \eta \tau/2) \nonumber \\
&= (1 - \eta \tau/2) \left( \| \theta_{t} - \theta^\star \|^2 + \| \nu_{t} - \nu^\star \|^2 + \frac{2C}{\eta\tau} (1 - \eta \tau)^t \right) \nonumber \\
&= (1 - \eta\tau/2) V_t.
\end{align}
This shows linear convergence of the parameter $\theta$ to $\theta^\star$ since:
\begin{align}
\| \theta_{t+1} - \theta^\star \|^2 + \| \nu_{t+1} - \nu^\star \|^2 \leq V_{t+1} \leq (1 - \eta \tau/2)^t V_0.
\end{align}
This completes the proof. \hfil \qed

Note that proof of Corollary \ref{cor:tab_matrix_connect_unreg} follows from Remark 4 and the preceding discussion in \cite{cen2021fast}.



\section{Missing Details and Proofs in \S\ref{sec:ONPG_Matrix_FA}}

\begin{remark}
We note that all results present in this section also follow for the case where the cardinality of the action spaces for both players are unequal. However, we stick to the case where the number of actions is the same for both players for ease of exposition.
\end{remark}


\begin{algorithm}[tb]
   \caption{Optimistic NPG (Function Approximation)}
   \label{alg:optimistic_npg_func}
\begin{algorithmic}
   \STATE {\bfseries Initialize:} $\theta_0 = 0$ and $\nu_0 = 0$.
   \FOR{$t=1, 2, \cdots$}
   \STATE $\bar{\theta}_{t+1} = (1 - \eta \tau) \theta_t - \eta  [(M^\top)^{-1} | 0]  \tilde{P} Q h_{\bar{\nu}_t} $
   \STATE $\bar{\nu}_{t+1} = (1 - \eta \tau) \nu_t + \eta  [(M^\top)^{-1} | 0]  \tilde{P} Q^{\top} g_{\bar{\theta}_t}$
   \STATE 
   \STATE ${\theta}_{t+1} = (1 - \eta \tau) {\theta}_t - \eta  [(M^\top)^{-1} | 0] \tilde{P} Q h_{\bar{\nu}_{t+1}}$
   \STATE ${\nu}_{t+1} = (1 - \eta \tau) {\nu}_t + \eta  [(M^\top)^{-1} | 0] \tilde{P} Q^{\top} g_{\bar{\theta}_{t+1}}$
   \ENDFOR
\end{algorithmic}
\end{algorithm}

\subsection{Proof of Lemma \ref{lemma:char_func_approx}}
\label{appendix:lemma:char_func_approx}

The first part of the lemma describes the set of distributions in the $m$-dimensional simplex covered by this parametrization. Since the set of distributions covered by the log-linear parametrization would be the same for all invertible $M$\footnote{To see this, consider $\tilde{\theta} = M\theta$. Since $M$ is invertible, there is a 1-1 correspondence between $\tilde{\theta}$ and $\theta$. The distribution parametrized by $\theta$ under the function approximation matrix $M$, is the same as the distribution parametrized by $\tilde{\theta}$ under the function approximation matrix $I$. Therefore it is enough to consider the special case of $M = \Ib$.}, for simplicity, we study the case where $M = \Ib$. Note that this would imply:
\begin{align}
g_{\theta}(a) &\propto e^{\theta(a)} \qquad \forall a \in \{ 1, 2, \cdots, d\}, \nonumber \\
g_{\theta}(a) &\propto 1 \qquad \quad \    \forall a \in \{ d+1, d+2, \cdots, n \}.
\end{align}
Similarly for $h_{\nu}$. Therefore, according to this parametrization the first $d$ elements can be chosen freely, and the rest $n-d$ parameters have to be equal. In other words, this parametrization covers the following set of distributions: 
\begin{align}
\label{eq:Delta_tilde_def_1}
\tilde{\Delta} = \{ \mu: \mu \in \Delta, \mu_{d+1} = \mu_{d+2} = \cdots = \mu_n  \},
\end{align}
which is a closed convex subset of the $n$-dimensional simplex. To see that any element of $\tilde{\Delta}$ can be represented by the log-linear parametrization, we can take the parameters $\theta(a) = \log \mu(a)$ for $a = 1, 2, \cdots, d$\footnote{Note that if $\mu(a) = 0$, the corresponding paramter would be $-\infty$.}. This would be a valid parametrization under the log-linear function approximation setting, and therefore all elements of $\tilde{\Delta}$ can be represented in this manner. Therefore, since the two sets are equivalent, we can rewrite the problem in terms of the policy vectors lying in the constraint set $\tilde{\Delta}$ which completes the proof of the lemma.

\subsection{Proof of Theorem \ref{thm:Nash_char_func_approx}}
\label{appendix:thm:Nash_char_func_approx}

For the regularized game  \eqref{eq:prob_contraint},  let player 2 play the NE strategy $h_\nu^\star$. Then player 1's optimization problem is given by:
\begin{align}
\min_{g_{\theta} \in \tilde{\Delta}}   \ g_{\theta} ^{\top} Q h_{\nu}^\star - \tau \mathcal{H}(g_{\theta}).
\end{align}
Define the following Lagrange multipliers (and associated constraints):
\begin{align}
\lambda \ &: \ \sum_{a = 1}^{n} g_\theta(a) = 1, \nonumber \\
\beta_1 \ &: \ g_\theta(d+1) = g_\theta(d+2), \nonumber \\
\beta_2 \ &: \ g_\theta(d+2) = g_\theta(d+3), \nonumber \\
&\cdots \nonumber \\
\beta_{n-d-1} \ &: \ g_\theta(n-1) = g_\theta(n).
\end{align}
Therefore, for the optimal Lagrange multipliers, taking the first-order optimality conditions (with respect to the variable $g_\theta$), we have:
\begin{align}
[Q h_{\nu}^\star]_a + \tau(\log g_\theta(a) + 1) &+ \lambda = 0 \qquad\qquad\qquad \qquad \qquad \forall a \in \{ 1, 2, \cdots, d\}, \nonumber \\
[Q h_{\nu}^\star]_a + \tau(\log g_\theta(a) + 1) &+ \lambda + \beta_{a - d} - \beta_{a-d-1}= 0 \qquad\quad \ \ \forall a \in \{ d+1, d+2, \cdots, n\} ,
\end{align}
where $\beta_0$ and $\beta_{n-d}$ are defined to be equal to $0$. This gives us:
\begin{align}
g^\star_\theta(a) \propto e^{-[Q h_{\nu}^\star]_a / \tau} \qquad \forall a \in \{ 1, 2, \cdots, d\}{.}  
\end{align}
For actions with indices $a > d$, the equality constraints give us:
\begin{align}
[Q h_{\nu}^\star]_a +  \beta_{a - d} - \beta_{a-d-1}= C \qquad\qquad\qquad \forall a \in \{ d+1, d+2, \cdots, n\}, 
\end{align}
for some constant $C$. On solving these equations  for $C$, we have: 
\begin{align}
C = \frac{1}{n-d} \sum_{a = d+1}^{n} [Q h_{\nu}^\star]_a,
\end{align}
which gives us
\begin{align}
g^\star_\theta(a) \propto e^{-C / \tau} \qquad \forall a \in \{ d + 1, \cdots, n\}.
\end{align}
Now, consider the symmetric matrix $\Psi \in \mathbb{R}^{n \times n}$ defined as:
\begin{align}
\Psi = 
\begin{pmatrix}
I_d & \zero \\ \zero & \begin{matrix} \frac{1}{n-d} &\cdots & \frac{1}{n-d} \\ \cdots &\cdots & \cdots \\ \frac{1}{n-d} &\cdots & \frac{1}{n-d} \end{matrix}
\end{pmatrix}.
\end{align}
Note that
\begin{align}
[\Psi^{\top} Q h_{\nu}^\star]_a &= [Q h_{\nu}^\star]_a \ \ \ \forall a \in \{ 1, 2, \cdots, d\}, \nonumber \\
[\Psi^{\top} Q h_{\nu}^\star]_a &= C \quad \qquad \forall a \in \{ d+1, d+2, \cdots, n\}.
\end{align}
Therefore, we can succinctly write the optimal distribution as: 
\begin{align}
g^\star_\theta(a) \propto e^{-[\Psi^{\top} Q h_{\nu}^\star]_a / \tau} \qquad \forall a \in \{ 1, 2, \cdots, n\}.
\end{align}
Also, we have $\Psi \mu = \mu$, ~$\forall \mu \in \tilde{\Delta}.$

Now, since $h_\nu^\star$ is the optimal distribution for Player $2$, we must have $h_\nu^\star \in \tilde{\Delta}$, which implies:
\begin{align}
\Psi h_{\nu}^\star = h_\nu^\star{.}
\end{align}
Doing a similar calculation for $g_{\nu}^\star$, we have that the NE satisfy: 
\begin{align}\label{equ:optimal_solution_linear_FA}
g^\star_\theta(a) = \frac{e^{-[\Psi^{\top} Q \Psi h^\star_\nu]_a/\tau}}{\sum_{a'} e^{-[\Psi^{\top} Q \Psi h^\star_\nu]_{a'}/\tau}}{,}\qquad\qquad\qquad
h^\star_\nu(a) =  \frac{e^{[\Psi^{\top} Q^\top \Psi g^\star_\theta]_a/\tau}}{\sum_{a'} e^{[\Psi^{\top} Q^\top \Psi g^\star_\theta]_{a'}/\tau}}.
\end{align}
Now, from this characterization of the NE, we see that these solutions are also the same as those of the following problem:

\begin{align}
\min_{g\in\Delta} \ \max_{h\in\Delta}  \ g ^{\top} \Psi^{\top} Q \Psi h - \tau \mathcal{H}(g) + \tau \mathcal{H}(h). 
\end{align}


Note that in the analysis above we have found a solution in the (relative) interior  of the constraint set. For example, if for some action $a$, we have $f_\theta(a) = 0$, then the term $\log f_\theta(a)$ is not defined and the Lagrangian would be different. However, since this is a strongly convex strongly concave minimax problem over a convex compact set, there is a unique solution (see \cite{facchinei2007finite}).  As we have already found a solution in the interior of the constraint set, by the argument above we have that this is the unique solution. This allows us to solve the first-order KKT optimality conditions (see \cite{boyd2004convex}) to find the solution. Also, since all terms satisfy $f_\theta(a) > 0$ (similarly for $g_\nu(a)$) we do not need to explicitly write down the Lagrange multipliers for the non-negativity constraint. This completes the proof. 
\hfil \qed

\subsection{Proof of Proposition \ref{eq:lemma:algo_lin_func_approx}}

The algorithm is similar to the one proposed for the tabular case. However, since the exponents are elements of $\Phi^{\top} \theta$ instead of just $\theta$ as was in the case of tabular softmax, we have the updates modified as well (we write the updates here for the combined update instead of the two step update for ease of presentation): 
\begin{align}
\Phi^\top \theta_{t+1} &= (1 - \eta \tau) \Phi^\top {\theta}_t - (2 - \eta \tau) \eta \Psi^{\top} \tilde{P} Q \Psi h_{\nu_t} + (1 - \eta \tau) \eta \Psi^{\top} \tilde{P} Q \Psi h_{\nu_{t-1}}, \nonumber \\
\Phi^\top \nu_{t+1} &= (1 - \eta \tau) \Phi^\top {\nu}_t + (2 - \eta \tau) \eta \Psi^{\top} \tilde{P} Q^\top \Psi g_{\theta_t} - (1 - \eta \tau) \eta \Psi^{\top} \tilde{P} Q^\top \Psi g_{\theta_{t-1}},
\end{align}
where $\Phi$ is a full-rank feature matrix. Note that the additional matrix $\tilde{P}$ is to ensure that the probability vector at each step of the algorithm satisfies the function approximation constraint\footnote{This is based on the fact that for a probability vector $x_i \propto e^{y_i}$, we will also have $x_i \propto e^{y_i - k}$ for any constant k independent of the index $i$.}. Since $\Phi$ is full-rank, we can explicitly write this update for $\theta$ and $\nu$ as follows:
\$
 \theta_{t+1} &= [\Phi \Phi^{\top}]^{-1}\Phi \left( (1 - \eta \tau) \Phi^\top {\theta}_t - (2 - \eta \tau) \eta \Psi^{\top} \tilde{P} Q \Psi h_{\nu_t} + (1 - \eta \tau) \eta \Psi^{\top} \tilde{P} Q \Psi h_{\nu_{t-1}} \right), \nonumber \\
 \nu_{t+1} &=  [\Phi \Phi^{\top}]^{-1}\Phi \left( (1 - \eta \tau) \Phi^\top {\nu}_t + (2 - \eta \tau) \eta \Psi^{\top} \tilde{P} Q^\top \Psi g_{\theta_t} - (1 - \eta \tau) \eta \Psi^{\top} \tilde{P} Q^\top \Psi g_{\theta_{t-1}} \right),
\$
which can be simplified to:
\begin{align}
 \theta_{t+1} &= (1 - \eta \tau) {\theta}_t - (2 - \eta \tau) \eta  [\Phi \Phi^{\top}]^{-1}\Phi  \Psi^{\top} \tilde{P} Q \Psi h_{\nu_t} + (1 - \eta \tau) \eta  [\Phi \Phi^{\top}]^{-1}\Phi  \Psi^{\top} \tilde{P} Q \Psi h_{\nu_{t-1}} , \nonumber \\
 \nu_{t+1} &= (1 - \eta \tau) {\nu}_t + (2 - \eta \tau) \eta  [\Phi \Phi^{\top}]^{-1}\Phi  \Psi^{\top} \tilde{P} Q^\top \Psi g_{\theta_t} - (1 - \eta \tau) \eta  [\Phi \Phi^{\top}]^{-1}\Phi  \Psi^{\top} \tilde{P} Q^\top \Psi g_{\theta_{t-1}} .
\end{align}

Note that for $\Phi = [M \ | \ 0]$, we have 
\begin{align}
[\Phi \Phi^{\top}]^{-1}\Phi &= ([M | 0] [M | 0]^{\top} )^{-1} [M | 0] = (M M^{\top})^{-1} [M|0] = [(M^{\top})^{-1} | 0].
\end{align}

From the previous discussion, since all the terms $g_{\theta_t}$ and $h_{\nu_t}$ lie in the set $\tilde{\Delta}$, we have $\Psi g_{\theta_t} = g_{\theta_t}$ and $\Psi h_{\nu_t} = h_{\nu_t}$. Also, we have:
\begin{align}
[\Phi \Phi^{\top}]^{-1}\Phi  \Psi^{\top} &= [(M^{\top})^{-1} | 0]  \Psi^{\top} = [(M^\top)^{-1} | 0], 
\end{align}
from the structure of $\Psi$. Therefore, the update can be written as:
\begin{align}
 \theta_{t+1} &= (1 - \eta \tau) {\theta}_t - (2 - \eta \tau) \eta   [(M^\top)^{-1} | 0]  \tilde{P} Q  h_{\nu_t} + (1 - \eta \tau) \eta   [(M^\top)^{-1} | 0]  \tilde{P} Q h_{\nu_{t-1}} , \nonumber \\
 \nu_{t+1} &= (1 - \eta \tau) {\nu}_t + (2 - \eta \tau) \eta  [(M^\top)^{-1} | 0]  \tilde{P} Q^\top  g_{\theta_t} - (1 - \eta \tau) \eta  [(M^\top)^{-1} | 0]  \tilde{P} Q^\top g_{\theta_{t-1}},
\end{align}
which completes the proof.
\hfil \qed

\subsection{Simulation to show convergence of ONPG in the function approximation setting}


In this section, we show the performance of the ONPG algorithm under Function Approximation (Algorithm \ref{alg:optimistic_npg_func}). The policies have a log linear parametrization, with the feature matrix  $\Phi = [I \ | \ 0] \in \mathbb{R}^{10 \times 100}$, and the cost matrix $Q$ is chosen to be a random matrix of dimension $100 \times 100$.
 
\begin{figure}[H]
  \centering 
\includegraphics[width=0.6\columnwidth]{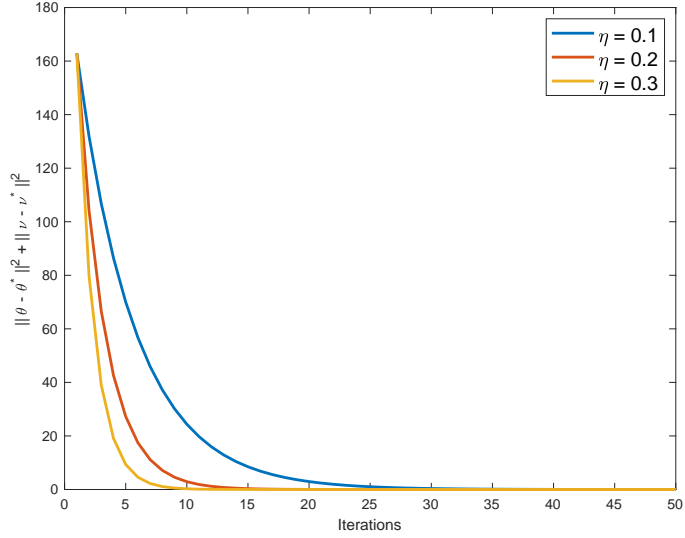}
    \caption{Behavior of Algorithm \ref{alg:optimistic_npg_func} in the matrix game under the log-linear function approximation setting.}
 \label{fig:NPG_FA_Matrix}
 \end{figure}
 

\section{Missing Definitions and Proofs in \S\ref{sec:Gen_Monotone}} 

\begin{remark}
We note that all results presented in this section also follow for the case where 
the number of possible actions for each player can be different.  However, we stick to the case where the number of action is the same for both players for ease of exposition. {Note that the actual action spaces need not be identical, but only their cardinalities.}
\end{remark}

\begin{definition}[In-class Nash equilibrium for a monotone game]\label{def:para_NE_mono_in_class}
	The policy parameter $\theta^\star = [\theta_1^\star, \theta_2^\star, \cdots, \theta_N^\star]$ is an {NE under function approximation, i.e., (in-class NE)} 
	of the monotone game, if it satisfies that for all  $i\in[N]$, 
\begin{align}
f_i( {g_{{\theta}_i^\star}, g_{{\theta}_{-i}^\star}})  \leq f_i( {g_{{\theta}_i}, g_{{\theta}_{-i}^\star}}) , \qquad \forall \theta_i \in \mathbb{R}^d. 
\end{align}
Note that if we are in the tabular setting, we will have $d = n$.
\end{definition}

\begin{definition}[$\epsilon$-in-class Nash equilibrium for a monotone game]\label{def:para_eps_NE_mono}
	The policy parameter $(\tilde{\theta}_1,\cdots,\tilde{\theta}_N)$ is an $\epsilon$-{\it Nash equilibrium}  {under function approximation} (or {\it in-class} $\epsilon$-NE) of the monotone game if it satisfies that for all  $i\in[N]$, 
\begin{align}
f_i( {g_{\tilde{\theta}_i}, g_{\tilde{\theta}_{-i}}}) - \epsilon \leq f_i( {g_{{\theta}_i}, g_{\tilde{\theta}_{-i}}}) , \qquad \forall \theta_i \in \mathbb{R}^d. 
\end{align}
Note that if we are in the tabular setting, we will have $d = n$.
\end{definition}

\begin{algorithm}[tb]
   \caption{Optimistic NPG for monotone games}
   \label{alg:ogda_monotone}
\begin{algorithmic}
   \STATE {\bfseries Initialize:} $\theta^0_i = 0$ for all players $i$. 
   \FOR{$t=1, 2, \cdots$}
   \STATE $\bar{\theta}_i^{t+1} = (1 - \eta \tau)\theta_i^t -\eta \nabla_{g_{\theta_i}} f_i(g_{\bar{\theta}^t_i}, g_{\bar{\theta}^t_{-i}}) \ \ \forall i \in [N]$.
   \STATE $\theta_i^{t+1} = (1 - \eta \tau)\theta_i^t -\eta \nabla_{g_{\theta_i}} f_i(g_{\bar{\theta}^{t+1}_i}, g_{\bar{\theta}^{t+1}_{-i}}) \ \ \forall i \in [N].$
   \ENDFOR
\end{algorithmic}
\end{algorithm}

We will also use the notation $f_i^\tau(g_i, g_{-i}) = f_i(g_i, g_{-i}) - \tau \mathcal{H} (g_i)$. 
\subsection{Proof of Lemma \ref{lemma:optimal_solution_N_player}}

We can follow the analysis for a two player game from  \cite{mertikopoulos2016learning} to write down the solution form for the N-player monotone setting. We let $g_i$ and $g_{-i}$ denote $g_{\theta_i}$ and $g_{\theta_{-i}}$ respectively. {First, note that the solutions in the policy space exists for the unregularized game, since we are solving a monotone VI over a convex compact set, and this solution is unique if we regularize the game, since in this case we are solving a stringly monotone VI over a convex compact set. See \cite{facchinei2007finite}.}

Consider player $i's$ optimization problem when other players play the equilibrium strategies:  
\begin{align}
\min_{g_i \in \Delta} f_i(g_i, g^\star_{-i})  - \tau \mathcal{H}(g_i).
\end{align}
Since we are in the monotone setting with a strongly convex regularizer, the first order Karush–Kuhn–Tucker (KKT) conditions are both necessary and sufficient. The first order KKT conditions  are:
\begin{align}
[\nabla_{g_i} f_i(g_i^\star, g_{-i}^\star)]_a + \tau(\log g_i^\star + 1) &+ \lambda = 0, \nonumber 
\end{align}
where $\lambda$ is the Lagrange multiplier corresponding to the simplex constraint. This implies:
\begin{align}
g_i^\star(a) &\propto e^{\frac{- [\nabla_{g_i} f_i(g_i^\star, g_{-i}^\star)]_a}{\tau}}, 
\end{align}
which shows the NE in the policy space. Now, to complete the proof of the lemma, we need to find parameters $\theta^\star$ which leads to this distribution. This can be easily seen by setting $\theta_i^\star = \frac{- [\nabla_{g_i} f_i(g_i^\star, g_{-i}^\star)]}{\tau}$, thereby completing the proof of the lemma.
\hfil \qed

\subsection{(Optimistic) NPG for monotone games}\label{sec:append:ONPG_update} 

As noted in \S\ref{sec:vanilla_NPG_deriv}, we have that the Fisher Information matrix $F_\theta(\theta) = \nabla_\theta g_{\theta}$. Therefore, the NPG update for player $i$ can be simplified as: 
\begin{align}
{\theta}^{t+1}_i &={\theta}^{t}_i-\eta \cdot F^{\dagger}_\theta(\theta^t_i)\cdot\frac{\partial f^\tau_i(g_{\theta^t_i},g_{\theta^t_{-i}})}{\partial\theta} ={\theta}_{t}-\eta\frac{\partial f^\tau_i(g_{\theta^t_i},g_{\theta^t_{-i}})}{\partial g_{\theta_i}} \nonumber \\
&= \theta_t - \eta \left( \nabla_{g_{\theta_i}} f_i(g_{\theta_i^t}, g_{\theta_{-i}^t}) + \tau (\mathbbm{1} + \log g_{\theta^t_i}) \right).
\end{align}

This can be simplified as: 
\begin{align}
{\theta}^{t+1}_i(a) = (1 - \eta \tau) {\theta}^{t}_i(a) - \eta [\nabla_{g_{\theta_i}} f_i(g_{\theta_i^t}, g_{\theta_{-i}^t})]_a + \eta \tau (\log Z_{\theta_i^t} - 1),
\end{align}
where $Z_{\theta_i^t} = \sum_{a' \in \cA}e^{\theta_i^t(a')}$. Note that this update will have the same pitfall of parameter divergence as the NPG update for the matrix game, since a matrix game is a special case of the monotone game. Therefore, we propose the following modified version of NPG, as done for the matrix game:
\begin{align}
{\theta}^{t+1}_i(a) = (1 - \eta \tau) {\theta}^{t}_i(a) - \eta [\nabla_{g_{\theta_i}} f_i(g_{\theta_i^t}, g_{\theta_{-i}^t})]_a.
\end{align}
This leads to the modified NPG dynamics for the monotone game. Now, similar to the matrix game, we analyze the optimistic version of this algorithm with updates:
\$
\bar{\theta}_i^{t+1} = (1 - \eta \tau)\theta_i^t -\eta \nabla_{g_{\theta_i}} f_i(g_{\bar{\theta}^t_i}, g_{\bar{\theta}^t_{-i}}), \qquad \theta_i^{t+1} = (1 - \eta \tau)\theta_i^t -\eta \nabla_{g_{\theta_i}} f_i(g_{\bar{\theta}^{t+1}_i}, g_{\bar{\theta}^{t+1}_{-i}}),
\$
in \S \ref{sec:Gen_Monotone}.

\subsection{Proof of Theorem \ref{thm:eg_og_conv_mono}}
\label{appendix:thm:eg_og_conv_mono}

We prove the following Lemma first:
\begin{lemma}
\label{Lemma_1}
For any $z = (g_{\theta_i}, g_{\theta_{-i}})  \in \Delta^N $, consider an update of the form:
\begin{align}
\label{eq:general_update_form}
\theta_i^{t+1} &= (1 - \eta \tau) \theta_i^{t} -\eta \nabla_{g_{\theta_i}} f_i(g_{\theta_i}, g_{\theta_{-i}}) \ \ \  \forall i \in [N].
\end{align}
We have:
\begin{align}
\langle \log z_{t+1} - (1 - \eta \tau) \log z_t - \eta \tau \log z^\star, z - z^\star  \rangle \leq 0,
\end{align}
where $z^\star = (g_i^\star)_{i=1}^N$.
\end{lemma}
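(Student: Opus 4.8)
The plan is to reduce the claimed inequality directly to the monotonicity of the pseudo-gradient operator $F$ (Assumption \ref{ass:monotone_smooth_F}), exploiting the fact that under the tabular softmax parameterization the logarithm of a policy equals its parameter vector up to an additive constant that is uniform across actions. First I would write, for each player $i$ and each action $a$, $\log g_{\theta_i}(a) = \theta_i(a) - \log Z_{\theta_i}$ with $Z_{\theta_i} = \sum_{a'\in\cA} e^{\theta_i(a')}$, so that $\log z_{t+1,i}$, $\log z_{t,i}$ and $\log z^\star_i$ differ from $\theta_i^{t+1}$, $\theta_i^t$ and $\theta_i^\star$, respectively, only by action-independent scalars.

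Next I would substitute the update $\theta_i^{t+1} = (1-\eta\tau)\theta_i^t - \eta\nabla_{g_{\theta_i}} f_i(g_{\theta_i}, g_{\theta_{-i}})$ into $\log z_{t+1,i}$ and, using Lemma \ref{lemma:optimal_solution_N_player} which gives $\log g^\star_i(a) = -[\nabla_{g_{\theta_i}} f_i(z^\star)]_a/\tau + \textrm{const}$, form the combination $\log z_{t+1} - (1-\eta\tau)\log z_t - \eta\tau\log z^\star$ componentwise. The terms proportional to $\theta_i^t$ cancel exactly (both carry the coefficient $1-\eta\tau$), and what survives for player $i$ is $-\eta\,\nabla_{g_{\theta_i}} f_i(z) + \eta\,\nabla_{g_{\theta_i}} f_i(z^\star) + c_i\mathbbm{1}$, where $c_i$ collects all the surviving log-partition terms and is independent of the action index.

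Finally, I would take the inner product with $z - z^\star$. Since $z_i$ and $z^\star_i$ are both probability vectors on $\cA$, one has $\langle c_i\mathbbm{1}, z_i - z^\star_i\rangle = c_i\sum_{a}(z_i(a)-z^\star_i(a)) = 0$, so every constant term drops out. What remains is exactly $-\eta\sum_{i=1}^N \langle \nabla_{g_{\theta_i}} f_i(z) - \nabla_{g_{\theta_i}} f_i(z^\star), z_i - z^\star_i\rangle = -\eta\langle F(z) - F(z^\star), z - z^\star\rangle$, which is $\leq 0$ by monotonicity of $F$. I expect the main obstacle to be purely bookkeeping: tracking the three distinct action-independent offsets (the log-partition functions of $\theta^{t+1}_i$, $\theta^t_i$, and $\theta^\star_i$) and verifying that they only ever enter through the combination $c_i\mathbbm{1}$, so that the simplex identity $\sum_a(z_i(a)-z^\star_i(a))=0$ annihilates them; once this is in place the monotonicity step is immediate.
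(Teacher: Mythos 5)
Your proposal is correct and follows essentially the same route as the paper's proof: express the log-policies via the softmax parameters up to action-independent constants, use the update rule and the NE characterization from Lemma \ref{lemma:optimal_solution_N_player} to rewrite $\log z_{t+1} - (1-\eta\tau)\log z_t - \eta\tau\log z^\star$ as $-\eta\left(F(z)-F(z^\star)\right)$ plus constant vectors, kill the constants with the simplex identity $\langle c\,\mathbbm{1}, z_i - z_i^\star\rangle = 0$, and conclude by monotonicity of $F$. The only difference is cosmetic bookkeeping (you combine the terms componentwise before taking one inner product, while the paper takes two inner products and adds them), which does not change the argument.
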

\begin{proof}
In the proof below, we define $g_i := g_{\theta_i}$ and $g_i^t := g_{\theta_i^t}$.
From the update sequence in Equation \eqref{eq:general_update_form}, we have
\begin{align}
\log g_i^{t+1} = (1- \eta \tau) \log g_i^t - \eta \nabla_{g_i} f_i(g_i, g_{-i}) + c\cdot\mathbbm{1},
\end{align}
where $c$ is the normalization constant. This implies:
\begin{align}
\langle \log g_i^{t+1}  - (1- \eta \tau) \log g_i^t, g_i - g_i^\star  \rangle &= \langle  - \eta \nabla_{g_i} f_i(g_i, g_{-i})+ c\cdot \mathbbm{1}, g_i - g_i^\star  \rangle \nonumber \\
&= \langle  - \eta \nabla_{g_i} f_i(g_i, g_{-i}),g_i - g_i^\star  \rangle. 
\end{align}
Note that $ \langle c\cdot \mathbbm{1}, g_i - g_i^\star  \rangle = 0$, since $g_i, g_i^\star \in \Delta$. Since this is true for all players $i$, we have:
\begin{align}
\langle \log z_{t+1} - (1 - \eta \tau) \log z_t, z - z^\star  \rangle &=  -\eta \sum_i \langle  \nabla_{g_i} f_i(g_i, g_{-i}),g_i - g_i^\star  \rangle = -\eta \langle F(z), z - z^\star \rangle.
\label{lemma1_main1}
\end{align}
Now, from the properties of the NE, we have:
\begin{align}
\eta \tau \log g_i^\star= -\eta \nabla_{g_i} f_i(g_i^\star, g_{-i}^\star) +  c\cdot \mathbbm{1},
\end{align}
which gives us:
\begin{align}
\langle \eta \tau \log g_i^\star ,  g_i - g_i^\star \rangle = \langle   -\eta \nabla_{g_i} f_i(g_i^\star, g_{-i}^\star) , g_i - g_i^\star \rangle.
\end{align}
Since this is true for all players $i$, we have
\begin{align}
\label{lemma1_main2}
\langle \eta \tau \log z^\star , z - z^\star\rangle = - \eta \langle  F(z^\star), z - z^\star \rangle.
\end{align}
From Equations \eqref{lemma1_main1} and \eqref{lemma1_main2}, we have:
\begin{align}
\langle \log z_{t+1} - (1 - \eta \tau) \log z_t - \eta \tau \log z^\star, z - z^\star  \rangle &=  -\eta \langle F(z) - F(z^\star), z - z^\star \rangle \leq 0,
\end{align}
where the last step uses the monotonicity assumption of $F$. This completes the proof of the lemma.
\end{proof}

\begin{lemma}
For updates of the form:
\begin{align}
\theta_i^{t+1} &= (1 - \eta \tau) \theta_i^{t} -\eta \nabla_{g_{\theta_i}} f_i(g_{\bar{\theta}^{t+1}_i}, g_{\bar{\theta}^{t+1}_{-i}}), \ \ \  \forall i \in [N],
\end{align}
we have
\begin{align}
\label{eq:common_1}
(1 - \eta \tau)\KL(z^\star \| z_t) \geq (1 - \eta \tau) \KL & (\bar{z}_{t+1} \| z_t )  + \eta \tau \KL (\bar{z}_{t+1} \| z^\star) + \KL (z_{t+1} \| \bar{z}_{t+1} ) \nonumber \\
& \qquad -\langle \log \bar{z}_{t+1} - \log z_{t+1}, \bar{z}_{t+1} - z_{t+1} \rangle + \KL(z^\star \| z_{t+1}),
\end{align}
and
\begin{align}
\label{eq:common_2}
\KL(z^\star \| \bar{z}_{t+1}) = \KL(z^\star \| z_{t+1}) - \KL(\bar{z}_{t+1} \| z_{t+1} ) - \langle z^\star - \bar{z}_{t+1}, \log \bar{z}_{t+1} - \log z_{t+1} \rangle.
\end{align}
\end{lemma}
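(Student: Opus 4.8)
The two displayed relations are both statements about the KL divergence viewed as the Bregman divergence of the negative entropy, so the plan is to handle them separately and reduce each to elementary Bregman algebra together with the preceding Lemma \ref{Lemma_1}. Throughout I would use the identity $\KL(p\|q)=\langle p,\log p-\log q\rangle$ for $p,q\in\Delta$, and the fact that all iterates stay in the relative interior of the simplex (so every $\log$ is finite and the gradient of the negative entropy is $\log(\cdot)+\mathbbm{1}$), which is guaranteed exactly as in Lemma \ref{lemma:MWU_c}.

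Equation \eqref{eq:common_2} is just the three-point (generalized Pythagorean) identity for the KL-Bregman divergence. I would record the identity $\KL(x\|z)-\KL(x\|y)-\KL(y\|z)=\langle \log y-\log z,\,x-y\rangle$, which follows in one line by expanding each term through $\KL(p\|q)=\langle p,\log p-\log q\rangle$ and noting that the constant $\mathbbm{1}$ coming from the entropy gradient contributes nothing since $x,y,z$ all sum to one. Substituting $x=z^\star$, $y=\bar z_{t+1}$, $z=z_{t+1}$ and rearranging yields \eqref{eq:common_2} verbatim (using that the inner product is symmetric in its two arguments).

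For Equation \eqref{eq:common_1} the starting point is Lemma \ref{Lemma_1}. The correction step $\theta_i^{t+1}=(1-\eta\tau)\theta_i^t-\eta\nabla_{g_{\theta_i}}f_i(g_{\bar\theta^{t+1}_i},g_{\bar\theta^{t+1}_{-i}})$ is precisely an update of the form covered by Lemma \ref{Lemma_1} with the gradient evaluated at the point $\bar z_{t+1}$, so applying that lemma with its free argument set to $z=\bar z_{t+1}$ gives $\langle v,\ \bar z_{t+1}-z^\star\rangle\le 0$, where I abbreviate $v:=\log z_{t+1}-(1-\eta\tau)\log z_t-\eta\tau\log z^\star$. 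The remaining task is purely algebraic: I would move the right-hand side of \eqref{eq:common_1} to one side and show that the resulting expression equals exactly $\langle v,\ \bar z_{t+1}-z^\star\rangle$, so that the target inequality is nothing but the Lemma \ref{Lemma_1} bound. Concretely I expand each of $(1-\eta\tau)\KL(\bar z_{t+1}\|z_t)$, $\eta\tau\KL(\bar z_{t+1}\|z^\star)$, $\KL(z_{t+1}\|\bar z_{t+1})$, $\KL(z^\star\|z_{t+1})$, $-(1-\eta\tau)\KL(z^\star\|z_t)$, and the cross term $-\langle\log\bar z_{t+1}-\log z_{t+1},\,\bar z_{t+1}-z_{t+1}\rangle$ into inner products of the form $\langle\cdot,\log\cdot\rangle$ and collect coefficients.

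The structural fact that makes this collection work is that the coefficients of $\log z_{t+1}$, $\log z_t$, $\log z^\star$ in $v$ are $1,\,-(1-\eta\tau),\,-\eta\tau$, which sum to zero; this is exactly what forces the diagonal self-terms $\langle \bar z_{t+1},\log\bar z_{t+1}\rangle$ and $\langle z_{t+1},\log z_{t+1}\rangle$ to cancel and groups the survivors into $\langle \bar z_{t+1},v\rangle-\langle z^\star,v\rangle=\langle v,\ \bar z_{t+1}-z^\star\rangle$. I expect this bookkeeping to be the only real obstacle: it is routine but must be carried out carefully, since there are many $\langle\cdot,\log\cdot\rangle$ terms and one must verify that every unwanted term cancels. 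Once the identity is confirmed, combining it with the sign supplied by Lemma \ref{Lemma_1} closes \eqref{eq:common_1}.
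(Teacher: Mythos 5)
Your proposal is correct and follows essentially the same route as the paper: the paper also obtains \eqref{eq:common_1} by invoking Lemma \ref{Lemma_1} at $z=\bar z_{t+1}$ to get $\langle \log z_{t+1}-(1-\eta\tau)\log z_t-\eta\tau\log z^\star,\ \bar z_{t+1}-z^\star\rangle\le 0$ and then expanding this inner product into the stated KL terms (pairing against $z^\star$ and $\bar z_{t+1}$ separately), and it treats \eqref{eq:common_2} as a direct consequence of KL-divergence properties, i.e., exactly the three-point identity you write out.
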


\begin{proof}
From the definition of $\KL$ divergence we have:
\begin{align}
\label{eq:sub1}
- \langle \log z_{t+1} - (1 - \eta \tau) \log z_t - \eta \tau \log z^\star, z^\star  \rangle = - (1 - \eta \tau) \KL (z^\star \| z_t) + \KL(z^\star \| z_{t+1} ). 
\end{align}
Next, note that:
\begin{align}
\label{eq:sub2}
&\langle \log z_{t+1} - (1 - \eta \tau) \log z_t - \eta \tau \log z^\star, \bar{z}_{t+1} \rangle \nonumber \\
&= \langle \log \bar{z}_{t+1} - (1 - \eta \tau) \log z_t - \eta \tau \log z^\star, \bar{z}_{t+1} \rangle + \langle \log \bar{z}_{t+1} - \log z_{t+1} , z_{t+1} \rangle    - \langle \log \bar{z}_{t+1} - \log z_{t+1}, \bar{z}_{t+1} - z_{t+1} \rangle \nonumber \\
&= (1 - \eta \tau) \KL (\bar{z}_{t+1} \| z_t ) + \eta \tau \KL (\bar{z}_{t+1} \| z^\star) + \KL (z_{t+1} \| \bar{z}_{t+1})   - \langle \log \bar{z}_{t+1} - \log z_{t+1}, \bar{z}_{t+1} - z_{t+1} \rangle.
\end{align}
Now, substituting $z = \bar{z}_{t+1}$ in Lemma \ref{Lemma_1} we have:
\begin{align}
\label{eq:sub3}
\langle \log z_{t+1} - (1 - \eta \tau) \log z_t - \eta \tau \log z^\star, \bar{z}_{t+1} - z^\star  \rangle \leq 0. 
\end{align}

Substituting Equations \eqref{eq:sub1} and \eqref{eq:sub2} in \eqref{eq:sub3}, we get the Inequality \eqref{eq:common_1}.

Inequality \eqref{eq:common_2} follows from the properties of KL divergence.
\end{proof}

From the ONPG updates in Algorithm \eqref{alg:ogda_monotone}, we have:
\begin{align}
\log \bar{g}_i^{t+1} - \log g_i^{t+1} = - \eta \left( \nabla_{g_i} f_i(\bar{g}_i^{t}, \bar{g}_{-i}^{t}) - \nabla_{g_i} f_i(\bar{g}_i^{t+1}, \bar{g}_{-i}^{t+1}) \right) + c\cdot \mathbbm{1}.
\end{align}
This implies:
\begin{align}
&\langle \log \bar{g}_i^{t+1} - \log g_i^{t+1}, \bar{g}_i^{t+1} - g_i^{t+1} \rangle \nonumber \\
&= -\eta \langle   \nabla_{g_i} f_i(\bar{g}_i^{t}, \bar{g}_{-i}^{t}) - \nabla_{g_i} f_i({g}_i^{t}, {g}_{-i}^{t}) + \nabla_{g_i} f_i({g}_i^{t}, {g}_{-i}^{t}) - \nabla_{g_i} f_i(\bar{g}_i^{t+1}, \bar{g}_{-i}^{t+1}) , \bar{g}_i^{t+1} - g_i^{t+1} \rangle \nonumber \\
&\leq^{*1} \eta \left(\|   \nabla_{g_i} f_i(\bar{g}_i^{t}, \bar{g}_{-i}^{t}) - \nabla_{g_i} f_i({g}_i^{t}, {g}_{-i}^{t})  \| +   \|  \nabla_{g_i} f_i({g}_i^{t}, {g}_{-i}^{t}) - \nabla_{g_i} f_i(\bar{g}_i^{t+1}, \bar{g}_{-i}^{t+1}) \| \right) \|\bar{g}_i^{t+1} - g_i^{t+1} \| \nonumber \\
&\leq^{*2} \eta L \left( \|\bar{g}_i^{t} - g_i^t \| + \| \bar{g}_{-i}^{t} - g_{-i}^t \| +  \|\bar{g}_i^{t+1} - g_i^t \| + \| \bar{g}_{-i}^{t+1} - g_{-i}^t \| \right) \| \bar{g}_i^{t+1} - g_i^{t+1}  \| \nonumber \\
&\leq^{*3} \frac{1}{2} \eta L \left(  \|\bar{g}_i^{t} - g_i^t \|^2 + \| \bar{g}_{-i}^{t} - g_{-i}^t \|^2 +  \|\bar{g}_i^{t+1} - g_i^t \|^2 + \| \bar{g}_{-i}^{t+1} - g_{-i}^t \|^2 + 4 \|\bar{g}_i^{t+1} - g_i^{t+1} \|^2  \right),
\end{align}
where $(*1)$ follows from the fact that $a^{\top}b \leq \| a \| \|b \|$, $(*2)$ follows from Assumption \ref{ass:monotone_smooth_F}, and $(*3)$ follows from $x \cdot y \leq \frac{1}{2} (x^2 + y^2)$. Since this is true for all players $i$, we have:
\begin{align}
\langle \log \bar{z}_{t+1} - \log z_{t+1},  \bar{z}_{t+1} - z_{t+1} \rangle  &\leq \frac{1}{2} \eta L \sum_{i=1}^{N} \left(  \|\bar{g}_i^{t} - g_i^t \|^2 + \| \bar{g}_{-i}^{t} - g_{-i}^t \|^2 +  \|\bar{g}_i^{t+1} - g_i^t \|^2 + \| \bar{g}_{-i}^{t+1} - g_{-i}^t \|^2 + 4 \|\bar{g}_i^{t+1} - g_i^{t+1} \|^2  \right)\nonumber \\
&\leq \frac{1}{2} \eta L \left( N \|\bar{z}_{t} - z_t \|^2  +  N \|\bar{z}_{t+1} - z_t \|^2 + 4 \|\bar{z}_{t+1} - z_{t+1} \|^2  \right) \nonumber \\
&\leq^{*1}  \eta L \left(N \KL ( z_t \| \bar{z}_{t} )  + N  \KL (\bar{z}_{t+1} \| z_t) + 4 \KL(z_{t+1} \| \bar{z}_{t+1}) \right),
\label{eq:similar_OGDA_ref}
\end{align}
where $(*1)$ follows from Pinsker's Inequality and the fact that the $l_1$ norm is an upper bound for the $l_2$ norm.
Substituting this in Equation \eqref{eq:common_1}, we have
\begin{align}
\label{eq:OGDA_call_back}
\KL (z^\star \| {z}_{t+1} ) \leq (1 - \eta \tau) \KL (z^\star \| z_t) & - (1 - \eta \tau - N \eta L)\KL (\bar{z}_{t+1} \| z_t )  - \eta \tau \KL(\bar{z}_{t+1} \| z^\star) \nonumber \\
& \qquad - (1 - 4\eta L) \KL(z_{t+1} \| \bar{z}_{t+1}) + N\eta L \KL ( z_t \| \bar{z}_{t} ).
\end{align}
For $\eta < \frac{1}{2(N+4)L + 2\tau}$, we have: $
N\eta L \leq (1 - \eta \tau)(1 - 4\eta L).$ 
This gives us:
\begin{align}
 \KL (z^\star \| {z}_{t+1} ) +  (1 - 4\eta L) KL(z_{t+1} \| \bar{z}_{t+1}) &\leq (1 - \eta \tau) \KL (z^\star \| z_t) + N\eta L \KL ( z_t \| \bar{z}_{t} ) \nonumber \\
 &\leq  (1 - \eta \tau) \left( \KL (z^\star \| z_t) +  (1 - 4\eta L)  \KL ( z_t \| \bar{z}_{t} ) \right). \nonumber \\
 \end{align}
 Define:
 \begin{align}
 V_t = \KL (z^\star \| z_t) +  (1 - 4\eta L)  \KL ( z_t \| \bar{z}_{t} ).
 \end{align}
 Then we have:
 \begin{align}
 V_{t+1} \leq (1 - \eta \tau) V_t,
 \end{align}
 and therefore:
 \begin{align}
  \KL (z^\star \| {z}_{t+1} )  \leq V_{t+1} \leq (1 - \eta \tau)^t V_0 = (1 - \eta \tau)^t  \KL (z^\star \| {z}_{0} ),
 \end{align}
 which shows convergence of $\KL (z^\star \| {z}_{t+1} )$. Next we show convergence of $\KL (z^\star \| \bar{z}_{t+1} )$. 

Similar to derivation of Equation \eqref{eq:similar_OGDA_ref}, we have
\begin{align}
- \langle z^\star - \bar{z}_{t+1}, \log \bar{z}_{t+1} - \log z_{t+1} \rangle &\leq  \eta L \left(N \KL ( z_t \| \bar{z}_{t} )  + N  \KL (\bar{z}_{t+1} \| z_t) + 4 \KL(z^\star \| \bar{z}_{t+1}) \right).
\end{align}
Substituting this in Equation \eqref{eq:common_2}, we have:
\begin{align}
(1 - 4\eta L) \KL (z^\star \| \bar{z}_{t+1}) \leq \KL(z^\star \| {z}_{t+1}) +  \eta L \left(N \KL ( z_t \| \bar{z}_{t} )  + N  \KL (\bar{z}_{t+1} \| z_t) \right).
\end{align}
Now, using Equation \eqref{eq:OGDA_call_back}, we have:
\begin{align}
(1 - 4\eta L) \KL (z^\star \| \bar{z}_{t+1}) &\leq (1 - \eta \tau) \KL (z^\star \| z_t) - (1 - \eta \tau - 2N \eta L) \KL (\bar{z}_{t+1} \| z_t )  - \eta \tau \KL(\bar{z}_{t+1} \| z^\star) \nonumber \\
& \qquad - (1 - 4\eta L) \KL(z_{t+1} \| \bar{z}_{t+1}) + 2N\eta L \KL ( z_t \| \bar{z}_{t} )\nonumber \\
&\leq  (1 - \eta \tau) \KL (z^\star \| z_t) +  2N\eta L \KL ( z_t \| \bar{z}_{t} ) \nonumber \\
&\leq \KL (z^\star \| z_t) +  (1 - 4\eta L)  \KL ( z_t \| \bar{z}_{t} ) := V_t,
\end{align}
which gives us:
\begin{align}
\KL (z^\star \| \bar{z}_{t+1})  \leq 2V_t \leq 2(1-\eta\tau)^t V_0 = 2(1-\eta\tau)^t \KL(z^\star \| z_0).
\end{align}
This completes the proof of the first part of the theorem.

Next, we prove parameter convergence. We have from Equation \eqref{eq:call_back_para_conv}:
\begin{align}
\| \theta^{t+1}_i - \theta^\star_i \|^2  = (1 - \eta \tau + \eta^2 \tau^2)  \| \theta^t_i - \theta^\star_i\|^2 + \left(1 + \frac{1}{\eta\tau} (1 - \eta \tau)^2 \right)\| \varepsilon_t \|^2,
\end{align}
where $\theta^\star_i = \frac{- \nabla_{g_i} f_i(g_i^\star, g_{-i}^\star)}{\tau}$ (Note that the term $\varepsilon_t$ however is different here from definition of $\varepsilon_t$ in Equation \eqref{eq:call_back_para_conv}).  Now, for ONPG, we have:
\begin{align}
\| \varepsilon_t \|^2 &= \eta^2 \| \nabla_{g_i} f_i(g_i^\star, g_{-i}^\star) - \nabla_{g_i} f_i(\bar{g}_i^{t+1}, \bar{g}_{-i}^{t+1}) \|^2 \leq \eta^2 L^2 \| \bar{z}_{t+1} - z^\star \|^2 \leq 2\eta^2L^2 \KL(z^\star \| z_{t+1}).
\end{align}

This gives us:
\begin{align}
\| \theta^{t+1}_i - \theta^\star_i \|^2 \leq (1 - \eta \tau + \eta^2 \tau^2)  \| \theta^t_i - \theta^\star_i\|^2 + C(1 - \eta \tau)^{t},
\end{align}
where 
\begin{align}
C = 4\eta^2L^2\left(1 + \frac{1}{\eta\tau} (1 - \eta \tau)^2 \right)\KL(z^\star \| z_{0}),
\end{align}
from the first part of the Theorem proved above.
For $\eta\tau < 1/2$, this reduces to:
\begin{align}
\| \theta^{t+1}_i - \theta^\star_i \|^2 \leq (1 - \eta \tau/2)  \| \theta^t_i - \theta^\star_i\|^2 + C(1 - \eta \tau)^{t}.
\end{align}

Now, consider the Lyapunov function:
\begin{align}
V_{t+1} = \| \theta^{t+1}_i - \theta^\star_i \|^2 + \frac{2C}{\eta\tau} (1 - \eta \tau)^{t+1}.
\end{align}
We have:
\begin{align}
V_{t+1} &\leq (1 - \eta \tau/2)  \| \theta^t_i - \theta^\star_i\|^2 + C(1 - \eta \tau)^{t} + \frac{2C}{\eta\tau} (1 - \eta \tau) (1 - \eta \tau)^{t} \nonumber \\
&= (1 - \eta \tau/2)  \| \theta^t_i - \theta^\star_i\|^2   +  \frac{2C}{\eta\tau} (1 - \eta \tau)^{t} (1 - \eta \tau + \eta \tau /2)  = (1 - \eta \tau/2)  \| \theta^t_i - \theta^\star_i\|^2   +  \frac{2C}{\eta\tau}(1 - \eta \tau)^{t} (1 - \eta \tau/2) \nonumber \\
&= (1 - \eta \tau/2) \left( \| \theta^{t}_i - \theta^\star_i \|^2 + \frac{2C}{\eta\tau} (1 - \eta \tau)^{t} \right) = (1 - \eta\tau/2) V_t.
\end{align}
This shows linear convergence of the parameter $\theta_i$ to $\theta^\star_i$ since: 
$\| \theta^{t+1}_i - \theta^\star_i \|^2 \leq V_{t+1} \leq (1 - \eta \tau/2)^t V_0.$ 
Merging these inequalities for all players $i$ completes the proof of the Theorem.
\hfil \qed

\subsection{Proof of Corollary \ref{cor:small_tau_N_player}}

We define $g_i := g_{\theta_i}$. The duality gap for the regularized game is given by:
\begin{align}
\label{eq:ineq_1_DG}
&\text{DG}_\tau(g_1, g_2, \cdots, g_N) = \sum_{i=1}^N \left[ f_i^\tau(g_i,g_{-i}) - \min_{\tilde{g}_i} f_i^\tau(\tilde{g}_i, g_{-i})  \right] = \max_{\tilde{g}_1, \tilde{g}_2, \cdots, \tilde{g}_N} \sum_{i=1}^N \left[ f_i^\tau(g_i,g_{-i}) -  f_i^\tau(\tilde{g}_i, g_{-i})  \right] \nonumber \\
&\quad= \max_{\tilde{g}_1, \tilde{g}_2, \cdots, \tilde{g}_N} \sum_{i=1}^N [ f_i^\tau(g_i,g_{-i}) - f_i^\tau(g_i,g_{-i}^\star) + f_i^\tau(g_i,g_{-i}^\star) - f_i^\tau(\tilde{g}_i,g_{-i}^\star)    +  f_i^\tau(\tilde{g}_i,g_{-i}^\star) - f_i^\tau(\tilde{g}_i, g_{-i})  ] \nonumber \\
&\quad\leq \max_{\tilde{g}_1, \tilde{g}_2, \cdots, \tilde{g}_N} \sum_{i=1}^N [ f_i^\tau(g_i,g_{-i}) - f_i^\tau(g_i,g_{-i}^\star) + f_i^\tau(g_i,g_{-i}^\star) - f_i^\tau(g_i^\star ,g_{-i}^\star)    +  f_i^\tau(\tilde{g}_i,g_{-i}^\star) - f_i^\tau(\tilde{g}_i, g_{-i})  ] .
\end{align}

Next, we note that:
\begin{align}
\label{eq:ineq_2_DG}
\sum_{i=1}^N  f_i^\tau(g_i,g_{-i}) &- f_i^\tau(g_i,g_{-i}^\star)  + f_i^\tau(g_i,g_{-i}^\star) - f_i^\tau(g_i^\star ,g_{-i}^\star) +  f_i^\tau(\tilde{g}_i,g_{-i}^\star) - f_i^\tau(\tilde{g}_i, g_{-i})  \nonumber \\
&\leq \sum_{i=1}^N \left[ \| f_i^\tau(g_i,g_{-i}) - f_i^\tau(g_i,g_{-i}^\star)  \| + \| f_i^\tau(g_i,g_{-i}^\star) - f_i^\tau(g_i^\star ,g_{-i}^\star) \| + \|  f_i^\tau(\tilde{g}_i,g_{-i}^\star) - f_i^\tau(\tilde{g}_i, g_{-i}) \|  \right] \nonumber \\
&\leq \sum_{i=1}^N \left[ \| f_i^\tau(g_i,g_{-i}) - f_i^\tau(g_i,g_{-i}^\star)  \| + \| f_i^\tau(g_i,g_{-i}^\star) - f_i^\tau(g_i^\star ,g_{-i}^\star) \| + \|  f_i^\tau(\tilde{g}_i,g_{-i}^\star) - f_i^\tau(\tilde{g}_i, g_{-i}) \|  \right] \nonumber \\
&\leq C_1 \sum_{i=1}^N \left[ \| g_i - g_i^\star \| + \| g_{-i} - g_{-i}^\star \| \right] \leq C_2N \sqrt{ \KL (z^\star \| z_t) }.
\end{align}
This follows by noting that the functions $f_i^\tau$ are Lipschitz since they are continuous  functions defined on a compact domain. The last step follows from Pinsker's inequality and the fact that the $l_1$ norm is an upper bound for the $l_2$ norm.

Combining the two inequalities \eqref{eq:ineq_1_DG} and \eqref{eq:ineq_2_DG} we have:
\begin{align}
\text{DG}_\tau(g_1, g_2, \cdots, g_N) \leq C_2N \sqrt{ \KL (z^\star \| z_t) }.
\end{align}
Let DG denote the Duality gap of the unregularized problem. Then we have:
\begin{align}
\text{DG}(g_1, g_2, \cdots, g_N) \leq \text{DG}_\tau(g_1, g_2, \cdots, g_N) + 2N \tau \log n.
\end{align}
Therefore, setting $\tau = \frac{\epsilon}{4N \log n}$ and solving the regularized problem to an accuracy of $\frac{\epsilon^2}{4C_2^2N^2}$ in terms of KL divergence, we have that:
\begin{align}
\text{DG}(g_1, g_2, \cdots, g_N) \leq \epsilon,
\end{align} 
completing the proof. \hfil \qed

\subsection{Proximal point and Extragradient methods for multi-player monotone games}
\label{app:sec_EG_PP_proofs}

We define $g_i := g_{\theta_i}$ and $g_i^t := g_{\theta_i^t}$ for simplicity. 

\subsubsection{Proximal-point updates} 

In this subsection, we show the convergence of the Proximal Point (PP) updates to the NE of the regularized N-player monotone game. The PP algorithm is presented in Algorithm \ref{alg:pp_monotone}

\begin{algorithm}[tb]
   \caption{Proximal Point Method}
   \label{alg:pp_monotone}
\begin{algorithmic}
   \STATE {\bfseries Initialize:} $\theta^0_i = 0$ for all players $i$.
   \FOR{$t=1, 2, \cdots$}
     \STATE $\theta_i^{t+1} = (1 - \eta \tau)\theta^t_i  -\eta \nabla_{g_{\theta_i}} f_i(g_{\theta_i^{t+1}}, g_{\theta^{t+1}_i}) \ \ \forall i \in [N].$ 
   \ENDFOR
\end{algorithmic}
\end{algorithm}

\begin{theorem}
\label{prop:pp_conv_mono}
Let $z^\star = (g_i^\star)_{i=1}^N$ be the Nash equilibrium of Problem \eqref{eq:N-payer_monotone_game_reg}. Also, we denote $z_t = (g_{i}^t)_{i=1}^N$. Define $\theta_i^\star := \frac{-\nabla_{g_i} f_i (g_i^\star, g_{-i}^\star)}{\tau}$. Then for updates in Algorithm \ref{alg:pp_monotone}, we have for $\eta\tau < 1/2$:
\vspace{-2mm}
\begin{itemize}
\item $\KL(z^\star \| z_{t+1} )  \leq (1 - \eta \tau)^t \KL (z^\star \| z_0).$
\item $\| \theta_{t+1} - \theta^\star \|^2  \leq (1 - \eta \tau/2)^t V_0$, where, 
\begin{align}
V_0 =\| \theta^{t} - \theta^\star \|^2 + \frac{2NC}{\eta\tau}, \qquad \qquad 
C = 2\eta^2L^2\left(1 + \frac{4}{\eta\tau} (1 - \eta \tau)^2 \right)\KL(z^\star \| z_{0}). \nonumber
\end{align}
\end{itemize}
\end{theorem}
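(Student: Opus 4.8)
The plan is to mirror the two-part structure of Theorem~\ref{thm:eg_og_conv_mono}: first establish a one-step contraction of $\KL(z^\star\|z_t)$ in policy space, then feed that decay into a recursion for $\|\theta^t-\theta^\star\|^2$ and close it with a Lyapunov function. The crucial simplification for the proximal-point scheme is that its update is \emph{implicit}: in Algorithm~\ref{alg:pp_monotone} the gradient $\nabla_{g_{\theta_i}}f_i$ is evaluated at the new iterate $z_{t+1}$ itself. Since Lemma~\ref{Lemma_1} holds for any point at which the gradient is taken, I would apply it directly with $z = z_{t+1}$ to get
\begin{align}
\langle \log z_{t+1} - (1-\eta\tau)\log z_t - \eta\tau\log z^\star,\; z_{t+1} - z^\star\rangle \leq 0.
\end{align}
Unlike the optimistic proof there is no extrapolated point $\bar z_{t+1}$ to track, so the two-step bounds \eqref{eq:common_1}--\eqref{eq:common_2} are unnecessary and the argument is cleaner.

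For the policy-space bound I would split
$\log z_{t+1} - (1-\eta\tau)\log z_t - \eta\tau\log z^\star = (1-\eta\tau)(\log z_{t+1} - \log z_t) + \eta\tau(\log z_{t+1} - \log z^\star)$
and apply the three-point identity $\langle \log b - \log c,\, a - b\rangle = \KL(a\|c) - \KL(a\|b) - \KL(b\|c)$ to each inner product with $a=z^\star$. The two $\KL(z^\star\|z_{t+1})$ contributions combine with total weight $(1-\eta\tau)+\eta\tau=1$, yielding
\begin{align}
\KL(z^\star\|z_{t+1}) \leq (1-\eta\tau)\KL(z^\star\|z_t) - (1-\eta\tau)\KL(z_{t+1}\|z_t) - \eta\tau\KL(z_{t+1}\|z^\star).
\end{align}
Dropping the two nonnegative subtracted terms and iterating gives the claimed $\KL(z^\star\|z_{t+1}) \leq (1-\eta\tau)^t\KL(z^\star\|z_0)$. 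Well-definedness of the implicit iterate is not an issue, since it is the unique solution of a strongly monotone variational inequality (cf.\ the uniqueness in Lemma~\ref{lemma:optimal_solution_N_player}).

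For parameter convergence I would reuse the per-player recursion of the form in \eqref{eq:call_back_para_conv}: writing the update as $\theta_i^{t+1}-\theta_i^\star = (1-\eta\tau)(\theta_i^t-\theta_i^\star) + \varepsilon_t^{(i)}$ with $\varepsilon_t^{(i)} = \eta\big(\nabla_{g_{\theta_i}}f_i(g_i^\star,g_{-i}^\star) - \nabla_{g_{\theta_i}}f_i(g_i^{t+1},g_{-i}^{t+1})\big)$ (using $\eta\tau\theta_i^\star = -\eta\nabla_{g_{\theta_i}}f_i(g_i^\star,g_{-i}^\star)$), Young's inequality gives
\begin{align}
\|\theta_i^{t+1}-\theta_i^\star\|^2 \leq (1-\eta\tau+\eta^2\tau^2)\|\theta_i^t-\theta_i^\star\|^2 + \Big(1+\tfrac{1}{\eta\tau}(1-\eta\tau)^2\Big)\|\varepsilon_t^{(i)}\|^2.
\end{align}
The new ingredient is the error bound: smoothness (Assumption~\ref{ass:monotone_smooth_F}) gives $\|\varepsilon_t^{(i)}\|^2 \leq \eta^2 L^2\|z^\star-z_{t+1}\|^2$, and Pinsker bounds this by $2\eta^2 L^2\KL(z^\star\|z_{t+1})$, which by the first part decays like $(1-\eta\tau)^t$. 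Summing over the $N$ players (this is what produces the factor $N$ in $V_0$ and fixes the precise constant $C$) and using $\eta\tau<1/2$ to absorb $1-\eta\tau+\eta^2\tau^2 \leq 1-\eta\tau/2$, one obtains $\|\theta^{t+1}-\theta^\star\|^2 \leq (1-\eta\tau/2)\|\theta^t-\theta^\star\|^2 + NC(1-\eta\tau)^t$. I would then close the recursion with the Lyapunov function $V_t = \|\theta^t-\theta^\star\|^2 + \frac{2NC}{\eta\tau}(1-\eta\tau)^t$, exactly as in the matrix-game proof, showing $V_{t+1}\leq(1-\eta\tau/2)V_t$ and hence the stated $(1-\eta\tau/2)^t V_0$ rate.

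The main obstacle is conceptual rather than computational: because the update is implicit, one cannot directly extract a parameter-space contraction, as the perturbation $\varepsilon_t^{(i)}$ depends on the still-unknown $z_{t+1}$. The resolution is precisely the decoupling above --- the parameter recursion only ever needs the \emph{policy-space} quantity $\KL(z^\star\|z_{t+1})$, which the first part controls --- so the only care required is the Lyapunov bookkeeping that converts the geometrically decaying but inhomogeneous forcing term into a clean linear rate. The remaining constant tracking (the exact form of $C$) is routine.
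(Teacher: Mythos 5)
Your proposal is correct and follows essentially the same route as the paper's proof: it applies Lemma \ref{Lemma_1} at $z=z_{t+1}$ (exploiting that the proximal-point update is implicit), expands the resulting inner product into KL terms to get the $(1-\eta\tau)$ contraction, and then runs the identical perturbed recursion with $\varepsilon_t^{(i)}=\eta(\nabla_{g_i}f_i(z^\star)-\nabla_{g_i}f_i(z_{t+1}))$, Young's inequality, Pinsker, and the Lyapunov function $V_t=\|\theta^t-\theta^\star\|^2+\tfrac{2NC}{\eta\tau}(1-\eta\tau)^t$. Your three-point-identity packaging of the KL expansion is equivalent to the paper's two separate inner-product identities (and in fact gets the coefficient $\eta\tau$ on $\KL(z_{t+1}\|z^\star)$ right, where the paper's displayed identity has a harmless typo), and your Young's constant $\tfrac{1}{\eta\tau}(1-\eta\tau)^2$ is tighter than the paper's $\tfrac{4}{\eta\tau}(1-\eta\tau)^2$, so it still implies the stated bound with the theorem's $C$.
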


\begin{proof}

From the definiton of the KL divergence we have:
\begin{align}
- \langle \log z_{t+1} - (1 - \eta \tau) \log z_t - \eta \tau \log z^\star, z^\star  \rangle = - (1 - \eta \tau) \KL (z^\star \| z_t) + \KL(z^\star \| z_{t+1} ),
\end{align}
and 
\begin{align}
\langle \log z_{t+1} - (1 - \eta \tau) \log z_t - \eta \tau \log z^\star, z_{t+1}  \rangle = (1 - \eta \tau) \KL (z_{t+1} \| z_t) + \KL(z_{t+1} \| z^\star ).
\end{align}
Substituting in Lemma \ref{Lemma_1} with $z = z_{t+1}$, we have:
\begin{align}
\langle \log z_{t+1} - (1 - \eta \tau) \log z_t - \eta \tau \log z^\star, z_{t+1} - z^\star  \rangle \leq 0,
\end{align}
and using the two equalities, we get:
\begin{align}
- (1 - \eta \tau) \KL (z^\star \| z_t) + \KL(z^\star \| z_{t+1} ) + (1 - \eta \tau) \KL (z_{t+1} \| z_t) + \KL(z_{t+1} \| z^\star ) \leq 0.
\end{align}
This implies:
\begin{align}
\KL(z^\star \| z_{t+1} )  \leq (1 - \eta \tau) \KL (z^\star \| z_t).
\end{align}
This shows linear convergence of the KL divergence to the Nash equilibrium for the proximal point method,  which completes the proof of the first part of the Theorem.

When the agents are playing the NE strategy, the updates reduce to
\begin{align}
\theta^{t+1}_i &= (1-\eta \tau) \theta^t_i - \eta \nabla_{g_i} f_i(g_i^\star, g_{-i}^\star). 
\end{align}

From Lemma \ref{Lemma_1}, we know that the NE satisfy:
\begin{align}
g_i^\star(a) = \frac{e^{\frac{- [\nabla_{g_i} f_i(g_i^\star, g_{-i}^\star)]_a}{\tau}} }{K},
\end{align} 
where $K = \sum_{a' \in \mathcal{A}_i} e^{\frac{- [\nabla_{g_i} f_i(g_i^\star, g_{-i}^\star)]_{a'}}{\tau}}$. Taking $\log$ on both sides, we have:
\begin{align}
 -\eta \nabla_{g_i} f_i(g_i^\star, g_{-i}^\star) = \eta \tau \log g_i^\star + \eta \tau \log K.
\end{align}
Substituting this back into the $\theta$ update, we have:
\begin{align}
\theta^{t+1}_i &= (1-\eta \tau) \theta^t_i +  \eta \tau \log g_i^\star + \eta \tau \log K =  \theta^t_i - \eta \tau \theta^t_i +  \eta \tau \log g_i^\star +  \eta \tau \log K- \eta \tau \log Z_{\theta^t_i} + \eta \tau \log Z_{\theta^t_i} \nonumber \\
&= \theta^t_i   +  \eta \tau \log g_i^\star - \eta \tau \log \left( \frac{e^{\theta^t_i}}{Z_{\theta^t_i}} \right) +  \eta \tau \log \left(  \frac{K}{Z_{\theta^t_i}} \right) = \theta^t_i   +  \eta \tau \log g_i^\star - \eta \tau \log g_{\theta^t_i}  +  \eta \tau \log \left(  \frac{K}{Z_{\theta^t_i}} \right) \nonumber \\
&= \theta^t_i   + \eta \tau \log \left( \frac{g_i^\star}{g_{\theta^t_i}} \right)  +  \eta \tau \log \left(  \frac{K}{Z_{\theta^t_i}} \right) =  \theta^t_i   +  \eta \tau \log \left(  \frac{ g_i^\star K }{ g_{\theta^t_i} Z_{\theta^t_i} } \right).
\end{align}
We can further simplify this as:
\begin{align}
\theta^{t+1}_i &= \theta^t_i + \eta \tau \log \left(  \frac{e^{- [\nabla_{g_i} f_i(g_i^\star, g_{-i}^\star)]/\tau}}{e^{ \theta^t_i}} \right). 
\end{align}
This is nothing but:
\begin{align}
\theta^{t+1}_i &= \theta^t_i + \eta \tau \left( \frac{- [\nabla_{g_i} f_i(g_i^\star, g_{-i}^\star)]}{\tau} - \theta^t_i \right).
\end{align}

Now the Proximal Point updates can be written as:

\begin{align}
\theta^{t+1}_i &= \theta^t_i + \eta \tau \left( \frac{- \nabla_{g_i} f_i(g_i^\star, g_{-i}^\star)}{\tau} - \theta^t_i \right) + \varepsilon_t,
\end{align}
where $\varepsilon_t = \eta \nabla_{g_i} f_i(g_i^\star, g_{-i}^\star) - \eta \nabla_{g_i} f_i(g_i^{t+1}, g_{-i}^{t+1})$.

Defining $\theta^\star_i = \frac{- \nabla_{g_i} f_i(g_i^\star, g_{-i}^\star)}{\tau}$, we have:
\begin{align}
\| \theta^{t+1}_i - \theta^\star_i \|^2 &= \| \theta^t_i + \eta \tau \left( \theta^\star_i - \theta^t_i \right) + \varepsilon_t  - \theta^\star_i \|^2 = \| (1 - \eta \tau) (\theta^t_i - \theta^\star_i) + \varepsilon_t \|^2 \nonumber \\
&= (1 - \eta \tau)^2 \| \theta^t_i - \theta^\star_i \|^2 + 2(1 - \eta \tau) (\theta^t_i - \theta^\star_i)^{\top} \varepsilon_t + \| \varepsilon_t \|^2 \nonumber \\
&\leq^{*1}  (1 - \eta \tau)^2 \| \theta^t_i - \theta^\star_i\|^2 + \eta \tau \| \theta^t_i - \theta^\star_i\|^2 + \frac{4}{\eta\tau} (1 - \eta \tau)^2  \| \varepsilon_t \|^2 +  \| \varepsilon_t \|^2 \nonumber \\
&= (1 - \eta \tau + \eta^2 \tau^2)  \| \theta^t_i - \theta^\star_i\|^2 + \left(1 + \frac{4}{\eta\tau} (1 - \eta \tau)^2 \right)\| \varepsilon_t \|^2,
\end{align}
where $*1$ follows from Young's inequality. \\

Now, for the proximal point methods, we have:
\begin{align}
\|\varepsilon_t\|^2 &= \eta^2\| \nabla_{g_i} f_i(g_i^\star, g_{-i}^\star) - \nabla_{g_i} f_i(g_i^{t+1}, g_{-i}^{t+1}) \|^2 \leq \eta^2 L^2 \| z_{t+1} - z^\star \|^2 \leq 2\eta^2L^2 \KL(z^\star \| z_{t+1}),
\end{align}
where it follows from Pinsker's inequality and the fact that the $l_1$ norm is an upper bound for the $l_2$ norm.

This gives us:
\begin{align}
\| \theta^{t+1}_i - \theta^\star_i \|^2 \leq (1 - \eta \tau + \eta^2 \tau^2)  \| \theta^t_i - \theta^\star_i\|^2 + C(1 - \eta \tau)^{t},
\end{align}
where 
\begin{align}
C = 2\eta^2L^2\left(1 + \frac{4}{\eta\tau} (1 - \eta \tau)^2 \right)\KL(z^\star \| z_{0}).
\end{align}
For $\eta\tau < 1/2$, this reduces to:
\begin{align}
\| \theta^{t+1}_i - \theta^\star_i \|^2 \leq (1 - \eta \tau/2)  \| \theta^t_i - \theta^\star_i\|^2 + C(1 - \eta \tau)^{t}.
\end{align}

Now, consider the Lyapunov function:
\begin{align}
V_{t+1} = \| \theta^{t+1}_i - \theta^\star_i \|^2 + \frac{2C}{\eta\tau} (1 - \eta \tau)^{t+1}.
\end{align}
We have:
\begin{align}
V_{t+1} &\leq (1 - \eta \tau/2)  \| \theta^t_i - \theta^\star_i\|^2 + C(1 - \eta \tau)^{t} + \frac{2C}{\eta\tau} (1 - \eta \tau) (1 - \eta \tau)^{t} = (1 - \eta \tau/2)  \| \theta^t_i - \theta^\star_i\|^2   +  \frac{2C}{\eta\tau} (1 - \eta \tau)^{t} (1 - \eta \tau + \eta \tau /2) \nonumber \\
& = (1 - \eta \tau/2)  \| \theta^t_i - \theta^\star_i\|^2   +  \frac{2C}{\eta\tau}(1 - \eta \tau)^{t} (1 - \eta \tau/2) = (1 - \eta \tau/2) \left( \| \theta^{t}_i - \theta^\star_i \|^2 + \frac{2C}{\eta\tau} (1 - \eta \tau)^{t} \right) \nonumber \\
&= (1 - \eta\tau/2) V_t.
\end{align}
This shows linear convergence of the parameter $\theta_i$ to $\theta^\star_i$ since:
\begin{align}
\| \theta^{t+1}_i - \theta^\star_i \|^2 \leq V_{t+1} \leq (1 - \eta \tau/2)^t V_0.
\end{align}
Merging these inequalities for all players $i$ completes the proof of the theorem.
\end{proof}

\subsubsection{Extragradient updates} \label{subsec:eg_mono_app}
In this subsection, we show the convergence of the Extragradient   method to the NE of the regularized N-player monotone game. The EG algorithm is presented in Algorithm \ref{alg:eg_monotone}. 

\begin{algorithm}[tb]
   \caption{Extragradient Method}
   \label{alg:eg_monotone}
\begin{algorithmic}
   \STATE {\bfseries Initialize:} $g_0$ and $h_0$.
   \FOR{$t=1, 2, \cdots$}
   \STATE $\bar{\theta}_i^{t+1} = (1 - \eta \tau)\theta_i^t -\eta \nabla_{g_{\theta_i}} f_i(g_{{\theta}^t_i}, g_{{\theta}^t_{-i}}) \ \ \ \ \ \ \forall i \in [N]$.
   \STATE $\theta_i^{t+1} = (1 - \eta \tau)\theta_i^t -\eta \nabla_{g_{\theta_i}} f_i(g_{\bar{\theta}^{t+1}_i}, g_{\bar{\theta}^{t+1}_{-i}}) \ \ \forall i \in [N].$
   \ENDFOR
\end{algorithmic}
\end{algorithm}

\begin{theorem}
\label{thm:eg_conv_mono}
Let $z^\star = (g_i^\star)_{i=1}^N$ be the unique Nash equilibrium of Problem \eqref{eq:N-payer_monotone_game_reg}. 
Also, we denote $z_t = (g_{\theta^t_i})_{i=1}^N$. Then for updates in Algorithm \ref{alg:eg_monotone}, we have for stepsize satisfying $0 < \eta <  \frac{1}{2NL + \tau}$:
\vspace{-2mm}
\begin{itemize}
\item Convergence of distributions:
\begin{align}
\max \left\{ \KL(z^\star \| z_{t} ), \KL(z^\star \| \bar{z}_{t+1} ) \right\} \leq (1 - \eta \tau)^t 2 \KL (z^\star \| z_0).
\end{align}
\item Convergence of parameters:
\begin{align}
\| \theta_{t+1} - \theta^\star \|^2  \leq (1 - \eta \tau/2)^t V_0,
\end{align} 
where, 
\begin{align}
V_0 =\| \theta^{t} - \theta^\star \|^2 + \frac{2NC}{\eta\tau}, \qquad\qquad \qquad  C = 4\eta^2L^2\left(1 + \frac{4}{\eta\tau} (1 - \eta \tau)^2 \right) \KL(z^\star \| z_{0}). \nonumber
\end{align}
\end{itemize}
\end{theorem}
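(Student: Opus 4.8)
The plan is to follow the same scaffolding as the proof of Theorem \ref{thm:eg_og_conv_mono} (the optimistic case), exploiting the fact that the \emph{second} (main) update of Algorithm \ref{alg:eg_monotone} is identical in form to the update used to derive Equations \eqref{eq:common_1} and \eqref{eq:common_2}, namely $\theta_i^{t+1} = (1-\eta\tau)\theta_i^t - \eta \nabla_{g_{\theta_i}} f_i(g_{\bar\theta^{t+1}_i}, g_{\bar\theta^{t+1}_{-i}})$. Consequently, Lemma \ref{Lemma_1} (applied with $z = \bar z_{t+1}$) together with both identities \eqref{eq:common_1}--\eqref{eq:common_2} hold verbatim for the extragradient iterates. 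The only place where extragradient differs from the optimistic method is in how $\bar z_{t+1}$ is produced, and hence in how the cross term $\langle \log \bar z_{t+1} - \log z_{t+1}, \bar z_{t+1} - z_{t+1}\rangle$ is controlled.

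First I would recompute this cross term for the extragradient extrapolation. Subtracting the two updates in log-space gives $\log \bar g_i^{t+1} - \log g_i^{t+1} = -\eta(\nabla_{g_i} f_i(g_i^t, g_{-i}^t) - \nabla_{g_i} f_i(\bar g_i^{t+1}, \bar g_{-i}^{t+1})) + c\cdot\mathbbm{1}$, where the gradient difference is now between $z_t$ and $\bar z_{t+1}$ (rather than between $\bar z_t$ and $\bar z_{t+1}$ as in the optimistic case). Taking the inner product with $\bar z_{t+1} - z_{t+1}$, the constant vector drops out since both iterates lie in $\Delta^N$; applying Cauchy--Schwarz, the smoothness bound of Assumption \ref{ass:monotone_smooth_F}, Young's inequality $xy \leq \frac{1}{2}(x^2+y^2)$, and Pinsker's inequality exactly as in the derivation of \eqref{eq:similar_OGDA_ref}, I obtain the cleaner estimate $\langle \log \bar z_{t+1} - \log z_{t+1}, \bar z_{t+1} - z_{t+1}\rangle \leq \eta L (N\, \KL(\bar z_{t+1}\|z_t) + 2\,\KL(z_{t+1}\|\bar z_{t+1}))$, which crucially lacks the $\KL(z_t\|\bar z_t)$ feedback term present in the optimistic analysis. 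Substituting into \eqref{eq:common_1} and collecting terms yields $\KL(z^\star\|z_{t+1}) \leq (1-\eta\tau)\KL(z^\star\|z_t) - (1-\eta\tau-N\eta L)\KL(\bar z_{t+1}\|z_t) - \eta\tau\KL(\bar z_{t+1}\|z^\star) - (1-2\eta L)\KL(z_{t+1}\|\bar z_{t+1})$. Under the stepsize condition $\eta < \frac{1}{2NL+\tau}$ all three coefficients $1-\eta\tau-N\eta L$, $\eta\tau$, $1-2\eta L$ are nonnegative, so every term but the first may be dropped, giving the one-step contraction $\KL(z^\star\|z_{t+1}) \leq (1-\eta\tau)\KL(z^\star\|z_t)$ directly; unlike the optimistic case, no Lyapunov function is needed for the distribution part. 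Iterating gives $\KL(z^\star\|z_t) \leq (1-\eta\tau)^t\KL(z^\star\|z_0)$.

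For the extrapolated iterate $\bar z_{t+1}$, I would start from identity \eqref{eq:common_2}, bound the cross term $-\langle z^\star - \bar z_{t+1}, \log\bar z_{t+1} - \log z_{t+1}\rangle$ by the same chain of steps to get $\leq \eta L(N\,\KL(\bar z_{t+1}\|z_t) + 2\,\KL(z^\star\|\bar z_{t+1}))$, and rearrange to $(1-2\eta L)\KL(z^\star\|\bar z_{t+1}) \leq \KL(z^\star\|z_{t+1}) + N\eta L\,\KL(\bar z_{t+1}\|z_t)$. Both terms on the right decay like $(1-\eta\tau)^t$: the first by the bound just proved, and $\KL(\bar z_{t+1}\|z_t)$ because it is one of the nonnegative terms discarded above, hence bounded by $(1-\eta\tau)\KL(z^\star\|z_t)$. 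Combining and absorbing constants into the $1-2\eta L$ factor produces the stated factor-$2$ estimate $\max\{\KL(z^\star\|z_t), \KL(z^\star\|\bar z_{t+1})\} \leq 2(1-\eta\tau)^t\KL(z^\star\|z_0)$.

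Finally, parameter convergence is essentially a verbatim repetition of the corresponding step in Theorem \ref{thm:eg_og_conv_mono}, since the main update coincides: one writes $\theta_i^{t+1} = \theta_i^t + \eta\tau(\theta_i^\star - \theta_i^t) + \varepsilon_t$ with $\theta_i^\star = -\nabla_{g_i} f_i(g_i^\star, g_{-i}^\star)/\tau$ and error $\varepsilon_t = \eta(\nabla_{g_i} f_i(g_i^\star,g_{-i}^\star) - \nabla_{g_i} f_i(\bar g_i^{t+1},\bar g_{-i}^{t+1}))$. Expanding $\|\theta_i^{t+1}-\theta_i^\star\|^2$ as in \eqref{eq:call_back_para_conv}, bounding $\|\varepsilon_t\|^2 \leq \eta^2 L^2\|\bar z_{t+1}-z^\star\|^2 \leq 2\eta^2 L^2\KL(z^\star\|\bar z_{t+1})$ via Lipschitzness and Pinsker, and inserting the geometric decay of $\KL(z^\star\|\bar z_{t+1})$ from the previous step yields the recursion $\|\theta_i^{t+1}-\theta_i^\star\|^2 \leq (1-\eta\tau/2)\|\theta_i^t-\theta_i^\star\|^2 + C(1-\eta\tau)^t$ with $C = 4\eta^2L^2(1 + \frac{4}{\eta\tau}(1-\eta\tau)^2)\KL(z^\star\|z_0)$ (the extra factor $2$ over the proximal-point constant in Theorem \ref{prop:pp_conv_mono} being precisely the factor $2$ from the $\bar z_{t+1}$ bound). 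The Lyapunov function $V_{t+1} = \|\theta^{t+1}-\theta^\star\|^2 + \frac{2NC}{\eta\tau}(1-\eta\tau)^{t+1}$ then telescopes to the claim once the per-player inequalities are summed (the factor $N$ originating from this summation). The main obstacle is confined to the second paragraph: verifying that the extragradient cross-term estimate genuinely sheds the $\KL(z_t\|\bar z_t)$ term and that the surviving coefficients stay nonnegative exactly under $\eta < 1/(2NL+\tau)$; everything downstream is bookkeeping inherited from the already-established optimistic and proximal-point arguments.
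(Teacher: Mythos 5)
Your overall route is the same as the paper's: reuse Lemma \ref{Lemma_1} (with $z=\bar z_{t+1}$) and the identities \eqref{eq:common_1}--\eqref{eq:common_2}, which indeed remain valid because the main EG update coincides in form with the optimistic one; recompute the log-space cross term for the extragradient extrapolation; deduce the one-step contraction $\KL(z^\star\|z_{t+1}) \le (1-\eta\tau)\KL(z^\star\|z_t)$ with no Lyapunov function; then control $\KL(z^\star\|\bar z_{t+1})$; and finally repeat the ONPG/proximal-point parameter argument. Your cross-term computation, the contraction step, and the parameter recursion all match the paper (your Pinsker constants are in fact slightly tighter: $2\eta L$ in place of the paper's $N\eta L$ on the $\KL(z_{t+1}\|\bar z_{t+1})$ term).

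The one step that does not go through as written is your bound for the extrapolated iterate. You claim that, because $\KL(\bar z_{t+1}\|z_t)$ is ``one of the nonnegative terms discarded above,'' it is bounded by $(1-\eta\tau)\KL(z^\star\|z_t)$. But that term enters the one-step inequality with coefficient $1-\eta\tau-N\eta L<1$, so what actually follows is $\KL(\bar z_{t+1}\|z_t)\le \frac{1-\eta\tau}{1-\eta\tau-N\eta L}\KL(z^\star\|z_t)$. Feeding this, together with $\KL(z^\star\|z_{t+1})\le(1-\eta\tau)\KL(z^\star\|z_t)$, into your rearranged inequality $(1-2\eta L)\KL(z^\star\|\bar z_{t+1})\le \KL(z^\star\|z_{t+1})+N\eta L\,\KL(\bar z_{t+1}\|z_t)$ yields the prefactor $\frac{(1-\eta\tau)^2}{(1-2\eta L)(1-\eta\tau-N\eta L)}$, which near the stepsize boundary (e.g.\ $N=2$, $\tau\to0$, $\eta\to\frac{1}{4L}$) approaches $4$, not the claimed $2$; ``absorbing constants'' cannot rescue the stated factor. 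The repair---and what the paper actually does---is to substitute the \emph{entire} one-step inequality, retaining its negative terms, into the $\bar z_{t+1}$ bound: the added $+N\eta L\,\KL(\bar z_{t+1}\|z_t)$ is then absorbed by the retained $-(1-\eta\tau-N\eta L)\KL(\bar z_{t+1}\|z_t)$, leaving a nonpositive coefficient exactly when $1-\eta\tau-2N\eta L\ge 0$, i.e.\ under $\eta<\frac{1}{2NL+\tau}$; dividing by $1-N\eta L\ge \frac12$ (or your $1-2\eta L\ge\frac12$, valid for $N\ge2$) then gives the stated factor $2$. Note that this is also where the full stepsize condition is genuinely used: your contraction step alone only requires the weaker $\eta\le\frac{1}{NL+\tau}$, so the ``main obstacle'' sits not in the coefficients of the contraction but in this combination step that you gloss over. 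With that substitution made, the rest of your argument, including the parameter part and the constant $C$, goes through as you describe.
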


\begin{proof}
From the EG updates in Algorithm \ref{alg:eg_monotone}, we have:
\begin{align}
\log \bar{g}_i^{t+1} - \log g_i^{t+1} = -\eta \left(\nabla_{g_i} f_i({g}_i^{t}, {g}_{-i}^{t}) - \nabla_{g_i} f_i(\bar{g}_i^{t+1}, \bar{g}_{-i}^{t+1}) \right) + c\cdot  \mathbbm{1}.
\end{align}
This implies:
\begin{align}
\langle \log \bar{g}_i^{t+1} - \log g_i^{t+1}, \bar{g}^{t+1}_i - g_{t+1}^i  \rangle &= - \eta \langle  \nabla_{g_i} f_i({g}_i^{t}, {g}_{-i}^{t}) - \nabla_{g_i} f_i(\bar{g}_i^{t+1}, \bar{g}_{-i}^{t+1}), \bar{g}^{t+1}_i - g^{t+1}_i \rangle \nonumber \\
&\leq^{*1} \eta \| \nabla_{g_i} f_i({g}_i^{t}, {g}_{-i}^{t}) - \nabla_{g_i} f_i(\bar{g}_i^{t+1}, \bar{g}_{-i}^{t+1}) \|\cdot \| \bar{g}^{t+1}_i - g^{t+1}_i  \| \nonumber \\
&\leq^{*2} \eta L \left( \|\bar{g}^{t+1}_i - g^{t}_i  \| + \| \bar{g}^{t+1}_{-i} - g^{t}_{-i} \| \right) \|\bar{g}^{t+1}_i - g^{t+1}_i \| \nonumber \\
&\leq^{*3} \frac{1}{2} \eta L \left( \|\bar{g}^{t+1}_i - g^{t}_i \|^2 + 2 \|\bar{g}^{t+1}_i - g^{t+1}_i \|^2 + \| \bar{g}^{t+1}_{-i} - g^{t}_{-i} \|^2 \right),
\end{align}
where $(*1)$ follows from the fact that $a^{\top}b \leq \| a \| \|b \|$, $(*2)$ follows from Assumption \ref{ass:monotone_smooth_F}, and $(*3)$ follows from $x \cdot y \leq \frac{1}{2} (x^2 + y^2)$. Since this is true for all players, we have:
\begin{align}
\label{eq:similar_eg_ref}
\langle \log \bar{z}_{t+1} - \log z_{t+1}, \bar{z}_{t+1} - z_{t+1} \rangle &\leq  \frac{1}{2} \eta L \sum_{i=1}^N \left( \|\bar{g}^{t+1}_i - g^{t}_i \|^2 + 2 \|\bar{g}^{t+1}_i - g^{t+1}_i \|^2 + \| \bar{g}^{t+1}_{-i} - g^{t}_{-i} \|^2 \right) \nonumber \\
&\leq \frac{1}{2} \eta L \left( N \|\bar{z}_{t+1} - z_{t} \|^2 + 2 \|\bar{z}_{t+1} - z_{t+1} \|^2 \right) \leq \frac{N}{2} \eta L \left( \|\bar{z}_{t+1} - z_{t} \|^2 +  \|\bar{z}_{t+1} - z_{t+1} \|^2 \right) \nonumber \\
&\leq^{*1} N\eta L \left( \KL (\bar{z}_{t+1} \| z_t) + \KL(z_{t+1} \| \bar{z}_{t+1}) \right),
\end{align}
where $(*1)$ follows from Pinsker's Inequality and the fact that the $l_1$ norm is an upper bound for the $l_2$ norm.
Substituting this in Equation \eqref{eq:common_1}, we have
\begin{align}
\label{eq:callback_eg_almost}
\KL (z^\star \| {z}_{t+1} ) &\leq (1 - \eta \tau) \KL (z^\star \| z_t) - (1 - \eta \tau - N \eta L) \KL (\bar{z}_{t+1} \| z_t )  \nonumber \\
& \qquad - \eta \tau \KL(\bar{z}_{t+1} \| z^\star) - (1 - N\eta L) \KL(z_{t+1} \| \bar{z}_{t+1}).
\end{align}
If $\eta \leq \frac{1}{\tau + NL}$, we have:
\begin{align}
\KL (z^\star \| {z}_{t+1} ) &\leq (1 - \eta \tau) \KL (z^\star \| z_t).
\end{align}

Similar to the derivation of Equation \eqref{eq:similar_eg_ref}, we have:
\begin{align}
- \langle \log \bar{z}_{t+1} - \log z_{t+1},z^\star -  \bar{z}_{t+1} \rangle \leq N\eta L \left( \KL (\bar{z}_{t+1} \| z_t) + \KL(z^\star \| \bar{z}_{t+1}) \right).
\end{align}
Substituting this in Equation \eqref{eq:common_2} we have:
\begin{align}
\label{eq:ughh_eg}
(1 - N\eta L) \KL(z^\star \| \bar{z}_{t+1}) \leq \KL (z^\star \| z_{t+1} ) +  N\eta L\KL (\bar{z}_{t+1} \| z_t). 
\end{align}
Now, plugging Inequality \eqref{eq:callback_eg_almost} in \eqref{eq:ughh_eg} we have:
\begin{align}
&(1 - N\eta L) \KL(z^\star \| \bar{z}_{t+1}) \leq (1 - \eta \tau)  \KL (z^\star \| z_t) \nonumber \\
&\qquad- (1 - \eta \tau - 2N \eta L) \KL (\bar{z}_{t+1} \| z_t )  - \eta \tau \KL(\bar{z}_{t+1} \| z^\star) - (1 - N\eta L) \KL(z_{t+1} \| \bar{z}_{t+1}).
\end{align}
With stepsize $\eta < \frac{1}{\tau + 2N L}$ we have:
\begin{align}
\KL(z^\star \| \bar{z}_{t+1}) \leq 2  \KL (z^\star \| z_t) \leq 2(1 - \eta \tau)^t \KL(z^\star \| z_0),
\end{align}
which completes the first part of the proof.

The proof of parameter convergence follows exactly from the proof of Theorem  \ref{thm:eg_og_conv_mono} and we avoid rewriting it here.
\end{proof}

\subsection{Parameterization with function approximation}
\label{appendix:monotone_FA}


In this section, we discuss the monotone game setting with function approximation for policy parameterization, as discussed for matrix games in \S\ref{sec:ONPG_Matrix_FA}. 
In this setting, the regularized problem each player $i$ faces is:
\begin{align} 
\label{eq:mono_func_main}
\min_{\theta_i \in \mathbb{R}^d}~~~f_i(g_{\theta_i}, g_{\theta_{-i}})  - \tau \mathcal{H}(g_{\theta_i}),
\end{align}
where $g_{\theta_i}$ is a log-linear policy parametrization. In the next lemma, we show the existence of a NE in this setting as well as the equivalence of this problem to one on the entire simplex, in the same spirit as  in \S\ref{sec:ONPG_Matrix_FA} for matrix games:

\begin{lemma}
\label{lemma:func_approx_monotone}
The in-class NE (Definition \ref{def:para_NE_mono_in_class}) for the {unregularized (and regularized)} monotone game under the log-linear policy parametrization exists. Also, under Assumption \ref{ass:full_rank}, solving Problem \eqref{eq:mono_func_main} for all $i$ is equivalent to:
\begin{align} 
\min_{g_{\theta_i} \in {\Delta}}~~~f_i(\Psi g_{\theta_i}, \Psi g_{\theta_{-i}})  - \tau \mathcal{H}(g_{\theta_i}),
\end{align}
where $\Psi$ is defined in Equation \eqref{eq:psi_def}.
\end{lemma}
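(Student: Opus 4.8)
The plan is to split the statement into the existence claim and the equivalence claim, reducing both to the function-approximation characterization already developed for matrix games.

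For existence I would first invoke Lemma~\ref{lemma:char_func_approx}: under Assumption~\ref{ass:full_rank} the map $\theta_i\in\mathbb{R}^d\mapsto g_{\theta_i}$ sweeps out exactly the closed convex set $\tilde{\Delta}$, so the in-class game is the monotone game in which each player is restricted to the convex compact strategy set $\tilde{\Delta}$. For the unregularized game this is a monotone variational inequality over the convex compact product $\tilde{\Delta}^N$ with the continuous (smooth) pseudo-gradient $F$ of Assumption~\ref{ass:monotone_smooth_F}, so an in-class NE (Definition~\ref{def:para_NE_mono_in_class}) exists by standard VI/fixed-point theory (\cite{rosen1965existence,facchinei2007finite}). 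Adding the regularizer $-\tau\mathcal{H}(g_{\theta_i})$ makes each player's objective strongly convex in its own variable, turning the VI strongly monotone; the regularized in-class NE then exists and is unique, exactly as in Lemma~\ref{lemma:optimal_solution_N_player} but with $\tilde{\Delta}$ replacing $\Delta$ as the feasible set.

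For the equivalence I would again use Lemma~\ref{lemma:char_func_approx} to rewrite Problem~\eqref{eq:mono_func_main} as the game $\min_{g_i\in\tilde{\Delta}} f_i(g_i,g_{-i})-\tau\mathcal{H}(g_i)$ over $\tilde{\Delta}^N$, and then relate this constrained game to the $\Psi$-transformed game over the full simplex $\Delta^N$. The three properties of $\Psi$ I would rely on are: $\Psi=\Psi^\top$ is symmetric; it acts as the identity on $\tilde{\Delta}$ (the remark after Theorem~\ref{thm:Nash_char_func_approx}); and it maps $\Delta$ onto $\tilde{\Delta}$ by replacing the last $n-d$ coordinates with their average, so $\Psi g_i\in\tilde{\Delta}$ for every $g_i\in\Delta$. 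The key step is a fiber argument for each player $i$: since $f_i(\Psi g_i,\Psi g_{-i})$ depends on $g_i$ only through $\mu:=\Psi g_i$, it is constant on each fiber $\{g_i\in\Delta:\Psi g_i=\mu\}$, on which the only active term is $-\tau\mathcal{H}(g_i)$; maximizing the entropy over a fiber (the first $d$ coordinates fixed at $\mu_{1:d}$, the last $n-d$ coordinates subject to a fixed sum) is attained precisely when those last coordinates are equal, i.e.\ at $g_i=\mu\in\tilde{\Delta}$. Hence the minimizer of the transformed per-player problem lies in $\tilde{\Delta}$, where $\Psi$ is the identity and $\mathcal{H}(g_i)=\mathcal{H}(\Psi g_i)$, so the transformed game reduces coordinate-wise to the in-class game over $\tilde{\Delta}^N$; uniqueness from strong convexity then identifies their solutions. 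For the unregularized case the same surjectivity-plus-identity argument shows directly that $\Psi\hat{z}$ is an in-class NE whenever $\hat{z}$ solves the transformed game and conversely, and that the two values coincide --- the sense of ``equivalent'' fixed in the footnote to Theorem~\ref{thm:Nash_char_func_approx}. This is the $N$-player lift of the Lagrangian computation in the proof of Theorem~\ref{thm:Nash_char_func_approx}.

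The main obstacle I anticipate is the asymmetric placement of the entropy: in the transformed objective the regularizer $-\tau\mathcal{H}(g_{\theta_i})$ is applied to the full simplex variable while only the coupling term $f_i$ carries $\Psi$. One must check that, despite this mismatch, the optimizer still lands in $\tilde{\Delta}$ so that $\Psi$ acts trivially and the two entropy terms agree. This is exactly what the fiber/entropy-maximization step secures: because $\Psi$ forces the $f_i$ value to be insensitive to how mass is split among the last $n-d$ coordinates, the entropy term breaks the tie in favor of the uniform split, pinning $g_i^\star$ inside $\tilde{\Delta}$. Once this is established, the remaining verifications are the routine product-set extension of the matrix-game argument and pose no additional difficulty.
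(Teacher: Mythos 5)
Your proof is correct, and the existence half matches the paper: both arguments use Lemma \ref{lemma:char_func_approx} to identify the in-class game with the game restricted to the convex compact set $\tilde{\Delta}^N$, and then invoke standard existence/uniqueness theory for (strongly) monotone variational inequalities over convex compact sets \citep{rosen1965existence,facchinei2007finite}. For the equivalence half, however, you take a genuinely different route. The paper argues as in Theorem \ref{thm:Nash_char_func_approx}: it writes the KKT conditions of player $i$'s best-response problem constrained to $\tilde{\Delta}$, solves for the Lagrange multipliers to obtain the softmax characterization $g_i^\star(a) \propto \exp\big(-[\Psi^\top \nabla_{g_i} f_i(z^\star)]_a/\tau\big)$, and matches this against the first-order conditions of the $\Psi$-transformed game on the full simplex via the chain-rule identity $\nabla_{g_{\theta_i}} f_i(\Psi g_{\theta_i},\Psi g_{\theta_{-i}}) = \Psi^{\top}\nabla_{\Psi g_{\theta_i}} f_i(\Psi g_{\theta_i},\Psi g_{\theta_{-i}})$ together with $\Psi\mu=\mu$ for $\mu\in\tilde{\Delta}$. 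You bypass multipliers entirely: you decompose $\Delta$ into the fibers $\{g_i:\Psi g_i=\mu\}$, observe that the coupling term $f_i(\Psi g_i,\Psi g_{-i})$ is constant on each fiber, and let the entropy break the tie at the uniform split of the last $n-d$ coordinates, so every best response of the transformed game (unique, since convexity of $f_i$ in $g_i$ follows from monotonicity of $F$ restricted to the $i$-th block and $-\tau\mathcal{H}$ is strictly convex) lies in $\tilde{\Delta}$, where $\Psi$ acts as the identity and the two games agree verbatim. The trade-off: the paper's KKT computation yields as a byproduct the explicit closed form of the in-class NE, which the corollary following the lemma needs in order to identify the limit point $\theta_i^\star = -[(M^\top)^{-1}\,|\,0]\,\tilde{P}\,\nabla_{g_{\theta_i}} f_i(g_i^\star,g_{-i}^\star)/\tau$ of the ONPG iterates; your fiber argument does not produce that formula, but it is more elementary, it isolates exactly why the asymmetric placement of the entropy (on $g_i$ rather than on $\Psi g_i$) is harmless, and your $\Psi$-projection observation handles the unregularized case cleanly, where the entropy tie-break is unavailable.
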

Now, using Lemma \ref{lemma:func_approx_monotone}, and Proposition \ref{eq:lemma:algo_lin_func_approx}, we describe an algorithm to solve this problem in the following corollary. 
\begin{corollary}
The update rule: 
\begin{align}
\bar{\theta}_i^{t+1} &= (1 - \eta \tau)\theta_i^t -[(M^\top)^{-1} | 0] \eta  \tilde{P} \nabla_{g_{\theta_i}} f_i(g_{\bar{\theta}^t_i}, g_{\bar{\theta}^t_{-i}}) \nonumber \\
{\theta}_i^{t+1} &= (1 - \eta \tau)\theta_i^t - [(M^\top)^{-1} | 0] \eta  \tilde{P}  \nabla_{g_{\theta_i}} f_i(g_{\bar{\theta}^{t+1}_i}, g_{\bar{\theta}^{t+1}_{-i}}), \nonumber
\end{align}
solves Problem \eqref{eq:mono_func_main} with similar guarantees given by Theorem \ref{thm:eg_og_conv_mono}. Here, the NE parameter value to which the algorithm converges to is given by $\theta^\star_i = \frac{- [(M^\top)^{-1} | 0]  \tilde{P} \nabla_{g_{\theta_i}} f_i(g_{i}^\star, g_{{-i}}^\star)}{\tau} $. Furthermore, by choosing the regularization parameter $\tau$ small enough, like in Corollary \ref{cor:small_tau_N_player}, we reach an $\epsilon$-in-class NE (Definition \ref{def:para_eps_NE_mono}) of the unregularized monotone game under the function approximation setting. 
\end{corollary}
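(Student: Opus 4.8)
The plan is to derive this corollary by composing four results already in hand: the equivalent-problem characterization of Lemma~\ref{lemma:func_approx_monotone}, the algebraic reduction of the log-linear update carried out in Proposition~\ref{eq:lemma:algo_lin_func_approx}, the tabular convergence guarantee of Theorem~\ref{thm:eg_og_conv_mono}, and the regularization-bias argument of Corollary~\ref{cor:small_tau_N_player}. First I would reduce the function-approximation problem to a tabular one. By Lemma~\ref{lemma:func_approx_monotone}, solving \eqref{eq:mono_func_main} under the log-linear parametrization of Assumption~\ref{ass:full_rank} is equivalent to the tabular monotone game on $\Delta^N$ with per-player objectives $f_i(\Psi g_{\theta_i},\Psi g_{\theta_{-i}})-\tau\mathcal{H}(g_{\theta_i})$, where $\Psi$ is the symmetric matrix in \eqref{eq:psi_def}. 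Writing $\bar\Psi$ for the block-diagonal matrix with $N$ copies of $\Psi$, the transformed pseudo-gradient is $\tilde F(z)=\bar\Psi F(\bar\Psi z)$. Since $\Psi$ is symmetric and idempotent ($\Psi^2=\Psi$), it is an orthogonal projection with $\|\Psi\|=1$, so $\|\bar\Psi\|=1$; hence
\[
\langle \tilde F(z)-\tilde F(z'),\,z-z'\rangle=\langle F(\bar\Psi z)-F(\bar\Psi z'),\,\bar\Psi(z-z')\rangle\geq 0,
\]
and $\|\tilde F(z)-\tilde F(z')\|\leq\|\bar\Psi\|^2 L\|z-z'\|\leq L\|z-z'\|$. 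Thus $\tilde F$ inherits Assumption~\ref{ass:monotone_smooth_F} with the same $L$, and the entropy term makes the regularized problem strongly monotone on the simplex, so its NE policy is unique.

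Next I would recover the stated update rule exactly as in Proposition~\ref{eq:lemma:algo_lin_func_approx}. Expressing the tabular optimistic NPG on the transformed game at the level of the logits $p_{\theta_i}=\Phi^\top\theta_i$ gives $\Phi^\top\theta_i^{t+1}=(1-\eta\tau)\Phi^\top\theta_i^t-\eta\,\Psi^\top\nabla_{g_{\theta_i}}f_i(\Psi g_{\bar\theta^{t+1}_i},\Psi g_{\bar\theta^{t+1}_{-i}})$ (and the analogous intermediate step). Because every iterate lies in $\tilde\Delta$, we have $\Psi g_{\theta_i}=g_{\theta_i}$, so the inner gradient is evaluated at the untransformed policies; and under $\Phi=[M\ |\ 0]$ one has $[\Phi\Phi^\top]^{-1}\Phi\,\Psi^\top=[(M^\top)^{-1}\ |\ 0]$, with $\tilde P$ entering to realize the softmax shift-invariance that enforces the $\tilde\Delta$ constraint, just as in Proposition~\ref{eq:lemma:algo_lin_func_approx}. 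Solving for $\theta_i^{t+1}$ produces precisely the update in the corollary statement, with fixed point $\theta_i^\star=-[(M^\top)^{-1}\ |\ 0]\tilde P\,\nabla_{g_{\theta_i}}f_i(g_i^\star,g_{-i}^\star)/\tau$.

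With monotonicity, smoothness, and the update form in place, I would invoke Theorem~\ref{thm:eg_og_conv_mono} on the transformed tabular game to obtain linear last-iterate KL-convergence of $(g_{\theta_i^t})_i$ to the unique regularized NE policy $(g_i^\star)_i$, and then lift this to parameter convergence of $\theta_i^t\to\theta_i^\star$ using the same Lyapunov construction: when opponents play the NE the preconditioned update reduces to gradient descent on a quadratic in $\theta_i$ with contraction factor $(1-\eta\tau)$, and the residual $\varepsilon_t$ (the preconditioned gradient gap relative to the NE gradient) is controlled by the policy KL rate via Pinsker's inequality. Finally, for the unregularized $\epsilon$-in-class NE (Definition~\ref{def:para_eps_NE_mono}) I would repeat the duality-gap estimate of Corollary~\ref{cor:small_tau_N_player} with the in-class gap taken over $\tilde\Delta$: bound the regularized in-class gap by $C_2 N\sqrt{\KL(z^\star\|z_t)}$ and the regularization bias by $2N\tau\log n$, then set $\tau=\Theta(\epsilon/(N\log n))$ and run to KL-accuracy $\Theta(\epsilon^2/N^2)$.

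The main obstacle is the parameter-convergence claim in the $d$-dimensional feature space rather than in policy space. Unlike the bilinear matrix case of Proposition~\ref{eq:lemma:algo_lin_func_approx}, here $\nabla_{g_{\theta_i}}f_i$ is a general monotone operator, so I must verify that the ``opponent-at-NE $\Rightarrow$ quadratic descent'' reduction genuinely survives the $[(M^\top)^{-1}\ |\ 0]\tilde P$ preconditioning, that the claimed $\theta_i^\star$ is consistent with the softmax representation of the $\tilde\Delta$-restricted NE policy, and that the full-rank structure of Assumption~\ref{ass:full_rank} is exactly what makes the logit-level update invertible back to the $\theta_i$ update without distorting the contraction rate.
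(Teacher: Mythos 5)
Your proposal takes essentially the same route as the paper, which presents this corollary as an immediate consequence of Lemma~\ref{lemma:func_approx_monotone}, Proposition~\ref{eq:lemma:algo_lin_func_approx}, Theorem~\ref{thm:eg_og_conv_mono}, and Corollary~\ref{cor:small_tau_N_player}, with no further proof given. The details you supply beyond what the paper writes down --- verifying that the projected operator $z \mapsto \bar\Psi F(\bar\Psi z)$ inherits monotonicity and smoothness (using that $\Psi$ is a symmetric idempotent projection), the logit-level reduction of the preconditioned update, and the Pinsker-based lifting from policy to parameter convergence --- are correct fillings-in of steps the paper leaves implicit.
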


\subsection{Proof of Lemma \ref{lemma:func_approx_monotone}}
The proof of this Lemma follows along the lines of Lemma \ref{lemma:char_func_approx} and Theorem \ref{thm:Nash_char_func_approx}. 
The key here is to notice that:
\begin{align}\label{equ:lemma_func_approx_monotone_1}
\nabla_{g_{\theta_i}} f_i ( \Psi g_{\theta_i}, \Psi g_{\theta_{-1}} ) = \Psi^{\top} \nabla_{\Psi g_{\theta_i}} f_i ( \Psi g_{\theta_i}, \Psi g_{\theta_{-1}} ), 
\end{align}
and that: $\Psi \mu = \mu,~~\forall \mu \in \tilde{\Delta}$. 
The rest of the proof is identical to the proof of Lemma \ref{lemma:char_func_approx}.
\hfil \qed

\section{Missing Definitions and Proofs in \S\ref{sec:ONPG_Markov}}\label{sec:append_missing_ONPG_Markov}

\begin{remark}
We note that all results presented  in this section also follow for the case where the cardinality of the  action spaces for both players are asymmetric. However, we stick to the case where the number of action is the same for both players in all states for ease of exposition.
\end{remark}


\subsection{Proof of Theorem \ref{thm:Markov_game_convergence}}
\label{appendix:thm:Markov_game_convergence}

\begin{algorithm}[tb]
   \caption{Optimistic NPG for Markov Games}
   \label{alg:markov_game}
\begin{algorithmic}
   \STATE {\bfseries Initialize:} $Q_0 = 0$
   \FOR{$t=1, 2, \cdots, T_{outer}$}
   \FOR{$s = 1, 2, \cdots, |\cS|$}
     \STATE Let $Q_t(s, a, b) := r(s, a, b) + \gamma \mathbb{E}_{s' \sim \mathbb{P}(\cdot|s, a, b)} [ V_t (s')]$
     \STATE Solve $\min_{{\theta} \in \mathbb{R}^n} \ \max_{{\nu}  \in \mathbb{R}^n } f_{\tau} (Q(s); g_\theta,h_\nu)$ by running the Optimistic NPG algorithm (Algorithm \ref{alg:optimistic_npg}) for $T_{inner}$ iterations and return the last iterates $(\theta_{T_{inner}}, \nu_{T_{inner}})$.
     \STATE Set $V_{t+1}(s) = f_{\tau} (Q_t(s); g_{\theta_{T_{inner}}}(\cdot\given s), h_{\nu_{T_{inner}}}(\cdot\given s))$
     \ENDFOR
   \ENDFOR
\end{algorithmic}
\end{algorithm}

We first have the following lemma which shows the smoothness property of the NE policy with respect to the game matrix $Q$.
\begin{lemma}
\label{lemma:smooth_wrtQ}
Consider the following entropy regularized game:
\begin{align}\label{equ:append_tilde_delta_minimax}
\min_{x \in \tilde{\Delta}} \ \max_{y \in \tilde{\Delta}} ~~~\ x^\top Q y - \tau \cH (x) + \tau \cH(y),
\end{align}
where $\tilde{\Delta} \subseteq \Delta$ is a convex compact subset of the simplex given by Equation \eqref{eq:Delta_tilde_def}. Let $(x^\star_{Q}, y^\star_Q)$ denote the unique solution to this problem (note that this is unique since we have a  strongly convex-strongly concave objective over a compact convex set). 
Then, we have: 
\begin{align}
\max \big\{ \| x^\star_{Q_1} - x^\star_{Q_2} \|, \| y^\star_{Q_1} - y^\star_{Q_2} \| \big\} \leq C \cdot\|Q_1 - Q_2\|_F,
\end{align}
for some constant $C > 0$ and for any  $Q_1, Q_2 \in \mathbb{R}^{n \times n}$.
\end{lemma}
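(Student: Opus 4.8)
The plan is to establish Lipschitz continuity of the (unique) saddle point of the regularized game with respect to the payoff matrix $Q$, measured in Frobenius norm. The key structural facts are: (i) the objective in \eqref{equ:append_tilde_delta_minimax} is $\tau$-strongly-convex in $x$ and $\tau$-strongly-concave in $y$ over the convex compact set $\tilde{\Delta}\times\tilde{\Delta}$, so the solution is unique; and (ii) the solution is characterized by a first-order optimality (variational inequality) condition. I would set up the argument as a perturbation/sensitivity analysis of the KKT system, treating $Q$ as the parameter.

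\textbf{Main steps.} First I would write the saddle point $(x^\star_Q, y^\star_Q)$ as the solution of the variational inequality
\begin{align}
\langle F_Q(z^\star_Q), z - z^\star_Q \rangle \geq 0, \qquad \forall z \in \tilde{\Delta}\times\tilde{\Delta},
\end{align}
where $z=(x,y)$ and $F_Q(z) = (Qy + \tau\nabla\mathcal{H}_{\mathrm{neg}}(x),\ -Q^\top x + \tau\nabla\mathcal{H}_{\mathrm{neg}}(y))$ collects the gradients (with the appropriate sign from the entropy). The crucial property is that, thanks to the entropy regularization, $F_Q$ is $\tau$-strongly monotone: for all $z,z'$,
\begin{align}
\langle F_Q(z) - F_Q(z'),\ z - z' \rangle \geq \tau \|z - z'\|^2.
\end{align}
Second, for two matrices $Q_1,Q_2$ with solutions $z_1:=z^\star_{Q_1}$ and $z_2:=z^\star_{Q_2}$, I would add the two variational inequalities (using $z=z_2$ in the one for $Q_1$ and $z=z_1$ in the one for $Q_2$) to obtain $\langle F_{Q_1}(z_1) - F_{Q_2}(z_2),\ z_2 - z_1\rangle \geq 0$. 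Third, I would split this via the identity $F_{Q_1}(z_1) - F_{Q_2}(z_2) = \big(F_{Q_1}(z_1) - F_{Q_1}(z_2)\big) + \big(F_{Q_1}(z_2) - F_{Q_2}(z_2)\big)$, apply strong monotonicity to the first bracket and Cauchy--Schwarz to the second, yielding
\begin{align}
\tau \|z_1 - z_2\|^2 \leq \|F_{Q_1}(z_2) - F_{Q_2}(z_2)\|\cdot\|z_1 - z_2\|.
\end{align}
Fourth, I would bound the parameter-perturbation term: since $F_Q$ depends on $Q$ only through the bilinear parts $Qy$ and $Q^\top x$, we have $\|F_{Q_1}(z_2) - F_{Q_2}(z_2)\| \leq \|(Q_1-Q_2)y_2\| + \|(Q_1-Q_2)^\top x_2\| \leq 2\|Q_1 - Q_2\|\cdot\max\{\|x_2\|,\|y_2\|\}$, and since $x_2,y_2$ lie in the simplex their norms are at most $1$, and $\|Q_1-Q_2\|\leq \|Q_1-Q_2\|_F$. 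Dividing through by $\|z_1-z_2\|$ then gives $\|z_1 - z_2\| \leq (2/\tau)\|Q_1-Q_2\|_F$, so $C = 2/\tau$ works.

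\textbf{The main obstacle} is being careful about the entropy gradient: $\nabla\mathcal{H}_{\mathrm{neg}}$ blows up near the boundary of the simplex, so $F_Q$ is not globally Lipschitz on $\tilde{\Delta}$. The strong monotonicity inequality, however, only requires strong \emph{convexity} of the negative entropy, which does hold on all of $\tilde\Delta$ (the Hessian $\mathrm{diag}(1/x_i)$ is positive definite with entries $\geq 1$), so the argument above sidesteps the boundary issue entirely because the entropy terms cancel in the perturbation term $F_{Q_1}(z_2)-F_{Q_2}(z_2)$ and only strong monotonicity (not smoothness) of the regularizer is used. I would therefore emphasize that the constant $C$ depends only on $\tau$ and not on any boundary-distance quantity, and verify that the restriction to the convex subset $\tilde\Delta$ (rather than the full simplex) poses no problem, since the variational-inequality characterization and strong monotonicity hold verbatim over any convex compact subset.
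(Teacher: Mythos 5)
Your proof is correct, and it takes a genuinely different — and more elementary — route than the paper. The paper first reduces the problem over $\tilde{\Delta}$ to one over the full simplex with payoff matrix $\Psi^\top Q\Psi$, eliminates the simplex constraint by a reparametrization in dimension $n-1$, writes the KKT system as an operator equation $\mathcal{G}(\tilde x,\tilde y,\mathrm{vec}(Q))=0$, and then invokes the implicit function theorem: invertibility of the Jacobian follows from its skew-symmetric-plus-positive-definite structure, a uniform bound on the inverse and on $\partial\mathcal{G}/\partial\,\mathrm{vec}(Q)$ gives a uniform derivative bound for the solution map, and the mean-value theorem yields the Lipschitz estimate with a nonconstructive constant $C$. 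Your argument replaces all of this machinery with the standard perturbation analysis of a strongly monotone variational inequality: the bilinear coupling terms cancel in the monotonicity computation, the entropy terms cancel in the parameter-perturbation term, and you obtain the bound in a few lines with the explicit constant $C=2/\tau$ — something the paper's proof does not provide. The one point you should state explicitly rather than leave implicit is that the VI characterization $\langle F_{Q}(z^\star_Q), z - z^\star_Q\rangle \geq 0$ requires $F_Q$ (hence $\nabla\mathcal{H}$) to be finite at $z^\star_{Q_1}$ and $z^\star_{Q_2}$; this holds because the entropy regularization forces both solutions into the relative interior of $\tilde{\Delta}$ (any coordinate equal to zero would make the inward directional derivative of the negative entropy $-\infty$, contradicting optimality — consistent with the paper's explicit softmax characterization of the solution in Theorem \ref{thm:Nash_char_func_approx}). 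With that interiority remark added, your argument is complete, self-contained, and in fact sharper than the paper's, since the strong-monotonicity route needs neither the reduction through $\Psi$ nor any smoothness of the regularizer, only its strong convexity on the simplex.
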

\begin{proof}
First notice that by the proof of Theorem \ref{thm:Nash_char_func_approx}, solving \eqref{equ:append_tilde_delta_minimax}  is equivalent to solving 
\#\label{equ:append_delta_minimax}
\min_{x\in {\Delta}} \ \max_{y  \in {\Delta} } ~~~\ x^\top \Psi^\top Q \Psi y - \tau \cH (x) + \tau \cH(y),
\#
with $\Psi$ being defined in Equation \eqref{eq:psi_def}, which admits a unique solution. In other words, the solution  $(x^\star_{Q}, y^\star_Q)$ also solves \eqref{equ:append_delta_minimax}.  
Also, notice that the solution to \eqref{equ:append_delta_minimax} always lies in the relative interior of $\Delta$, given by \eqref{equ:optimal_solution_linear_FA}. In other words, $[x^\star_{Q}]_a>0$ and $[y^\star_{Q}]_b>0$ for all $a$ and $b$. Due to the simplex constraint, the free variable is of dimension $n-1$, and the last dimension of $x$ can be represented as $1-\sum_{a=1}^{n-1}x_a>0$ (similarly for $y$). Let $\tilde x=(x_1,x_2,\cdots,x_{n-1})^\top$, $\tilde y=(y_1,y_2,\cdots,y_{n-1})^\top$, and $f(x,y):=x^\top \Psi^\top Q\Psi y - \tau \cH (x) + \tau \cH(y)$. Recall that $f(x,y)$ is strongly convex in $x$ and strongly concave in $y$.  Note that $f(x,y)=f\big(\Lambda(\tilde x),\Lambda(\tilde y)\big)=\tilde f(\tilde x,\tilde y):=\Lambda(\tilde x)^\top \Psi^\top Q\Psi  \Lambda(\tilde y) - \tau \cH \big(\Lambda (\tilde x)\big) + \tau \cH\big(\Lambda (\tilde y)\big)$, where $x=\Lambda(\tilde x)=\begin{bmatrix}
	I \\ -\mathbbm{1}^\top
\end{bmatrix}\tilde x+\begin{bmatrix}
0\\0\\\vdots\\1
\end{bmatrix}$, and $\mathbbm{1}$ denotes an all-one vector of proper dimension. Note that $\tilde f(\tilde x,\tilde y)$ is also strongly convex in $\tilde x$ and strongly concave in $\tilde y$, as for any $\tilde y$, the Hessian $\nabla^2_{\tilde x} \tilde f(\tilde x,\tilde y)=\begin{bmatrix}
	I \ \big| \ -\mathbbm{1}
\end{bmatrix}\nabla^2_x  f(\Lambda(\tilde x),\Lambda(\tilde y))\begin{bmatrix}
	I \\ -\mathbbm{1}^\top
\end{bmatrix}\succ 0$, so is the Hessian with respect to $\tilde y$ for any $\tilde x$. Hence, the solution to the  minimax problem 
\#\label{equ:surrogate_low_dim}
\min_{\big\{\tilde x\biggiven\tilde x_a\geq 0,1-\sum_{a=1}^{n-1}\tilde x_a\geq 0\big\}}\max_{\big\{\tilde y\biggiven\tilde y_b\geq 0,1-\sum_{b=1}^{n-1}\tilde y_b\geq 0\big\}}\quad\tilde f(\tilde x,\tilde y) 
\# 
is given by $(\tilde x^\star_Q,\tilde y^\star_Q)$, where $\tilde x^\star_Q$ and $\tilde y^\star_Q$ are just the first $n-1$ dimensions of $x^\star_Q$ and $y^\star_Q$, satisfying $[\tilde x^\star_{Q}]_a>0$, $[\tilde y^\star_{Q}]_b>0$, and $1-\sum_{a=1}^{n-1}[\tilde x^\star_{Q}]_a>0$, $1-\sum_{b=1}^{n-1}[\tilde y^\star_{Q}]_b>0$, i.e., the constraints in \eqref{equ:surrogate_low_dim} are not violated at $(\tilde x^\star_Q,\tilde y^\star_Q)$. By KKT conditions, it holds that at $(\tilde x^\star_Q,\tilde y^\star_Q)$ 
\#\label{equ:KKT_low_dim}
\tau\nabla_{\tilde x}\cH(\Lambda(\tilde x^\star_Q))-\begin{bmatrix}
	I \ \big| \ -\mathbbm{1}
\end{bmatrix}\Psi^\top Q\Psi  \Lambda(\tilde y^\star_Q)&=0\\
\tau\nabla_{\tilde y}\cH(\Lambda(\tilde y^\star_Q))+\begin{bmatrix}
	I \ \big| \ -\mathbbm{1}
\end{bmatrix}\Psi^\top Q^\top\Psi  \Lambda(\tilde x^\star_Q)&=0. 
\#
Define operator $\cG\big(\tilde x,\tilde y,\text{vec}(Q)\big)$ as
\$
\cG\big(\tilde x,\tilde y,\text{vec}(Q)\big):=\begin{bmatrix}
	\tau\nabla_{\tilde x}\cH(\Lambda(\tilde x))-\begin{bmatrix}
	I \ \big| \ -\mathbbm{1}
\end{bmatrix}\Psi^\top Q\Psi  \Lambda(\tilde y)\\
\tau\nabla_{\tilde y}\cH(\Lambda(\tilde y))+\begin{bmatrix}
	I \ \big| \ -\mathbbm{1}
\end{bmatrix}\Psi^\top Q^\top\Psi  \Lambda(\tilde x)
\end{bmatrix},
\$
where $\text{vec}(Q)$ is the vectorization of $Q$.   
Then, $(\tilde x^\star_Q,\tilde y^\star_Q)$ is given by the solution to $\cG\big(\tilde x,\tilde y,\text{vec}(Q)\big)=0$. Note that the Jacobian of $\cG$ with respect to $[\tilde x^\top,\tilde y^\top]^\top$ is 
\$
\cM\big(\tilde x,\tilde y,\text{vec}(Q)\big):=\begin{bmatrix}
	\frac{\partial \cG}{\partial \tilde x}\ \big| \ \frac{\partial \cG}{\partial \tilde y}
\end{bmatrix}=\begin{bmatrix}
	\tau\nabla^2_{\tilde x} \cH(\Lambda(\tilde x)) & -\begin{bmatrix}
	I \ \big| \ -\mathbbm{1}
\end{bmatrix}\Psi^\top Q\Psi  \begin{bmatrix}
	I \\ -\mathbbm{1}^\top
\end{bmatrix}\\
\begin{bmatrix}
	I \ \big| \ -\mathbbm{1}
\end{bmatrix}\Psi^\top Q^\top\Psi  \begin{bmatrix}
	I \\ -\mathbbm{1}^\top
\end{bmatrix} & \tau\nabla^2_{\tilde y} \cH(\Lambda(\tilde y))
	\end{bmatrix},
\$
which is always invertible for any $\tilde x$ and $\tilde y$ belonging to the constraints in \eqref{equ:surrogate_low_dim}, due to the fact that $\tau\nabla^2_{\tilde x} \cH(\Lambda(\tilde x)),\tau\nabla^2_{\tilde y} \cH(\Lambda(\tilde y))\succ 0$, and $\cM\big(\tilde x,\tilde y,\text{vec}(Q)\big)$ is skew-symmetric, yielding the fact that the real parts of the eigenvalues  of $\cM\big(\tilde x,\tilde y,\text{vec}(Q)\big)$, which are the eigenvalues of  $(\cM^\top+\cM)/2$,  are
always positive. In fact, the real parts are  uniformly lower bounded by some constant $\eta>0$ for any $\tilde x$ and $\tilde y$ belong to the constraints in \eqref{equ:surrogate_low_dim}, due to the strong convexity of $\cH(\Lambda(\tilde x))$ and $\cH(\Lambda(\tilde y))$. Hence, we have
\$
\big\|\cM\big(\tilde x,\tilde y,\text{vec}(Q)\big)^{-1}\big\|_2\leq \frac{2}{\lambda_{\min}\Big(\cM\big(\tilde x,\tilde y,\text{vec}(Q)\big)+\cM\big(\tilde x,\tilde y,\text{vec}(Q)\big)^\top\Big)}\leq \frac{1}{\eta}.
\$
Due to the invertibility of $\cM\big(\tilde x,\tilde y,\text{vec}(Q)\big)$, we can apply implicit function theorem \citep{krantz2012implicit} for any solution to $\cG\big(\tilde x,\tilde y,\text{vec}(Q)\big)=0$, and obtain that for any  such a solution $(\tilde x^\star_{Q},\tilde y^\star_Q,\text{vec}(Q))$, there  exists a neighborhood of it such that for any $(\tilde x,\tilde y,\text{vec}(\tilde Q))$ in the neighborhood
\$
\frac{\partial[\tilde x^\top,\tilde y^\top]^\top}{\partial \text{vec}(\tilde Q)}=-\cM\big(\tilde x,\tilde y,\text{vec}(\tilde Q)\big)^{-1}\cdot \frac{\partial \cG\big(\tilde x,\tilde y,\text{vec}(\tilde Q)\big)}{\partial \text{vec}(\tilde Q)}.  
\$
Notice that $\frac{\partial \cG(\tilde x,\tilde y,\text{vec}(\tilde Q))}{\partial \text{vec}(\tilde Q)}$ is uniformly bounded in norm on the constrained sets in \eqref{equ:surrogate_low_dim}, due to the boundedness of the sets. Hence, there exists a uniform constant $C'>0$ such that 
\$
\bigg\|\frac{\partial[(\tilde x^\star_{Q})^\top,~ (\tilde y^\star_{Q})^\top]^\top}{\partial \text{vec}(Q)}\bigg\|_2\leq \Big\|\cM\big(\tilde x^\star_Q,\tilde y^\star_Q,\text{vec}(Q)\big)^{-1}\Big\|_2\cdot \bigg\| \frac{\partial \cG\big(\tilde x^\star_Q,\tilde y^\star_Q,\text{vec}(Q)\big)}{\partial \text{vec}(\tilde Q)}\bigg\|_2\leq C'
\$ 
for any $(\tilde x^\star_{Q},\tilde y^\star_Q,\text{vec}(Q))$. By the mean-value theorem, we know that
\$
\big\|[(\tilde x^\star_{Q_1})^\top,~ (\tilde y^\star_{Q_1})^\top]-[(\tilde x^\star_{Q_2})^\top,~ (\tilde y^\star_{Q_2})^\top]\big\|_2 \leq C' \cdot \big\|\text{vec}(Q_1)-\text{vec}(Q_2)\big\|_2. 
\$
Finally, notice that 
\$
\big\|[(x^\star_{Q_1})^\top,~ (y^\star_{Q_1})^\top]-[(x^\star_{Q_2})^\top,~ (y^\star_{Q_2})^\top]\big\|_2\leq \Bigg\|\begin{bmatrix}
	I \\ -\mathbbm{1}^\top
\end{bmatrix}
\Bigg\|_2\cdot \big\|[(\tilde x^\star_{Q_1})^\top,~ (\tilde y^\star_{Q_1})^\top]-[(\tilde x^\star_{Q_2})^\top,~ (\tilde y^\star_{Q_2})^\top]\big\|_2,
\$
which completes the proof by the equivalence of norms.
\end{proof}

\subsubsection{Tabular case and proof of Theorem \ref{thm:Markov_game_convergence}}

We set the stepsize to be:
\begin{align}
\eta = \frac{1-\gamma}{2(1 + \tau(\log n+ 1 - \gamma))}. 
\end{align}

The convergence of the $Q$-function follows from Lemma \ref{lemma:distribution_con_cen}, along with Theorem 2 in \cite{cen2021fast}. In particular, it is shown that when the inner problem is solved upto an accuracy of $\epsilon$, we have:
\begin{align}
\label{eq:ref_Q_conv_cen}
\| Q_t - Q^\star \|_\infty \leq \epsilon + \gamma^t \| Q_0 - Q^\star \|_\infty
\end{align}


We show the convergence of the parameter next. We define:
\begin{align}
\label{def:optimal_para_MG_tab}
\theta^\star(s) = \frac{- Q^\star_\tau h_{\nu^\star}(\cdot\given s)}{\tau},\qquad\qquad 
\nu^\star(s) = \frac{{Q^\star_\tau}^\top g_{\theta^\star}(\cdot\given s)}{\tau} .
\end{align}
Similarly, we also define $\theta^\star_{Q_t}$ and $\nu^\star_{Q_t}$ as:
\begin{align}
\theta^\star_{Q_t} = \frac{- Q_t h_{\nu^\star_{Q_t}}(\cdot\given s)}{\tau},\qquad\qquad 
\nu^\star(s) = \frac{{Q_t}^\top g_{\theta^\star_{Q_t}}(\cdot\given s)}{\tau} .
\end{align}
 We have: 
\begin{align}
\label{eq:Markov_1}
\| \theta_t - \theta^\star_{Q^\star} \|^2 = \| \theta_t - \theta^\star_{Q_t} + \theta^\star_{Q_t} -  \theta^\star \|^2 \leq  2\| \theta_t - \theta^\star_{Q_t} \|^2 + 2\| \theta^\star_{Q_t} -  \theta^\star \|^2.
\end{align}
Now, the first term converges approximately after the inner loop terminates.  We can analyze the second term as follows: 
\begin{align}
\| \theta^\star_{Q_t} -  \theta^\star \|^2 &=^{*1} \frac{1}{\tau} \| Q_t h_{\nu^\star_{Q_t}} -  Q^\star h_{\nu^\star_{Q^\star}} \|^2 =  \frac{1}{\tau} \| Q_t h_{\nu^\star_{Q_t}} -Q^\star h_{\nu^\star_{Q_t}} + Q^\star h_{\nu^\star_{Q_t}} - Q^\star h_{\nu^\star_{Q^\star}} \|^2 \nonumber \\
&\leq   \frac{2}{\tau} \left(  \| Q_t h_{\nu^\star_{Q_t}} -Q^\star h_{\nu^\star_{Q_t}}  \|^2 + \| Q^\star h_{\nu^\star_{Q_t}} - Q^\star h_{\nu^\star_{Q^\star}} \|^2 \right) \nonumber \\
&\leq  \frac{2}{\tau} \left(  \| Q_t -Q^\star  \|^2_F + \|Q^\star\|^2_F \|  h_{\nu^\star_{Q_t}} - h_{\nu^\star_{Q^\star}} \|^2 \right) \nonumber \\
&\leq^{*2}  \frac{2}{\tau} \left(  \| Q_t -Q^\star  \|^2_F + C \|Q^\star\|^2_F\| Q_t -Q^\star  \|^2_F \right) =  \frac{2}{\tau} \left(1 + C^2 \|Q^\star\|^2_F \right)  \| Q_t -Q^\star  \|^2_F,
\end{align}
where $(*1)$ follows from the definition of $\theta^\star$ and $(*2)$ follows from Lemma \ref{lemma:smooth_wrtQ}.
Substituting this in Equation \eqref{eq:Markov_1} we have: 
\begin{align}
\| \theta_t - \theta^\star_{Q^\star} \|^2 &\leq 2\| \theta_t - \theta^\star_{Q_t} \|^2 + \frac{4}{\tau}  \left(1 + C^2 \|Q^\star\|^2_F \right)  \| Q_t -Q^\star  \|^2_F.
\end{align}
Therefore, we have $\|\theta_t - \theta^\star_{Q^\star} \|^2 \leq \epsilon$ if 
\begin{align}
\| \theta_t - \theta^\star_{Q_t} \|^2 \leq \frac{\epsilon}{4} , \qquad  \| Q_t -Q^\star  \|^2_F \leq \frac{\epsilon}{\frac{8}{\tau}  \left(1 + C^2 \|Q^\star\|^2_F \right) }.
\end{align}
Note that the first term can be achieved by setting the inner-loop iterations $T_{inner}$ to be the following (from Theorem \ref{thm:param_con}):
\begin{align}
T_{inner} = \mathcal{O} \left( \frac{1}{\eta \tau} \left( \log \frac{1}{\epsilon} + \log \frac{1}{1 - \gamma} + \log \log n + \log \frac{1}{\eta} \right) \right),
\end{align}
and the second term can be achieved by noting that:
\begin{align}
 \| Q_t -Q^\star  \|_F \leq d \| Q_t -Q^\star  \|_\infty,
 \end{align}
 for $d = |\cS| \times |A|$. Now, using Inequality \eqref{eq:ref_Q_conv_cen}, we can set the outer-loop iterations $T_{outer}$ to be: 
\begin{align}
T_{outer} = \mathcal{O} \left( \frac{1}{1 - \gamma} \left( \log \frac{d}{\epsilon} + \log \left( \frac{8}{\tau}  \left(1 + C^2 \|Q^\star\|^2_F \right) \right)  +\log \frac{1 + \tau \log n}{1 - \gamma} \right) \right),
\end{align}
to get the desired convergence result. This completes the proof. \hfil \qed

\subsection{Function approximation setting}
\label{sec:appendix_func_approx_markov_results}


In this subsection, we discuss Markov games where the policies have a log-linear parametrization. The basic idea is to follow the tabular setting, but only for those states for which there is an action for which the feature vector corresponding to the state-action pair is non-zero. We first make the following assumption on the feature matrix $\phi$.

\begin{assumption}
\label{ass:phi_mg}
The feature matrix $\Phi$ is full rank, Moreover, it is of the form $\Phi=[\phi_1, \phi_2, \cdots , \phi_{|\cS|\times|\cA|}]=[I \ | \ 0]$.
\end{assumption}

Note that this assumption is similar to Assumption \ref{ass:full_rank} for the matrix game. This The full rank assumption is standard in the literature. Furthermore, this  particular structure of the feature matrix, though being restrictive, ensures that the constraint set of policies is convex (similar to the case of matrix games), otherwise the minimax theorem of $\min\max=\max\min$  might not hold, i.e., the Nash equilibrium for the parameterized game does not exist. See the paragraph below Assumption \ref{ass:full_rank} for further discussion on the structure of the feature matrix.
We next describe the detailed description of the setup. 

\subsubsection{Setup}

Each column of $\Phi$ is a feature vector corresponding to some state action pair $(s, a)$. Note that for each state, there could be  $0$ to $\min \{n, d \}$ actions for which the feature vector is non-zero.

Now, consider a state $s\in S$. Define $A_{s} = \{ a \in \mathcal{A}: \Phi_{s, a} \neq 0 \}$ 
where $\Phi_{s,a}$ corresponds to the column in the feature matrix for state $s$, and action $a$, and here $0$ denotes the zero vector. Therefore $A_s$ is the set of actions in state $s$ for which the feature vector is non-zero. For sake of notational simplicity, let these be the actions $1, 2, \cdots, | A_{s} |$. Note that $A_{s}$ can be an empty set. We further assume that the first $|A_1|$ columns of $\Phi$ are corresponding to state $1$, the next $|A_2|$ columns correspond to state $2$ and so on. Note that we have $\sum_{s \in \cS} |A_s | = d$.

For state $s$, if $A_{s}$ is nonempty, define the feature matrix $\Phi_{s} = [I_{|A_{s}|} \ |\ 0] \in \mathbb{R}^{|A_{s}| \times n}$.
Note that this would be the feature matrix corresponding to each state for the original feature matrix $\Phi$.  
Now, define $\tilde{\Delta}_s$ corresponding to each state $s$ using the  feature matrix $\Phi_s$, as in Equation \eqref{eq:Delta_tilde_def}. This corresponds to the set of admissible  distributions under the function approximation setting for state $s$. If the set $A_s$ is empty, we take $\tilde{\Delta}_s$ to be the singleton set with the uniform distribution. Furthermore, we define $\tilde{\Delta} = \times_{s \in \cS} \tilde{\Delta}_s$.

Next, for notational convenience, we let the first $d_1$ columns of the Matrix $\Phi$ correspond to state $s_{1}$, i.e., $d_1= |A_{s_1}|$, the next $d_2$ columns correspond to actions in state $s_{2}$ and so on till finally the columns from $d - d_D +1$ to column $d$ correspond to state $s_{D}$, i.e., we partition the columns for which the feature vector is non-zero into the different states they correspond to. Therefore, $D$ corresponds to the number of states for which there is at least one action for which the feature vector corresponding to the state action pair is non-zero.
This means that the states $s_{1}, s_{2}, \cdots, s_{D}$ are the only states for which there is at least one action with a nonzero feature vector, and therefore $\tilde{\Delta}_s$ is not a singleton set for these states. For all other states $s \in \cS \backslash \{ s_{1}, s_{2}, \cdots, s_{D} \}$, we have that  $\tilde{\Delta}_s$ is a singleton set containing the uniform distribution. We will also separate the parameters as follows: $\theta_{s_1}$ denotes the first $d_1$ elements of $\theta$, $\theta_{s_2}$ denotes the next $d_2$ elements of $\theta$ and so on. Similarly for $\nu$.

Now, the algorithm used to solve the Markov game in this function approximation setting, is similar to the tabular setting, except that we only have to run the inner iteration on the states $s_{i}, \ i = \{ 1, 2, \cdots, D \}$. We describe the algorithm in detail in Algorithm \ref{alg:markov_game_func} in \S\ref{sec:append_missing_ONPG_Markov}.




\begin{theorem}
\label{thm:Markov_game_convergence_func}
Let $Q^\star_{\tau}$ be the in-class NE (Definition \ref{def:para_eps_NE}) $Q$-value defined in Equation \eqref{eq:def_optimal_Q_func} of the regularized problem, under the log-linear parametrization satisfying Assumption \ref{ass:phi_mg}. Note that we have existence of the in-class NE from Lemma \ref{lemma:exist_para_NE_FA}. Choose the stepsize $\eta = \frac{1-\gamma}{2(1 + \tau(\log n + 1 - \gamma))}$ for the inner loop in Algorithm \ref{alg:markov_game_func}.
Then, after running Algorithm \ref{alg:markov_game_func} for 
\begin{align}
T_{inner} &= \mathcal{O} \left( \frac{1}{\eta \tau} \left( \log \frac{1}{\epsilon} + \log \frac{1}{1 - \gamma} + \log \log n + \log \frac{1}{\eta} \right) \right), \nonumber \\
T_{outer} &= \mathcal{O} \left( \frac{1}{1 - \gamma} \left( \log \frac{D}{\epsilon} + \log \left( \frac{8}{\tau}  \left(1 + C^2 \|Q^\star\|_F^2 \right) \right)  +\log \frac{1 + \tau \log n}{1 - \gamma} \right) \right),
\end{align}
iterations, we have $\| Q_T - Q^\star_{\tau} \|_\infty \leq \epsilon$ and $\max\{ \| \theta_T - \theta^\star \| , \| \nu_T - \nu^\star \| \} \leq \epsilon$
where $(Q_T, \theta_T, \nu_T)$ is the output of Algorithm \ref{alg:markov_game_func}  after $T$ iterations and $(\theta^\star, \nu^\star)$ are defined in Equation \eqref{eq:def_optimal_sol}.
\end{theorem}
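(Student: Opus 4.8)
The plan is to replicate the two-timescale argument used for the tabular case in the proof of Theorem~\ref{thm:Markov_game_convergence}, replacing each ingredient by its function-approximation counterpart. The outer loop performs a regularized Bellman-type update of the $Q$-matrix, while the inner loop solves, for each state $s\in\{s_1,\dots,s_D\}$, the per-state regularized matrix game over the constrained simplex $\tilde\Delta_s$. The crucial enabler is that, by Theorem~\ref{thm:Nash_char_func_approx} and Proposition~\ref{eq:lemma:algo_lin_func_approx}, solving this constrained game with the log-linear optimistic NPG (Algorithm~\ref{alg:markov_game_func}) is equivalent to running the tabular optimistic NPG on the reshaped cost matrix $\Psi^\top Q(s)\Psi$, and therefore inherits the last-iterate parameter-convergence guarantee of Theorem~\ref{thm:param_con}. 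For the states $s\notin\{s_1,\dots,s_D\}$ the set $\tilde\Delta_s$ is the singleton uniform distribution, so there is no inner optimization and the corresponding value update is deterministic; these states contribute nothing to the parameter error and only enter the Bellman recursion.

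First I would establish the $Q$-convergence. I would argue that the regularized Bellman operator $s\mapsto r+\gamma\,\mathbb{E}[f_\tau(\cdot)]$ defining the outer update is a $\gamma$-contraction in $\|\cdot\|_\infty$, whose fixed point is exactly the in-class NE value $Q^\star_\tau$ (existence guaranteed by Lemma~\ref{lemma:exist_para_NE_FA}). Combining this contraction with the inner-loop accuracy — the policies produced by Algorithm~\ref{alg:markov_game_func} converge to the per-state constrained NE at the rate of Lemma~\ref{lemma:distribution_con_cen} — would, following the template invoked in Theorem~\ref{thm:Markov_game_convergence}, yield $\|Q_t-Q^\star_\tau\|_\infty\le\epsilon_{\mathrm{in}}+\gamma^t\|Q_0-Q^\star_\tau\|_\infty$. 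Choosing $T_{outer}=\mathcal{O}\big((1-\gamma)^{-1}\log(\cdots)\big)$ as stated drives the geometric term below the target, using $\|Q_t-Q^\star_\tau\|_F\le D\,\|Q_t-Q^\star_\tau\|_\infty$ to convert the Frobenius tolerance needed later into an $\|\cdot\|_\infty$ tolerance.

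Next I would handle the parameter convergence by the same decomposition as in the tabular proof: writing $\theta^\star_{Q_t}$ for the per-state NE parameter of the current matrix $Q_t$, I would bound $\|\theta_T-\theta^\star\|^2\le 2\|\theta_T-\theta^\star_{Q_T}\|^2+2\|\theta^\star_{Q_T}-\theta^\star\|^2$. The first term is the inner-loop error, which decays at the linear rate of Theorem~\ref{thm:param_con} through Proposition~\ref{eq:lemma:algo_lin_func_approx}, and is made smaller than $\epsilon/4$ by the stated $T_{inner}$. For the second term I would use Lemma~\ref{lemma:smooth_wrtQ}, which is stated precisely for the constrained game over $\tilde\Delta$ and hence applies verbatim to each state: it gives $\|h_{\nu^\star_{Q_1}}-h_{\nu^\star_{Q_2}}\|\le C\|Q_1-Q_2\|_F$, and since $\theta^\star_Q$ is a fixed linear image of $Q\,h_{\nu^\star_Q}$ (cf.\ Equation~\eqref{eq:def_optimal_sol}) this upgrades to a Lipschitz bound of the form $\tfrac{2}{\tau}(1+C^2\|Q^\star\|_F^2)\|Q_T-Q^\star_\tau\|_F^2$. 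Feeding in the $Q$-tolerance from the first step closes the argument for both $\theta$ and, symmetrically, $\nu$.

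The main obstacle I anticipate is not any single estimate but the careful bookkeeping of the per-state function-approximation structure: one must verify that the constrained-NE parameters across states assemble into the global $(\theta^\star,\nu^\star)$ of Equation~\eqref{eq:def_optimal_sol}, that the smoothness constant $C$ from Lemma~\ref{lemma:smooth_wrtQ} is uniform over states and over the $Q$-matrices encountered along the trajectory, and that the singleton states do not corrupt the Bellman contraction. A secondary subtlety is the coupling of the two error sources: the inner-loop tolerance must be set relative to the shrinking outer-loop residual so that the product of the two linear rates still yields the advertised $T_{inner}$ and $T_{outer}$; this is exactly where the choice $\eta=\tfrac{1-\gamma}{2(1+\tau(\log n+1-\gamma))}$ and the factor $(1+C^2\|Q^\star\|_F^2)$ in $T_{outer}$ enter.
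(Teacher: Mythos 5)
Your proposal is correct and follows essentially the same route as the paper's proof: the paper likewise reduces the function-approximation case to the tabular argument of Theorem \ref{thm:Markov_game_convergence}, via the soft-Bellman contraction in Equation \eqref{eq:soft_bellman_func}, the per-state equivalence given by Theorem \ref{thm:Nash_char_func_approx} and Proposition \ref{eq:lemma:algo_lin_func_approx}, and (implicitly) the same decomposition of the parameter error into inner-loop error plus the $Q$-drift term controlled by Lemma \ref{lemma:smooth_wrtQ}. The paper's appendix treatment is in fact terser than your write-up — it establishes the contraction and then states that the iteration complexity ``follows from a similar analysis to the tabular case'' — so your proposal spells out precisely the steps the paper intends.
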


 \begin{remark}
 Note that Theorem \ref{thm:Markov_game_convergence_func} provides the convergence rate for a two player Markov game under the function approximation setting. This covers the tabular case by setting the feature matrix, $\Phi$ (See Equation \eqref{eq:Phi_def} in \S\ref{sec:ONPG_Matrix_FA}), to be equal to the identity matrix. In particular, making this substitution recovers the results of Theorem \ref{thm:Markov_game_convergence} . 
 \end{remark}

\begin{remark}
We remark that Theorem \ref{thm:Markov_game_convergence_func}, to the best of our knowledge, provides the first symmetric algorithm with convergence rate  guarantees for Markov games under the function approximation setting. {The only other existing result in this setting is \cite{zhao2021provably}, where the update is asymmetric, and one of the players performs multiple updates while the other player updates once. An asymmetric update-rule also appears in \cite{daskalakis2020independent}, without function approximation.  
Our results also improve over  \cite{wei2021last,cen2021fast} by generalizing the results to the case of certain function approximation, as well as showing parameter convergence.}
\end{remark}

\subsubsection{Proof of Theorem \ref{thm:Markov_game_convergence_func}}
\label{sec:appendix_func_approx_markov}


\begin{algorithm}[tb]
   \caption{Optimistic NPG for Markov Games with Function Approximation}
   \label{alg:markov_game_func}
\begin{algorithmic}
   \STATE {\bfseries Initialize:} $Q_0 = 0$
   \FOR{$t=1, 2, \cdots, T_{outer}$}
     \STATE Let $Q_t(s, a, b) := r(s, a, b) + \gamma \mathbb{E}_{s' \sim \mathbb{P}(\cdot|s, a, b)} [- f_{\tau} (Q_t(s'); g_{\theta_{T_{inner}}}(\cdot\given s'), h_{\nu_{T_{inner}}}(\cdot\given s'))]$
     \FOR{$i = 1, 2, \cdots, D$}
     \STATE Solve $\min_{{\theta} \in \mathbb{R}^{|A_{s_{i}}|}} \ \max_{{\nu}  \in \mathbb{R}^{|B_{s_{i}}|} } f_{\tau} (Q(s_{i}); g_{\theta},h_{\nu})$  by running the Optimistic NPG algorithm (Algorithm \ref{alg:optimistic_npg_func}) with feature matrix $\Phi_{s_{i}}$ for $T_{inner}$ iterations and return the last iterates $(\theta_{s_i}^{T_{inner}}, \nu_{s_i}^{T_{inner}})$.
     \ENDFOR
     \STATE Set $(\theta_{T_{inner}} , \nu_{T_{inner}} ) = \left( [\theta_{s_1}^{T_{inner}}, \theta_{s_2}^{T_{inner}}, \cdots, \theta_{s_D}^{T_{inner}}], [\nu_{s_1}^{T_{inner}}, \nu_{s_2}^{T_{inner}}, \cdots, \nu_{s_D}^{T_{inner}}] \right)$.
   \ENDFOR
\end{algorithmic}
\end{algorithm}

In order to characterize the point where the parameter converges  to, we define: 
\begin{align} 
\label{eq:def_optimal_sol}
\theta^\star = [(\theta_{s_{1}}^\star)^\top, (\theta_{s_{2}}^\star)^\top, \cdots, (\theta_{s_{D}}^\star)^\top]^\top,\qquad \qquad 
\nu^\star = [(\nu_{s_{1}}^\star)^\top, (\nu_{s_{2}}^\star)^\top, \cdots, (\nu_{s_{D}}^\star)^\top]^\top,
\end{align} 
where 
\begin{align}
\theta_{s_{i}}^\star = - [I_{|A_{s_{i}}|} | 0]\frac{Q_\tau^\star(s_{i}) h_{\nu^\star}(\cdot\given s_{i})}{\tau} \in \mathbb{R}^{A_{s_{i}}},\qquad \qquad 
\nu_{s_{i}}^\star = [I_{|A_{s_{i}}|} | 0]\frac{Q_\tau^\star(s_{i})^{\top} g_{\theta^\star}(\cdot\given s_{i})}{\tau} \in \mathbb{R}^{A_{s_{i}}} \nonumber,
\end{align}
and $Q_\tau^\star$ is the in-class NE Q-value (see Definition \ref{def:para_eps_NE}). The existence of this in-class NE follows from Lemma \ref{lemma:exist_para_NE_FA}.

The {in-class} Nash equilibrium $(\theta^\star, \nu^\star)$ under the function approximation setting satisfies:
\begin{align}
V^{\theta^\star, \nu}(s) \geq V^{\theta^\star, \nu^\star}(s) \geq V^{\theta, \nu^\star}(s) \qquad \qquad \forall \theta, \nu \in \mathbb{R}^d, \ \forall s \in \cS.
\end{align}
Note that this is equivalent to (from Lemma \ref{lemma:char_func_approx}): 
\begin{align}
V^{g_{\theta^\star}, h_{\nu}}(s) \geq V^{g_{\theta^\star}, h_{\nu^\star}}(s) \geq V^{g_{\theta}, h_{\nu^\star}}(s) \qquad \qquad \forall g_\theta, h_\nu \in \tilde{\Delta}, \ \forall s \in \cS.
\end{align}
We denote the NE V (and Q) as $V^\star$ (and $Q^\star$) (We use this notation for the general regularized version with $\tau \geq 0$, i.e., we do not explicitly state the dependence on $\tau$)
, i.e.,
\begin{align}
Q^\star (s, a, b) = r(s, a, b) + \gamma \mathbb{E}_{s' \sim \mathbb{P}(\cdot|s, a, b)} [V^\star (s')].
\end{align}

Next, we define the soft-Bellman operator as \footnote{Note that we use the structure of the feature matrix $\Phi$, along with Theorem \ref{thm:Nash_char_func_approx}, to show that $\min_{g_\theta(\cdot\given s)\in\tilde\Delta}\max_{h_\nu(\cdot\given s)\in\tilde\Delta}$ is equivalent to $\min_{{\theta}_s \in \mathbb{R}^{|A_s|}} \ \max_{{\nu}_s \in \mathbb{R}^{|A_s|}}$}: 
\begin{align}
\label{eq:soft_bellman_func}
\mathcal{T}_{\tau}(Q)(s, a, b) := r(s, a, b) + \gamma \mathbb{E}_{s' \sim \mathbb{P}(\cdot|s, a, b)} [ - \min_{{\theta} \in \mathbb{R}^{|A_s|}} \ \max_{{\nu} \in \mathbb{R}^{|A_s|}} f_{\tau} (Q(s'); g_\theta(\cdot\given s'),h_\nu(\cdot\given s'))],
\end{align}
where
\begin{align}
f_{\tau} (Q(s); g_\theta(\cdot\given s),h_\nu(\cdot\given s) &:= -\ g_\theta(\cdot\given s) ^{\top} Q(s) h_\nu(\cdot\given s) - \tau \mathcal{H}(g_\theta(\cdot\given s)) + \tau \mathcal{H}(h_\nu(\cdot\given s)), 
\end{align}
and $Q(s)$ is the Q-value matrix at state $s$. Let $\theta_s, \nu_s$ be the NE parameters at state $s$.
Note that by $NE$ we mean the solution to the min-max problem (which is the in-class NE in the function approximation setting). Then  the concatenation $\theta = [\theta_{s_1}^\top, \theta_{s_2}^\top, \cdots, \theta_{s_D}^\top]^\top$ (and similarly for $\nu$) denotes the parameters. 

Note that the inner $\min \max$ problem is equivalent to (from \S \ref{sec:ONPG_Matrix_FA}): 
\begin{align}
\min_{g_{\theta}(\cdot|s') \in \tilde{\Delta}_{s'}} \ \max_{h_{\nu}(\cdot|s') \in\tilde{\Delta}_{s'} } ~~~~f_{\tau} (Q(s'); g_\theta(\cdot\given s'),h_\nu(\cdot\given s')).
\end{align}

Consider the value iteration:
\begin{align}
Q_{t+1} = \mathcal{T}_{\tau}(Q_t).
\end{align}
We have $\|Q_t - Q^\star \|_\infty \leq \gamma^t \| Q_0 - Q^\star \|_\infty,$
due to the non-expansiveness  property of the $\min \ \max$ operator and the contracting factor $\gamma  < 1$. Note from Lemma \ref{lemma:exist_para_NE_FA},
this fixed point in fact corresponds to the in-class NE $Q-$value matrix of the regularized Markov game. 


The inner problem, which solves the saddle-point problem, is solved for each state with the input feature matrix $\Phi_s$ using Algorithm \ref{alg:optimistic_npg_func}. The iteration complexity follows from a similar analysis to the tabular case. 
This completes the proof. \hfil \qed

\subsection{Proof of Lemma \ref{lemma:exist_para_NE}}\label{sec:proof_lemma_exist_para_NE}



Consider the operator \footnote{Note that we here we use the structure of the tabular parametrization, by noting that $\max_{g_\theta(\cdot\given s)\in \Delta}\min_{h_\nu(\cdot\given s)\in \Delta}$ is equivalent to $\max_{{\theta}_s \in \mathbb{R}^{|A|}} \ \min_{{\nu}_s \in \mathbb{R}^{|A|}}$ using Theorem \ref{thm:con_MWU}.} 
\begin{align}
\label{eq:V_contrac_operator}
\mathcal{T}(V)(s) :=\max_{{\theta_s} \in \mathbb{R}^{|A|}} \ \min_{{\nu_s} \in \mathbb{R}^{|A|} } \ \mathbb{E}_{a \sim g_{\theta}(\cdot | s), b \sim h_{\nu}(\cdot | s)} \left[ r(s, a, b) + \gamma \mathbb{E}_{s' \sim \mathbb{P}(\cdot|s, a, b)} [ V(s')] \right]. 
\end{align}
The proof here is for the regularized game, i.e., with $\tau \geq 0$\footnote{We suppress the dependency on $\tau$ in the notation, by dropping the subscript $\tau$ for $Q$, $V$, and the operator $\cT$.}.
Note that this is the operator for the value function $V$ corresponding to the $Q$-value operator in Equation \eqref{eq:soft_bellman_func} with $\tau = 0$. We have, from the nonexpansive property of the min-max operator (see for example \cite{filar2012competitive}): 
\begin{align}
\| \mathcal{T}(V_1) - \mathcal{T}(V_2) \|_\infty \leq \gamma \| V_1 - V_2 \|_\infty,
\end{align}
which shows that this is a contracting operator and therefore has a unique fixed point by the Banach Fixed Point theorem. We show that this fixed point will lead to the in-class Nash equilibrium policy defined in Definition \ref{def:para_eps_NE}. Let the fixed point be denoted by $V^\star$, and let $(\theta^\star, \nu^\star)$ be the maxmin policy parameters in Equation \eqref{eq:V_contrac_operator} when plugging in $V^\star$. Note that $\theta^\star = (\theta_{s}^\star)_{s=1}^{|\cS|}$\footnote{Since we are in the tabular setting, each element $\theta^\star_s$ is of length $|A|$.} and similarly $\nu^\star$. We will show that $(\theta^\star, \nu^\star)$ is in fact the NE policy parameters.
We have:
\begin{align}
V^\star(s) &= \mathbb{E}_{a \sim g_{\theta^\star}(\cdot | s), b \sim h_{\nu^\star}(\cdot | s)} \left[ r(s, a, b) + \gamma \mathbb{E}_{s' \sim \mathbb{P}(\cdot|s, a, b)} [ V^{\star}(s')] \right] \nonumber \\
&=  \min_{{\nu}_s \in \mathbb{R}^{|A|} } \ \mathbb{E}_{a \sim g_{\theta^\star}(\cdot | s), b \sim h_{\nu}(\cdot | s)} \left[ r(s, a, b) + \gamma \mathbb{E}_{s' \sim \mathbb{P}(\cdot|s, a, b)} [ V^{\star}(s')] \right] \nonumber \\
&\leq \mathbb{E}_{a \sim g_{\theta^\star}(\cdot | s), b \sim h_{\nu}(\cdot | s)} \left[ r(s, a, b) + \gamma \mathbb{E}_{s' \sim \mathbb{P}(\cdot|s, a, b)} [ V^{\star}(s')] \right]  \qquad \forall \ {\nu} \in \mathbb{R}^{d}.\label{ineq:V_lower_bound_Filar}
\end{align}

Now applying Lemma 4.3.3 in \cite{filar2012competitive} 
to Inequality  \eqref{ineq:V_lower_bound_Filar} for all states $s$, we have:
\begin{align}
V^{\star} \leq V^{\theta^\star, \nu} \qquad \forall \ {\nu} \in \mathbb{R}^{d},  
\end{align}
where $d = |\cS | \times |A|$. Applying the same inequality for $\nu^\star$, we have: 
\begin{align}
\label{eq:last_lemma_saddle}
V^{\theta^\star, \nu} \geq V^{\star} \geq V^{\theta, \nu^\star} \qquad \forall\ \theta, {\nu} \in \mathbb{R}^{d}.
\end{align}
Furthermore, since we have: 
\begin{align}
V^\star &= \mathbb{E}_{a \sim g_{\theta{^\star} (\cdot|s)}, b \sim h_{\nu^\star} (\cdot|s)} \left[ r(s, a, b) + \gamma \mathbb{E}_{s' \sim \mathbb{P}(\cdot|s, a, b)} [ V^{\star}(s')] \right]. 
\end{align}
Lemma 4.3.3 in \cite{filar2012competitive} gives us that $V^\star = V^{\theta^\star, \nu^\star}$. Combining this with Inequality \eqref{eq:last_lemma_saddle}, we have:
\begin{align}
V^{\theta^\star, \nu} \geq V^{\theta^\star, \nu^\star} \geq V^{\theta, \nu^\star} \qquad \ \forall \theta, {\nu} \in \mathbb{R}^{d},
\end{align}
which shows that $(\theta^\star, \nu^\star)$ is the required NE.
Finally, using Theorem 4.3.2 (iii) in \cite{filar2012competitive},  we can find the NE $(\theta^\star, \nu^\star)$ for each state $s\in\cS$ by solving the following matrix game (for which the solution is guaranteed to exist, from Theorem \ref{thm:con_MWU}) : 
\$
\max_{{\theta_s} \in \mathbb{R}^{|A|}} \ \min_{{\nu_s} \in \mathbb{R}^{|A|} } \ \mathbb{E}_{a \sim g_{\theta}(\cdot|s), b \sim h_\nu (\cdot|s)} \left[ r(s, a, b) + \gamma \mathbb{E}_{s' \sim \mathbb{P}(\cdot|s, a, b)} [ V^\star(s')] \right].
\$
This completes the proof. 

\hfil \qed

\begin{lemma}[Existence of parameterized/in-class NE under Linear FA]\label{lemma:exist_para_NE_FA}
	Under policy parameterization \eqref{equ:policy_para_markov} with log linear policy parametrization and Assumption \ref{ass:phi_mg}, the  in-class NE defined in Definition \ref{def:para_eps_NE} exists.
\end{lemma}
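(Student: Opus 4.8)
The plan is to replicate the fixed-point argument used for the tabular existence result (Lemma \ref{lemma:exist_para_NE}), but with the value operator restricted to the policy classes reachable under log-linear parameterization. For a value vector $V$, write the induced one-step matrix as $Q_V(s,a,b) := r(s,a,b) + \gamma\, \mathbb{E}_{s'\sim\mathbb{P}(\cdot|s,a,b)}[V(s')]$, and define the operator
\begin{align}
\mathcal{T}(V)(s) := \max_{g_\theta(\cdot\given s)\in\tilde{\Delta}_s}\ \min_{h_\nu(\cdot\given s)\in\tilde{\Delta}_s}\ \mathbb{E}_{a\sim g_\theta(\cdot\given s),\, b\sim h_\nu(\cdot\given s)}\big[ Q_V(s,a,b)\big]. \nonumber
\end{align}
By the structure of the feature matrix and Theorem \ref{thm:Nash_char_func_approx}, the inner constrained game over $\tilde{\Delta}_s$ is equivalent to an unconstrained matrix game over the full simplex with a suitably transformed payoff matrix, so the optimization can equivalently be carried out over $\theta_s,\nu_s\in\mathbb{R}^{|A_{s}|}$, as in the footnote following Definition \ref{def:para_eps_NE}.

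The first key step is to show $\mathcal{T}$ is well-defined (single-valued on $V$). By Lemma \ref{lemma:char_func_approx}, the log-linear class sweeps out exactly the convex compact set $\tilde{\Delta}_s$; hence the inner problem is a bilinear (plus entropy when $\tau>0$) saddle-point problem over a product of convex compact sets, the minimax theorem applies so that $\min\max=\max\min$, a saddle point exists, and the game value is unique. Exactly as in the tabular proof, the non-expansiveness of the min-max value together with the discount $\gamma<1$ yields $\|\mathcal{T}(V_1)-\mathcal{T}(V_2)\|_\infty \le \gamma\|V_1-V_2\|_\infty$, so by the Banach fixed-point theorem $\mathcal{T}$ has a unique fixed point $V^\star$ (the argument is identical to that in the proof of Lemma \ref{lemma:exist_para_NE}, with $\Delta$ replaced by $\tilde{\Delta}_s$ and the uniform distribution assigned to the states with no active feature).

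Next I would extract the maxmin parameters $(\theta^\star,\nu^\star)$ attaining $\mathcal{T}(V^\star)=V^\star$, concatenated across the active states $s_1,\dots,s_D$ as in \eqref{eq:def_optimal_sol}. Then for every $h_\nu\in\tilde{\Delta}$ one has the one-step domination $V^\star(s)\le \mathbb{E}_{a\sim g_{\theta^\star}(\cdot\given s),\, b\sim h_\nu(\cdot\given s)}[Q_{V^\star}(s,a,b)]$, and by the comparison lemma (Lemma 4.3.3 in \cite{filar2012competitive}) applied state-wise this lifts to $V^\star\le V^{\theta^\star,\nu}$; the symmetric inequality gives $V^{\theta^\star,\nu}\ge V^\star\ge V^{\theta,\nu^\star}$, and a second application of the same lemma at $(\theta^\star,\nu^\star)$ identifies $V^\star=V^{\theta^\star,\nu^\star}$. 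Combining these yields
\begin{align}
V^{\theta^\star,\nu}(s)\ \ge\ V^{\theta^\star,\nu^\star}(s)\ \ge\ V^{\theta,\nu^\star}(s),\qquad \forall\, \theta,\nu\in\mathbb{R}^d,\ \forall\, s\in\cS, \nonumber
\end{align}
where Lemma \ref{lemma:char_func_approx} is used to equate the quantifier over $\theta,\nu\in\mathbb{R}^d$ with the quantifier over $g_\theta,h_\nu\in\tilde{\Delta}$. This is precisely the in-class NE condition of Definition \ref{def:para_eps_NE} with $\epsilon=0$, and the argument is valid for any $\tau\ge 0$.

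The main obstacle is verifying that the constrained inner game retains the minimax equality $\min\max=\max\min$ (so that $\mathcal{T}$ is single-valued and the Banach argument goes through) and that the comparison lemma of \cite{filar2012competitive} still applies when deviations are restricted to the class $\tilde{\Delta}$. Both rely crucially on $\tilde{\Delta}_s$ being convex, which is exactly what the feature structure $\Phi_s=[I\mid 0]$ in Assumption \ref{ass:phi_mg} guarantees via Lemma \ref{lemma:char_func_approx}; for a general feature matrix $\tilde{\Delta}_s$ could be nonconvex, the value of the inner game need not exist, and the fixed-point scheme would break down. The remaining steps are routine adaptations of the tabular proof of Lemma \ref{lemma:exist_para_NE}.
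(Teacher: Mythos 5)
Your proposal is correct and follows essentially the same route as the paper: the paper's own proof simply states that the argument of Lemma \ref{lemma:exist_para_NE} goes through verbatim once Theorem \ref{thm:Nash_char_func_approx} guarantees the inner constrained matrix game over $\tilde{\Delta}_s$ has a solution, which is exactly the fixed-point-plus-comparison-lemma scheme you spelled out. Your write-up just makes explicit the details (convexity of $\tilde{\Delta}_s$ via Lemma \ref{lemma:char_func_approx}, contraction, Banach fixed point, and the state-wise lifting via Lemma 4.3.3 of \cite{filar2012competitive}) that the paper leaves implicit.
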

\begin{proof}
The proof follows exactly along the lines of Lemma \ref{lemma:exist_para_NE}, along with the fact that the matrix game under linear function approximation and Assumption \ref{ass:phi_mg} has a solution, as shown in Theorem \ref{thm:Nash_char_func_approx}.
\end{proof}


\end{document}